\newcommand*\tupref[2]{\href{http://math.mit.edu/~primegaps/tuples/admissible_#1_#2.txt}{\num{#2}}}
\DeclareMathOperator*\EH{EH}
\DeclareMathOperator*\GEH{GEH}
\DeclareMathOperator*\MPZ{MPZ}
\DeclareMathOperator*\DHL{DHL}
\DeclareFontFamily{OT1}{rsfs}{}
\DeclareFontShape{OT1}{rsfs}{n}{it}{<-> rsfs10}{}
\DeclareMathAlphabet{\mathscr}{OT1}{rsfs}{n}{it}
\theoremstyle{plain}
\newtheorem{theorem}{Theorem}[section]
\newtheorem{proposition}[theorem]{Proposition}
\newtheorem{lemma}[theorem]{Lemma}
\newtheorem{corollary}[theorem]{Corollary}
\newtheorem{claim}[theorem]{Claim}
\theoremstyle{definition}
\newtheorem{definition}[theorem]{Definition}
\newtheorem{remark}[theorem]{Remark}
\newcommand\E{\mathbb{E}}
\newcommand\R{\mathbb{R}}
\newcommand\Z{\mathbb{Z}}
\newcommand\N{\mathbb{N}}
\newcommand\C{\mathbb{C}}
\newcommand\eps{\varepsilon}
\renewcommand\Re{\operatorname{Re}}
\newcommand\Scal{\mathcal{S}}
\renewcommand\P{\mathbb{P}}
\renewcommand{\sim}{\asymp} 
\renewcommand{\lessapprox}{\llcurly}
\renewcommand{\phi}{\varphi}
\newcommand{\onef}{\mathbf{1}}
\begin{document}

\begin{frontmatter}

\begin{fmbox}
\dochead{Research}


\title{Variants of the Selberg sieve, and bounded intervals containing many primes}


\author[
   addressref={aff1},                   
   corref={aff1},                       
   email={tao@math.ucla.edu}   
]{\inits{DHJP}\fnm{DHJ} \snm{Polymath}}


\address[id=aff1]{
  \street{\url{michaelnielsen.org/polymath1/index.php?title=Bounded\_gaps\_between\_primes}},                     %
}



\end{fmbox}


\begin{abstractbox}

\begin{abstract} 
For any $m \geq 1$, let $H_m$ denote the quantity $\liminf_{n \to \infty} (p_{n+m}-p_n)$, where $p_n$ is the $n^{\operatorname{th}}$ prime. A celebrated recent result of Zhang showed the finiteness of $H_1$, with the explicit bound $H_1 \leq 70000000$. This was then improved by us (the Polymath8 project) to $H_1 \leq 4680$, and then by Maynard to $H_1 \leq 600$, who also established for the first time a finiteness result for $H_m$ for $m \geq 2$, and specifically that $H_m \ll m^3 e^{4m}$. If one also assumes the Elliott-Halberstam conjecture, Maynard obtained the bound $H_1 \leq 12$, improving upon the previous bound $H_1 \leq 16$ of Goldston, Pintz, and Y{\i}ld{\i}r{\i}m, as well as the bound $H_m \ll m^3 e^{2m}$.

  In this paper, we extend the methods of Maynard by generalizing the Selberg sieve further, and by performing more extensive numerical calculations. As a consequence, we can obtain the bound $H_1 \leq 246$ unconditionally, and $H_1 \leq 6$ under the assumption of the generalized Elliott-Halberstam conjecture. Indeed, under the latter conjecture we show the stronger statement that for any admissible triple $(h_1,h_2,h_3)$, there are infinitely many $n$ for which at least two of $n+h_1,n+h_2,n+h_3$ are prime, and also obtain a related disjunction asserting that either the twin prime conjecture holds, or the even Goldbach conjecture is asymptotically true if one allows an additive error of at most $2$, or both. We also modify the ``parity problem'' argument of Selberg to show that the $H_1 \leq 6$ bound is the best possible that one can obtain from purely sieve-theoretic considerations. For larger $m$, we use the distributional results obtained previously by our project to obtain the unconditional asymptotic bound $H_m \ll m e^{(4-\frac{28}{157})m}$, or $H_m \ll m e^{2m}$ under the assumption of the Elliott-Halberstam conjecture. We also obtain explicit upper bounds for $H_m$ when $m=2,3,4,5$.
	\end{abstract}

\begin{keyword}
\kwd{Selberg sieve}
\kwd{Elliott-Halberstam conjecture}
\kwd{Prime gaps}
\end{keyword}


\end{abstractbox}
%

\end{frontmatter}





\section{Introduction}

For any natural number $m$, let $H_m$ denote the quantity
$$ H_m \coloneqq \liminf_{n \to \infty} (p_{n+m} - p_n),$$
where $p_n$ denotes the $n^{\operatorname{th}}$ prime.  The twin prime conjecture asserts that $H_1=2$; more generally, the Hardy-Littlewood prime tuples conjecture \cite{hardy} implies that $H_m = H(m+1)$ for all $m \geq 1$, where $H(k)$ is the diameter of the narrowest admissible $k$-tuple (see Section \ref{subclaim-sec} for a definition of this term).  Asymptotically, one has the bounds
$$ (\frac{1}{2}+o(1)) k \log k \leq H(k) \leq (1+o(1)) k \log k$$
as $k \to \infty$ (see Theorem \ref{hk-bound} below); thus the prime tuples conjecture implies that $H_m$ is comparable to $m \log m$ as $m \to \infty$.

Until very recently, it was not known if any of the $H_m$ were finite, even in the easiest case $m=1$.  In the breakthrough work of Goldston, Pintz, and Y{\i}ld{\i}r{\i}m \cite{gpy}, several results in this direction were established, including the following conditional result assuming the Elliott-Halberstam conjecture $\EH[\vartheta]$ (see Claim \ref{eh-def} below) concerning the distribution of the prime numbers in arithmetic progressions:

\begin{theorem}[GPY theorem]\label{gpy-thm}  Assume the Elliott-Halberstam conjecture $\EH[\vartheta]$ for all $0 < \vartheta < 1$.  Then $H_1 \leq 16$.
\end{theorem}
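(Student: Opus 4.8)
The plan is to run the Goldston--Pintz--Y{\i}ld{\i}r{\i}m sieve with the \emph{optimal} one--dimensional weight. Fix an admissible $k$--tuple with $k=6$ of minimal diameter, say $\mathcal H=(h_1,\dots,h_6)=(0,4,6,10,12,16)$; it is admissible because it omits a residue class modulo each of $2,3,5$ and, having only $6$ elements, omits one modulo every prime $p\ge 7$. It suffices to prove that for infinitely many $n$ at least two of $n+h_1,\dots,n+h_6$ are prime, since then two primes lie in an interval of length $\le 16$, giving $p_{m+1}-p_m\le 16$ for arbitrarily large $m$. To produce such $n$ I would fix a large $N$, set $R=N^{\vartheta/2-\eps}$, pick a smooth $g\colon[0,1]\to\R$ with $g(1)=0$, let $\Lambda_R(n)=\sum_{d\mid(n+h_1)\cdots(n+h_6),\ d\le R}\lambda_d$ where $\lambda_d$ is the Selberg--optimised weight associated via the standard diagonalisation to the profile $g$ (with $u=\log(R/d)/\log R$), and consider
$$
 S=\sum_{N<n\le 2N}\Bigl(\sum_{i=1}^{6}\theta(n+h_i)-\log 3N\Bigr)\Lambda_R(n)^2 .
$$
A single prime among the $n+h_i$ is $\le 2N+O(1)$ and so contributes $\le\log 3N$; hence any summand with at most one prime is $\le 0$, and it is enough to prove $S>0$ for all large $N$ and let $N\to\infty$. (One may first restrict $n$ to a fixed residue class modulo a product of small primes, the usual $W$--trick, to make the error terms uniform.)

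The heart of the matter is the asymptotic evaluation of the two parts of $S$. The sum $\sum_{N<n\le2N}\Lambda_R(n)^2$ is elementary: expand the square, interchange the summations, substitute $\#\{N<n\le 2N:[d_1,d_2]\mid\prod_i(n+h_i)\}=\rho([d_1,d_2])N/[d_1,d_2]+O(\rho([d_1,d_2]))$ with $\rho$ the multiplicative function counting the roots of $\prod_i(X+h_i)$, identify the resulting Dirichlet series as a product of Riemann zeta--factors times the singular series $\mathfrak S(\mathcal H)$, and read off the main term by a double contour integral; this yields $\sum_{N<n\le2N}\Lambda_R(n)^2=(1+o(1))\,\mathfrak S(\mathcal H)\,N\,(\log R)^6\cdot\tfrac1{5!}\int_0^1 g'(u)^2u^5\,du$. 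The sums $\sum_{N<n\le2N}\theta(n+h_i)\Lambda_R(n)^2$ proceed identically, except that the inner sum $\sum_{n\equiv a\,([d_1,d_2])}\theta(n+h_i)$ is replaced by its main term $N/\phi([d_1,d_2])$; as $d_1,d_2$ range up to $R$ the moduli $[d_1,d_2]$ reach $R^2=N^{\vartheta-2\eps}$, so the aggregated error is controlled by precisely an estimate of the form $\EH[\vartheta]$ -- this is the one and only use of the hypothesis, and it is invoked with $\vartheta$ as close to $1$ as we wish. The outcome is $\sum_{N<n\le2N}\theta(n+h_i)\Lambda_R(n)^2=(1+o(1))\,\mathfrak S(\mathcal H)\,N\,(\log R)^7\cdot\tfrac1{4!}\int_0^1 g(u)^2u^4\,du$, the extra power of $\log R$ and the shift $u^5\mapsto u^4$ reflecting that the prime $n+h_i$ uses up one coordinate.

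Combining these with $\log 3N=(2/\vartheta+o(1))\log R$ gives $S=(1+o(1))\,\mathfrak S(\mathcal H)\,N\,(\log R)^7\bigl(\tfrac6{4!}\int_0^1 g^2u^4\,du-\tfrac2\vartheta\cdot\tfrac1{5!}\int_0^1 (g')^2u^5\,du\bigr)$, so $S>0$ for all large $N$ once
$$
 \frac{\vartheta}{2}\cdot 30\cdot\frac{\int_0^1 g(u)^2u^4\,du}{\int_0^1 g'(u)^2u^5\,du}>1 .
$$
It remains to maximise the Rayleigh quotient $\int_0^1 g^2u^4\,du\big/\int_0^1 (g')^2u^5\,du$ over smooth $g$ with $g(1)=0$. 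Its Euler--Lagrange equation is $u\,g''+5g'+\omega^2 g=0$, whose solution regular at $u=0$ is $g(u)=u^{-2}J_4(2\omega\sqrt u)$; the condition $g(1)=0$ forces $2\omega=j_{4,1}$, the first positive zero of the Bessel function $J_4$, and the maximum of the quotient equals $4/j_{4,1}^2$. Therefore
$$
 \sup_{g(1)=0}\ 30\cdot\frac{\int_0^1 g(u)^2u^4\,du}{\int_0^1 g'(u)^2u^5\,du}=\frac{120}{j_{4,1}^2}=2.084\ldots>2 ,
$$
so choosing $g$ (a suitable polynomial approximation to the Bessel profile) and any admissible $\vartheta$ with $\vartheta>2j_{4,1}^2/120=0.9597\ldots$ makes the displayed inequality strict; hence $S>0$ for all large $N$, and $H_1\le 16$.

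I expect the genuine difficulty to be concentrated in two places. First -- and this is why the result is only conditional -- the numerator sums require equidistribution of $\theta$ in arithmetic progressions to moduli as large as $N^{1-\eps}$: this \emph{is} the content of $\EH[\vartheta]$, and no weaker level of distribution suffices. Second is the variational endgame: one must pin down the two functionals with their exact constants and verify $120/j_{4,1}^2>2$, equivalently $j_{4,1}<\sqrt{60}$. It is precisely this Bessel inequality that singles out $k=6$: for $k=5$ one has $j_{3,1}=6.380\ldots>\sqrt{40}$, so the one--dimensional sieve fails there; and the naive monomial weight $\lambda_d=\mu(d)(\log(R/d))^{k+1}$ gives only the borderline value $2$ at $k=6$, which is exactly why a genuinely optimised weight is needed to reach $H(6)=16$ rather than merely $H(7)=20$.
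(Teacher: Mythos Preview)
Your argument is correct; it is the original Goldston--Pintz--Y{\i}ld{\i}r{\i}m computation with the Bessel-optimised one-dimensional weight, and the decisive inequality $j_{4}^2<60$ (equivalently $120/j_4^2>2$) is exactly right. The paper does not give its own proof of this theorem---it is quoted as background from \cite{gpy}---but its machinery does recover it, by a slightly different packaging: Theorem~\ref{maynard-thm} reduces $\DHL[6,2]$ under $\EH[\vartheta]$ to the inequality $M_6>2/\vartheta$, and in Section~\ref{asymptotics-sec} the paper notes that restricting the multidimensional test function to the diagonal form $F(t_1,\dots,t_k)=f(t_1+\dots+t_k)$ yields $M_k\ge 4k(k-1)/j_{k-2}^2$, hence $M_6\ge 120/j_4^2>2$. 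So the two routes differ only in presentation (you compute the two GPY sums directly; the paper filters them through the Maynard variational quantity $M_k$), and they meet at the same Bessel inequality.
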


Furthermore, it was shown in \cite{gpy} that any result of the form $\EH[\frac{1}{2} + 2\varpi]$ for some fixed $0 < \varpi < 1/4$ would imply an explicit finite upper bound on $H_1$ (with this bound equal to $16$ for $\varpi > 0.229855$).  Unfortunately, the only results of the type $\EH[\vartheta]$ that are known come from the Bombieri-Vinogradov theorem (Theorem \ref{bv-thm}), which only establishes $\EH[\vartheta]$ for $0 < \vartheta < 1/2$.

The first unconditional bound on $H_1$ was established  in a breakthrough work of Zhang \cite{zhang}:

\begin{theorem}[Zhang's theorem]\label{zhang-thm}  $H_1 \leq \num{70000000}$.
\end{theorem}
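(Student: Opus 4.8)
The plan is to follow Zhang's strategy: combine a modified Goldston--Pintz--Y{\i}ld{\i}r{\i}m sieve with a Bombieri--Vinogradov-type distribution estimate that is weak enough to be provable yet strong enough to force bounded gaps. First I would set up the $\DHL[k_0;2]$ reduction: fix an admissible $k_0$-tuple $\mathcal{H} = (h_1,\dots,h_{k_0})$ and seek infinitely many $n$ for which at least two of $n+h_1,\dots,n+h_{k_0}$ are prime, so that $H_1 \le \operatorname{diam}(\mathcal{H})$. Weighting by a truncated divisor sum $\bigl(\sum_{d} \lambda_d\bigr)^2$ supported on moduli up to $x^{\vartheta/2}$ and running the standard GPY computation, the existence of such $n$ follows once an explicit ratio of two quadratic forms in the $\lambda_d$ exceeds $1$; for $\vartheta$ a little above $1/2$ this happens as soon as $k_0$ is large enough. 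Thus the whole problem reduces to a level-of-distribution statement for the primes with exponent $\vartheta = 1/2 + 2\varpi$ for some small absolute constant $\varpi > 0$.

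The second step is the key observation that one does not need the full $\EH[1/2+2\varpi]$ — which is open — but only the restricted Motohashi--Pintz--Zhang estimate $\MPZ[\varpi,\delta]$, in which the moduli $q$ are squarefree and \emph{smooth} (all prime factors $\le x^{\delta}$) and the congruence conditions are attached to a single fixed residue system coming from $\mathcal{H}$. Smoothness is essential: it lets each modulus be factored as $q = q_1 q_2$ with $q_1,q_2$ in prescribed ranges, which is precisely the flexibility needed for Linnik's dispersion method. After Cauchy--Schwarz and expanding the dispersion, the diagonal main term is identified and one is left to bound incomplete exponential sums to moduli $q_1 q_2$; these are handled by completing the sum and invoking the Weil bound for Kloosterman sums and its consequences.

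The heart of the matter — and the step I expect to be the main obstacle — is the \textbf{Type III} contribution: the trilinear pieces arising from a Heath-Brown (or Vaughan) decomposition of the von Mangoldt function in which all three variables run over smooth ranges near $x^{1/3}$. Here the Weil bound alone does not suffice; after manipulation one must bound sums of the shape $\sum_{n \bmod q} e\bigl((a\bar n + b\,\overline{n+\ell} + \cdots)/q\bigr)$, which reduce to point counts on varieties over $\F_p$, and a power saving there requires Deligne's resolution of the Weil conjectures (the Riemann Hypothesis over finite fields), in the form of the equidistribution-type bounds used by Birch--Bombieri and Friedlander--Iwaniec. Making the Type III term acceptable is exactly what forces $\varpi$ and $\delta$ to be small and demands careful bookkeeping of the various ranges; by contrast, the Type I and Type II sums are a more routine (if lengthy) application of the dispersion method.

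Finally, with $\MPZ[\varpi,\delta]$ established for explicit small $\varpi,\delta$ (Zhang's values being $\varpi = \delta = 1/1168$), one checks that these suffice to make the sieve ratio exceed $1$ with $k_0 = 3\,500\,000$, thereby establishing $\DHL[k_0;2]$. It then remains only to exhibit an admissible $k_0$-tuple of diameter at most $70\,000\,000$: taking, say, a suitable set of $k_0$ primes slightly exceeding $k_0$ gives an admissible tuple whose diameter is at most $(1+o(1))k_0\log k_0$ by Theorem \ref{hk-bound}, comfortably below $7\times 10^7$, which completes the proof.
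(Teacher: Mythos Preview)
The paper does not give its own proof of Theorem~\ref{zhang-thm}; it is stated as a cited result of Zhang~\cite{zhang}, with only a one-sentence summary of the method (``Zhang's argument followed the general strategy from~\cite{gpy}\dots, with the major new ingredient being a proof of a weaker version of $\EH[\frac{1}{2}+2\varpi]$, which we call $\MPZ[\varpi,\delta]$''). Your outline is an accurate sketch of Zhang's actual argument and is fully consistent with that summary: the reduction to $\DHL[k_0,2]$ via a GPY-type sieve, the replacement of $\EH$ by the smooth-modulus estimate $\MPZ[\varpi,\delta]$, the dispersion method with Weil-type bounds for Type~I/II sums, Deligne's theorem for the Type~III sums, the numerical values $\varpi,\delta$ near $1/1168$ and $k_0=3\,500\,000$, and the final admissible-tuple step are all correct. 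There is nothing to compare against beyond this, since the present paper treats the theorem as input rather than proving it.
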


Zhang's argument followed the general strategy from \cite{gpy} on finding small gaps between primes, with the major new ingredient being a proof of a weaker version of $\EH[\frac{1}{2}+2\varpi]$, which we call $\MPZ[\varpi,\delta]$; see Claim \ref{mpz-claim} below.  It was quickly realized that Zhang's numerical bound on $H_1$ could be improved.  By optimizing many of the components in Zhang's argument, we were able \cite{polymath8a, polymath8a-unabridged} to improve Zhang's bound to
$$ H_1 \leq \num{4680}.$$

Very shortly afterwards, a further breakthrough was obtained by Maynard \cite{maynard-new} (with related work obtained independently in unpublished work of Tao), who developed a more flexible ``multidimensional'' version of the Selberg sieve to obtain stronger bounds on $H_m$. This argument worked without using any equidistribution results on primes beyond the Bombieri-Vinogradov theorem, and amongst other things was able to establish finiteness of $H_m$ for all $m$, not just for $m=1$.  More precisely, Maynard established the following results.

\begin{theorem}[Maynard's theorem]  Unconditionally, we have the following bounds:
\begin{itemize}
\item[(i)] $H_1 \leq 600$.
\item[(ii)] $H_m \leq C m^3 e^{4m}$ for all $m \geq 1$ and an absolute (and effective) constant $C$.
\end{itemize}
Assuming the Elliott-Halberstam conjecture $\EH[\vartheta]$ for all $0 < \vartheta < 1$, we have the following improvements:
\begin{itemize}
\item[(iii)] $H_1 \leq 12$.
\item[(iv)] $H_2 \leq 600$.
\item[(v)] $H_m \leq C m^3 e^{2m}$ for all $m \geq 1$ and an absolute (and effective) constant $C$.
\end{itemize}
\end{theorem}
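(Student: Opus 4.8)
The plan is to establish the theorem via Maynard's multidimensional Selberg sieve. Fix $m\ge 1$ and let $\mathcal H=(h_1,\dots,h_k)$ be an admissible $k$-tuple, with $k$ to be chosen. Apply the $W$-trick: set $W=\prod_{p\le w}p$ with $w=w(x)\to\infty$ slowly, and restrict attention to $n\equiv v_0\pmod W$ with $v_0$ chosen so that every $n+h_i$ is coprime to $W$. Introduce the nonnegative sieve weight
\[
\nu(n)=\Bigl(\,\sum_{\substack{d_i\mid n+h_i\ \forall\, i}}\lambda_{d_1,\dots,d_k}\Bigr)^2,
\]
where the $\lambda_{d_1,\dots,d_k}$ are built from a fixed smooth function $F\colon[0,\infty)^k\to\R$ supported on the simplex $\mathcal R_k=\{t:t_i\ge 0,\ \sum_i t_i\le 1\}$, roughly $\lambda_{d_1,\dots,d_k}\approx\bigl(\prod_i\mu(d_i)\bigr)F\bigl(\tfrac{\log d_1}{\log R},\dots,\tfrac{\log d_k}{\log R}\bigr)$ with $R=x^{\vartheta/2-\eps}$. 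First I would evaluate, for $x\le n<2x$ in this residue class, the two sums
\[
S_1=\sum_n\nu(n),\qquad S_2=\sum_n\Bigl(\sum_{i=1}^k\onef_{n+h_i\ \text{prime}}\Bigr)\nu(n).
\]
The support condition $\operatorname{supp}F\subset\mathcal R_k$ is crucial: it forces every modulus arising in the evaluation of $S_2$ to stay below $R^2\le x^{\vartheta}$, so that $S_2$ can be handled by the level-of-distribution hypothesis $\EH[\vartheta]$ — or, unconditionally, by the Bombieri–Vinogradov theorem, which supplies any $\vartheta<\tfrac12$ — whereas $S_1$ is elementary.

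The outcome is asymptotic formulas for $S_1$ and $S_2$ from which
\[
\frac{S_2}{S_1}\ \longrightarrow\ \frac{\vartheta}{2}\cdot\frac{\sum_{j=1}^k J_k^{(j)}(F)}{I_k(F)},\qquad
I_k(F)=\int_{[0,1]^k}F^2,\quad J_k^{(j)}(F)=\int_{[0,1]^{k-1}}\Bigl(\int_0^1 F\,dt_j\Bigr)^2 .
\]
Writing $M_k$ for the supremum of $\bigl(\sum_{j} J_k^{(j)}(F)\bigr)/I_k(F)$ over nonzero admissible $F$, it follows that if $M_k>\tfrac{2m}{\vartheta}$ then one can choose $F$ with $S_2>mS_1$; since $\nu\ge 0$, there must then be infinitely many $n$ for which at least $m+1$ of $n+h_1,\dots,n+h_k$ are prime, whence $H_m\le H(k)$, the diameter of $\mathcal H$. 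Unconditionally (Bombieri–Vinogradov, $\vartheta\to\tfrac12^-$) this asks for $M_k>4m$; under $\EH[\vartheta]$ for all $\vartheta<1$ it asks only for $M_k>2m$. So everything reduces to lower bounds for $M_k$ together with the construction of narrow admissible tuples.

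For the asymptotic parts (ii) and (v) I would prove $M_k\ge\log k-2\log\log k-2$ for all sufficiently large $k$ by testing with an explicit symmetric function of product type, e.g.\ $F(t)=\onef_{\mathcal R_k}(t)\prod_{i=1}^k g(kt_i)$ for a suitable univariate weight $g$, and computing $I_k$ and $\sum_j J_k^{(j)}$ by expanding, using the symmetry, and bounding crudely the effect of the simplex truncation. Then $M_k>4m$ once $k$ exceeds a suitable absolute constant times $m^2e^{4m}$ (respectively $k$ a constant times $m^2e^{2m}$ for $M_k>2m$), and Theorem~\ref{hk-bound} gives $H_m\le H(k)\le (1+o(1))k\log k\ll m^3e^{4m}$, respectively $\ll m^3e^{2m}$ under $\EH$. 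For the explicit parts (i), (iii), (iv) I would instead take $F$ to be a polynomial in the symmetric functions $P_1=\sum_i t_i$ and $P_2=\sum_i t_i^2$ restricted to $\mathcal R_k$; then $I_k(F)$ and $\sum_j J_k^{(j)}(F)$ are explicit quadratic forms with rational coefficients (Dirichlet integrals over the simplex) in the coefficient vector of $F$, so a lower bound for $M_k$ is obtained as the largest generalized eigenvalue of a small rational matrix, a rigorous finite computation. One verifies $M_{105}>4$ and exhibits an admissible $105$-tuple of diameter $600$: this gives (i), and also (iv), since under $\EH$ the case $m=2$ again only requires $M_k>4$. One verifies $M_5>2$ and uses the admissible tuple $\{0,2,6,8,12\}$ of diameter $12$: this gives (iii).

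The main obstacle is the rigorous evaluation of $S_1$ and, especially, $S_2$. One must expand the square defining $\nu(n)$, interchange the prime sum with the divisor sums, note that primality of $n+h_i$ forces the $i$-th divisor variables to be $1$, and reduce to counting primes in arithmetic progressions to moduli as large as $x^{\vartheta}$; controlling the resulting error terms uniformly is exactly where Bombieri–Vinogradov (or $\EH[\vartheta]$) and the restriction $\operatorname{supp}F\subset\mathcal R_k$ are indispensable, and the ensuing singular-series bookkeeping that converts the arithmetic sums into the analytic functionals $I_k$ and $J_k^{(j)}$ is delicate. A second, quantitative, difficulty is to obtain a lower bound for $M_k$ that grows like $\log k$ rather than staying bounded — this is what yields finiteness of $H_m$ for \emph{every} $m$ — and it genuinely requires a multidimensional test function, not merely a product of one-dimensional sieve weights.
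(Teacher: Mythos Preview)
Your proposal is essentially correct and follows Maynard's original argument; note however that the paper does not itself prove this theorem but rather cites it from \cite{maynard-new}. The paper does reprove the key reduction you describe (your $M_k>2m/\vartheta$ criterion is exactly Theorem~\ref{maynard-thm}, established in Section~\ref{may-sec} via Lemma~\ref{crit} and Theorems~\ref{prime-asym}, \ref{nonprime-asym}), and it explicitly records Maynard's inputs $M_5>2$, $M_{105}>4$, $M_k\ge\log k-2\log\log k-2$ together with the admissible-tuple diameters $H(105)=600$ and $H(5)=12$, so your sketch assembles precisely the ingredients the paper attributes to Maynard. One minor cosmetic difference: the paper's version of the sieve-to-variational reduction proceeds by approximating a general $F$ supported on $\mathcal R_k$ by linear combinations of tensor products $\prod_i f_{i,j}(t_i)$ (so that Theorems~\ref{prime-asym} and~\ref{nonprime-asym} apply directly), rather than working with the multilinear $\lambda_{d_1,\dots,d_k}$ of Maynard's formulation, but the two are equivalent and lead to the same functionals $I_k$, $J_k^{(j)}$.
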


For a survey of these recent developments, see \cite{granville}.  

In this paper, we refine Maynard's methods to obtain the following further improvements.

\begin{theorem}\label{main} Unconditionally, we have the following bounds:
\begin{itemize}
\item[(i)] $H_1 \leq 246$.   
\item[(ii)] $H_2 \leq \num{398130}$. 
\item[(iii)] $H_3 \leq \num{24797814}$. 
\item[(iv)] $H_4 \leq \num{1431556072}$.  
\item[(v)] $H_5 \leq \num{80550202480}$. 
\item[(vi)] $H_m \leq C m \exp( (4 - \frac{28}{157}) m )$ for all $m \geq 1$ and an absolute (and effective) constant $C$.
\end{itemize}
Assume the Elliott-Halberstam conjecture $\EH[\vartheta]$ for all $0 < \vartheta < 1$.  Then we have the following improvements:
\begin{itemize}
\item[(vii)] $H_2 \leq 270$.
\item[(viii)] $H_3 \leq \num{52116}$.
\item[(ix)] $H_4 \leq \num{474266}$.
\item[(x)] $H_5 \leq \num{4137854}$.
\item[(xi)] $H_m \leq Cme^{2m}$ for all $m \geq 1$ and an absolute (and effective) constant $C$.
\end{itemize}
Finally, assume the generalized Elliott-Halberstam conjecture $\GEH[\vartheta]$ (see Claim \ref{geh-def} below) for all $0 < \vartheta < 1$.  Then
\begin{itemize}
\item[(xii)] $H_1 \leq 6$.
\item[(xiii)] $H_2 \leq 252$.
\end{itemize}
\end{theorem}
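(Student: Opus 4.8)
The plan is to reduce all of (i)--(xiii) to the assertion $\DHL[k,m+1]$: that for \emph{every} admissible $k$-tuple $(h_1,\dots,h_k)$ there are infinitely many $n$ for which at least $m+1$ of $n+h_1,\dots,n+h_k$ are prime. Indeed $\DHL[k,m+1]$ forces $m+1$ primes into infinitely many windows of length equal to the diameter $H(k)$ of the narrowest admissible $k$-tuple, so $\DHL[k,m+1]\Rightarrow H_m\le H(k)$, and the problem becomes: (a) a sieve criterion deducing $\DHL[k,m+1]$ from an equidistribution hypothesis together with a solvable variational inequality; and (b) checking that inequality for a well-chosen $k$ and exhibiting a correspondingly narrow admissible $k$-tuple (or, for the asymptotic parts (vi) and (xi), invoking $H(k)\le(1+o(1))k\log k$ from Theorem~\ref{hk-bound}). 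Following Maynard, the relevant variational quantities are
\[ I_k(F)\coloneqq\int_{\mathcal R_k}F(t_1,\dots,t_k)^2\,dt_1\cdots dt_k,\qquad \mathcal R_k\coloneqq\Big\{(t_1,\dots,t_k)\in[0,\infty)^k:\ t_1+\cdots+t_k\le1\Big\}, \]
\[ J_k^{(i)}(F)\coloneqq\int_{\mathcal R_{k-1}}\Big(\int_0^{\,1-\sum_{j\ne i}t_j}F(t_1,\dots,t_k)\,dt_i\Big)^{2}\prod_{j\ne i}dt_j,\qquad M_k\coloneqq\sup_F\frac{\sum_{i=1}^k J_k^{(i)}(F)}{I_k(F)}, \]
the supremum over nonzero square-integrable $F$ supported on $\mathcal R_k$.

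For (a) I would run the multidimensional Selberg sieve with weights $\nu(n)=\big(\sum_{d_i\mid n+h_i}\lambda_{d_1,\dots,d_k}\big)^2$, where the $\lambda$'s are built from a smooth $F$ on $\mathcal R_k$ (so that $\prod_i d_i$ ranges up to $x^{\vartheta/2}$), and compare the total mass $\sum_n\nu(n)$ against the prime-detecting sums $\sum_n\nu(n)\mathbf{1}_{n+h_i\,\mathrm{prime}}$: their main terms are proportional to $I_k(F)$ and $J_k^{(i)}(F)$, and their error terms are absorbed by the distribution hypothesis. Since the Bombieri--Vinogradov theorem (Theorem~\ref{bv-thm}) gives $\EH[\vartheta]$ for all $\vartheta<1/2$, the bookkeeping yields $M_k>4m\Rightarrow\DHL[k,m+1]$ unconditionally, $\EH[\vartheta]$ for all $\vartheta<1$ relaxes this to $M_k>2m$, and inserting the Polymath8a estimate $\MPZ[\varpi,\delta]$ (Claim~\ref{mpz-claim}) instead improves the unconditional threshold to $M_k>(4-\tfrac{28}{157})m$ (up to $\epsilon$-losses), which is what underlies (i)--(vi). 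Under $\GEH[\vartheta]$ one does still better: because $\GEH$ is a \emph{bilinear} statement, the divisor attached to the variable being tested for primality can be absorbed into a Type~II sum, which lets $F$ be supported on a region strictly larger than $\mathcal R_k$; with $M_k$ replaced by the corresponding enlarged quantity $M_k^{+}$ one obtains $M_k^{+}>2m\Rightarrow\DHL[k,m+1]$, and it is precisely this enlargement that makes the small value $k=3$ usable for (xii).

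For (b), on the numerical side I would restrict the variational problem to $F$ in an explicit finite-dimensional space of symmetric functions — polynomials of bounded degree in the power sums $\sum_i t_i,\ \sum_i t_i^2$ (and, for small $k$, in all of $t_1,\dots,t_k$), or suitable product-type functions for larger $k$ — so that $I_k$ and $\sum_i J_k^{(i)}$ (or their enlarged-region analogues) become explicit rational quadratic forms $\mathbf v^{\top}B\mathbf v$ and $\mathbf v^{\top}A\mathbf v$ in the coefficient vector $\mathbf v$; the variational quantity is then bounded below by the top generalized eigenvalue of the pencil $(A,B)$, which can be certified rigorously over $\mathbb Q$. For each of (i)--(v), (vii)--(x), (xii), (xiii) one selects $k$ essentially to minimise $H(k)$ subject to the relevant inequality ($M_k>4m$ / $M_k>(4-\tfrac{28}{157})m$ / $M_k>2m$ / $M_k^{+}>2m$) being verifiable, verifies it with such a trial function, and pairs it with an explicit narrow admissible $k$-tuple of the stated diameter (for instance diameter $6$ with $k=3$ for (xii), and diameter $246$ for (i)); the tuples are produced and shown admissible by a combinatorial/sieving search. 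For the asymptotic parts I would combine an upper bound of the shape $M_k\le\frac{k}{k-1}\log k$ with a matching lower bound $M_k\ge\log k-O(\log\log k)$ coming from an explicit near-optimal trial function (essentially a one-variable function of $t_1+\cdots+t_k$, reducing to a one-dimensional Euler--Lagrange problem), so that $M_k$ exceeds $(4-\tfrac{28}{157})m$ once $k\gg\exp((4-\tfrac{28}{157})m)$; then $H(k)\le(1+o(1))k\log k\ll m\exp((4-\tfrac{28}{157})m)$ gives (vi), and the same with $2m$ in place of $(4-\tfrac{28}{157})m$ gives (xi) under $\EH$.

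The hard part will be twofold. Analytically, the genuinely new input is the $\GEH$-based criterion with sieve support pushed beyond $\mathcal R_k$: one must carry the Type~I/II/III decomposition and the attendant exponential-sum bounds through a range in which the naive support constraint is violated, while verifying that every error term still fits inside the $\GEH$ budget — this is the delicate mechanism behind $k=3\Rightarrow H_1\le6$ and behind (xiii), and establishing the analogous thresholds under $\MPZ$ requires similar care. On the optimisation side, the inequalities needed for (i) and especially (xii)--(xiii) sit very close to the boundary, so the variational lower bounds must be squeezed hard: a low-degree polynomial Ansatz does not suffice, one needs carefully designed (piecewise-polynomial, genuinely multidimensional, and for (xii)--(xiii) adapted to the enlarged region) trial functions together with an exact rational certificate for the eigenvalue bound, and one must in addition run a nontrivial combinatorial search to find admissible tuples of the stated diameters and prove their admissibility.
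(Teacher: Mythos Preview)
Your overall architecture matches the paper: reduce to $\DHL[k,m+1]$, establish it via Maynard-type variational criteria fed by distributional inputs, and pair with narrow admissible tuples. However, there are three genuine gaps.

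First, your asymptotic lower bound $M_k \ge \log k - O(\log\log k)$ cannot come from ``essentially a one-variable function of $t_1+\cdots+t_k$'': as the paper notes (citing Soundararajan), such functions give ratio at most $4$, so this ansatz cannot even reach $M_k>4$, let alone $\log k$. The correct trial function is a truncated \emph{product} $\onef_{t_1+\cdots+t_k\le r}\prod_i g(t_i)$ with $g(t)=1/(c+(k-1)t)$; the analysis (Theorem~\ref{explicit}) is a probabilistic Cauchy--Schwarz argument comparing $g$ to the optimal weight $h$ from the upper-bound proof, not a one-dimensional Euler--Lagrange computation.

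Second, you conflate three distinct variational problems that the paper keeps separate. The plain $M_k$ is used only for (vii)--(xi) under $\EH$. For (ii)--(vi), $\MPZ[\varpi,\delta]$ controls only \emph{smooth} moduli, so each coordinate must satisfy $t_i<\delta/(\tfrac14+\varpi)$, and one must work with the \emph{truncated} quantity $M_k^{[\alpha]}$ (Theorem~\ref{maynard-trunc}); your statement ``$M_k>(4-\tfrac{28}{157})m$'' omits this truncation, which is not free. For (i) and (xiii) the paper uses the $\eps$-\emph{enlarged} quantity $M_{k,\eps}$ (Theorem~\ref{epsilon-trick}). In particular, contrary to your claim, (i) does \emph{not} use $\MPZ$: it uses only Bombieri--Vinogradov plus the $\eps$-enlargement (case (i) of Theorem~\ref{epsilon-trick}, requiring only $\EH[\vartheta]$ with $1+\eps<1/\vartheta$). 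This matters because numerically $M_{50}\approx 3.94<4$, whereas $M_{50,1/25}>4.004$; without the enlargement you would only reach $k=54$ and $H_1\le 270$. The enlargement is thus not a $\GEH$-only phenomenon.

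Third, for (xii) your description of the $\GEH$ mechanism is too coarse. The $\eps$-enlargement alone is insufficient for $k=3$: the paper reports that the best $M_{3,\eps}$ they could establish is about $1.917<2$. The decisive extra idea is Theorem~\ref{epsilon-beyond}: allow $F$ supported on the full dilate $\tfrac{k}{k-1}\cdot\mathcal R_k$ subject to a \emph{vanishing marginal condition} $\int_0^\infty F\,dt_i=0$ whenever $\sum_{j\ne i}t_j>1+\eps$. Under $\GEH$, Theorem~\ref{nonprime-asym}(ii) handles the resulting non-prime sums with no constraint on one coordinate, and the vanishing marginals kill the uncontrolled part of the prime sums. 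Exhibiting a suitable $F$ for $k=3$ (Theorem~\ref{piece}) requires a piecewise-polynomial construction on a 60-polytope decomposition of $\tfrac32\cdot\mathcal R_3$, not merely a symmetric polynomial on an enlarged simplex; this is the technical heart of $H_1\le 6$, and your proposal does not anticipate it.
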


In Section \ref{subclaim-sec} we will describe the key propositions that will be combined together to prove the various components of Theorem \ref{main}.
As with Theorem \ref{gpy-thm}, the results in (vii)-(xiii) do not require $\EH[\vartheta]$ or $\GEH[\vartheta]$ for all $0 < \vartheta < 1$, but only for a single explicitly computable $\vartheta$ that is sufficiently close to $1$.  

Of these results, the bound in (xii) is perhaps the most interesting, as the parity problem \cite{selberg} prohibits one from achieving any better bound on $H_1$ than $6$ from purely sieve-theoretic methods; we review this obstruction in Section \ref{parity-sec}.  If one only assumes the Elliott-Halberstam conjecture $\EH[\vartheta]$ instead of its generalization $\GEH[\vartheta]$, we were unable to improve upon Maynard's bound $H_1 \leq 12$; however the parity obstruction does not exclude the possibility that one could achieve (xii) just assuming $\EH[\vartheta]$ rather than $\GEH[\vartheta]$, by some further refinement of the sieve-theoretic arguments (e.g. by finding a way to establish Theorem \ref{nonprime-asym}(ii) below using only $\EH[\vartheta]$ instead of $\GEH[\vartheta]$).

The bounds (ii)-(vi) rely on the equidistribution results on primes established in our previous paper \cite{polymath8a}.  However, the bound (i) uses only the Bombieri-Vinogradov theorem, and the remaining bounds (vii)-(xiii) of course use either the Elliott-Halberstam conjecture or a generalization thereof.

A variant of the proof of Theorem \ref{main}(xii), which we give in Section \ref{remarks-sec}, also gives the following conditional ``near miss'' to (a disjunction of) the twin prime conjecture and the even Goldbach conjecture:

\begin{theorem}[Disjunction]\label{disj} Assume the generalized Elliott-Halberstam conjecture $\GEH[\vartheta]$ for all $0 < \vartheta < 1$.  Then at least one of the following statements is true:
\begin{itemize}
\item[(a)]  (Twin prime conjecture) $H_1=2$.
\item[(b)]  (near-miss to even Goldbach conjecture)  If $n$ is a sufficiently large multiple of six, then at least one of $n$ and $n-2$ is expressible as the sum of two primes.  Similarly with $n-2$ replaced by $n+2$.  (In particular, every sufficiently large even number lies within $2$ of the sum of two primes.)
\end{itemize}
\end{theorem}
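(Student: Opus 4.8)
The plan is to show that the negation of (a) implies (b), by exploiting the strength of the $\GEH[\vartheta]$ hypothesis in a multidimensional Selberg sieve with $k=3$. Suppose the twin prime conjecture fails, so $H_1 > 2$; since $H_1$ is even and, by Theorem \ref{main}(xii), at most $6$, we have $H_1 \in \{4,6\}$. The key point is that the sieve-theoretic machinery underlying (xii) does not merely count one prime among $n+h_1,n+h_2,n+h_3$ for a fixed admissible triple: when run under $\GEH$ it produces, for infinitely many $n$, at least \emph{two} primes among the shifted values (as asserted in the abstract). The idea is to apply this not to a single triple but to a suitable \emph{family} of admissible triples, so that the two-prime conclusion, combined with the failure of small gaps, forces the two primes to be separated by a fixed distance, which then translates — after an affine reparametrization $n \mapsto an+b$ — into a representation of a fixed even number as a difference, and ultimately a sum, of two primes.

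Concretely, I would first set up, for each admissible pair $(0,h)$ with $h$ even and $h \le H_1$, the relevant weighted sum $\sum_n \bigl(\sum_i \onef_{\P}(n+h_i) - 1\bigr) w(n)^2$ over an appropriate residue class, and show using $\GEH[\vartheta]$ (for $\vartheta$ close to $1$) and the optimized cutoff functions from Section \ref{subclaim-sec} that it is positive. The passage from ``at least one prime'' to ``at least two primes'' is exactly where $\GEH$ rather than $\EH$ is needed: the generalized hypothesis allows one to handle the bilinear/dispersion terms arising when two of the three linear forms are simultaneously required to be prime, which is the content of the relevant part of Theorem \ref{nonprime-asym} alluded to in the text. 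Granting two primes among $n+h_1,n+h_2,n+h_3$ infinitely often, and knowing that no two primes can be within $H_1 - 2$ of each other for large $n$ except at gap exactly $H_1$ (here one uses that $H_1$ is the \emph{liminf}, so gaps below $H_1$ occur only finitely often), one deduces that the two primes produced must realize a gap in a controlled finite set; choosing the triple's spacings appropriately pins this down to a single even value $d$, giving infinitely many prime pairs $p, p+d$.

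The final step is the translation to Goldbach. Fix a large multiple of six $n$. Applying the above to the linear forms built from the arithmetic progression $n - x, n - x + d, \dots$ running over $x$ (equivalently, choosing the residue class and the admissible triple so that the shifted variable ranges over values of the shape $n - p$), the two-primes-infinitely-often statement becomes: for each such $n$ there exist primes $p_1, p_2$ with $p_1 + p_2 \in \{n, n-2\}$ (the ambiguity $\{n, n-2\}$ being precisely the residual slack left by the gap-$d$ versus gap-$(d\pm 2)$ dichotomy, with $d$ forced to be $2$ once (a) fails in the strongest way). Running the symmetric variant with $n+2$ in place of $n-2$ gives the second assertion of (b), and the parenthetical remark follows immediately. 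I expect the main obstacle to be the second step — making the ``two primes'' output of the $\GEH$-sieve genuinely robust as the admissible triple varies over the needed family, uniformly enough that the forced gap is a single fixed even number rather than merely a bounded set; this requires the full force of $\GEH[\vartheta]$ as $\vartheta \to 1$ and a careful choice of the sieve weights, and is the technical heart of the argument that Section \ref{remarks-sec} must supply. The reduction in the first and third steps is comparatively routine combinatorics of admissible tuples together with the definition of $\liminf$.
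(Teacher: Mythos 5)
Your opening move is correct, and matches the paper: under $\GEH$ the three-dimensional Selberg sieve (the machinery behind Theorem \ref{main-dhl}(xii)) delivers ``at least two primes among three values'' infinitely often, and this is where the twin-prime/Goldbach disjunction should come from. But there is a genuine gap after that, and it is the central idea of the paper's argument. The paper does \emph{not} sieve a fixed-shift admissible triple $(n+h_1,n+h_2,n+h_3)$ and then try to reparametrize or pin down a gap. Instead, for each large multiple $x$ of six, it applies the sieve to the \emph{reflected} triple $(n,\,n-2,\,x-n)$: one of the three linear forms in $n$ has slope $-1$. Admissibility mod $W$ uses $6\mid x$, and the sieve weight is of the shape $\nu(n)=\bigl(\sum_j c_j\lambda_{F_{j,1}}(n)\lambda_{F_{j,2}}(n+2)\lambda_{F_{j,3}}(x-n)\bigr)^2$. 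Once you know that for some $n\in[\eps x,(1-\eps)x]$ at least two of $n,\,n-2,\,x-n$ are prime, the three pairs decompose the conclusion directly: $n$ and $n-2$ both prime gives a twin pair; $n$ and $x-n$ both prime gives $x=n+(x-n)$ as a sum of two primes; $n-2$ and $x-n$ both prime gives $x-2=(n-2)+(x-n)$. If the twin outcome occurs for infinitely many $x$ then $H_1=2$, and otherwise $x$ or $x-2$ is a Goldbach number for every large $x\equiv 0\ (6)$, which is (b). No case analysis on whether $H_1$ equals $4$ or $6$, and no need to ``force'' a particular gap, is required.

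Your route cannot recover this. An affine change of variable $n\mapsto an+b$ sends a form $n+h$ to another form with the \emph{same} sign of slope; you cannot manufacture $x-n$ this way, and without a slope-$(-1)$ form there is no mechanism to turn ``two primes close together'' into ``two primes summing to a prescribed target''. The passage you sketch --- ``two primes at gap exactly $H_1$'' implies something about representations of a fixed even number --- is a non sequitur: producing infinitely many prime pairs with bounded (or even fixed) difference is a de~Polignac-type statement, not a Goldbach one, and nothing in the sieve output lets you steer the pair to straddle a prescribed $x$. The claim that ``$d$ is forced to be $2$ once (a) fails in the strongest way'' does not make sense (either (a) fails or it doesn't), and your uniform-in-triple sieve is not needed once the reflected form is in place: a single tuple per $x$ suffices, with the only new technical cost being a mild variant of Lemma \ref{mul-asym} producing a singular series $\mathfrak{S}=\prod_{p\mid x(x-2),\,p>w}\frac{p}{p-1}$ in place of the usual normalization, plus the substitution of Theorem \ref{piece}'s cutoff into a modified Theorem \ref{epsilon-beyond}.
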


We remark that a disjunction in a similar spirit was obtained in \cite{bgc}, which established (prior to the appearance of Theorem \ref{zhang-thm}) that either $H_1$ was finite, or that every interval $[x,x+x^\eps]$ contained the sum of two primes if $x$ was sufficiently large depending on $\eps>0$.  

There are two main technical innovations in this paper.  The first is a further generalization of the multidimensional Selberg sieve introduced by Maynard and Tao, in which the support of a certain cutoff function $F$ is permitted to extend into a larger domain than was previously permitted (particularly under the assumption of the generalized Elliott-Halberstam conjecture).  As in \cite{maynard-new}, this largely reduces the task of bounding $H_m$ to that of efficiently solving a certain multidimensional variational problem involving the cutoff function $F$.  Our second main technical innovation is to obtain efficient numerical methods for solving this variational problem for small values of the dimension $k$, as well as sharpened asymptotics in the case of large values of $k$.

The methods of Maynard and Tao have been used in a number of subsequent applications \cite{freiberg}, \cite{consecutive}, \cite{thorner}, \cite{benatar}, \cite{li-pan}, \cite{castillo}, \cite{pollack}, \cite{banks}, \cite{clusters}, \cite{lola}, \cite{pintz-ratio}, \cite{chua}, \cite{pintz-new}.  The techniques in this paper should be able to be used to obtain slight numerical improvements to such results, although we did not pursue these matters here.

\subsection{Organization of the paper}

The paper is organized as follows.  After some notational preliminaries, we recall in Section \ref{dist-sec} the known (or conjectured) distributional estimates on primes in arithmetic progressions that we will need to prove Theorem \ref{main}.  Then, in Section \ref{subclaim-sec}, we give the key propositions that will be combined together to establish this theorem.  One of these propositions, Lemma \ref{crit}, is an easy application of the pigeonhole principle.  Two further propositions, Theorem \ref{prime-asym} and Theorem \ref{nonprime-asym}, use the prime distribution results from Section \ref{dist-sec} to give asymptotics for certain sums involving sieve weights and the von Mangoldt function; they are established in Section \ref{sieving-sec}.  Theorems \ref{maynard-thm}, \ref{maynard-trunc}, \ref{epsilon-trick}, \ref{epsilon-beyond} use the asymptotics established in Theorems \ref{prime-asym}, \ref{nonprime-asym}, in combination with Lemma \ref{crit}, to give various criteria for bounding $H_m$, which all involve finding sufficiently strong candidates for a variety of multidimensional variational problems; these theorems are proven in Section \ref{variational-sec}.  These variational problems are analysed in the asymptotic regime of large $k$ in Section \ref{asymptotics-sec}, and for small and medium $k$ in Section \ref{h1-sec}, with the results collected in Theorems \ref{mlower}, \ref{mlower-var}, \ref{mke-lower}, \ref{piece}.  Combining these results with the previous propositions gives Theorem \ref{main-dhl}, which, when combined with the bounds on narrow admissible tuples in Theorem \ref{hk-bound} that are established in Section \ref{tuples-sec}, will give Theorem \ref{main}.  (See also Table \ref{ingredients} for some more details of the logical dependencies between the key propositions.)

Finally, in Section \ref{parity-sec} we modify an argument of Selberg to show that the bound $H_1 \leq 6$ may not be improved using purely sieve-theoretic methods, and in Section \ref{remarks-sec} we establish Theorem \ref{disj} and make some miscellaneous remarks.

\subsection{Notation}

The notation used here closely follows the notation in our previous paper \cite{polymath8a}.

We use $|E|$ to denote the cardinality of a finite set $E$, and $\onef_E$ to denote the indicator function of a set $E$, thus $\onef_E(n)=1$ when $n \in E$ and $\onef_E(n)=0$ otherwise.  In a similar spirit, if $E$ is a statement, we write $\onef_E=1$ when $E$ is true and $\onef_E=0$ otherwise. 

All sums and products will be over the natural numbers $\N \coloneqq \{1,2,3,\ldots\}$ unless otherwise specified, with the exceptions of sums and products over the variable $p$, which will be understood to be over primes.

The following important asymptotic notation will be in use throughout the paper:

\begin{definition}[Asymptotic notation]\label{asym}  We use $x$ to denote a large real parameter, which one should think of as going off to infinity; in particular, we will implicitly assume that it is larger than any specified fixed constant. Some mathematical objects will be independent of $x$ and referred to as \emph{fixed}; but unless otherwise specified we allow all mathematical objects under consideration to depend on $x$ (or to vary within a range that depends on $x$, e.g. the summation parameter $n$ in the sum $\sum_{x \leq n \leq 2x} f(n)$).  If $X$ and $Y$ are two quantities depending on $x$, we say that $X = O(Y)$ or $X \ll Y$ if one has $|X| \leq CY$ for some fixed $C$ (which we refer to as the \emph{implied constant}), and $X = o(Y)$ if one has $|X| \leq c(x) Y$ for some function $c(x)$ of $x$ (and of any fixed parameters present) that goes to zero as $x \to \infty$ (for each choice of fixed parameters).  We use $X \lessapprox Y$ to denote the estimate $|X| \leq x^{o(1)} Y$, $X \sim Y$ to denote the estimate $Y \ll X \ll Y$, and $X \approx Y$ to denote the estimate $Y \lessapprox X \lessapprox Y$.  Finally, we say that a quantity $n$ is of \emph{polynomial size} if one has $n = O(x^{O(1)})$.

If asymptotic notation such as $O()$ or $\lessapprox$ appears on the left-hand side of a statement, this means that the assertion holds true for any specific interpretation of that notation.  For instance, the assertion $\sum_{n=O(N)} |\alpha(n)| \lessapprox N$ means that for each fixed constant $C>0$, one has $\sum_{|n| \leq CN} |\alpha(n)|\lessapprox N$.
\end{definition}

If $q$ and $a$ are integers, we write $a|q$ if $a$ divides $q$.
If $q$ is a natural number and $a \in \Z$, we use $a\ (q)$ to denote
the residue class
$$ a\ (q) \coloneqq \{ a+nq: n \in \Z \}$$
and let $\Z/q\Z$ denote the ring of all such residue classes $a\
(q)$. The notation $b=a\ (q)$ is synonymous to $b \in \, a \ (q)$. We
use $(a,q)$ to denote the greatest common divisor of $a$ and $q$, and
$[a,q]$ to denote the least common
multiple.\footnote{When $a,b$ are real numbers, we will also
  need to use $(a,b)$ and $[a,b]$ to denote the open and closed
  intervals respectively with endpoints $a,b$.  Unfortunately, this
  notation conflicts with the notation given above, but it should be
  clear from the context which notation is in use.}
We also let
$$ (\Z/q\Z)^\times \coloneqq \{ a\ (q): (a,q)=1 \}$$
denote the primitive residue classes of $\Z/q\Z$.  

We use the following standard arithmetic functions:
\begin{itemize}
\item[(i)] $\phi(q) \coloneqq |(\Z/q\Z)^\times|$ denotes the Euler totient function of $q$.
\item[(ii)] $\tau(q) \coloneqq \sum_{d|q} 1$ denotes the divisor function of $q$.
\item[(iii)] $\Lambda(q)$ denotes the von Mangoldt function of $q$, thus $\Lambda(q)=\log p$ if $q$ is a power of a prime $p$, and $\Lambda(q)=0$ otherwise.
\item[(iv)] $\theta(q)$ is defined to equal $\log q$ when $q$ is a prime, and $\theta(q)=0$ otherwise.
\item[(v)] $\mu(q)$ denotes the M\"obius function of $q$, thus $\mu(q) = (-1)^k$ if $q$ is the product of $k$ distinct primes for some $k \geq 0$, and $\mu(q)=0$ otherwise.
\item[(vi)] $\Omega(q)$ denotes the number of prime factors of $q$ (counting multiplicity).
\end{itemize}

We recall the elementary \emph{divisor bound}
\begin{equation}\label{divisor-bound}
\tau(n) \lessapprox 1
\end{equation}
whenever $n \ll x^{O(1)}$, as well as the related estimate
\begin{equation}\label{divisor-2}
\sum_{n \ll x} \frac{\tau(n)^C}{n} \ll \log^{O(1)} x
\end{equation}
for any fixed $C>0$; this follows for instance from \cite[Lemma 1.3]{polymath8a}.

The \emph{Dirichlet convolution} $\alpha \star \beta \colon \N \to \C$
of two arithmetic functions $\alpha,\beta \colon \N \to \C$ is defined
in the usual fashion as
$$ \alpha\star\beta(n) \coloneqq \sum_{d|n} \alpha(d)
\beta\left(\frac{n}{d}\right) =\sum_{ab=n}{\alpha(a)\beta(b)}.$$

\section{Distribution estimates on arithmetic functions}\label{dist-sec}

As mentioned in the introduction, a key ingredient in the Goldston-Pintz-Y{\i}ld{\i}r{\i}m approach to small gaps between primes comes from distributional estimates on the primes, or more precisely on the von Mangoldt function $\Lambda$, which serves as a proxy for the primes.  In this work, we will also need to consider distributional estimates on more general arithmetic functions, although we will not prove any new such estimates in this paper, relying instead on estimates that are already in the literature.  

More precisely, we will need averaged information on the following quantity:

\begin{definition}[Discrepancy]
For any function $\alpha \colon \N \to \C$ with finite support (that is, $\alpha$ is non-zero only on a finite set) and any
primitive residue class $a\ (q)$, we define the (signed)
\emph{discrepancy} $\Delta(\alpha; a\ (q))$ to be the quantity
\begin{equation}\label{disc-def}
  \Delta(\alpha; a\ (q)) \coloneqq \sum_{n = a\ (q)} \alpha(n) - 
  \frac{1}{\phi(q)} \sum_{(n,q)=1} \alpha(n).
\end{equation}
\end{definition}

For any fixed $0 < \vartheta < 1$, let $\EH[\vartheta]$ denote the following claim:

\begin{claim}[Elliott-Halberstam conjecture, {$\EH[\vartheta]$}]\label{eh-def}  If $Q \lessapprox x^\vartheta$ and $A \geq 1$ is fixed, then
\begin{equation}\label{qq}
 \sum_{q \leq Q} \sup_{a \in (\Z/q\Z)^\times} |\Delta(\Lambda \onef_{[x,2x]}; a\ (q))| \ll x \log^{-A} x.
\end{equation} 
\end{claim}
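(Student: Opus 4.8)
My plan here is not to attempt a proof, because the statement is precisely the Elliott--Halberstam conjecture --- one of the central open problems in analytic number theory --- and it is introduced in this paper as a hypothesis, not as a theorem to be established. Accordingly, what I would record at this point is simply that $\EH[\vartheta]$ is \emph{assumed} (for a suitable fixed $\vartheta$ sufficiently close to $1$) in each of the conditional results that invoke it, namely Theorem~\ref{main}(vii)--(xi) and Theorem~\ref{disj}; the parameter $\vartheta$ will play exactly the role in the sieve arguments of Sections~\ref{sieving-sec}--\ref{variational-sec} that the exponent $1/2$ plays when one has only the unconditional Bombieri--Vinogradov theorem. It is nonetheless worth sketching the landscape around the conjecture, since this explains both why it is plausible and why it is not within reach.

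Heuristically, $\EH[\vartheta]$ is a ``GRH-on-average'' statement: if one assumed the Riemann Hypothesis for all Dirichlet $L$-functions of conductor at most $Q$, one would obtain $\sum_{q \le Q}\sup_{a}|\Delta(\Lambda\onef_{[x,2x]};a\ (q))| \ll x^{1/2}Q\log^{O(1)}x$, which is of the required strength precisely in the range $Q \lessapprox x^{1/2}$ and not beyond --- so the content of $\EH[\vartheta]$ for $\vartheta>1/2$ is that square-root cancellation persists \emph{on average over $q$} well past the point where individual $L$-functions could supply it. Unconditionally, the only range that has been established is $0<\vartheta<1/2$, which is Theorem~\ref{bv-thm}; the standard route to that is Vaughan's identity, decomposing $\Lambda\onef_{[x,2x]}$ into Type~I and Type~II (bilinear) pieces, followed by the large sieve inequality for Dirichlet characters. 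If one wished to push past $\vartheta=1/2$, one would have to abandon the large sieve as the final step --- it is essentially sharp at $Q=x^{1/2}$ --- and instead exploit arithmetic structure in the moduli: this is exactly what is done in the Bombieri--Friedlander--Iwaniec theorems and in the $\MPZ[\varpi,\delta]$ estimates of Zhang and of \cite{polymath8a} (Claim~\ref{mpz-claim}), via the dispersion method, Weil/Deligne bounds for Kloosterman and hyper-Kloosterman sums, and the spectral theory of automorphic forms. But all of these only gain a small amount beyond $1/2$, and only for moduli restricted to be smooth or otherwise well-factorable.

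The main obstacle, therefore, is not a gap in an otherwise-routine argument but the complete absence of any technique capable of controlling primes in arithmetic progressions to general moduli exceeding $x^{1/2}$ with the uniformity demanded here; establishing this for a fixed $\vartheta$ close to $1$ would be a vastly stronger result than anything currently known, and is what keeps the conjecture open. For the purposes of this paper the only point that matters is that $\EH[\vartheta]$ is a clean, falsifiable hypothesis, and that the machinery developed in the later sections is arranged so as to take $\vartheta$ as an input and return the corresponding conditional bounds on $H_m$.
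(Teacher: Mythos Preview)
Your assessment is correct and matches the paper's treatment: Claim~\ref{eh-def} is stated as a conjecture (attributed to \cite{elliott}), not as a result to be proven, and the paper immediately remarks that the Bombieri--Vinogradov theorem (Theorem~\ref{bv-thm}) remains the best known partial result. Your surrounding discussion of the landscape is accurate and more detailed than what the paper itself provides at this point.
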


In \cite{elliott} it was conjectured that $\EH[\vartheta]$ held for all $0 < \vartheta < 1$.  (The conjecture fails at the endpoint case $\vartheta=1$; see \cite{fg}, \cite{fghm} for a more precise statement.)  The following classical result of Bombieri \cite{bombieri} and Vinogradov \cite{vinogradov} remains the best partial result of the form $\EH[\vartheta]$:

\begin{theorem}[Bombieri-Vinogradov theorem]\label{bv-thm}\cite{bombieri,vinogradov}  $\EH[\vartheta]$ holds for every fixed $0 < \vartheta < 1/2$.
\end{theorem}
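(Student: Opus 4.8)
This is the classical Bombieri--Vinogradov theorem, and the plan is to follow the standard route through a combinatorial identity for the von Mangoldt function and the multiplicative large sieve inequality; we only sketch the structure. First I would reduce the claim to a bound on Dirichlet character sums. By orthogonality of characters, for a primitive residue class $a\ (q)$ one has
$$ \Delta(\Lambda\onef_{[x,2x]}; a\ (q)) = \frac{1}{\phi(q)} \sum_{\chi \neq \chi_0 \mods{q}} \bar\chi(a) \sum_{x \leq n \leq 2x} \Lambda(n)\chi(n), $$
the principal-character term being exactly the subtracted average. Passing from each non-principal $\chi$ to the primitive character $\chi^{*}$ inducing it (at the cost of a negligible error from the removed Euler factors, absorbed using the divisor estimate \eqref{divisor-2}), rearranging the double sum over the conductor and the complementary divisor, and summing the tame factor $\sum_{r \leq Q/q^{*}} 1/\phi(q^{*}r) \ll \log x/\phi(q^{*})$, it suffices to control
$$ \sum_{q \leq Q} \frac{q}{\phi(q)} \sum_{\chi \mods{q}}^{*} \, \max_{y \leq 2x} \Bigl| \sum_{x \leq n \leq y} \Lambda(n)\chi(n) \Bigr| $$
for $Q \lessapprox x^{1/2}$, where the $*$ restricts to primitive characters. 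For $q \leq \log^{B} x$ (with $B=B(A)$ large) the corresponding contribution is handled by the Siegel--Walfisz theorem, which bounds each individual character sum by $x\exp(-c\sqrt{\log x})$ with an ineffective constant; the range $\log^{B} x < q \leq Q$ is the substance, and one treats it in dyadic blocks $q \sim R$.

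On the block $n \in [x,2x]$ I would decompose $\Lambda$ via Vaughan's identity (or Heath-Brown's identity), with cut-off parameters a small fixed power of $x$, into $O(1)$ pieces of two shapes: ``Type I'' sums $\sum_{d \leq D} \alpha_d \sum_{m} \chi(dm)$ with $D \lessapprox x^{1/3}$ and divisor-bounded coefficients $\alpha_d \lessapprox 1$, and ``Type II'' bilinear sums $\sum_{m \sim M}\sum_{n \sim N} \alpha_m \beta_n \chi(mn)$ with $\alpha_m,\beta_n \lessapprox 1$, $MN \sim x$, and both $M,N$ in the middle range $[x^{1/3},x^{2/3}]$. For the Type I sums one estimates the inner character sum over the interval in $m$ by P\'olya--Vinogradov (or trivially when $d$ is large), then sums over $d \leq D$, over primitive $\chi\ (q)$, and over $q$; since $D$ and $Q$ are both well below $x^{1/2}$ this contributes far less than the target, and again the large sieve may be invoked to be efficient.

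The core is the Type II estimate. After Cauchy--Schwarz in the $(q,\chi)$ aggregate, one applies the multiplicative large sieve inequality
$$ \sum_{q \leq Q} \frac{q}{\phi(q)} \sum_{\chi \mods{q}}^{*} \Bigl| \sum_{\ell \sim L} c_\ell \chi(\ell) \Bigr|^{2} \ll (Q^{2} + L) \sum_{\ell \sim L} |c_\ell|^{2} $$
to the $m$- and $n$-variables in turn, producing a bound of the shape $\bigl( (Q^{2}+M)(Q^{2}+N) \bigr)^{1/2}(MN)^{1/2}\log^{O(1)}x$ for each Type II piece (after extracting the $\lessapprox 1$ coefficients with \eqref{divisor-2}). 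Because $Q^{2} \lessapprox x \sim MN$ with $M,N$ both genuine powers of $x$, each factor $Q^{2}+M$ and $Q^{2}+N$ is $\lessapprox x$, so the whole expression is $\lessapprox x$: the large sieve beats the trivial bound, and dividing the dyadic-block estimate by $R$ and summing over $R \in (\log^{B}x, Q]$, together with the Siegel--Walfisz input for $q \leq \log^{B}x$, upgrades this to the required $\ll x\log^{-A}x$ once $B$ is large in terms of $A$.

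The main obstacle --- and the sole place the hypothesis $\vartheta < 1/2$ is used --- is exactly this last step. One must arrange the combinatorial identity so that \emph{every} bilinear piece has both variables honestly in a range $[x^{\delta},x^{1-\delta}]$ (pieces with a short variable get reclassified as Type I sums, pieces with a long one must be re-expanded), and the large sieve is efficient only because the ``$Q^{2}=x^{2\vartheta}$'' term does not exceed the total length $x$. Going past $\vartheta = 1/2$ would require genuinely new input beyond the large sieve and is precisely the (still open) content of $\EH[\vartheta]$; this is why the Bombieri--Vinogradov theorem, on its own, cannot reach the full Elliott--Halberstam range.
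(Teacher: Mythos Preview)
The paper does not supply a proof of this theorem; it is simply quoted from \cite{bombieri,vinogradov} as a classical result and used as a black box. Your sketch is the standard modern proof (Vaughan's identity plus the multiplicative large sieve, with Siegel--Walfisz covering small moduli) and is structurally sound.

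There is, however, an arithmetic slip in your Type~II analysis. You assert that $\bigl((Q^{2}+M)(Q^{2}+N)\bigr)^{1/2}(MN)^{1/2} \lessapprox x$, but each of $(Q^{2}+M)^{1/2}$, $(Q^{2}+N)^{1/2}$, $(MN)^{1/2}$ is individually $\lessapprox x^{1/2}$, so the product is only $\lessapprox x^{3/2}$; dividing afterwards by a dyadic $R$ that may be as small as $\log^{B}x$ does not save this. The correct bookkeeping keeps the dyadic block size $R$ explicit: on $q\sim R$ one gets, after inserting the $1/\phi(q)$ weight, Cauchy--Schwarz, and the large sieve,
\[
\frac{1}{R}\bigl((R^{2}+M)(R^{2}+N)MN\bigr)^{1/2}\log^{O(1)}x
\ \ll\ \Bigl(Rx^{1/2} + (M+N)^{1/2}x^{1/2} + \frac{x}{R}\Bigr)\log^{O(1)}x,
\]
and now each term is acceptable: $Rx^{1/2}\le Qx^{1/2}\lessapprox x^{\vartheta+1/2}$ with $\vartheta<1/2$; the middle term is at most $x^{5/6}$ since $M,N\le x^{2/3}$; and $x/R < x/\log^{B}x$. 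Summing over $O(\log x)$ dyadic blocks and choosing $B$ large in terms of $A$ then gives the claim. This three-term breakdown is also exactly where $\vartheta<1/2$ is genuinely used (in the first term), confirming the point in your final paragraph.
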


In \cite{gpy} it was shown that any estimate of the form $\EH[\vartheta]$ with some fixed $\vartheta > 1/2$ would imply the finiteness of $H_1$.  While such an estimate remains unproven, it was observed by Motohashi-Pintz \cite{mp} and by Zhang \cite{zhang} that a certain weakened version of $\EH[\vartheta]$ would still suffice for this purpose.  More precisely (and following the notation of our previous paper \cite{polymath8a}), let $\varpi,\delta > 0$ be fixed, and let $\MPZ[\varpi,\delta]$ be the following claim:

\begin{claim}[Motohashi-Pintz-Zhang estimate,
  {$\MPZ[\varpi,\delta]$}]\label{mpz-claim} Let $I \subset
  [1,x^\delta]$ and $Q \lessapprox x^{1/2+2\varpi}$.  Let $P_I$ denote the product of all the primes in $I$, and let $\Scal_I$ denote the square-free natural numbers whose prime factors lie in $I$.  If the residue class $a\ (P_I)$ is
  primitive (and is allowed to depend on $x$), and $A \geq 1$
  is fixed, then
\begin{equation}\label{qq-mpz}
  \sum_{\substack{q \leq Q\\q \in \Scal_I}} |\Delta(\Lambda \onef_{[x,2x]}; a\ (q))| \ll x \log^{-A} x,
\end{equation}
where the implied constant depends only on the fixed quantities $(A,\varpi,\delta)$, but
not on $a$.
\end{claim}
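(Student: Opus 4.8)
The plan is to establish $\MPZ[\varpi,\delta]$ for $(\varpi,\delta)$ in an explicit region, as was done by Zhang \cite{zhang} and sharpened in our earlier paper \cite{polymath8a}; the decisive structural feature is that every modulus $q\in\Scal_I$ has all of its prime factors $\le x^\delta$ and is therefore \emph{densely divisible}, so that it may be factored as $q=q_1q_2$ with $q_1,q_2$ of essentially any prescribed sizes, and moreover the residue classes $a\ (q)$ are all reductions of a single fixed class $a\ (P_I)$ and hence ``coherent'' as $q$ varies. First I would apply the Heath--Brown combinatorial identity to $\Lambda\onef_{[x,2x]}$, writing it as a bounded number of Dirichlet convolutions $\alpha_1\star\cdots\star\alpha_j$ with $N_1\cdots N_j\asymp x$, in which each factor on its dyadic range $[N_i,2N_i]$ is either a truncation of $1$ or $\log$ (``smooth'') or a general coefficient of controlled size. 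Regrouping the factors reduces \eqref{qq-mpz} to bounding the averaged discrepancies of three prototypes: ``Type I'' and ``Type II'' convolutions $\alpha\star\beta$ with the support of $\alpha$ of size below, respectively near, $x^{1/2}$, and ``Type III'' convolutions of three smooth factors of size close to $x^{1/3}$.

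For the Type I and Type II contributions I would run Linnik's dispersion method: open the absolute value, square, extract the diagonal term, and reduce the off-diagonal to incomplete exponential sums of Kloosterman type modulo $q$ (with one or two moduli in play). Using dense divisibility to split $q=q_1q_2$ with $q_1$ in an optimized range, I would then apply the Graham--Ringrose $q$-van der Corput iteration in the $q_1$-aspect to replace each sum of length $N$ modulo $q$ by complete sums modulo $q_1$, which are controlled by Weil's bound for Kloosterman sums. It is precisely this factorization gain that pushes the admissible range of moduli past $x^{1/2}$ --- i.e., allows $\varpi>0$ --- and tracking the exponents yields the saving $x\log^{-A}x$ provided $\varpi$ and $\delta$ satisfy an explicit linear inequality coming from the optimization.

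For the Type III sums $\psi_1\star\psi_2\star\psi_3$ the elementary dispersion argument no longer suffices, and one must instead invoke the Birch--Bombieri/Friedlander--Iwaniec estimate for the associated three-variable exponential sums, whose square-root cancellation rests on Deligne's form of the Weil conjectures (bounds for hyper-Kloosterman sheaves and their correlations); here dense divisibility is again used to reduce matters to a complete sum in the correct number of variables. Intersecting the parameter constraints produced by the Type I, Type II and Type III estimates then yields $\MPZ[\varpi,\delta]$ on an explicit region of $(\varpi,\delta)$.

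The main obstacle is the Type I/II analysis for moduli just above the Bombieri--Vinogradov threshold $x^{1/2}$: with no factorization available one recovers nothing beyond Theorem \ref{bv-thm}, and extracting a quantitative gain requires a careful choice of how to split the smooth modulus before the Weyl differencing, balancing the length of the completed sums against the accumulated error terms. This optimization, together with the strength of the algebraic Type III input, is what ultimately caps the admissible size of $\varpi$.
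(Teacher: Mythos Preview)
The statement you were asked to address, Claim~\ref{mpz-claim}, is not a theorem but the \emph{definition} of the hypothesis $\MPZ[\varpi,\delta]$; it plays the same role as Claim~\ref{eh-def} (the Elliott--Halberstam conjecture) and Claim~\ref{geh-def} (the generalized Elliott--Halberstam conjecture). The paper provides no proof for it because none is called for: it is a labelled assertion whose truth, for any given pair $(\varpi,\delta)$, is either assumed as a hypothesis or imported from elsewhere. What you have sketched is instead an outline of the proof of Theorem~\ref{mpz-poly}, which asserts that $\MPZ[\varpi,\delta]$ holds whenever $600\varpi+180\delta<7$; but the present paper does not prove that theorem either --- it simply cites \cite[Theorem~2.17]{polymath8a}.

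That said, your outline of the argument behind Theorem~\ref{mpz-poly} is broadly accurate: Heath--Brown decomposition into Type~I/II/III sums, dispersion plus $q$-van der Corput and Weil bounds for Types~I/II, and the Deligne-powered hyper-Kloosterman estimates of Birch--Bombieri/Friedlander--Iwaniec type for Type~III, all exploiting the dense divisibility afforded by $q\in\Scal_I$. So the content is not wrong, only misplaced: you have answered a different question than the one posed.
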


It is clear that $\EH[\frac{1}{2}+2\varpi]$ implies $\MPZ[\varpi,\delta]$ whenever $\varpi,\delta \geq 0$.  The first non-trivial estimate of the form $\MPZ[\varpi,\delta]$ was established by Zhang \cite{zhang}, who (essentially) obtained $\MPZ[\varpi,\delta]$ whenever $0 \leq \varpi, \delta < \frac{1}{1168}$.
In \cite[Theorem 2.17]{polymath8a}, we improved this result to the following.

\begin{theorem}\label{mpz-poly}  $\MPZ[\varpi,\delta]$ holds for every fixed $\varpi,\delta \geq 0$ with $600 \varpi + 180 \delta < 7$.
\end{theorem}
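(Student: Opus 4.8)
The plan is to follow the Bombieri--Friedlander--Iwaniec and Zhang circle of ideas, reducing the bilinear estimate \eqref{qq-mpz} to bounds for exponential sums over finite fields. First I would invoke a combinatorial identity for the von Mangoldt function --- a Heath-Brown identity truncated at a bounded level --- to write $\Lambda \onef_{[x,2x]}$ as a sum of $O(1)$ Dirichlet convolutions $\alpha_1 \star \cdots \star \alpha_j$ whose factors are supported on dyadic ranges and are either ``smooth'' (essentially $\onef$ or a logarithm) or M\"obius-type coefficients enjoying the Siegel--Walfisz property. Classifying according to the relative sizes of the supports, each convolution is of one of three kinds: $\TypeI$ (one long smooth factor, of size at least $x^{1/2}$ times a small power of $x$), $\TypeII$ (two factors of size close to $x^{1/2}$), or $\TypeIII$ (three smooth factors, each of size between $x^\sigma$ and $x^{1/2}$ for a suitable small $\sigma$). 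It then suffices to prove the discrepancy estimate \eqref{qq-mpz} with $\Lambda$ replaced by each such convolution, the sum over $q$ being restricted to $q \in \Scal_I$ with $q \lessapprox x^{1/2+2\varpi}$.

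For $\TypeI$ and $\TypeII$ I would run Linnik's dispersion method: remove the absolute value by duality, expand the resulting square, and reduce to off-diagonal correlations weighted by an additive character to modulus $q$ and running over two moduli. The crucial point, due to Zhang and sharpened in our earlier work, is that since $q \in \Scal_I$ is $x^\delta$-smooth we may factor $q = q_1 q_2$ with $q_1, q_2$ localized in essentially any prescribed dyadic ranges; choosing these ranges well and completing the inner sum to modulus $q_2$, the off-diagonal terms become incomplete Kloosterman- and Ramanujan-type sums, bounded via the Riemann hypothesis for curves over $\Fp$ (the Weil bound), with an extra saving extracted from a $q$-van der Corput (``Weyl differencing in the modulus'') step. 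The main term reproduces $\frac{1}{\phi(q)}\sum_{(n,q)=1}$, and the power savings obtained translate into the linear constraints on $(\varpi,\delta)$ under which the $\TypeI$ and $\TypeII$ estimates hold.

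For $\TypeIII$ the variables are too short for the dispersion method to be efficient, so instead I would follow the Friedlander--Iwaniec and Heath-Brown treatment used by Zhang: after completing sums, the estimate reduces to square-root cancellation in three-variable exponential sums over $\Fp$ of Birch--Bombieri type, which follows from Deligne's form of the Riemann hypothesis over finite fields. Exploiting the smoothness of $q$ once more to split the modulus allows the $\TypeIII$ estimate to hold with the long factor as large as roughly $x^{1/2}$, which is exactly what is needed to make the final numerology close.

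The last step is bookkeeping. Each of the $\TypeI$, $\TypeII$, and $\TypeIII$ estimates, together with the choices of the auxiliary exponents used to factor the smooth modulus, contributes a linear inequality in $\varpi$ and $\delta$; optimizing the free parameters and intersecting the resulting feasible regions produces the stated condition $600\varpi + 180\delta < 7$. The principal obstacle --- and where the bulk of the work lies --- is the $\TypeI$/$\TypeII$ analysis: attaining the maximal level of distribution requires iterating the factorization of the smooth modulus (with two or even three factors), carefully tracking the error terms through the dispersion method and the completion of sums, and combining the Weil bound with the $q$-van der Corput method; the $\TypeIII$ input, though resting on the deepest ingredient (Deligne's theorem), is comparatively rigid and can be used essentially as a black box. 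A complete and optimized execution of this strategy is carried out in \cite[Theorem 2.17]{polymath8a}.
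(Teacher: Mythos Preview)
Your proposal is appropriate: the present paper does not prove this theorem at all but simply imports it from \cite[Theorem~2.17]{polymath8a}, and your sketch accurately summarizes the strategy of that reference (Heath-Brown decomposition into $\TypeI$/$\TypeII$/$\TypeIII$ sums, dispersion method exploiting the $x^\delta$-smoothness of the modulus, Weil and Deligne bounds, and the final bookkeeping that yields $600\varpi+180\delta<7$), concluding with the same citation.
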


In fact, a stronger result was established in \cite{polymath8a}, in which the moduli $q$ were assumed to be \emph{densely divisible} rather than smooth, but we will not exploit such improvements here.  For our application, the most important thing is to get $\varpi$ as large as possible; in particular, Theorem \ref{mpz-poly} allows one to get $\varpi$ arbitrarily close to $\frac{7}{600} \approx 0.01167$. 

%
%

In this paper, we will also study the following generalization of the Elliott-Halberstam conjecture for a fixed choice of $0 < \vartheta < 1$:

\begin{claim}[Generalized Elliott-Halberstam conjecture, {$\GEH[\vartheta]$}]\label{geh-def}  Let $\eps > 0$ and $A \geq 1$ be fixed.  Let $N,M$ be quantities such that
$x^\eps \lessapprox N \lessapprox x^{1-\eps}$ and $x^\eps \lessapprox M \lessapprox x^{1-\eps}$ with $NM \sim x$, and let $\alpha, \beta: \N \to \R$ be sequences supported on $[N,2N]$ and $[M,2M]$ respectively, such that one has the pointwise bounds
\begin{equation}\label{ab-div}
 |\alpha(n)| \ll \tau(n)^{O(1)} \log^{O(1)} x; \quad |\beta(m)| \ll \tau(m)^{O(1)} \log^{O(1)} x
\end{equation}
for all natural numbers $n,m$.  Suppose also that $\beta$ obeys the Siegel-Walfisz type bound
\begin{equation}\label{sig}
| \Delta(\beta \onef_{(\cdot,r)=1}; a\ (q)) | \ll \tau(qr)^{O(1)} M \log^{-A} x 
\end{equation}
for any $q,r \geq 1$, any fixed $A$, and any primitive residue class $a\ (q)$.  Then for any $Q \lessapprox x^\vartheta$, we have
\begin{equation}\label{qq-gen}
 \sum_{q \leq Q} \sup_{a \in (\Z/q\Z)^\times} |\Delta(\alpha \star \beta; a\ (q))| \ll x \log^{-A} x.
\end{equation} 
\end{claim}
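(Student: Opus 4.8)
The plan is to prove $\GEH[\vartheta]$ by Linnik's dispersion method, in the refined form developed by Bombieri, Friedlander and Iwaniec, exploiting in an essential way the bilinear shape of the convolution $\alpha\star\beta$ together with the Siegel--Walfisz hypothesis \eqref{sig} on $\beta$. The strategy reduces the bound \eqref{qq-gen} to the estimation of sums of incomplete Kloosterman sums, which one then attacks via the spectral theory of automorphic forms on $\GL_2$.

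First I would pass to a smoothed bilinear form: insert a smooth dyadic partition of unity, and remove the supremum over $a\in(\Z/q\Z)^\times$ in the usual way by selecting for each $q\sim Q$ a class $a_q$ realising the supremum up to a constant and inserting arbitrary unimodular coefficients $c_q$, so that it suffices to bound $\sum_{q\sim Q}c_q\,\Delta(\alpha\star\beta;a_q\ (q))$. Writing $\alpha\star\beta(\ell)=\sum_{nm=\ell}\alpha(n)\beta(m)$ and applying Cauchy--Schwarz in the $n$-variable (the ``smooth'' variable of length $\sim N$, which carries no usable arithmetic), the problem becomes one of bounding a quadratic form in $\beta$; squaring out produces the three familiar pieces of the dispersion method --- a diagonal term $nm_1=nm_2$, a main term issuing from the $\frac{1}{\phi(q)}$ average, and the \emph{off-diagonal} term
\[
 \sum_{m_1,m_2\sim M}\beta(m_1)\overline{\beta(m_2)}\sum_{q\sim Q}c_q\!\!\sum_{\substack{n\\ nm_1\equiv nm_2\equiv a_q\ (q)}}\!\!\!1 \ -\ (\text{main term}).
\]
Detecting the two congruence conditions on $n$ by additive characters modulo $q$ and carrying out the resulting smooth sum of length $\sim N$ leaves, after removing the contribution of $m_1\equiv m_2$ and of the zero frequency, a weighted sum
\[
 \sum_{m_1,m_2\sim M}\beta(m_1)\overline{\beta(m_2)}\sum_{q\sim Q}c_q\sum_{h\neq0}\what W\!\Big(\tfrac{hN}{q}\Big)\,S\big(\overline{m_1m_2}\,h,\ \ell_q(h);\,q\big),
\]
with $W$ a fixed smooth function and $\ell_q(h)$ depending on $a_q,h$; the Siegel--Walfisz bound \eqref{sig}, used through Gallagher's lemma, is precisely what forces the main terms generated at each step to cancel to within $O(x\log^{-A}x)$.

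Next I would estimate this last sum by opening the Kloosterman sums, performing the reciprocity exchange $\bar q \mods{m_1m_2}\leftrightarrow\overline{m_1m_2}\mods{q}$ to place the modulus in a shape to which the automorphic machinery applies, and then invoking the large-sieve inequality for sums of Kloosterman sums of Deshouillers--Iwaniec (equivalently the Kuznetsov trace formula for congruence subgroups), together with a nontrivial bound toward the Selberg eigenvalue conjecture (the $\lambda_1\geq 3/16$ bound, or Kim--Sarnak). The averaging over $m_1,m_2$ and over $q$ that the dispersion method has manufactured is exactly what lets the level $Q$ be pushed past $x^{1/2}$, and --- optimising the reciprocity step and the admissible dyadic ranges of $M$ and $N$ --- toward $x^\vartheta$ with $\vartheta$ approaching $1$.

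The hard part, and the reason $\GEH[\vartheta]$ for $\vartheta$ close to $1$ lies beyond the reach of a fully unconditional argument at present, is this final step. As $\vartheta\to1$ the off-diagonal term, even after reciprocity, demands cancellation in sums $\sum_{q\sim Q}S(a,b;q)$ of a strength exceeding what the Weil bound and the $\GL_2$ spectral large sieve deliver uniformly in $a,b$ and over the full range $x^\eps\lessapprox M\lessapprox x^{1-\eps}$ --- in effect one needs a uniform averaged form of the Kloosterman-sum conjecture (equivalently, the generalised Ramanujan/Selberg conjecture in quantitative shape). I would therefore organise the argument so that every reduction up to and including the passage to this Kloosterman-sum estimate is unconditional, isolating the single analytic input on which the entire range $0<\vartheta<1$ rests; proving that input --- or bypassing it by exploiting factorability of the moduli, as in \cite{polymath8a}, when $\alpha$ and $\beta$ carry extra structure --- is precisely the open problem, which is why in the body of the paper $\GEH[\vartheta]$ is invoked as a hypothesis.
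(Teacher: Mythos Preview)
The statement you are asked to prove is not a theorem but a \emph{conjecture}: Claim~\ref{geh-def} is the definition of the hypothesis $\GEH[\vartheta]$, and the paper contains no proof of it. The paper explicitly attributes it (essentially) to Bombieri--Friedlander--Iwaniec as a conjecture, and uses it only as an assumption (in Theorem~\ref{main}(xii),(xiii) and Theorem~\ref{disj}). So there is no ``paper's own proof'' to compare against.

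You seem to be aware of this yourself --- your final paragraph correctly isolates the missing analytic input (uniform cancellation in sums of Kloosterman sums beyond the spectral large sieve) and notes that this is why $\GEH[\vartheta]$ is invoked as a hypothesis. What you have written is therefore not a proof but a heuristic outline of the dispersion-method framework in which one would naturally situate such a result, together with an identification of the obstruction. That outline is broadly accurate as far as it goes: the dispersion method with a Siegel--Walfisz input is indeed the standard route, and it does yield $\GEH[\vartheta]$ unconditionally for $\vartheta<1/2$ (this is Motohashi's Theorem~\ref{gbv-thm} in the paper). But for $\vartheta$ close to $1$ the reduction you describe does not terminate in any estimate currently known, even granting the full Ramanujan--Selberg conjecture for $\GL_2$; the level $Q\sim x^\vartheta$ with $\vartheta$ near $1$ and the completely unstructured moduli place the problem well beyond the Deshouillers--Iwaniec machinery. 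So your proposal is best read as commentary on why the claim is open, not as a proof attempt, and on that reading it is reasonable --- but it should not be labelled a proof.
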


In \cite[Conjecture 1]{bfi} it was essentially conjectured\footnote{Actually, there are some differences between \cite[Conjecture 1]{bfi} and the claim here.  Firstly, we need an estimate that is uniform for all $a$, whereas in \cite{bfi} only the case of a fixed modulus $a$ was asserted.  On the other hand, $\alpha,\beta$ were assumed to be controlled in $\ell^2$ instead of via the pointwise bounds \eqref{ab-div}, and $Q$ was allowed to be as large as $x \log^{-C} x$ for some fixed $C$ (although, in view of the negative results in \cite{fg}, \cite{fghm}, this latter strengthening may be too ambitious).} that $\GEH[\vartheta]$ was true for all $0 < \vartheta < 1$.   This is stronger than the Elliott-Halberstam conjecture:

\begin{proposition}\label{geh-eh}  For any fixed $0 < \vartheta < 1$, $\GEH[\vartheta]$ implies $\EH[\vartheta]$.
\end{proposition}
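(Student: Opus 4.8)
The plan is to deduce $\EH[\vartheta]$ from $\GEH[\vartheta]$ by writing the von Mangoldt function $\Lambda$ restricted to $[x,2x]$ as a sum of a bounded number of Dirichlet convolutions $\alpha \star \beta$ of the type controlled by $\GEH[\vartheta]$, plus a negligible error. The standard tool here is a combinatorial identity such as Vaughan's identity (or Heath--Brown's identity), which expresses $\Lambda \onef_{[x,2x]}$ as a linear combination of $O(1)$ terms, each of which is (up to harmless truncation) a convolution of two sequences supported on dyadic ranges $[N,2N]$ and $[M,2M]$ with $NM \sim x$, where one of the factors is a smooth or divisor-bounded ``Type I'' coefficient and the other is $\log$ or $\mu$ or $\onef$. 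Since discrepancy $\Delta(\cdot; a\ (q))$ is linear in its first argument, it suffices to bound $\sum_{q \le Q} \sup_a |\Delta(\alpha \star \beta; a\ (q))|$ for each such piece.

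First I would record that the pointwise bounds \eqref{ab-div} hold for all the sequences arising in the decomposition: this is immediate from the divisor bound \eqref{divisor-bound}, since every factor appearing is dominated by $\tau(n)^{O(1)} \log^{O(1)} x$ on polynomial-size inputs. Next I would verify the Siegel--Walfisz hypothesis \eqref{sig} for the appropriate factor $\beta$ in each convolution. For factors equal to $\onef$, $\mu$, $\log$, or short divisor-type convolutions $\tau_k$ supported on a range $[M,2M]$ with $M \le x^{o(1)}$ or so, \eqref{sig} either is trivial (when the support is shorter than $q$) or follows from the classical Siegel--Walfisz theorem for $\mu$ and for the constant/$\log$ sequences together with partial summation; the factor $\tau(qr)^{O(1)}$ on the right of \eqref{sig} gives ample room to absorb the divisors of $q$ and $r$. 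One then chooses the roles of $\alpha,\beta$ in each term so that the Siegel--Walfisz-respecting factor plays the role of $\beta$.

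Having checked the hypotheses, $\GEH[\vartheta]$ applies to each dyadic piece $\alpha \star \beta$ with $N,M$ in the admissible window $x^\eps \lessapprox N,M \lessapprox x^{1-\eps}$, giving $\sum_{q\le Q}\sup_a |\Delta(\alpha\star\beta; a\ (q))| \ll x\log^{-A}x$. The pieces of the identity in which one variable is too large or too small to fall in the window $[x^\eps, x^{1-\eps}]$ are the genuinely ``Type I/II'' extreme ranges; these are handled without any hypothesis, directly by the Bombieri--Vinogradov-type bound (Theorem \ref{bv-thm}) or by elementary large-sieve/dispersion estimates for the $\onef$ or $\tau_k$ convolutions, exactly as in the classical proof of Bombieri--Vinogradov. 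Summing the $O(1)$ contributions and adjusting $A$ by the implied $O(1)$ factor yields \eqref{qq}, which is $\EH[\vartheta]$.

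The main obstacle is bureaucratic rather than conceptual: one must set up the combinatorial identity with all dyadic ranges chosen so that, term by term, either both variables lie in the $\GEH$-admissible window $[x^\eps,x^{1-\eps}]$ (so $\GEH[\vartheta]$ applies), or one variable is so extreme that an unconditional estimate suffices; and one must confirm the Siegel--Walfisz condition \eqref{sig} uniformly in the auxiliary modulus $r$ for whichever factor is designated $\beta$. Getting the boundary cases to partition cleanly, and tracking that all constants remain fixed as $x\to\infty$, is the part that requires care; everything else is a routine linearity-plus-dyadic-decomposition argument.
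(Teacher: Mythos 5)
Your proposal matches the paper's own (sketched) argument: Vaughan's identity to decompose $\Lambda\onef_{[x,2x]}$, a dyadic (in fact finer, near-dyadic) decomposition of each factor, application of $\GEH[\vartheta]$ to the pieces whose two ranges both lie in the admissible window, the Siegel--Walfisz theorem to verify \eqref{sig} for the designated $\beta$ factor, and direct unconditional estimates for the extreme (Type~I/Type~0) ranges where one variable is too short. The only imprecision is your appeal to ``the Bombieri--Vinogradov-type bound (Theorem~\ref{bv-thm})'' for the extreme ranges: Bombieri--Vinogradov would cap you at $\vartheta<1/2$ and is not what is used there; what actually handles those ranges is the elementary fact that a convolution with a smooth long factor (such as $1$ or $\log$) equidistributes trivially in progressions to moduli $q\lessapprox x^{1-\eps}$, which you also correctly allude to with ``elementary large-sieve/dispersion estimates.''
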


\begin{proof} (Sketch)  As this argument is standard, we give only a brief sketch.
Let $A > 0$ be fixed.  For $n \in [x,2x]$, we have Vaughan's identity\footnote{One could also use the Heath-Brown identity \cite{hb-ident} here if desired.} \cite{vaughan}
$$ \Lambda(n) = \mu_< \star  L(n) - \mu_< \star  \Lambda_< \star  1(n) + \mu_\geq \star  \Lambda_\geq \star  1(n),$$
where $L(n) \coloneqq \log(n)$, $1(n) \coloneqq 1$, and
\begin{gather}
\Lambda_\geq(n) \coloneqq \Lambda(n) \onef_{n \geq x^{1/3}},\quad\quad 
\Lambda_<(n) \coloneqq \Lambda(n) \onef_{n < x^{1/3}}\\
\mu_\geq(n) \coloneqq \mu(n) \onef_{n \geq x^{1/3}},\quad\quad \mu_<(n) \coloneqq \mu(n)
\onef_{n < x^{1/3}}.
\end{gather}
By decomposing each of the functions $\mu_<$, $\mu_\geq$, $1$, $\Lambda_<$, $\Lambda_{\geq}$ into $O(\log^{A+1} x)$ functions supported on intervals of the form $[N, (1+\log^{-A} x) N]$, and discarding those contributions which meet the boundary of $[x,2x]$ (cf. \cite{fouvry}, \cite{fi-2}, \cite{bfi}, \cite{zhang}), and using $\GEH[\vartheta]$ (with $A$ replaced by a much larger fixed constant $A'$) to control all remaining contributions, we obtain the claim (using the Siegel-Walfisz theorem, see e.g. \cite[Satz 4]{siebert}
  or~\cite[Th. 5.29]{ik}).
\end{proof}

By modifying the proof of the Bombieri-Vinogradov theorem Motohashi \cite{motohashi} established the following generalization of that theorem (see also \cite{gallagher} for some related ideas):

\begin{theorem}[Generalized Bombieri-Vinogradov theorem]\label{gbv-thm}\cite{motohashi}  $\GEH[\vartheta]$ holds for every fixed $0 < \vartheta < 1/2$.
\end{theorem}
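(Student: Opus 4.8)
The final statement in the excerpt is the Generalized Bombieri--Vinogradov theorem (Theorem \ref{gbv-thm}): $\GEH[\vartheta]$ holds for every fixed $0 < \vartheta < 1/2$.

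**Plan of proof.**
The plan is to run the classical dispersion/large-sieve argument that underlies the ordinary Bombieri--Vinogradov theorem, but applied directly to a general Dirichlet convolution $\alpha\star\beta$ rather than to $\Lambda\onef_{[x,2x]}$. First I would reduce to controlling $\sum_{q\le Q}\sup_{a\in(\Z/q\Z)^\times}|\Delta(\alpha\star\beta;a\ (q))|$ for $Q\lessapprox x^{\vartheta}$ with $\vartheta<1/2$; by a standard splitting into dyadic ranges of $q$ and the usual device of inserting well-factorable weights / removing the supremum over $a$ via a maximal large sieve inequality, it suffices to bound $\sum_{q\le Q}|\Delta(\alpha\star\beta;a_q\ (q))|$ for an arbitrary choice of residue classes $a_q$. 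Next, I would express the discrepancy in terms of Dirichlet characters: writing the congruence condition $n\equiv a\ (q)$ via characters mod $q$, the main term corresponds to the principal character and cancels against the $\tfrac1{\phi(q)}\sum_{(n,q)=1}$ term, so one is left estimating, after summing over $q$ and reducing characters to primitive ones, a sum of the shape $\sum_{q\le Q}\tfrac1{\phi(q)}\sum_{\chi\ (q)}^{*}|\sum_{n}\alpha\star\beta(n)\chi(n)|$.

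The heart of the matter is then a bilinear large sieve estimate. Since $\alpha\star\beta(n)=\sum_{n=n_1 n_2}\alpha(n_1)\beta(n_2)$ with $\alpha,\beta$ supported on $[N,2N]$, $[M,2M]$ and $NM\sim x$, the character sum factors as $\big(\sum_{n_1}\alpha(n_1)\chi(n_1)\big)\big(\sum_{n_2}\beta(n_2)\chi(n_2)\big)$, and one applies the multiplicative large sieve inequality (in its bilinear form, e.g. the version of Gallagher, or \cite[Th.~17.4]{ik}) to get a bound of size $\ll (Q^2+N)(Q^2+M)\cdot(\text{$\ell^2$ norms})$ up to logarithms. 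Because $NM\sim x$ and $Q^2\lessapprox x^{2\vartheta}$ with $2\vartheta<1$, the cross terms $Q^2 N$, $Q^2 M$, $Q^4$ are all $\lessapprox x^{1+2\vartheta-\eps'}$-type or are dominated using $x^{\eps}\lessapprox N,M\lessapprox x^{1-\eps}$; the pointwise divisor bounds \eqref{ab-div} control the $\ell^2$ norms of $\alpha,\beta$ by $x\log^{O(1)}x$ up to acceptable factors. The one genuinely delicate point — and the step I expect to be the main obstacle — is the treatment of the small moduli $q$ (equivalently, small conductors, including the principal and low-lying characters), where the large sieve is lossy: here one must instead invoke the Siegel--Walfisz hypothesis \eqref{sig} on $\beta$ to get power-of-log savings, exactly as in the classical Bombieri--Vinogradov proof. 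This is precisely the role for which the hypothesis \eqref{sig} was built into the statement of $\GEH[\vartheta]$.

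**Assembly.** Combining the large-sieve bound for the conductors in the range $(\log^{B}x, Q]$ with the Siegel--Walfisz bound for conductors $\le \log^{B}x$ (choosing $B$ large in terms of $A$), Cauchy--Schwarz to handle the $1/\phi(q)$ weights and the passage from $|\sum|$ to $|\sum|^2$, and summing the dyadic ranges of $q$, one obtains $\sum_{q\le Q}\sup_{a}|\Delta(\alpha\star\beta;a\ (q))|\ll x\log^{-A}x$ for $Q\lessapprox x^{\vartheta}$, $\vartheta<1/2$, which is \eqref{qq-gen}. This is the argument of Motohashi \cite{motohashi}; since it is classical I would present it in the paper only in the sketch form above, citing \cite{motohashi} (and \cite{bfi} for the analogous bilinear treatment) for the details, much as Proposition \ref{geh-eh} is handled.
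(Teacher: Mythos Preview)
The paper does not actually give a proof of Theorem \ref{gbv-thm}; it simply attributes the result to Motohashi \cite{motohashi} (with a pointer to \cite{gallagher}) and moves on. Your sketch is a faithful outline of that classical argument --- reduce to primitive characters, apply the bilinear/multiplicative large sieve for conductors above a power of $\log x$, and use the Siegel--Walfisz hypothesis \eqref{sig} on $\beta$ to handle the small conductors --- so there is no discrepancy to discuss.
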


One could similarly describe a generalization of the Motohashi-Pintz-Zhang estimate $\MPZ[\varpi,\delta]$, but unfortunately the arguments in \cite{zhang} or Theorem \ref{mpz-poly} do not extend to this setting unless one is in the ``Type I/Type II'' case in which $N,M$ are constrained to be somewhat close to $x^{1/2}$, or if one has ``Type III'' structure to the convolution $\alpha \star \beta$, in the sense that it can refactored as a convolution involving several ``smooth'' sequences.  In any event, our analysis would not be able to make much use of such incremental improvements to $\GEH[\vartheta]$, as we only use this hypothesis effectively in the case when $\vartheta$ is very close to $1$.  In particular, we will not directly use Theorem \ref{gbv-thm} in this paper.


\section{Outline of the key ingredients}\label{subclaim-sec}

In this section we describe the key subtheorems used in the proof of Theorem \ref{main}, with the proofs of these subtheorems mostly being deferred to later sections.

We begin with a weak version of the Dickson-Hardy-Littlewood prime tuples conjecture \cite{hardy}, which (following Pintz \cite{pintz-polignac}) we refer to as $\DHL[k,j]$.  Recall that for any $k \in \mathbb{N}$, an \emph{admissible $k$-tuple} is a tuple ${\mathcal H} = (h_1,\ldots,h_{k})$ of $k$ increasing integers $h_1 < \ldots < h_{k}$ which avoids at least one residue class $a_p\ (p) := \{ a_p + np: n \in \Z \}$ for every $p$.  For instance, $(0,2,6)$ is an admissible $3$-tuple, but $(0,2,4)$ is not.

For any $k \geq j \geq 2$, we let $\DHL[k,j]$ denote the following claim:

\begin{claim}[Weak Dickson-Hardy-Littlewood conjecture,
  {$\DHL[k,j]$}] For any admissible $k$-tuple ${\mathcal
    H}=(h_1,\ldots,h_{k})$ there
  exist infinitely many translates $n + {\mathcal H} =
  (n+h_1,\ldots,n+h_{k})$ of ${\mathcal H}$ which contain at least $j$
  primes.
\end{claim}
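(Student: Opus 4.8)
Since $\DHL[k,j]$ is a genuine arithmetic assertion rather than a working hypothesis like $\EH$ or $\GEH$, what one really wants is to establish it for $k$ as small as possible relative to $j$, using whichever distributional input one is prepared to assume --- Bombieri--Vinogradov (Theorem~\ref{bv-thm}), $\EH[\vartheta]$, or $\GEH[\vartheta]$; this is precisely what will feed into Theorem~\ref{main} once combined with the narrow-tuple bounds of Theorem~\ref{hk-bound}. The plan is to run the multidimensional Selberg sieve of Maynard and Tao, in a form where the support of the cutoff function is allowed to be enlarged. Fix an admissible $k$-tuple $\mathcal H=(h_1,\dots,h_k)$. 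By a pigeonholing argument (to be isolated as Lemma~\ref{crit}), it is enough to exhibit, for arbitrarily large $x$, a non-negative weight $\nu$ supported on $[x,2x]$ with
\begin{equation*}
\sum_{n}\nu(n)\,\Big(\sum_{i=1}^{k}\onef_{n+h_i\ \text{prime}}-(j-1)\Big)>0,
\end{equation*}
since then some $n$ in the support has at least $j$ of the $n+h_i$ prime, and letting $x\to\infty$ yields infinitely many such $n$.

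For the weight I would take the Selberg square
\begin{equation*}
\nu(n)=\onef_{n\equiv b\ (W)}\Big(\sum_{\substack{d_i\mid n+h_i\\ i=1,\dots,k}}\lambda_{d_1,\dots,d_k}\Big)^{2},
\end{equation*}
where $W=\prod_{p<w}p$ for a slowly growing $w=w(x)$, the class $b\ (W)$ is chosen --- using the admissibility of $\mathcal H$ --- so that each $b+h_i$ is coprime to $W$ (the ``$W$-trick'', removing small-prime obstructions), and $\lambda_{d_1,\dots,d_k}=\big(\prod_i\mu(d_i)d_i\big)\,F\big(\tfrac{\log d_1}{\log R},\dots,\tfrac{\log d_k}{\log R}\big)$ for a fixed smooth $F$ supported on a prescribed polytope $\mathcal R\subset[0,\infty)^{k}$ and $R$ a small power of $x$. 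The classical choice is $\mathcal R=\{x_i\ge0,\ \sum_i x_i\le1\}$ with $R=x^{\vartheta/2-\eps}$; the new feature, and the source of the sharpest bounds, is to enlarge $\mathcal R$, which is what assuming $\GEH$ rather than merely $\EH$ will buy us.

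The computational core is then to obtain, as $x\to\infty$, asymptotics for $S_1=\sum_n\nu(n)$ and $S_2=\sum_n\nu(n)\sum_{i=1}^{k}\onef_{n+h_i\ \text{prime}}$ --- these will be Theorems~\ref{prime-asym} and \ref{nonprime-asym}. Expanding the square and interchanging summation, $S_1$ is driven by a purely elementary main term proportional to $I_k(F)=\int F^2$, using only the divisor estimates \eqref{divisor-bound}--\eqref{divisor-2}; while each $n+h_i$ contribution to $S_2$ requires equidistribution of $\Lambda$ in progressions to moduli up to $\approx R^2$, supplied by Theorem~\ref{bv-thm} when $\vartheta<1/2$, by $\EH[\vartheta]$ for larger $\vartheta$, and --- in order to evaluate the cross-terms that appear once $\mathcal R$ exceeds the simplex --- by the bilinear estimate $\GEH[\vartheta]$ applied after a Vaughan/Heath--Brown type factorisation of $\Lambda$, in the spirit of the proof of Proposition~\ref{geh-eh}. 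The main term of $S_2$ is proportional to $\sum_{i=1}^{k}J_k^{(i)}(F)$ with $J_k^{(i)}(F)=\int\big(\int F\,dx_i\big)^2$, times an extra factor $\tfrac{\log R}{\log x}$. Dividing, $S_2>(j-1)S_1$ follows provided there is a nonzero admissible $F$ on $\mathcal R$ obeying a ratio bound of the shape
\begin{equation*}
\frac{\sum_{i=1}^{k}J_k^{(i)}(F)}{I_k(F)}>\frac{2(j-1)}{\vartheta}
\end{equation*}
(in the classical regime; the enlarged-support and ``$\eps$-trick'' variants sharpen this), which is exactly the variational criterion to be packaged in Theorems~\ref{maynard-thm}, \ref{maynard-trunc}, \ref{epsilon-trick}, \ref{epsilon-beyond}, reducing $\DHL[k,j]$ to an extremal problem for $F$.

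I expect two genuine difficulties. First, justifying the enlargement of $\mathcal R$: one must track precisely which terms in the expansion of $S_2$ retain a ``Type I'' shape (handled by $\EH$) and which genuinely need the full bilinear $\GEH$ input, and then show the off-simplex contributions do not swamp the main term --- this is the reason the sharp bounds ($H_1\le6$, etc.) require $\GEH$ and not merely $\EH$. Second, actually solving the variational problem: for the small and medium $k$ relevant to $H_1,\dots,H_5$ this means constructing explicit (piecewise-polynomial, or symmetric-function) test functions $F$ and bounding $I_k(F)$ and the $J_k^{(i)}(F)$ rigorously, typically with computer assistance; and for large $k$ it means an asymptotic analysis showing that the optimal ratio grows like $\log k$, so the criterion is met for $k$ of the size needed to make $H_m$ as small as claimed. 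Assembling the sieve asymptotics, the variational bounds (Theorems~\ref{mlower}, \ref{mlower-var}, \ref{mke-lower}, \ref{piece}), and the narrow-tuple bounds (Theorem~\ref{hk-bound}) then yields the list of valid $\DHL[k,j]$ recorded in Theorem~\ref{main-dhl}, and hence Theorem~\ref{main}.
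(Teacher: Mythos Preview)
The statement $\DHL[k,j]$ is not a theorem the paper proves; it is the \emph{definition} of a conjectural assertion, on a par with the Claims $\EH[\vartheta]$, $\MPZ[\varpi,\delta]$, and $\GEH[\vartheta]$. There is accordingly no ``paper's own proof'' of it to compare against. You have evidently recognised this, and what you have written is not a proof of the Claim but rather an outline of the machinery the paper develops to establish \emph{specific instances} of $\DHL[k,j]$ (namely those listed in Theorem~\ref{main-dhl}). As such an outline, your summary is accurate and matches the paper's architecture: the pigeonhole reduction (Lemma~\ref{crit}), the Selberg-type weight, the asymptotics for prime and non-prime sums (Theorems~\ref{prime-asym}, \ref{nonprime-asym}), the reduction to a variational problem (Theorems~\ref{maynard-thm}--\ref{epsilon-beyond}), and the analysis of that problem (Theorems~\ref{mlower}, \ref{mlower-var}, \ref{mke-lower}, \ref{piece}).

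One small inaccuracy worth flagging: you describe $\GEH[\vartheta]$ as entering via a Vaughan/Heath--Brown factorisation of $\Lambda$ to handle cross-terms in $S_2$. In the paper the prime sums $S_2$ only ever need $\EH[\vartheta]$ (Theorem~\ref{prime-asym}); $\GEH[\vartheta]$ is invoked for the \emph{non-prime} sum $S_1$ (Theorem~\ref{nonprime-asym}(ii)) once the support of $F$ is enlarged past the standard simplex, and the relevant factorisation there is of the divisor-sum product $\lambda_{F_k}\lambda_{G_k}$ via the fundamental theorem of arithmetic, not of $\Lambda$. This is a detail, but it affects where the real novelty lies.
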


The full Dickson-Hardy-Littlewood conjecture is then the assertion that $\DHL[k,k]$ holds for all $k \ge 2$.  In our analysis we will focus on the case when $j$ is much smaller than $k$; in fact $j$ will be of the order of $\log k$.

For any $k$, let $H(k)$ denote the minimal diameter $h_k-h_1$ of an admissible $k$-tuple; thus for instance $H(3)=6$.  It is clear that for any natural numbers $m \geq 1$ and $k \geq m+1$, the claim $\DHL[k,m+1]$ implies that $H_m \leq H(k)$ (and the claim $\DHL[k,k]$ would imply that $H_{k-1} = H(k)$).  We will therefore deduce Theorem \ref{main} from a number of claims of the form $\DHL[k,j]$.  More precisely, we have:

\begin{theorem}\label{main-dhl} Unconditionally, we have the following claims:
\begin{itemize}
\item[(i)] $\DHL[50,2]$.
\item[(ii)] $\DHL[\num{35410},3]$.
\item[(iii)] $\DHL[\num{1649821},4]$.
\item[(iv)] $\DHL[\num{75845707},5]$.
\item[(v)] $\DHL[\num{3473955908},6]$.
\item[(vi)] $\DHL[k,m+1]$ whenever $m \geq 1$ and $k \geq C\exp( (4 - \frac{28}{157}) m )$ for some sufficiently large absolute (and effective) constant $C$.
\end{itemize}
Assume the Elliott-Halberstam conjecture $\EH[\vartheta]$ for all $0 < \vartheta < 1$.  Then we have the following improvements:
\begin{itemize}
\item[(vii)] $\DHL[54,3]$.
\item[(viii)] $\DHL[\num{5511},4]$.
\item[(ix)] $\DHL[\num{41588},5]$.
\item[(x)] $\DHL[\num{309661},6]$.
\item[(xi)] $\DHL[k,m+1]$ whenever $m \geq 1$ and $k \geq C\exp( 2 m )$ for some sufficiently large absolute (and effective) constant $C$.
\end{itemize}
Assume the generalized Elliott-Halberstam conjecture $\GEH[\vartheta]$ for all $0 < \vartheta < 1$.  Then
\begin{itemize}
\item[(xii)] $\DHL[3,2]$.
\item[(xiii)] $\DHL[51,3]$.
\end{itemize}
\end{theorem}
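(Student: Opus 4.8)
The plan is to deduce each assertion $\DHL[k,j]$ by feeding the appropriate distributional estimate from Section \ref{dist-sec} into the sieve criteria proved in Section \ref{variational-sec}, and then quoting the corresponding solution of the associated multidimensional variational problem from Theorems \ref{mlower}, \ref{mlower-var}, \ref{mke-lower}, \ref{piece}. Recall the mechanism: given an admissible $k$-tuple ${\mathcal H}=(h_1,\dots,h_k)$, one forms a Maynard--Tao sieve weight $w(n)=\bigl(\sum_{d_1\mid n+h_1,\ldots,d_k\mid n+h_k}\lambda_{d_1,\dots,d_k}\bigr)^2$, the coefficients $\lambda$ being a smoothing of a fixed cutoff function $F$ supported on a region ${\mathcal R}\subseteq\{(t_1,\dots,t_k):t_i\ge 0\}$. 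Theorem \ref{prime-asym} and Theorem \ref{nonprime-asym} evaluate the sieve sums $\sum_{x\le n\le 2x}\theta(n+h_i)w(n)$ (for $h_i\in{\mathcal H}$) and $\sum_{x\le n\le 2x}w(n)$, as well as the further sums needed for the $\GEH$-based refinements, as explicit integral functionals of $F$ times the expected main term --- the ``prime'' functional involving the available level of distribution $\vartheta$ together with the one-dimensional marginals of $F$, the ``non-prime'' functional being essentially $\int_{\mathcal R}F^2$. Lemma \ref{crit}, an application of the pigeonhole principle, converts a sufficiently favourable ratio of these functionals into the conclusion that a positive proportion of translates $n+{\mathcal H}$ contain more than $m$ primes, i.e. into $\DHL[k,m+1]$. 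Theorems \ref{maynard-thm}, \ref{maynard-trunc}, \ref{epsilon-trick}, \ref{epsilon-beyond} package this chain into ready-to-use statements: $\DHL[k,m+1]$ holds as soon as one exhibits an $F$ --- supported on the standard simplex, on a truncated simplex, or on one of two successively larger regions, the largest of which is controlled only under $\GEH[\vartheta]$ --- whose ratio beats an explicit threshold of the shape $\tfrac{2m}{\vartheta}$ (up to lower-order corrections).

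With the criteria in hand, each of (i)--(xiii) reduces to quoting such a variational bound with the right distributional exponent. For (i) only the Bombieri--Vinogradov theorem (Theorem \ref{bv-thm}) is used, so $\vartheta$ may be taken arbitrarily close to $\tfrac12$; the input needed is a lower bound exceeding $4$ for an enlarged-support variational quantity at $k=50$, supplied by the criterion of Theorem \ref{epsilon-trick} together with the certified numerical optimization of $F$ --- over a finite-dimensional space of symmetric polynomials --- carried out in Section \ref{h1-sec} and recorded in Theorem \ref{piece}. For (ii)--(v) one replaces Bombieri--Vinogradov by the Motohashi--Pintz--Zhang estimate of Theorem \ref{mpz-poly}, which provides an admissible level of distribution $\vartheta=\tfrac12+2\varpi$ with $\varpi$ approaching $\tfrac{7}{600}$ --- that is, $\vartheta$ approaching $\tfrac{157}{300}$ --- and combines it with the medium- and large-$k$ lower bounds on $M_k$ from Theorems \ref{mlower-var}, \ref{mke-lower}, \ref{piece}. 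For (vi) one invokes the sharpened asymptotic analysis of Section \ref{asymptotics-sec} (Theorems \ref{mlower}, \ref{mke-lower}), which yields lower bounds of the form $M_k\ge\log k-O(1)$; the criterion $M_k>\tfrac{2m}{\vartheta}$ is then met once $\log k>\tfrac{2m}{\vartheta}+O(1)$, and with $\vartheta\to\tfrac{157}{300}$ this is precisely $k\gg\exp\bigl(\tfrac{600}{157}m\bigr)=\exp\bigl((4-\tfrac{28}{157})m\bigr)$, the exponent appearing in (vi). Items (vii)--(xi) are the same argument with $\EH[\vartheta]$, $\vartheta\to 1$, replacing Bombieri--Vinogradov or $\MPZ$: this halves the thresholds, replacing the growth $\exp((4-\tfrac{28}{157})m)$ of (vi) by the growth $\exp(2m)$ of (xi), and otherwise just calls on the appropriate entries of Theorems \ref{mlower}--\ref{piece}.

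The two $\GEH$ items (xii)--(xiii) are where the real work lies, and (xii), namely $\DHL[3,2]$ (which gives the bound $H_1\le 6$, since $H(3)=6$), is the crux. On the standard simplex $\{t_1+t_2+t_3\le 1\}$ the optimal ratio $M_3$ falls short of the threshold $2$ that is needed, even with $\vartheta\to 1$, to force two primes among three shifts; so one must enlarge the support of $F$ into a region where the bilinear sums arising in the evaluation of the prime sum are no longer controlled by $\EH[\vartheta]$ but only by its generalization $\GEH[\vartheta]$ --- this enlargement, and the accompanying criterion, is the content of Theorem \ref{epsilon-beyond}. It then remains to exhibit an explicit $F$ on this enlarged three-dimensional region whose ratio exceeds $2$, a concrete small-$k$ variational computation performed in Section \ref{h1-sec}; (xiii) is handled the same way at $k=51$. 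We expect the main obstacle throughout to be the variational side rather than the sieve bookkeeping: the medium-range cases ($k=50,51,54$, and the large values of $k$ in (ii)--(v) and (viii)--(x)) require efficient and rigorously error-controlled numerical schemes that certify the needed inequalities for $F$, while (vi) and (xi) require an asymptotically optimal family of test functions to pin down the constants $4-\tfrac{28}{157}$ and $2$ in the exponents. By contrast, once Lemma \ref{crit} and Theorems \ref{maynard-thm}--\ref{epsilon-beyond} are available, the passage from a verified variational inequality to the corresponding $\DHL[k,m+1]$ is routine.
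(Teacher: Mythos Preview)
Your high-level architecture is right: each part of Theorem \ref{main-dhl} is obtained by combining a distributional input (Theorem \ref{bv-thm}, \ref{mpz-poly}, $\EH$, or $\GEH$) with one of the sieve criteria (Theorems \ref{maynard-thm}, \ref{maynard-trunc}, \ref{epsilon-trick}, \ref{epsilon-beyond}) and a variational bound (Theorems \ref{mlower}, \ref{mlower-var}, \ref{mke-lower}, \ref{piece}). However, several of your specific attributions are wrong, and a couple reflect a conceptual point you are missing.

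First, for (ii)--(vi) you write the criterion as ``$M_k>\tfrac{2m}{\vartheta}$'' with $\vartheta=\tfrac12+2\varpi$. This is not what the paper does. The Motohashi--Pintz--Zhang estimate $\MPZ[\varpi,\delta]$ restricts to moduli that are $x^\delta$-smooth, so one cannot use Theorem \ref{maynard-thm} (which needs full $\EH[\vartheta]$); one must instead use Theorem \ref{maynard-trunc}, whose cutoff $F$ is supported on the \emph{truncated} simplex \eqref{ttk}, and the relevant variational quantity is $M_k^{[\delta/(1/4+\varpi)]}$, bounded in Theorem \ref{mlower-var}. Your derivation of the exponent $4-\tfrac{28}{157}$ happens to come out right numerically because $\tfrac{2m}{157/300}=\tfrac{m}{1/4+7/600}$, but the criterion you cite is the wrong one.

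Second, you have swapped some of the variational inputs. For (i), the $k=50$ bound is Theorem \ref{mke-lower}(i) ($M_{50,1/25}>4.0043$), fed into Theorem \ref{epsilon-trick}(i); Theorem \ref{piece} is not involved --- that theorem constructs the piecewise polynomial $F$ on $\tfrac32\cdot\mathcal R_3$ used only for (xii). Conversely, for (xii) you should cite Theorem \ref{piece} explicitly. For (xiii) you say it is handled ``the same way'' as (xii), i.e.\ via Theorem \ref{epsilon-beyond}; in fact (xiii) uses Theorem \ref{epsilon-trick}(ii) together with Theorem \ref{mke-lower}(xiii) ($M_{51,1/50}>4.00156$) --- the vanishing-marginal machinery of Theorem \ref{epsilon-beyond} is not needed there.

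Finally, your description of where $\GEH$ enters is backwards: the generalized Elliott--Halberstam conjecture is used via Theorem \ref{nonprime-asym}(ii) to control the \emph{non-prime} sieve sum $\sum_n\nu(n)$ when the support of $F$ extends past the unit simplex; the prime sums $\sum_n\nu(n)\theta(n+h_i)$ still use only $\EH[\vartheta]$ (available from $\GEH[\vartheta]$ by Proposition \ref{geh-eh}).
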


Theorem \ref{main} then follows from Theorem \ref{main-dhl} and the following bounds on $H(k)$ (ordered by increasing value of $k$):

\begin{theorem}[Bounds on $H(k)$]\label{hk-bound}\
\begin{itemize}
\item[(xii)] $H(3)=6$.
\item[(i)] $H(50) = 246$.
\item[(xiii)] $H(51) = 252$.
\item[(vii)] $H(54) = 270$.
\item[(viii)] $H(\num{5511}) \leq \num{52116}$.
\item[(ii)] $H(\num{35410}) \leq \num{398130}$.
\item[(ix)] $H(\num{41588}) \leq \num{474266}$.
\item[(x)] $H(\num{309661}) \leq \num{4137854}$.
\item[(iii)] $H(\num{1649821}) \leq \num{24797814}$.
\item[(iv)] $H(\num{75845707}) \leq \num{1431556072}$.
\item[(v)] $H(\num{3473955908}) \leq \num{80550202480}$.
\item[(vi), (xi)] In the asymptotic limit $k \to \infty$, one has $H(k) \leq k \log k + k \log\log k - k + o(k)$, with the bounds on the decay rate $o(k)$ being effective.
\end{itemize}
\end{theorem}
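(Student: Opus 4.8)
Theorem~\ref{hk-bound} collects three rather different kinds of assertion, and I would treat them separately. \textbf{Exact values for small and medium $k$} ($H(3)=6$, $H(50)=246$, $H(51)=252$, $H(54)=270$): each equality is an upper bound together with a matching lower bound. For the upper bound it is enough to exhibit one admissible $k$-tuple of the claimed diameter and verify admissibility directly --- for each prime $p\le k$ one checks that the reduction of the tuple mod $p$ omits some residue class, while for $p>k$ admissibility is automatic since $k$ integers cannot occupy all $p$ residue classes. For $k=3$ the tuple $(0,2,6)$ works, and $H(3)\ge 6$ is a finite hand check: to omit a class mod $2$ an admissible $3$-tuple of diameter $\le 5$ must have all entries of one parity, which forces (after translation) the shape $\{0,2,4\}$ of diameter $4$, but $\{0,2,4\}$ meets every residue class mod $3$. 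For $k=50,51,54$ the matching lower bounds cannot be done by hand; here one invokes the exhaustive searches over all $k$-tuples of diameter below the stated value recorded in the online tables of admissible tuples referenced in the statement, which certify that no narrower admissible tuple exists.

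\textbf{Upper bounds for large $k$} ($H(5511)\le 52116$ through $H(3473955908)\le 80550202480$): these are pure existence claims, so it suffices to produce a single admissible $k$-tuple of the stated diameter and verify admissibility by the check above. Candidates are generated by classical sieving: starting from an interval of consecutive integers, delete the class $0\bmod p$ for the small primes $p$, and for the remaining primes $p\le k$ delete a residue class chosen greedily so as to remove as many surviving points as possible; then translate the surviving set to minimize its diameter, and for the smaller values of $k$ improve it further by a local search on the endpoints. The explicit tuples are those posted in the cited tables, so at the level of this paper this part reduces to recording and re-verifying them.

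\textbf{The asymptotic bound} (parts (vi) and (xi)): this is the one genuinely analytic step. I would use the admissible $k$-tuple $\mathcal H_k=(q_1,\ldots,q_k)$ whose entries are the first $k$ primes exceeding $k$, so $q_1=p_{\pi(k)+1}$ and $q_k=p_{\pi(k)+k}$. It is admissible: no $q_i$ is divisible by any prime $p\le k$ (as $q_i$ is prime and $q_i>k\ge p$), so $\mathcal H_k$ omits $0\bmod p$ for every $p\le k$, and for $p>k$ admissibility is automatic; hence $H(k)\le q_k-q_1$. Now apply the prime number theorem with its classical (effective) error term. First, effective PNT gives a prime in $(k,k+k/(\log k)^2]$ for all large $k$, so $q_1=k+o(k)$ with effective decay. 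Second, put $N\coloneqq\pi(k)+k$; since $\pi(k)=o(k)$ we have $\log N=\log k+o(1)$ and $\log\log N=\log\log k+o(1)$, and the identity $\pi(k)\log k=k+o(k)$ gives $N\log N=k\log k+k+o(k)$, $N\log\log N=k\log\log k+o(k)$, $-N=-k+o(k)$. Feeding these into Cipolla's expansion $p_N=N\log N+N\log\log N-N+o(N)$ (itself a consequence of the effective PNT) yields
\[ q_k=p_N=(k\log k+k)+k\log\log k-k+o(k)=k\log k+k\log\log k+o(k). \]
Subtracting, $H(k)\le q_k-q_1=k\log k+k\log\log k-k+o(k)$, with every error term effective because the PNT inputs are. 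This establishes (vi) and (xi) at once.

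\textbf{Main obstacle.} The numerical lower bounds for $k=50,51,54$ and the enormous tuples in the large-$k$ part are not ``proved'' in the usual sense --- they rest on exhaustive or heuristic machine computations that must be cited, the only intrinsic mathematical content being the routine admissibility verification. The genuinely delicate point is the asymptotic bound: producing the constant exactly $-k$ (and not $+0$ or $-2k$) hinges on the second-order term of Cipolla's formula and on the exact cancellation between the $-N\sim -k$ term and the contribution $\pi(k)\log k\sim k$ concealed in $N\log N$; a coarser appeal to PNT only gives $H(k)\le(1+o(1))k\log k$. Keeping the $o(k)$ effective is then merely a matter of carrying the de la Vall\'ee Poussin error term through these estimates.
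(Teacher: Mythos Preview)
Your proposal is correct and follows essentially the same approach as the paper. For the exact values at $k=3,50,51,54$ the paper likewise exhibits explicit admissible tuples and, for the lower bounds at $k=50,51,54$, cites an external exhaustive computation (specifically the tables of Clark and Jarvis rather than online databases); for the large-$k$ upper bounds the paper constructs tuples by the same family of greedy and shifted sieves you describe; and for the asymptotic bound the paper uses exactly your tuple of the first $k$ primes exceeding $k$ together with the PNT expansions $p_N=N\log N+N\log\log N-N+o(N)$ and $\pi(x)=x/\log x+O(x/\log^2 x)$, though it states the resulting computation more tersely than you do.
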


We prove Theorem \ref{hk-bound} in Section \ref{tuples-sec}.  In the opposite direction, an application of the Brun-Titchmarsh theorem gives $H(k) \geq (\frac{1}{2} + o(1)) k \log k$ as $k \to \infty$; see \cite[\S 3.9]{polymath8a-unabridged} for this bound, as well as with some slight refinements.

\begin{table}
\centering
\setlength{\extrarowheight}{2pt}
\caption{Results used to prove various components of Theorem \ref{main-dhl}. Note that Theorems \ref{maynard-thm}, \ref{maynard-trunc}, \ref{epsilon-trick}, \ref{epsilon-beyond} are in turn proven using Theorems \ref{prime-asym}, \ref{nonprime-asym}, and Lemma \ref{crit}.}\label{ingredients}
\begin{tabular}{lr}
\toprule
Theorem \ref{main-dhl}                 & Results used \\
\midrule
(i) & Theorems \ref{bv-thm}, \ref{epsilon-trick}, \ref{mke-lower} \\
(ii)-(vi) & Theorems \ref{mpz-poly}, \ref{maynard-trunc}, \ref{mlower-var} \\
(vii)-(xi) & Theorems \ref{maynard-thm}, \ref{mlower} \\
(xii) & Theorems \ref{epsilon-beyond}, \ref{piece} \\
(xiii) & Theorems \ref{epsilon-trick}, \ref{mke-lower} \\
\bottomrule
\end{tabular}
\end{table}

The proof of Theorem \ref{main-dhl} follows the Goldston-Pintz-Y{\i}ld{\i}r{\i}m strategy that was also used in all previous progress on this problem (e.g. \cite{gpy}, \cite{mp}, \cite{zhang}, \cite{polymath8a}, \cite{maynard-new}), namely that of constructing a sieve function adapted to an admissible $k$-tuple with good properties.  More precisely, we set
$$ w := \log \log \log x$$
and 
$$ W := \prod_{p \leq w} p,$$
and observe the crude bound
\begin{equation}\label{W-bound}
 W \ll \log \log^{O(1)} x.
\end{equation}
We have the following simple ``pigeonhole principle'' criterion for $\DHL[k,m+1]$ (cf. \cite[Lemma 4.1]{polymath8a}, though the normalization here is slightly different):

\begin{lemma}[Criterion for $\DHL$]\label{crit} Let $k \geq 2$ and $m \geq 1$ be fixed integers, and define the normalization constant
\begin{equation}\label{bnorm}
 B := \frac{\phi(W)}{W} \log x.
\end{equation}
Suppose that for each fixed admissible $k$-tuple $(h_1,\dots,h_k)$
  and each residue class $b\ (W)$ such that $b+h_i$ is coprime to $W$ for all $i=1,\dots,k$, one can find a non-negative weight
  function $\nu \colon \N \to \R^+$ and fixed quantities $\alpha > 0$ and $\beta_1,\dots,\beta_k \geq
  0$, such that one has the
  asymptotic upper bound
\begin{equation}\label{s1}
 \sum_{\substack{x \leq n \leq 2x\\ n = b\ (W)}} \nu(n) \leq (\alpha+o(1)) B^{-k} \frac{x}{W},
 \end{equation}
the asymptotic lower bound
\begin{equation}\label{s2}
  \sum_{\substack{x \leq n \leq 2x\\ n = b\ (W)}} \nu(n) \theta(n+h_i) \geq (\beta_i-o(1)) B^{1-k} \frac{x}{\phi(W)}
\end{equation}
for all $i=1,\dots,k$, and the key inequality
\begin{equation}\label{key}
\frac{\beta_1 + \dots + \beta_k}{\alpha} > m.
\end{equation}
Then $\DHL[k,m+1]$ holds.
\end{lemma}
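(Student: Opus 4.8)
The plan is to run the Goldston--Pintz--Y\i ld\i r\i m pigeonhole argument in the normalization fixed by \eqref{bnorm}. Fix an admissible $k$-tuple $(h_1,\dots,h_k)$. The first step is to check that the hypothesis of the lemma can be invoked at all, i.e.\ that a residue class $b\ (W)$ with $(b+h_i,W)=1$ for all $i$ actually exists --- this is the only place admissibility is used. For each prime $p\le w$, admissibility supplies a residue class $a_p\ (p)$ disjoint from $\{h_1,\dots,h_k\}\bmod p$; by the Chinese Remainder Theorem choose $b\ (W)$ with $b\equiv -a_p\ (p)$ for every $p\mid W$. Then $b+h_i\equiv h_i-a_p\not\equiv 0\ (p)$ for all $i$ and all $p\le w$, so $(b+h_i,W)=1$. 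Fix this $b$; the hypotheses then furnish a non-negative $\nu$ together with $\alpha>0$ and $\beta_1,\dots,\beta_k\ge 0$ obeying \eqref{s1}, \eqref{s2} and \eqref{key}.

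Next, for each large $x$ I would introduce the weighted counting sum
\[
  S(x) \coloneqq \sum_{\substack{x\le n\le 2x\\ n=b\ (W)}} \nu(n)\Bigl(\sum_{i=1}^{k}\theta(n+h_i) - m\log(3x)\Bigr).
\]
Since the $h_i$ are fixed, for $x$ large one has $n+h_i<3x$, hence $\theta(n+h_i)\le\log(3x)$ for every $i$; as $\theta(n+h_i)$ vanishes unless $n+h_i$ is prime, it follows that if $n+h_i$ is prime for at most $m$ indices $i$ then $\sum_i\theta(n+h_i)\le m\log(3x)$. Because $\nu\ge 0$, positivity of $S(x)$ forces some $n\in[x,2x]$ with $n\equiv b\ (W)$ for which $\sum_i\theta(n+h_i)>m\log(3x)$, and therefore $n+\mathcal{H}$ contains at least $m+1$ primes. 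As this holds for arbitrarily large $x$, we obtain infinitely many translates of $\mathcal{H}$ containing at least $m+1$ primes, which is precisely $\DHL[k,m+1]$.

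It then remains to show $S(x)>0$ for $x$ large. Expanding $S(x)=\sum_{i=1}^k\sum_n\nu(n)\theta(n+h_i)-m\log(3x)\sum_n\nu(n)$ and inserting \eqref{s2} as a lower bound on each inner sum over $i$ and \eqref{s1} as an upper bound on $\sum_n\nu(n)$ (legitimate since $m\log(3x)>0$) yields
\[
  S(x)\ge\Bigl(\sum_{i=1}^{k}\beta_i-o(1)\Bigr)B^{1-k}\frac{x}{\phi(W)}-m\log(3x)\,(\alpha+o(1))\,B^{-k}\frac{x}{W}.
\]
Substituting $B=\frac{\phi(W)}{W}\log x$ turns $B^{1-k}\frac{x}{\phi(W)}$ into $B^{-k}\frac{x\log x}{W}$, so after factoring out the positive quantity $B^{-k}\frac{x\log x}{W}$ the bracket becomes $\sum_i\beta_i-m\frac{\log(3x)}{\log x}\alpha-o(1)$, which equals $\sum_i\beta_i-m\alpha-o(1)$ because $\log(3x)=(1+o(1))\log x$. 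By the \emph{strict} inequality \eqref{key} this is bounded below by a fixed positive constant once $x$ is large, giving $S(x)>0$. There is no genuine obstacle in the argument: the substance is the pigeonhole step and the bookkeeping of powers of $B$, and the only point needing care is precisely that the $o(1)$ errors --- from \eqref{s1}--\eqref{s2} and from replacing $\log(3x)$ by $\log x$ --- remain negligible against the fixed gap $\frac{\beta_1+\cdots+\beta_k}{\alpha}-m>0$, which is why \eqref{key} must be a strict inequality.
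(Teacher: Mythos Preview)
Your proof is correct and follows essentially the same argument as the paper's own proof: both introduce the weighted sum $\sum_n \nu(n)\bigl(\sum_i \theta(n+h_i)-m\log 3x\bigr)$, bound it below via \eqref{s1}--\eqref{s2}, and use the strict inequality \eqref{key} to force positivity and hence at least $m+1$ primes in some translate. Your write-up is simply more explicit about the CRT construction of $b$ and the bookkeeping of the $B$-factors and the $\log(3x)/\log x$ ratio, but the structure is identical.
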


\begin{proof}
Let $(h_1,\ldots,h_{k})$ be a fixed
  admissible $k$-tuple.  Since it is admissible, there is at least
  one residue class $b\ (W)$ such that $(b+h_i,W)=1$ for all $h_i
  \in {\mathcal H}$.  For an arithmetic function $\nu$ as in the
  lemma, we consider the quantity
$$
N:=\sum_{\substack{x \leq n \leq 2x\\ n = b\ (W)}}
 \nu(n) \left(\sum_{i=1}^{k} \theta(n+h_i) - m\log 3x\right).
$$
Combining \eqref{s1} and \eqref{s2}, we obtain the lower bound
$$
N\geq (\beta_1+\dots+\beta_k-o(1)) B^{1-k} \frac{x}{\phi(W)} - (m\alpha+o(1)) B^{-k} \frac{x}{W} \log 3x.
$$
From \eqref{bnorm} and the crucial condition \eqref{key}, it follows that
$N>0$ if $x$ is sufficiently large.
\par
On the other hand, the sum
$$
\sum_{i=1}^{k} \theta(n+h_i) - m\log 3x
$$
can  be positive only if $n+h_i$ is prime for \emph{at least} $m+1$
indices $i=1, \ldots, k$.  We conclude that, for all sufficiently
large $x$, there exists some integer $n \in [x,2x]$ such that $n+h_i$
is prime for at least $m+1$ values of $i=1,\ldots,k$.
\par
Since $(h_1,\dots,h_k)$ is an arbitrary admissible $k$-tuple,
$\DHL[k,m+1]$ follows.
\end{proof}

The objective is then to construct non-negative weights $\nu$ whose associated ratio $\frac{\beta_1+\dots+\beta_k}{\alpha}$ has provable lower bounds that are as large as possible.   Our sieve majorants will be a variant of the multidimensional Selberg sieves used in \cite{maynard-new}.  As with all Selberg sieves, the $\nu$ are constructed as the square of certain (signed) divisor sums.  The divisor sums we will use will be finite linear combinations of products of ``one-dimensional'' divisor sums.  More precisely, for any fixed smooth compactly supported function $F: [0,+\infty) \to \R$, define the divisor sum $\lambda_F: \Z \to \R$ by the formula
\begin{equation}\label{lambdaf-def}
\lambda_F(n) := \sum_{d|n} \mu(d) F( \log_x d )
\end{equation}
where $\log_x$ denotes the base $x$ logarithm
\begin{equation}\label{logx-def}
\log_x n:= \frac{\log n}{\log x}.
\end{equation}
One should think of $\lambda_F$ as a smoothed out version of the indicator function to numbers $n$ which are ``almost prime'' in the sense that they have no prime factors less than $x^\eps$ for some small fixed $\eps>0$; see Proposition \ref{almostprime} for a more rigorous version of this heuristic.

The functions $\nu$ we will use will take the form
\begin{equation}\label{nuform}
 \nu(n) = \left( \sum_{j=1}^J c_j \lambda_{F_{j,1}}(n+h_1) \dots \lambda_{F_{j,k}}(n+h_k) \right)^2 
\end{equation}
for some fixed natural number $J$, fixed coefficients $c_1,\dots,c_J \in \R$ and fixed smooth compactly supported functions $F_{j,i}: [0,+\infty) \to \R$ with $j=1,\dots,J$ and $i=1,\dots,k$.  (One can of course absorb the constant $c_j$ into one of the $F_{j,i}$ if one wishes.)   Informally, $\nu$ is a smooth restriction to those $n$ for which $n+h_1,\dots,n+h_k$ are all almost prime.

Clearly, $\nu$ is a (positive-definite) fixed linear combination of functions of the form
$$ n \mapsto \prod_{i=1}^k \lambda_{F_i}(n+h_i) \lambda_{G_i}(n+h_i)$$
for various fixed smooth functions $F_1,\dots,F_k,G_1,\dots,G_k: [0,+\infty) \to \R$.  The sum appearing in \eqref{s1} can thus be decomposed into fixed linear combinations of sums of the form
\begin{equation}\label{sfg-1}
 \sum_{\substack{x \leq n \leq 2x\\ n = b\ (W)}} \prod_{i=1}^k \lambda_{F_i}(n+h_i) \lambda_{G_i}(n+h_i).
\end{equation}
Also, if $F$ is supported on $[0,1]$, then from \eqref{lambdaf-def} we clearly have
\begin{equation}\label{lambdan-prime}
\lambda_F(n) = F(0)
\end{equation}
when $n \geq x$ is prime, and so the sum appearing in \eqref{s2} can be similarly decomposed in this case into fixed linear combinations of sums of the form
\begin{equation}\label{sfg-2}
 \sum_{\substack{x \leq n \leq 2x\\ n = b\ (W)}} \theta(n+h_i) \prod_{1 \leq i' \leq k; i' \neq i} \lambda_{F_{i'}}(n+h_{i'}) \lambda_{G_{i'}}(n+h_{i'}).
\end{equation}

To estimate the sums \eqref{sfg-2}, we use the following asymptotic, proven in Section \ref{sieving-sec}.  For each compactly supported $F: [0,+\infty) \to \R$, let
\begin{equation}\label{S-def}
S(F) \coloneqq \sup \{ x \geq 0: F(x) \neq 0\}
\end{equation}
 denote the upper range of the support of $F$ (with the convention that $S(0)=0$).

\begin{theorem}[Asymptotic for prime sums]\label{prime-asym}  Let $k \geq 2$ be fixed, let $(h_1,\dots,h_k)$ be a fixed admissible $k$-tuple, and let $b\ (W)$ be such that $b+h_i$ is coprime to $W$ for each $i=1,\dots,k$.  Let $1 \leq i_0 \leq k$ be fixed, and for each $1 \leq i \leq k$ distinct from $i_0$, let $F_{i}, G_{i}: [0,+\infty) \to \R$ be fixed smooth compactly supported functions.  Assume one of the following hypotheses:
\begin{itemize}
\item[(i)]  (Elliott-Halberstam) There exists a fixed $0 < \vartheta < 1$ such that $\EH[\vartheta]$ holds, and such that
\begin{equation}\label{fg-upper}
\sum_{1 \leq i \leq k; i \neq i_0} ( S(F_{i}) + S(G_{i}) ) < \vartheta.
\end{equation}
\item[(ii)]  (Motohashi-Pintz-Zhang) There exists fixed $0 \leq \varpi < 1/4$ and $\delta > 0$ such that $\MPZ[\varpi,\delta]$ holds, and such that
\begin{equation}\label{fg-upper-alt}
\sum_{1 \leq i \leq k; i \neq i_0} ( S(F_{i}) + S(G_{i}) ) < \frac{1}{2} + 2 \varpi
\end{equation}
and
\begin{equation}
\max_{1 \leq i \leq k; i \neq i_0} \Bigl\{S(F_{i}), S(G_{i}) \Bigr\}< \delta.
\end{equation}
\end{itemize}
Then we have
\begin{equation}\label{theta-oo}
 \sum_{\substack{x \leq n \leq 2x\\ n = b\ (W)}} \theta(n+h_{i_0}) \prod_{1 \leq i \leq k; i \neq i_0} \lambda_{F_{i}}(n+h_{i}) \lambda_{G_{i}}(n+h_{i}) =
(c+o(1)) B^{1-k} \frac{x}{\phi(W)}
\end{equation}
where $B$ is given by \eqref{bnorm} and
$$ c := \prod_{1 \leq i \leq k; i \neq i_0} \left(\int_0^1 F'_i(t_{i}) G'_{i}(t_{i})\ dt_{i}\right).$$
Here of course $F'$ denotes the derivative of $F$.
\end{theorem}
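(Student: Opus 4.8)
The plan is to reduce the sum over $n$ to a sum over moduli $d_i, e_i$ by expanding each divisor sum $\lambda_{F_i}$ and $\lambda_{G_i}$, then to apply the prime distribution hypothesis ($\EH$ or $\MPZ$) to control the resulting sum of discrepancies $\Delta(\theta\onef_{[x,2x]}; a\ (q))$. First I would expand
$$\prod_{i\neq i_0}\lambda_{F_i}(n+h_i)\lambda_{G_i}(n+h_i) = \sum_{d_i, e_i} \Bigl(\prod_{i\neq i_0}\mu(d_i)\mu(e_i)F_i(\log_x d_i)G_i(\log_x e_i)\Bigr),$$
summed over tuples with $[d_i,e_i] \mid n+h_i$; because each $F_i, G_i$ is supported in $[0, S(F_i)]$ (resp.\ $[0,S(G_i)]$), the moduli $[d_i,e_i]$ are each at most $x^{S(F_i)+S(G_i)+o(1)}$, so the combined modulus $q := \prod_{i\neq i_0}[d_i,e_i]$ is at most $x^{\sum_{i\neq i_0}(S(F_i)+S(G_i))+o(1)}$, which is $\lessapprox x^\vartheta$ under \eqref{fg-upper} (resp.\ $\lessapprox x^{1/2+2\varpi}$ under \eqref{fg-upper-alt}, with the individual moduli $\delta$-smooth enough to lie in the relevant class $\Scal_I$). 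The inner sum over $n$, for fixed $d_i, e_i$, is a sum of $\theta(n+h_{i_0})$ over $n$ in a single residue class modulo $Wq$ (after using CRT to combine the condition $n = b\ (W)$ with the divisibility conditions; note $q$ is coprime to $W$ since every prime factor of $q$ exceeds $w$, and the $h_i$ are distinct so the conditions are compatible on the relevant residue classes), which equals the main term $\frac{1}{\phi(Wq)}\sum_{(n,Wq)=1}\theta(n+h_{i_0})\onef_{[x,2x]}$ plus a discrepancy term $\Delta(\theta\onef_{[x,2x]}; a\ (Wq))$.

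The error contribution is then bounded by $\sum_{q}\tau(q)^{O(1)}\sup_a|\Delta(\theta\onef_{[x,2x]}; a\ (q))|$ over $q \lessapprox x^\vartheta$ (with an extra factor $W$ absorbed via \eqref{W-bound}, and $\theta$ replaced by $\Lambda$ at negligible cost since prime powers contribute $O(\sqrt{x})$), and this is $\ll x\log^{-A}x$ by $\EH[\vartheta]$ (case (i)) or $\MPZ[\varpi,\delta]$ (case (ii)); the divisor-bounded weights $\prod F_i(\log_x d_i)G_i(\log_x e_i)$ and the number of $(d_i,e_i)$ decompositions of each $q$ are absorbed into $\tau(q)^{O(1)}\lessapprox 1$ using \eqref{divisor-bound}, so summing against $B^{1-k}x/\phi(W)$ this is $o(B^{1-k}x/\phi(W))$. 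For the main term, $\frac{1}{\phi(Wq)}\sum_{(n,Wq)=1, x\leq n+h_{i_0}\leq 2x+O(1)}\theta(n+h_{i_0})$ is $\frac{x}{\phi(Wq)}(1+o(1))$ by the prime number theorem (the coprimality to $q$ and to $W$ removes a density factor already accounted for in $\phi$), so we are left with evaluating
$$\frac{x}{\phi(W)}\sum_{d_i, e_i}\frac{\prod_{i\neq i_0}\mu(d_i)\mu(e_i)F_i(\log_x d_i)G_i(\log_x e_i)}{\phi(q)},$$
where $q = \prod[d_i,e_i]$ and the sum ranges over tuples with each modulus squarefree and coprime to $W$.

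This remaining sum factors over the indices $i \neq i_0$ since $\phi$ is multiplicative on the (automatically coprime in the relevant range) moduli, reducing the problem to the one-variable evaluation of
$$\sum_{d,e}\frac{\mu(d)\mu(e)}{\phi([d,e])}F(\log_x d)G(\log_x e),$$
which is a standard Selberg-sieve computation: one writes $\frac{1}{\phi([d,e])} = \frac{1}{\phi(d)\phi(e)}\sum_{f\mid (d,e)}\frac{f}{\phi(f)}\cdot\frac{\phi(f)}{f}$-type identity (equivalently uses $\frac{\phi((d,e))}{\phi(d)\phi(e)}$ and multiplicativity), passes to a smooth/Mellin or Fourier representation of $F, G$ in terms of $\log_x$, and extracts the leading term via the Euler product for $\prod_p(1 - \tfrac{1}{p})^{-1}$-type factors; after the dust settles the main term is $\frac{1}{\log x}\int_0^1 F'(t)G'(t)\,dt$ times the appropriate power of $B = \frac{\phi(W)}{W}\log x$, giving exactly the constant $c = \prod_{i\neq i_0}\int_0^1 F_i'(t_i)G_i'(t_i)\,dt_i$ after collecting the $k-1$ factors and the overall normalization $B^{1-k}$. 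The main obstacle is bookkeeping rather than conceptual: keeping the combined modulus genuinely below the $\EH$/$\MPZ$ threshold (this is precisely what \eqref{fg-upper}, \eqref{fg-upper-alt} and the $\delta$-condition are engineered to guarantee), handling the fact that the moduli $[d_i,e_i]$ for different $i$ need not be coprime without that destroying the factorization of the main term, and carefully tracking the boundary effects in the ranges $x \leq n+h_{i_0}\leq 2x$ versus $x\leq n\leq 2x$; I expect the discrepancy-control step to carry all the analytic weight, with everything downstream being a routine (if lengthy) manipulation of multiplicative functions.
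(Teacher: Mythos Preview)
Your outline is correct and follows the paper's proof closely: expand the $\lambda$'s, combine the congruence conditions via CRT into a single modulus $q_{W,d_1,\dots}=W\prod_{i\neq i_0}[d_i,e_i]$, split into a main term plus discrepancy, evaluate the main term (the paper does this via the joint Fourier-analytic Lemma~\ref{mul-asym} rather than your ``factorize, then do $k-1$ one-variable sums'' route, but the two are equivalent up to routine Euler-product bookkeeping; note that the coprimality constraint between the $[d_i,e_i]$ means the sum does not literally factor, which is precisely the obstacle you flag at the end), and control the discrepancy sum by the distribution hypothesis.

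Two points need more care than you indicate. First, the weight $\tau(q)^{O(1)}$ in the error sum cannot be ``absorbed into $\tau(q)^{O(1)}\lessapprox 1$'' by the divisor bound: the bound $\tau(q)\lessapprox 1$ means $\tau(q)\leq x^{o(1)}$, and $x^{o(1)}$ dominates every fixed power of $\log x$, so you cannot conclude $\sum_q\tau(q)^{O(1)}\sup_a|\Delta|\ll x\log^{-A}x$ from $\EH[\vartheta]$ this way. The paper instead uses Cauchy--Schwarz, pairing the crude estimate $\sum_q\tau(q)^{C}\sup_a|\Delta|\ll x\log^{O(1)}x$ (which follows from $|\Delta|\ll(x/q)\log^{O(1)}x$ and \eqref{divisor-2}) with the hypothesis $\sum_q\sup_a|\Delta|\ll x\log^{-A}x$. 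Second, in case (ii) the estimate $\MPZ[\varpi,\delta]$ is stated for a \emph{single} primitive residue class $a\ (P_I)$, not for a supremum over $a$; to apply it you must first observe that every residue class $a_{W,d_1,\dots}$ arising here is a projection of a class in a fixed finite set $\mathcal{A}\subset\Z/P_I\Z$ (determined by $b$ and the $h_i$), and then average over $\mathcal{A}$ as the paper does.
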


To estimate the sums \eqref{sfg-1}, we use the following asymptotic, also proven in Section \ref{sieving-sec}.

\begin{theorem}[Asymptotic for non-prime sums]\label{nonprime-asym}  Let $k \geq 1$ be fixed, let $(h_1,\dots,h_k)$ be a fixed admissible $k$-tuple, and let $b\ (W)$ be such that $b+h_i$ is coprime to $W$ for each $i=1,\dots,k$.  For each fixed $1 \leq i \leq k$, let $F_{i}, G_{i}: [0,+\infty) \to \R$ be fixed smooth compactly supported functions.  Assume one of the following hypotheses:
\begin{itemize}
\item[(i)]  (Trivial case)  One has
\begin{equation}\label{easy-upper}
\sum_{i=1}^k ( S(F_i) + S(G_i) ) < 1.
\end{equation}
\item[(ii)]  (Generalized Elliott-Halberstam) There exists a fixed $0 < \vartheta < 1$ and $i_0 \in \{1,\dots,k\}$ such that $\GEH[\vartheta]$ holds, and
\begin{equation}\label{eh-upper}
\sum_{1 \leq i \leq k; i \neq i_0} ( S(F_{i}) + S(G_{i}) ) < \vartheta.
\end{equation}
\end{itemize}
Then we have
\begin{equation}\label{lflg}
 \sum_{\substack{x \leq n \leq 2x\\ n = b\ (W)}} \prod_{i=1}^k \lambda_{F_{i}}(n+h_{i}) \lambda_{G_{i}}(n+h_{i}) =
(c+o(1)) B^{-k} \frac{x}{W},
\end{equation}
where $B$ is given by \eqref{bnorm} and
\begin{equation}\label{c-def}
 c := \prod_{i=1}^k \left(\int_0^1 F'_i(t_i) G'_i(t_i)\ dt_i\right).
\end{equation}
\end{theorem}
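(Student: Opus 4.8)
The plan is to establish the asymptotic \eqref{lflg}; the general product form in the theorem is then immediate, since $\nu$ as in \eqref{nuform} is a fixed linear combination of such products. The first step is formal. Expanding each $\lambda_{F_i}(n+h_i)$ and $\lambda_{G_i}(n+h_i)$ via \eqref{lambdaf-def} rewrites the left-hand side of \eqref{lflg} as
$$ \sum_{d_1,e_1,\dots,d_k,e_k}\Bigl(\prod_{i=1}^k\mu(d_i)\mu(e_i)F_i(\log_x d_i)G_i(\log_x e_i)\Bigr)\,\#\bigl\{x\le n\le 2x:\ n\equiv b\ (W),\ [d_i,e_i]\mid n+h_i\ (1\le i\le k)\bigr\}. $$
Only squarefree $d_i,e_i$ contribute; moreover in every surviving term each $[d_i,e_i]$ is automatically coprime to $W$ (otherwise some prime $\le w$ would divide $b+h_i$, contrary to hypothesis) and the $[d_i,e_i]$ are pairwise coprime (a prime dividing two of them would exceed $w$ yet divide a fixed nonzero difference $h_i-h_j$, impossible once $x$ is large). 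Hence, by the Chinese Remainder Theorem, the conditions on $n$ describe a single residue class modulo $W\prod_i[d_i,e_i]$, and whenever this modulus is $\le x$ the inner cardinality equals $\frac{x/W}{\prod_i[d_i,e_i]}+O(1)$.

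For the \emph{main term} I would collect the contribution of $\frac{x/W}{\prod_i[d_i,e_i]}$. Because there are no cross terms between distinct indices, this equals $\frac{x}{W}\prod_{i=1}^k T_i$, where $T_i:=\sum_{(de,W)=1}\frac{\mu(d)\mu(e)}{[d,e]}F_i(\log_x d)G_i(\log_x e)$. Each $T_i$ is computed by the standard Selberg-sieve argument — the one used for \eqref{sfg-2} in Theorem \ref{prime-asym} and in \cite{gpy}, \cite{maynard-new}: writing $F_i,G_i$ through their Fourier/Mellin transforms turns $T_i$ into a contour integral of Euler products of the shape $\prod_{p\nmid W}(1-p^{s}-p^{t}+p^{s+t})$, and shifting the contours past $s=t=0$ gives $T_i=\frac{W}{\phi(W)}\cdot\frac{1}{\log x}\bigl(\int_0^1 F_i'(t)G_i'(t)\,dt+o(1)\bigr)$; the truncation of the integral at $1$ is forced because $d\mid n+h_i\le 2x+O(1)$, so the values of $F_i,G_i$ beyond $1$ never enter. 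Multiplying out and invoking \eqref{bnorm}, \eqref{W-bound} and \eqref{divisor-2}, the main term is $(c+o(1))B^{-k}\frac{x}{W}$ with $c$ as in \eqref{c-def}, the asserted value.

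It remains to bound the \emph{error} by $o(x\log^{-A}x)$ for each fixed $A$. Under hypothesis (i) this is elementary: the total modulus satisfies $W\prod_i[d_i,e_i]\lessapprox x^{\sum_i(S(F_i)+S(G_i))}=x^{1-\delta+o(1)}$ for some fixed $\delta>0$, so it is $\le x$, the expansion above holds termwise, and the accumulated $O(1)$'s sum to $\lessapprox x^{\sum_i(S(F_i)+S(G_i))}=o(x/W)$. Under hypothesis (ii) the factor $\lambda_{F_{i_0}}\lambda_{G_{i_0}}$ may have large support, so I would \emph{not} expand it. Expanding only the factors with $i\neq i_0$ produces a modulus $q:=W\prod_{i\neq i_0}[d_i,e_i]\lessapprox x^{\vartheta_1}$, $\vartheta_1:=\sum_{i\neq i_0}(S(F_i)+S(G_i))<\vartheta$; after the shift $m=n+h_{i_0}$ the relevant residue class is \emph{primitive} modulo $q$ (a common factor with some $[d_j,e_j]$ would exceed $w$ and divide $h_{i_0}-h_j\neq 0$; coprimality to $W$ is hypothesised). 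Comparing $\sum_{x\le n\le 2x,\, n\equiv a\ (q)}\lambda_{F_{i_0}}(n+h_{i_0})\lambda_{G_{i_0}}(n+h_{i_0})$ to its expected value $\frac{1}{\phi(q)}\sum_{(m,q)=1,\, m\in[x,2x]}\lambda_{F_{i_0}}(m)\lambda_{G_{i_0}}(m)$ and summing trivially over the $d_i,e_i$ ($i\neq i_0$), the error is $\ll\sum_{q}\tau(q)^{O(1)}\sup_{a\in(\Z/q\Z)^\times}|\Delta(\lambda_{F_{i_0}}\lambda_{G_{i_0}}\onef_{[x,2x]};a\ (q))|$ over $q\lessapprox x^{\vartheta_1}$, the $\tau(q)^{O(1)}$ recording the number of admissible factorisations of $q/W$. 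A Cauchy–Schwarz argument, using \eqref{divisor-2} together with the Shiu-type trivial bound $|\Delta(\lambda_{F_{i_0}}\lambda_{G_{i_0}}\onef_{[x,2x]};a\ (q))|\ll\frac{x}{q}\log^{O(1)}x$ (valid as $\vartheta_1<1$), reduces this to the \emph{unweighted} sum $\sum_{q\lessapprox x^{\vartheta_1}}\sup_a|\Delta(\cdots;a\ (q))|$. To bound the latter I would write $\lambda_{F_{i_0}}(m)\lambda_{G_{i_0}}(m)=(\rho\star 1)(m)$ for $m\in[x,2x]$, with $\rho(f):=\sum_{[d,e]=f}\mu(d)\mu(e)F_{i_0}(\log_x d)G_{i_0}(\log_x e)$ and $|\rho(f)|\ll\tau(f)^{O(1)}$, and split this convolution dyadically into $O(\log^{O(1)}x)$ pieces $\alpha\star\beta$: the generic pieces, with both factors on ranges inside $[x^\eps,x^{1-\eps}]$ for a small fixed $\eps$ with $\vartheta_1+\eps<\vartheta$, satisfy the hypotheses of $\GEH[\vartheta]$ ($\alpha$ obeys the pointwise bound by \eqref{divisor-bound}; $\beta$, a smoothed dyadic-interval indicator, obeys the Siegel–Walfisz bound \eqref{sig} by an elementary count), so $\GEH[\vartheta]$ bounds their total contribution, while the boundary pieces — a short $\rho$-range, giving an admissibly small total modulus and handled by elementary progression counting, or a short cofactor range, handled by the Siegel–Walfisz property of $\rho$ inherited from $\mu$ — are disposed of separately. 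The ``main parts'' $\frac{1}{\phi(q)}\sum_{(m,q)=1}\lambda_{F_{i_0}}\lambda_{G_{i_0}}(m)$ left behind, fed back into the sum over the $d_i,e_i$, reassemble by the same Mellin computation into $(c+o(1))B^{-k}\frac{x}{W}$.

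The technical heart is the case (ii) error estimate: realizing $\lambda_{F_{i_0}}\lambda_{G_{i_0}}$ as a controlled superposition of Dirichlet convolutions that $\GEH[\vartheta]$ can absorb, handling the $\tau(q)^{O(1)}$ loss from grouping the auxiliary divisor variables into a modulus, and disposing of the boundary and degenerate ranges that lie outside the scope of $\GEH[\vartheta]$ (and of Siegel–Walfisz). The rest — the formal expansion, the Mellin evaluation of the main term, and the case (i) bound — is routine and closely parallels the proof of Theorem \ref{prime-asym} and the arguments of \cite{gpy}, \cite{maynard-new}.
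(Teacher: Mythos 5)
Your case (i) argument matches the paper's in outline. The case (ii) argument starts the same way — leave the $i_0$ factor unexpanded, group the remaining $d_i,e_i$ into a modulus $q\lessapprox x^{\vartheta_1}$, and apply $\GEH[\vartheta]$ to the resulting discrepancy sum after a Cauchy--Schwarz reduction — but it then diverges from the paper in a way that creates real gaps rather than an alternative proof.

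The central device you omit is the almost-primality restriction. The paper first uses Proposition~\ref{almostprime} to insert the cutoff $\onef_{p(n+h_{i_0})>x^\eps}$ at a cost of $O(\eps)B^{-k}x/W$, and then passes to the limit $\eps\to 0$. This restriction is what makes \emph{both} remaining steps tractable, and your proposal has no substitute for either.

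\emph{Main term.} You claim that $\sum_{x\le m\le 2x,\,(m,q)=1}\lambda_{F_{i_0}}(m)\lambda_{G_{i_0}}(m)$ ``reassembles by the same Mellin computation'' and that ``the truncation of the integral at $1$ is forced because $d\mid n+h_i\le 2x$''. That observation explains why the values of $F_{i_0}$ beyond $1$ do not enter, but it does not produce the evaluation: the unrestricted series $T_{i_0}=\sum_{d,e}\mu(d)\mu(e)[d,e]^{-1}F_{i_0}(\log_x d)G_{i_0}(\log_x e)$ gives $\int_0^\infty F'_{i_0}G'_{i_0}$ (this is Lemma~\ref{mul-asym}), which is \emph{not} the claimed constant $\int_0^1 F'_{i_0}G'_{i_0}$ when the supports exceed~$1$. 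Meanwhile, the naive expansion of $\sum_{x\le m\le 2x}\lambda_{F_{i_0}}(m)\lambda_{G_{i_0}}(m)$ produces roughly $x^{S(F_{i_0})+S(G_{i_0})}$ error terms of size $O(1)$, which swamps the main term once $S(F_{i_0})+S(G_{i_0})\ge 1$. The paper's route is to compute the restricted sum for rough $m$ by writing $m=p_1\cdots p_r$ with $r\le 1/\eps+1$ (prime number theorem plus Mertens), and then establish that the resulting constant $c''_\eps$ converges to $\int_0^1 F'_{i_0}G'_{i_0}$ as $\eps\to 0$ via the non-trivial combinatorial identity~\eqref{depol} (proved by iterating the telescoping identity~\eqref{star}). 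This is the technical heart of the main-term evaluation, and your sketch contains nothing that replaces it.

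\emph{Discrepancy.} You propose $\lambda_{F_{i_0}}\lambda_{G_{i_0}}=\rho\star 1$ and a dyadic split into pieces $\alpha\star\beta$. When the $\rho$-range is $N>x^{1-\eps}$ (which occurs exactly when $S(F_{i_0})+S(G_{i_0})$ is close to or exceeds~$1$ — the novel case), $\GEH[\vartheta]$ does not apply, and you claim to fall back on ``the Siegel--Walfisz property of $\rho$ inherited from $\mu$''. Two problems. First, $\rho$ is not a simple translate of $\mu$: after factoring $f=abc$ with $d=ab$, $e=ac$ coprime, one has $\rho(f)=\sum_{abc=f}\mu(b)\mu(c)F_{i_0}(\log_x ab)G_{i_0}(\log_x ac)$, a weighted ternary convolution whose Siegel--Walfisz bound would require a separate argument (and it is not one the paper provides). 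Second, and more fundamentally, Siegel--Walfisz alone gives savings of a power of $\log x$ \emph{for each fixed} $q$; it does not control $\sum_{q\lessapprox x^\vartheta}\sup_a|\Delta(\cdots)|$ without the averaging power of a Bombieri--Vinogradov/GEH-type statement, so this boundary piece remains unbounded in your scheme. The paper avoids all of this: after the rough-number restriction, $\lambda_{F_{i_0}}\lambda_{G_{i_0}}\onef_{p(\cdot)>x^\eps}$ is (essentially) a bounded-length convolution of prime indicators on dyadic intervals, so one of the prime-indicator factors can always be chosen as $\beta$ with $x^\eps\lessapprox N\lessapprox x^{1-\eps}$, and its Siegel--Walfisz bound is the classical one for primes; the degenerate tuples (repeated indices, intervals meeting the boundary, $r=1$) are disposed of by trivial or $\EH[\vartheta]$-type estimates.

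A minor further point: your main-term factorization ``$=\frac{x}{W}\prod_i T_i$'' ignores the pairwise coprimality constraints among the $[d_i,e_i]$ (and the constraint that the total modulus not exceed $x$), so it is not a clean product; Lemma~\ref{mul-asym} handles the coprimality and shows it only contributes $1+o(1)$, but this does need to be invoked rather than asserted.

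In short: your plan for (ii) is in the right spirit, but by bypassing the almost-primality restriction you run into (a) a main term whose evaluation is precisely the hard part and is not a Mellin computation when $S(F_{i_0})+S(G_{i_0})\ge 1$, and (b) a Type II boundary piece outside the reach of both $\GEH[\vartheta]$ and a bare Siegel--Walfisz input. Both issues are resolved in the paper by Proposition~\ref{almostprime}, the fundamental-theorem-of-arithmetic factorization into bounded products of primes, and the identity~\eqref{depol}; none of these has an analogue in your proposal.
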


A key point in (ii) is that no upper bound on $S(F_{i_0})$ or $S(G_{i_0})$ is required (although, as we will see in Section \ref{geh-case}, the result is a little easier to prove when one has $S(F_{i_0})+S(G_{i_0}) < 1$).  This flexibility in the $F_{i_0}, G_{i_0}$ functions will be particularly crucial to obtain part (xii) of Theorem \ref{main-dhl} and Theorem \ref{main}.

\begin{remark}  Theorems \ref{prime-asym}, \ref{nonprime-asym} can be viewed as probabilistic assertions of the following form: if $n$ is chosen uniformly at random from the set $\{ x \leq n \leq 2x: n = b\ (W)\}$, then the random variables $\theta(n+h_i)$ and $\lambda_{F_j}(n+h_j) \lambda_{G_j}(n+h_j)$ for $i,j=1,\dots,k$ have mean $(1+o(1)) \frac{W}{\phi(W)}$ and $(\int_0^1 F'_j(t) G'_j(t)\ dt + o(1)) B^{-1}$ respectively, and furthermore these random variables enjoy a limited amount of independence, except for the fact (as can be seen from \eqref{lambdan-prime}) that $\theta(n+h_i)$ and $\lambda_{F_i}(n+h_i) \lambda_{G_i}(n+h_i)$ are highly correlated.  Note though that we do not have asymptotics for any sum which involves two or more factors of $\theta$, as such estimates are of a difficulty at least as great as that of the twin prime conjecture (which is equivalent to the divergence of the sum $\sum_n \theta(n) \theta(n+2)$).
\end{remark}

Theorems \ref{prime-asym}, \ref{nonprime-asym} may be combined with Lemma \ref{crit} to reduce the task of establishing estimates of the form $\DHL[k,m+1]$ to that of obtaining sufficiently good solutions to certain variational problems.  For instance, in Section \ref{may-sec} we reprove the following result of Maynard \cite[Proposition 4.2]{maynard-new}:

\begin{theorem}[Sieving on the standard simplex]\label{maynard-thm}  Let $k \geq 2$ and $m \geq 1$ be fixed integers. For any fixed compactly supported square-integrable function $F: [0,+\infty)^k \to \R$, define the functionals
\begin{equation}\label{i-def}
I(F) := \int_{[0,+\infty)^k} F(t_1,\dots,t_k)^2\ dt_1 \dots dt_k
\end{equation}
and
\begin{equation}\label{ji-def}
 J_i(F) := \int_{[0,+\infty)^{k-1}} \left(\int_0^\infty F(t_1,\dots,t_k)\ dt_i\right)^2 dt_1 \dots dt_{i-1} dt_{i+1} \dots dt_k
\end{equation}
for $i=1,\dots,k$, and let $M_k$ be the supremum
\begin{equation}\label{mk4}
 M_k := \sup \frac{\sum_{i=1}^k J_i(F)}{I(F)}
\end{equation}
over all square-integrable functions $F$ that are supported on the simplex
$$ {\mathcal R}_k := \{ (t_1,\dots,t_k) \in [0,+\infty)^k: t_1+\dots+t_k \leq 1 \}$$
and are not identically zero (up to almost everywhere equivalence, of course).  Suppose that there is a fixed $0 < \vartheta < 1$ such that $\EH[\vartheta]$ holds, and such that
$$ M_k > \frac{2m}{\vartheta}.$$
Then $\DHL[k,m+1]$ holds.
\end{theorem}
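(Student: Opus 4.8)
The plan is to apply the $\DHL$ criterion of Lemma \ref{crit}, using a sieve weight $\nu$ of the form \eqref{nuform} built from a near-optimizer $F$ of the variational quotient \eqref{mk4}. First I would fix an admissible $k$-tuple $(h_1,\dots,h_k)$ and a residue class $b\ (W)$ with $(b+h_i,W)=1$ for all $i$, as in Lemma \ref{crit}. Given any $\eps>0$, choose a smooth, compactly supported $F\colon[0,+\infty)^k\to\R$, supported on $\mathcal R_k$ and not identically zero, such that $\sum_i J_i(F)/I(F) \geq M_k - \eps$; by a standard mollification/truncation argument one may take $F$ smooth with support in a slightly shrunk simplex $(1-\eps)\mathcal R_k$ without losing more than $O(\eps)$ in the quotient. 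Writing $F$ (or rather approximating it) as a finite sum $F(t_1,\dots,t_k)=\sum_{j=1}^J c_j \prod_{i=1}^k f_{j,i}(t_i)$ of tensor products of one-variable smooth functions, and setting $F_{j,i}(t)=f_{j,i}(t)$ in \eqref{nuform}, defines the weight $\nu(n) = \bigl(\sum_j c_j \prod_i \lambda_{F_{j,i}}(n+h_i)\bigr)^2$, which is manifestly non-negative.

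Next I would compute the two sums \eqref{s1} and \eqref{s2} for this $\nu$. Expanding the square, \eqref{s1} becomes a fixed linear combination of sums of the form \eqref{sfg-1}, and Theorem \ref{nonprime-asym}(i) applies to each: the support condition $\sum_i(S(F_i)+S(G_i))<1$ is guaranteed because every one-variable factor arising from $F$ is supported in $[0,1-\eps]$, wait — more precisely, because $F$ is supported on $(1-\eps)\mathcal R_k$, in each monomial the sum of the arguments is at most $1-\eps<1$, so after expanding into tensor products the relevant sum of supports is $<1$. Reassembling the resulting products of integrals $\int_0^1 F'_{j,i}(t)F'_{j',i}(t)\,dt$ recovers $\alpha = I(F^{(\mathrm{der})})$ up to the tensor-product bookkeeping; the clean way to say this, following Maynard, is that after the change of variables replacing each $F_{j,i}$ by its antiderivative the main term of \eqref{s1} is $(I(F)+o(1))B^{-k}x/W$, so we may take $\alpha = I(F)$. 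Similarly, for \eqref{s2} with a fixed index $i$, we use \eqref{lambdan-prime} (valid since each $F_{j,i}$ is supported in $[0,1]$) to replace $\lambda_{F_{j,i}}(n+h_i)$ by $F_{j,i}(0)$ on primes, reducing \eqref{s2} to a linear combination of sums \eqref{sfg-2}; Theorem \ref{prime-asym}(i) applies since the remaining supports satisfy $\sum_{i'\neq i}(S(F_{i'})+S(G_{i'})) < 1-\eps < \vartheta$ once $\eps$ is small enough relative to $\vartheta$. The main term reassembles to $(J_i(F)+o(1))B^{1-k}x/\phi(W)$, so we may take $\beta_i = J_i(F)$ (after the antiderivative normalization — one must be slightly careful that the $F(0)$ factor and the $\int F'F'$ factors combine correctly into $J_i$, which is exactly the computation in \cite{maynard-new} and which I would reproduce).

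Finally, the key inequality \eqref{key} reads $\sum_i \beta_i/\alpha = \sum_i J_i(F)/I(F) \geq M_k - \eps > 2m/\vartheta - \eps$, wait — we need this to exceed $m$, not $2m/\vartheta$; the point is that the normalization in \eqref{s1}–\eqref{s2} carries an asymmetric power of $B$, and redoing the constant bookkeeping shows the effective ratio that must beat $m$ is $\tfrac{\vartheta}{2}\sum_i J_i(F)/I(F)$, which by hypothesis $M_k>2m/\vartheta$ exceeds $m$ for $\eps$ small. Hence Lemma \ref{crit} yields $\DHL[k,m+1]$. The main obstacle — and the step deserving the most care — is the precise matching of normalizing constants (the powers of $B=\tfrac{\phi(W)}{W}\log x$, the factor of $\vartheta$ coming from the length of the range of summation in $\EH[\vartheta]$ versus the unit-length simplex, and the $F(0)$ versus $\int F'F'$ accounting), since the variational quotient $M_k$ is dimensionless while the arithmetic sums are not; once that is pinned down the argument is a routine assembly of Lemma \ref{crit}, Theorem \ref{prime-asym}, and Theorem \ref{nonprime-asym}.
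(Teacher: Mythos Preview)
Your overall architecture is right, but there is a real gap: you never perform the rescaling that brings in the factor $\vartheta/2$, and your attempt to locate it in ``normalization bookkeeping'' is incorrect.

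Concretely, when you expand $\nu(n)^2$ you get cross-terms $\prod_i \lambda_{f_{j,i}}(n+h_i)\lambda_{f_{j',i}}(n+h_i)$, so in the hypotheses of Theorems \ref{prime-asym}(i) and \ref{nonprime-asym}(i) one has $F_i=f_{j,i}$, $G_i=f_{j',i}$, and
\[
\sum_i\bigl(S(F_i)+S(G_i)\bigr)=\sum_i S(f_{j,i})+\sum_i S(f_{j',i}) < 2(1-\eps),
\]
not $1-\eps$ as you wrote. So with $F$ supported merely on $(1-\eps)\cdot\mathcal R_k$, neither the condition $\sum_i(S(F_i)+S(G_i))<1$ of Theorem \ref{nonprime-asym}(i) nor the condition $\sum_{i\ne i_0}(S(F_i)+S(G_i))<\vartheta$ of Theorem \ref{prime-asym}(i) is available. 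The fix in the paper is to first rescale: set $F_1(t):=F(t/(\vartheta/2-\delta_1))$, supported on $(\vartheta/2-\delta_1)\cdot\mathcal R_k$. Under $t\mapsto ct$ one has $I(F_1)=c^k I(F)$ and $J_i(F_1)=c^{k+1}J_i(F)$, so the ratio drops by exactly a factor $c\approx\vartheta/2$, converting $M_k>2m/\vartheta$ into $\sum_i J_i(F_1)/I(F_1)>m$. After this rescaling each tensor component can be arranged to satisfy $\sum_i S(f_{j,i})<\vartheta/2$, and then both support hypotheses hold. There is no $\vartheta$ hidden in the powers of $B$: the extra $B$ between \eqref{s1} and \eqref{s2} is cancelled by the $\log 3x$ in Lemma \ref{crit}, so the required inequality is literally $\sum_i\beta_i/\alpha>m$ with $\alpha,\beta_i$ the (rescaled) $I,J_i$.

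A secondary point: you should also be careful that in a tensor approximation $F\approx\sum_j c_j\prod_i f_{j,i}$ the individual summands are boxes, not simplices, so it is not automatic that each $j$ obeys $\sum_i S(f_{j,i})<\vartheta/2$; the paper arranges this by passing to the antiderivative $f_3(t)=\int_{s\ge t}F_3(s)\,ds$ (supported on the same region), applying Stone--Weierstrass to $f_3$ with the components forced to lie in a slightly larger copy of that region, and then recovering $F$ via mixed partials. This is the correct direction of the antiderivative trick, and it is what makes $\alpha=\tilde I(f_4)$ and $\beta_i=\tilde J_i(f_4)$ match $I$ and $J_i$ exactly.
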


Parts (vii)-(xi) of Theorem \ref{main-dhl} (and hence Theorem \ref{main}) are then immediate from the following results, proven in Sections \ref{asymptotics-sec}, \ref{h1-sec}, and ordered by increasing value of $k$:

\begin{theorem}[Lower bounds on $M_k$]\label{mlower}\
\begin{itemize}
\item[(vii)] $M_{54} > 4.00238$.
\item[(viii)] $M_{5511} > 6$.
\item[(ix)] $M_{41588} > 8$.
\item[(x)] $M_{309661} > 10$.
\item[(xi)] One has $M_k \geq \log k - C$ for all $k \geq C$, where $C$ is an absolute (and effective) constant.
\end{itemize}
\end{theorem}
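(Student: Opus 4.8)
The plan is to treat the four explicit lower bounds (vii)--(x) and the asymptotic bound (xi) by two different methods, both resting on the variational characterization \eqref{mk4} of $M_k$. For the explicit bounds, the strategy is to exhibit, for each relevant $k$, a concrete trial function $F$ supported on $\mathcal{R}_k$ for which the Rayleigh-type quotient $\sum_{i=1}^k J_i(F)/I(F)$ can be computed or rigorously bounded below by the stated numerical value. Following Maynard, one restricts to the symmetric subspace: functions of the form $F(t_1,\dots,t_k) = \sum_{\text{partitions}} g(t_1,\dots,t_k)$ that are invariant under permutation of coordinates, since the supremum in \eqref{mk4} is attained (or approached) on this subspace. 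A convenient family is the polynomial ansatz $F = P(P_1,\dots)\cdot \onef_{\mathcal{R}_k}$ where $P_j = \sum_i t_i^j$ are power sums (or, equivalently, a linear combination of symmetric monomials $\prod_i (1 - t_1 - \dots - t_k)^{a}$ times low-degree symmetric polynomials); for such $F$ both $I(F)$ and $J_i(F)$ reduce to finite-dimensional integrals over the simplex that are rational numbers computable by the Dirichlet/beta-integral formula $\int_{\mathcal{R}_k} t_1^{a_1}\cdots t_k^{a_k}\,dt = \frac{a_1!\cdots a_k!}{(a_1+\dots+a_k+k)!}$. The optimization over the coefficients of $P$ becomes a generalized eigenvalue problem $A\mathbf{v} = \lambda B\mathbf{v}$ for explicit rational symmetric matrices $A,B$; one solves this numerically to locate a good $F$, then verifies the bound $M_k > \lambda_0$ by exhibiting a single rational (or rational-interval) vector $\mathbf{v}$ with $\mathbf{v}^\top A \mathbf{v} > \lambda_0\, \mathbf{v}^\top B \mathbf{v}$, an inequality between explicit rationals. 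The larger the target ($6,8,10$), the larger $k$ must be and the higher the polynomial degree needed, so these computations are deferred to Section \ref{h1-sec} where the matrices and certificates are recorded.

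For the asymptotic lower bound (xi), $M_k \geq \log k - C$, the approach is the one pioneered in this circle of ideas: choose a trial function of product/Bernoulli type. A natural choice is $F(t_1,\dots,t_k) = \prod_{i=1}^k g(k t_i) \cdot \onef_{t_1+\dots+t_k \le 1}$ for a suitable univariate weight $g$ — e.g. $g(u) = \frac{1}{1+Au}$ on a range $u \le T$ and zero beyond — mimicking the Maynard--Tao choice adapted to force the bulk of the mass near the boundary of the simplex. With such a choice, $I(F)$ and $\sum_i J_i(F)$ separate to leading order into a product of one-dimensional integrals times a correction accounting for the simplex constraint; a second-moment (Chebyshev) argument shows the constraint $\sum t_i \le 1$ is satisfied for all but a negligible proportion of the mass when the parameters of $g$ are tuned so that the mean of $\sum t_i$ is slightly below $1$. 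One is then left to optimize $\frac{k \int_0^\infty (\int_0^\infty g\, \cdots)^2}{\int g^2}$-type ratios over $g$, and the $1/(1+Au)$ shape yields the $\log k$ main term (the logarithmic divergence of $\int^T du/u$), with the constant $C$ absorbing the simplex-truncation loss and lower-order terms; full details appear in Section \ref{asymptotics-sec}.

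The main obstacle is twofold. For (vii)--(x) the difficulty is purely computational scale: at $k$ in the hundreds of thousands the naive symmetric-function basis is enormous, so one needs a smart low-dimensional parametrization (the paper's second technical innovation) that still captures a quotient exceeding the target, together with a rigorous arithmetic certificate — exact rational arithmetic or interval arithmetic — so that the final inequality is a genuine proof and not merely a floating-point estimate. For (xi) the delicate point is controlling the error introduced by the simplex cutoff uniformly in $k$: one must show that restricting an essentially-product trial function to $\mathcal{R}_k$ costs only an $O(1)$ additive loss in the quotient, which requires the variance bound to beat the $\log k$ main term — this is where the precise choice of $g$ and the tuning of its parameters (so that $\mathbb{E}\sum t_i = 1 - \varepsilon_k$ with $\varepsilon_k \to 0$ slowly) is essential.
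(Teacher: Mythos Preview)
Your plan for part (vii) is essentially what the paper does: symmetric polynomial trial functions, exact rational matrix entries via beta integrals, and a generalized eigenvalue computation. Good.

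There is a real gap for parts (viii)--(x). You propose the same polynomial-basis method for $k=5511,\,41588,\,309661$, but this is not computationally realistic: the relevant symmetric-function spaces grow too fast, and no polynomial ansatz of tractable size is going to push the quotient past $6,8,10$ at those dimensions. The paper does \emph{not} do a finite-dimensional eigenvalue computation here. Instead it proves an explicit analytic inequality (Theorem~\ref{explicit}) that bounds $\frac{k}{k-1}\log k - M_k^{[T]}$ from above by a closed-form expression in a few scalar parameters $c,T,\tau$ attached to the one-dimensional weight $g(t)=1/(c+(k-1)t)$. Parts (viii)--(x) then follow by plugging in numerical values of $(c,T)$ from a short table and checking a handful of elementary inequalities; no large linear algebra is involved.

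For part (xi) you have identified the right trial function (the $1/(c+(k-1)t)$ shape), but the argument you sketch---second-moment/Chebyshev control of the simplex cutoff---is precisely Maynard's, and it only yields $M_k \ge \log k - 2\log\log k - 2$, not $\log k - C$. The $\log\log k$ loss is intrinsic to estimating $J_i(F)$ by crudely dropping or Chebyshev-bounding the constraint. The paper's key new idea, which you are missing, is to compare $g$ against the ``ideal'' Cauchy--Schwarz weight $h(t) = 1/(r - S_{k-1} + (k-1)t)$ (for which $\int_0^{r-S_{k-1}} h = \frac{\log k}{k-1}$ exactly) via the Legendre identity
\[
\Bigl(\int g\Bigr)^2 \;=\; \frac{\log k}{k-1}\int \frac{g^2}{h} \;-\; \frac{1}{2}\iint \frac{(g(s)h(t)-g(t)h(s))^2}{h(s)h(t)}\,ds\,dt,
\]
and then to bound the double-integral error term by $O(1)$ after averaging over $r\in[1,1+\tau]$. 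That averaging is what tames the singular behaviour of $h$ and replaces the $\log\log k$ loss by a bounded one. Without this device your proposal would reprove \eqref{klog} but not the stronger claim (xi).
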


For sake of comparison, in \cite[Proposition 4.3]{maynard-new} it was shown that $M_5 > 2$, $M_{105} > 4$, and $M_k \geq \log k - 2\log\log k - 2$ for all sufficiently large $k$.  As remarked in that paper, the sieves used on the bounded gap problem prior to the work in \cite{maynard-new} would essentially correspond, in this notation, to the choice of functions $F$ of the special form $F(t_1,\dots,t_k) := f(t_1+\dots+t_k)$, which severely limits the size of the ratio in
\eqref{mk4} (in particular, the analogue of $M_k$ in this special case cannot exceed $4$, as shown in \cite{sound}).

In the converse direction, in Corollary \ref{mk-upper} we will also show the upper bound $M_k \leq \frac{k}{k-1} \log k$ for all $k \geq 2$, which shows in particular that the bounds in (vii) and (xi) of the above theorem cannot be significantly improved.  We  remark that Theorem \ref{mlower}(vii) and the Bombieri-Vinogradov theorem also gives a weaker version $\DHL[54,2]$ of Theorem \ref{main-dhl}(i).

We also have a variant of Theorem \ref{maynard-thm} which can accept inputs of the form $\MPZ[\varpi,\delta]$:

\begin{theorem}[Sieving on a truncated simplex]\label{maynard-trunc}  Let $k \geq 2$ and $m \geq 1$ be fixed integers. Let $0<\varpi<1/4$ and $0 <\delta < 1/2$ be such that $\MPZ[\varpi,\delta]$ holds.  For any $\alpha>0$, let $M_k^{[\alpha]}$ be defined as in \eqref{mk4}, but where the supremum now ranges over all square-integrable functions $F$ supported in the \emph{truncated} simplex
\begin{equation}\label{ttk}
 \{ (t_1,\dots,t_k) \in [0,\alpha]^k: t_1+\dots+t_k \leq 1 \}
\end{equation}
and are not identically zero.  If
$$ M_k^{[\frac{\delta}{1/4+\varpi}]} > \frac{m}{1/4+\varpi},$$
then $\DHL[k,m+1]$ holds.
\end{theorem}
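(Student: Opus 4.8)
The plan is to run the Maynard--Tao sieve exactly as in the proof of Theorem~\ref{maynard-thm}, but with the Selberg weights formed at the scale $R := x^{1/4+\varpi}$ --- so that $R^2 = x^{1/2+2\varpi}$ matches the range of moduli in $\MPZ[\varpi,\delta]$ --- and with the cutoff function supported on the truncated simplex \eqref{ttk} with $\alpha = \frac{\delta}{1/4+\varpi}$, so that every one-dimensional divisor variable $d_i \le R^{\alpha}$ satisfies $d_i \le x^\delta$ and hence every modulus arising in the (squarefree) sieve is $x^\delta$-smooth, as required by $\MPZ[\varpi,\delta]$. Concretely, given $\eps>0$, the hypothesis $M_k^{[\alpha]} > \frac{m}{1/4+\varpi}$ together with a routine density and mollification argument lets us choose a smooth $F$ supported on a slightly shrunk truncated simplex $\{t\in[0,\alpha-\eps]^k : t_1+\dots+t_k\le 1-\eps\}$ and written as a finite sum $F=\sum_{j=1}^J c_j\prod_{i=1}^k f_{j,i}$ of tensor products of one-variable bumps --- each tensor term supported in a box that still lies in that shrunk region --- with $\frac{\sum_i J_i(F)}{I(F)} > \frac{m}{1/4+\varpi}$. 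Forming the primitive ``antiderivative'' of each tensor factor and rescaling from base $R$ to base $x$, exactly as in Section~\ref{may-sec}, yields smooth compactly supported $F_{j,i}$ with $S(F_{j,i}) \le (1/4+\varpi)(\alpha-\eps) < \delta$, out of which we build a non-negative weight $\nu$ of the form \eqref{nuform}.

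Next I would evaluate the two sums needed to invoke Lemma~\ref{crit}. Expanding the square, \eqref{s1} is a finite combination of sums of the form \eqref{sfg-1}, each with total support $\sum_{i=1}^k(S(F_{j,i})+S(F_{j',i})) \le 2(1/4+\varpi)(1-\eps) = (\tfrac12+2\varpi)(1-\eps) < 1$ --- here the hypothesis $\varpi<1/4$ is precisely what makes this work --- so Theorem~\ref{nonprime-asym}(i) applies and, after the change of variables relating $\int_0^1 F_i'G_i'$ to the rescaled integrals as in the proof of Theorem~\ref{maynard-thm}, gives $\sum_n \nu(n) = \bigl((1/4+\varpi)^{-k}I(F)+o(1)\bigr)B^{-k}\tfrac{x}{W}$. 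For \eqref{s2}, using \eqref{lambdan-prime} (legitimate since each $F_{j,i}$ has support $<1$) we reduce to sums of the form \eqref{sfg-2}; in each such term the factors with index $\ne i_0$ obey $\sum_{i\ne i_0}(S(F_{j,i})+S(F_{j',i})) \le (\tfrac12+2\varpi)(1-\eps) < \tfrac12+2\varpi$ as well as $\max_{i\ne i_0}\{S(F_{j,i}),S(F_{j',i})\} < \delta$, so Theorem~\ref{prime-asym}(ii) applies --- this is the one place where $\MPZ[\varpi,\delta]$ and the exact value of $\alpha$ enter --- yielding $\sum_n \nu(n)\theta(n+h_i) = \bigl((1/4+\varpi)^{-(k-1)}J_i(F)+o(1)\bigr)B^{1-k}\tfrac{x}{\phi(W)}$.

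Finally I would apply Lemma~\ref{crit} with $\alpha_{\mathrm{Lem}} := (1/4+\varpi)^{-k}I(F)$ and $\beta_i := (1/4+\varpi)^{-(k-1)}J_i(F)$: the key inequality \eqref{key} becomes $(1/4+\varpi)\frac{\sum_i J_i(F)}{I(F)} > m$, which holds by the choice of $F$ --- this is where the factor $1/4+\varpi$ converts $M_k^{[\alpha]} > \frac{m}{1/4+\varpi}$ into the bound needed in \eqref{key}. Lemma~\ref{crit} then gives $\DHL[k,m+1]$.

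I expect the only genuinely delicate point to be the verification that the hypotheses of Theorem~\ref{prime-asym}(ii) are satisfied --- in particular the condition $\max\{S(F_i),S(G_i)\}<\delta$, which has no counterpart in the $\EH$-based Theorem~\ref{maynard-thm} --- which is exactly what pins down the truncation parameter $\alpha=\delta/(1/4+\varpi)$; one also has to check with some care that $F$ can be decomposed into tensor products each still supported inside the truncated simplex, so that the rescaled one-dimensional divisor sums inherit both the ``$<\delta$'' and the ``$<\tfrac12+2\varpi$'' constraints. Beyond that, every step is identical to the treatment of Theorem~\ref{maynard-thm}.
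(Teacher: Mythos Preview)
Your proposal is correct and follows essentially the same route as the paper's proof in Section~\ref{trunc-sec}: rescale so that the simplex constraint becomes $t_1+\dots+t_k<\tfrac14+\varpi$ and the box constraint becomes $t_i<\delta$, approximate by smooth tensor products, build $\nu$ via~\eqref{nu-def}, and invoke Theorem~\ref{nonprime-asym}(i) for the density sum and Theorem~\ref{prime-asym}(ii) (using $\MPZ[\varpi,\delta]$) for the prime sums before applying Lemma~\ref{crit}. The only cosmetic difference is that the paper first passes to the antiderivative $f_3$ and then applies Stone--Weierstrass, whereas you propose decomposing $F$ into tensors first; both orderings work and lead to the same support conditions $S(f_{1,j})+\dots+S(f_{k,j})<\tfrac14+\varpi$ and $\max_i S(f_{i,j})<\delta$ that you correctly identify as the crux.
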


In Section \ref{asymptotics-sec} we will establish the following variant of Theorem \ref{mlower}, which when combined with Theorem \ref{mpz-poly}, allows one to use Theorem \ref{maynard-trunc} to establish parts (ii)-(vi) of Theorem \ref{main-dhl} (and hence Theorem \ref{main}):

\begin{theorem}[Lower bounds on $M_k^{[\alpha]}$]\label{mlower-var}\
\begin{itemize}
\item[(ii)] There exist $\delta,\varpi>0$ with $600 \varpi + 180 \delta < 7$ and $M_{\num{35410}}^{[\frac{\delta}{1/4+\varpi}]} > \frac{2}{1/4+\varpi}$.
\item[(iii)] There exist $\delta,\varpi>0$ with $600 \varpi + 180 \delta < 7$ and $M_{\num{1649821}}^{[\frac{\delta}{1/4+\varpi}]} > \frac{3}{1/4+\varpi}$.
\item[(iv)] There exist $\delta,\varpi>0$ with $600 \varpi + 180 \delta < 7$ and $M_{\num{75845707}}^{[\frac{\delta}{1/4+\varpi}]} > \frac{4}{1/4+\varpi}$.
\item[(v)] There exist $\delta,\varpi>0$ with $600 \varpi + 180 \delta < 7$ and $M_{\num{3473955908}}^{[\frac{\delta}{1/4+\varpi}]} > \frac{5}{1/4+\varpi}$.
\item[(vi)] For all $k \geq C$, there exist $\delta,\varpi>0$ with $600 \varpi + 180 \delta < 7$, $\varpi \geq \frac{7}{600} - \frac{C}{\log k}$, and $M_k^{[\frac{\delta}{1/4+\varpi}]} \geq \log k - C$ for some absolute (and effective) constant $C$.
\end{itemize}
\end{theorem}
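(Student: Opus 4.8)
We prove Theorem~\ref{mlower-var} as follows. In each case the task is to exhibit a concrete admissible pair $\varpi,\delta>0$ with $600\varpi+180\delta<7$ (so that $\MPZ[\varpi,\delta]$ is available by Theorem~\ref{mpz-poly}), together with a test function $F$ supported in the truncated simplex $\{t\in[0,\alpha]^k:t_1+\dots+t_k\le1\}$, $\alpha:=\delta/(1/4+\varpi)$, whose Rayleigh quotient $\sum_{i=1}^kJ_i(F)/I(F)$ exceeds the target $\frac{m}{1/4+\varpi}$ (with $m=2,3,4,5$ in (ii)--(v) and $m$ growing in (vi)). Since the target shrinks as $\varpi$ grows, the plan is to push $\varpi$ as close to $\frac{7}{600}$ as the constraint $600\varpi+180\delta<7$ permits; this forces $\delta$, hence $\alpha$, to be small, and the real content is to verify that a function confined to the small box $[0,\alpha]^k$ can still have Rayleigh quotient of size $\asymp\log k$. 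The enabling structural fact is that the near-extremizers of the \emph{untruncated} problem $M_k$ concentrate near the origin, with all coordinates of size $O(\log k/k)$ or smaller --- far below the available $\alpha\asymp1/\log k$.

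For the asymptotic statement (vi), fix absolute constants $C_0,c_0>0$ with $180c_0<600C_0$ and, for each large $k$, set $\varpi:=\frac{7}{600}-\frac{C_0}{\log k}$ and $\delta:=\frac{c_0}{\log k}$. Then $600\varpi+180\delta<7$, $\varpi\ge\frac{7}{600}-\frac{C_0}{\log k}$, $\frac14+\varpi=\frac{157}{600}-\frac{C_0}{\log k}$, and $\alpha=\delta/(1/4+\varpi)$ satisfies $\alpha\log k\asymp1$. Take the function $F_k$ built in the proof of Theorem~\ref{mlower}(xi): it is supported on $\mathcal{R}_k$ and has $\sum_iJ_i(F_k)/I(F_k)\ge\log k-C$ for an absolute $C$, and the mass relevant to $I(F_k)$ and to each $J_i(F_k)$ lies (up to a $1+o(1)$ factor) in $\{\max_it_i\le\beta_k\}$ for some $\beta_k$ with $\beta_k\log k\to0$ --- immediate if $F_k$ already has per-coordinate support $\le\beta_k$, and otherwise obtained from a union bound together with a large-deviation bound for the event $\sum_it_i\le1$, which is where $\alpha k/\log k\to\infty$ is used. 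Since $\beta_k<\alpha$ for $k$ large, truncating $F_k$ to $[0,\alpha]^k$ multiplies each of $I$ and the $J_i$ by $1+o(1)$, so $M_k^{[\alpha]}\ge\log k-C'$. Combining this with $\frac14+\varpi=\frac{157}{600}-\frac{C_0}{\log k}$ in Theorem~\ref{maynard-trunc} yields Theorem~\ref{main-dhl}(vi); here the target equals $\frac{600m}{157-600C_0/\log k}=(1+o(1))(4-\frac{28}{157})m$, and comparing with $\log k-C'$ produces the stated range $k\ge C\exp((4-\frac{28}{157})m)$.

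For the explicit cases (ii)--(v) we fix, for each listed $k$ (all below $3.5\times10^{9}$), a convenient rational pair $(\varpi,\delta)$ with $\varpi$ close to $\frac{7}{600}$, $\delta$ small and $600\varpi+180\delta<7$, so that $\alpha=\delta/(1/4+\varpi)$ is an explicit constant; for these $k$ one has $\log k/k\ll\alpha$, so the truncation is essentially inactive. To produce the witness $F$ efficiently despite the enormous $k$, we restrict to a finite-dimensional family of symmetric functions assembled from products $\prod_ig(t_i)$ with $g$ ranging over a fixed space of functions supported on $[0,\alpha]$ (polynomials, or piecewise polynomials), so that permutation symmetry and the product structure reduce $I(F)$ and $\sum_iJ_i(F)$ to closed-form expressions in $k$ and the moments of $g$. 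Maximizing the resulting finite generalized Rayleigh quotient $\mathbf{c}^{\top}A\mathbf{c}/\mathbf{c}^{\top}B\mathbf{c}$ in the coefficient vector $\mathbf{c}$ of $g$ is a finite eigenvalue computation; we compute a good $\mathbf{c}$ numerically and then verify $\sum_iJ_i(F)/I(F)>\frac{m}{1/4+\varpi}$ rigorously (e.g.\ by interval arithmetic, lower-bounding the relevant rational functions of $k$). The size of $k$ is harmless because it enters only as a parameter in these formulas, not as the number of variables in the optimization.

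The main obstacle is in (ii)--(v): one must jointly choose the family of test functions and the parameters $(\varpi,\delta)$ so that the small window $[0,\alpha]$ still supports a function whose Rayleigh quotient beats $\frac{m}{1/4+\varpi}$ (a number near $\frac{600}{157}m$), and the resulting tight numerical optimization must be made both computationally feasible and rigorously certifiable at these very large values of $k$. For (vi), by contrast, the only work beyond Theorem~\ref{mlower}(xi) is the elementary bookkeeping that $\delta\asymp1/\log k$ absorbs the truncation while keeping $\varpi$ within $O(1/\log k)$ of $\frac{7}{600}$ and respecting $600\varpi+180\delta<7$.
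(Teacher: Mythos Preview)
Your plan for (vi) is close to the paper's, but the claim ``$\beta_k\log k\to 0$'' is wrong as stated and the union-bound fallback is unnecessary.  In the paper the witness behind Theorem~\ref{mlower}(xi) is the product function of Theorem~\ref{explicit} with $g(t)=1/(c+(k-1)t)$ on $[0,T]$ and $T=\beta/\log k$ for a fixed constant $\beta$ (e.g.\ $\beta=1$); this function is \emph{exactly} supported in $[0,T]^k$, so $\beta_k=T$ and $\beta_k\log k=\beta$ is a nonzero constant, not $o(1)$.  The correct bookkeeping is simply to choose $c_0$ large enough that $\alpha=\delta/(1/4+\varpi)\ge T$, whence $M_k^{[\alpha]}\ge M_k^{[T]}$ by monotonicity; the paper does precisely this by setting $\delta:=T(1/4+\varpi)$ (so $\alpha=T$), and the pair $(\varpi,\delta)$ then satisfies $600\varpi+180\delta<7$ with $\varpi=7/600-O(1/\log k)$.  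No concentration or truncation estimate is needed.

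For (ii)--(v) your proposed route (a generalized eigenvalue computation over a finite-dimensional family of products $\prod_i g(t_i)$) is a genuinely different approach from the paper's, and you have not carried it out.  The paper does \emph{not} optimize numerically over a basis here; instead it proves a single analytical inequality, Theorem~\ref{explicit}, which for any $k$ and any parameters $c,T,\tau$ gives an explicit lower bound on $M_k^{[T]}$ in terms of elementary integrals $Z,Z_3,W,X,V,U$ (all computable in closed form from $c,T,\tau,k$).  Both the asymptotic part (vi) and the explicit parts (ii)--(v) then follow by plugging in parameter values---$c=\theta/\log k$, $T=\beta/\log k$ with $(\theta,\beta)$ from Table~\ref{narrow-table}---and reading off a value $M$ with $M_k^{[T]}>M$; one then sets $\varpi$ by $M=m/(1/4+\varpi)$ and $\delta=T(1/4+\varpi)$, and checks (barely, in some cases) that $600\varpi+180\delta<7$.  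The advantage of the paper's approach is that it is uniform in $k$: the same inequality covers both the asymptotic regime and each specific $k$, with the numerical work reduced to evaluating a handful of one-dimensional integrals rather than solving an eigenvalue problem and certifying it rigorously.  Your eigenvalue scheme could in principle yield sharper constants, but as written it is only a program, not a proof.
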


The implication is clear for (ii)-(v).  For (vi), observe that from Theorem \ref{mlower-var}(vi), Theorem \ref{mpz-poly}, and Theorem \ref{maynard-trunc}, we see that $\DHL[k,m+1]$ holds whenever $k$ is sufficiently large and
$$ m \leq (\log k - C) \left(\frac{1}{4} + \frac{7}{600} - \frac{C}{\log k}\right)$$
which is in particular implied by
$$ m \leq \frac{\log k}{4 - \frac{28}{157}} - C'$$
for some absolute constant $C'$, giving Theorem \ref{main-dhl}(vi).

Now we give a more flexible variant of Theorem \ref{maynard-thm}, in which the support of $F$ is enlarged, at the cost of reducing the range of integration of the $J_i$.

\begin{theorem}[Sieving on an epsilon-enlarged simplex]\label{epsilon-trick}  Let $k \geq 2$ and $m \geq 1$ be fixed integers, and let $0 < \eps < 1$ be fixed also. For any fixed compactly supported square-integrable function $F: [0,+\infty)^k \to \R$, define the functionals
$$ J_{i,1-\eps}(F) := \int_{(1-\eps) \cdot {\mathcal R}_{k-1}} \left(\int_0^\infty F(t_1,\dots,t_k)\ dt_i\right)^2 dt_1 \dots dt_{i-1} dt_{i+1} \dots dt_k$$
for $i=1,\dots,k$, and let $M_{k,\eps}$ be the supremum
$$ M_{k,\eps} := \sup \frac{\sum_{i=1}^k J_{i,1-\eps}(F)}{I(F)}$$
over all square-integrable functions $F$ that are supported on the simplex
$$ (1+\eps) \cdot {\mathcal R}_k = \{ (t_1,\dots,t_k) \in [0,+\infty)^k: t_1+\dots+t_k \leq 1+\eps \}$$
and are not identically zero.  Suppose that there is a fixed $0 < \vartheta < 1$, such that one of the following two hypotheses holds:
\begin{itemize}
\item[(i)]  $\EH[\vartheta]$ holds, and $1+\eps < \frac{1}{\vartheta}$.
\item[(ii)] $\GEH[\vartheta]$ holds, and $\eps < \frac{1}{k-1}$.
\end{itemize}
If
$$ M_{k,\eps} > \frac{2m}{\vartheta}$$
then $\DHL[k,m+1]$ holds.
\end{theorem}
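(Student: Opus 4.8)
The plan is to run the Goldston--Pintz--Y{\i}ld{\i}r{\i}m/Maynard machinery: deduce $\DHL[k,m+1]$ from Lemma \ref{crit} by producing, for each admissible $k$-tuple $(h_1,\dots,h_k)$ and each admissible residue class $b\ (W)$, a non-negative weight $\nu$ of the form \eqref{nuform} whose ratio $(\beta_1+\dots+\beta_k)/\alpha$ exceeds $m$. The weight will be manufactured from a function $F$ supported on the enlarged simplex $(1+\eps)\cdot{\mathcal R}_k$ that nearly attains $M_{k,\eps}$, and the constants $\alpha,\beta_i$ will be read off from Theorems \ref{nonprime-asym} and \ref{prime-asym}.

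Concretely, fix such an $F$ with $\sum_{i=1}^k J_{i,1-\eps}(F)/I(F)$ as close to $M_{k,\eps}>2m/\vartheta$ as we wish; by continuity of $I$ and of the $J_{i,1-\eps}$ we may take $F$ smooth and supported on $(1+\eps')\cdot{\mathcal R}_k$ for some $\eps'<\eps$. Approximating $F$ in $L^2([0,\infty)^k)$, together with each marginal $\int_0^\infty F\,dt_i$ in $L^2((1-\eps)\cdot{\mathcal R}_{k-1})$, by a finite linear combination $\sum_{j=1}^J c_j\prod_{i=1}^k f_{j,i}(t_i)$ of tensor products of smooth compactly supported one-dimensional functions (for instance through a smooth partition of unity into small cubes), we set $\nu(n):=\bigl(\sum_{j=1}^J c_j\prod_{i=1}^k\lambda_{F_{j,i}}(n+h_i)\bigr)^2$, where each $F_{j,i}$ is a suitably rescaled copy of $f_{j,i}$ (the rescaling matches divisor-sum supports to the variational variables, and its Jacobian contributes the factor $\tfrac{\vartheta}{2}$ that will appear in $\beta_i$). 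Expanding the square, \eqref{s1} becomes a fixed linear combination of sums \eqref{sfg-1}, each evaluated by Theorem \ref{nonprime-asym}: in case (i) its hypothesis $\sum_i(S(F_{j,i})+S(F_{j',i}))<1$ holds because $1+\eps<1/\vartheta$ forces every cube lying in $(1+\eps)\cdot{\mathcal R}_k$ into the admissible range, while in case (ii) one invokes hypothesis (ii) of Theorem \ref{nonprime-asym} with $\GEH[\vartheta]$, singling out for each cube the coordinate $i_0$ of largest extent, the point being that $\eps<1/(k-1)$ is exactly the condition guaranteeing that the remaining coordinates have total extent less than one after rescaling, so the convolution estimate applies. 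This gives $\alpha=I(F)+o(1)$. Similarly \eqref{s2} is a fixed linear combination of sums \eqref{sfg-2}, where on the prime $n+h_i$ the $i$-th divisor sum collapses to $F_{j,i}(0)$ (legitimate since the $i$-th extent is less than one); Theorem \ref{prime-asym}(i)—valid in case (ii) as well, since $\GEH[\vartheta]$ implies $\EH[\vartheta]$ by Proposition \ref{geh-eh}—applies to each such sum, with the modulus over the remaining coordinates kept below $x^\vartheta$, precisely for those cubes whose projection away from the $i$-th coordinate lies in $(1-\eps)\cdot{\mathcal R}_{k-1}$; the cubes not meeting this condition are simply discarded, which only lowers the lower bound in \eqref{s2}. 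Hence $\beta_i\ge\tfrac{\vartheta}{2}J_{i,1-\eps}(F)+o(1)$, so $(\beta_1+\dots+\beta_k)/\alpha\ge\tfrac{\vartheta}{2}\sum_i J_{i,1-\eps}(F)/I(F)+o(1)>m$, and $\DHL[k,m+1]$ follows from Lemma \ref{crit}.

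The main obstacle is exactly this support bookkeeping: one must verify that enlarging the support of $F$ from ${\mathcal R}_k$ to $(1+\eps)\cdot{\mathcal R}_k$ stays within the reach of Theorem \ref{nonprime-asym} (which is what $1+\eps<1/\vartheta$, respectively $\eps<1/(k-1)$ under $\GEH$, is designed to buy), that the compensating restriction of the $J_i$-functionals to $(1-\eps)\cdot{\mathcal R}_{k-1}$ is precisely what keeps the moduli arising in \eqref{sfg-2} within the range of validity of Theorem \ref{prime-asym}(i), and that a single tensor-product decomposition can be arranged to respect both restrictions simultaneously. The one genuinely soft point—that throwing away the non-compliant pieces in the computation of $\beta_i$ costs nothing—is justified because Lemma \ref{crit} only requires a lower bound for \eqref{s2}.
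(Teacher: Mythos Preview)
Your overall strategy matches the paper's, but the step you flag as ``genuinely soft'' is in fact a genuine gap. When you expand
\[
\nu(n)\theta(n+h_{i_0})=\sum_{j,j'} c_j c_{j'}\, f_{i_0,j}(0)f_{i_0,j'}(0)\,\theta(n+h_{i_0})\prod_{i\neq i_0}\lambda_{F_{j,i}}(n+h_i)\lambda_{F_{j',i}}(n+h_i),
\]
each summand carries no sign; discarding the pairs $(j,j')$ for which Theorem~\ref{prime-asym}(i) does not apply is \emph{not} a lower bound. The fact that Lemma~\ref{crit} only needs a lower bound for~\eqref{s2} does not help, because you have no control on the omitted bilinear terms (and you cannot even estimate them crudely by Proposition~\ref{almostprime} without losing a full power of $B$).

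The paper resolves this with a pointwise algebraic inequality. After decomposing into tensor pieces supported on small cubes (of side $\delta_3$), one partitions $\{1,\dots,J\}={\mathcal J}_1\cup{\mathcal J}_2$ according to whether $\sum_{i\neq i_0}S(f_{i,j})<(1-\eps)\tfrac{\vartheta}{2}$ or not, writes $x_l=\sum_{j\in{\mathcal J}_l}c_j\prod_i\lambda_{f_{i,j}}(n+h_i)$ for $l=1,2$, and uses the elementary bound
\[
(x_1+x_2)^2\;\ge\;(x_1+2x_2)\,x_1.
\]
In the resulting bilinear form every pair has $j'\in{\mathcal J}_1$, and since $\sum_{i\neq i_0}S(f_{i,j})<(1+\eps)\tfrac{\vartheta}{2}$ always holds, one gets $\sum_{i\neq i_0}(S(f_{i,j})+S(f_{i,j'}))<\vartheta$ for \emph{every} surviving pair, so Theorem~\ref{prime-asym}(i) applies throughout. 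The cross terms between ${\mathcal J}_1$ and ${\mathcal J}_2$ contribute $O(\delta_3)$ because the supports of $f_{4,1}:=\sum_{{\mathcal J}_1}$ and $f_{4,2}:=\sum_{{\mathcal J}_2}$ overlap only on a set of measure $O(\delta_3)$, and one then sends $\delta_3\to 0$. This inequality---not merely the permission to take lower bounds---is the mechanism that makes the $\eps$-enlargement compatible with the level-of-distribution constraint.

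Two minor remarks: in case~(ii) the index $i_0$ in Theorem~\ref{nonprime-asym}(ii) must be chosen per \emph{pair} $(j,j')$ (by pigeonhole from $\sum_i(S(f_{i,j})+S(f_{i,j'}))<(1+\eps)\vartheta<\tfrac{k}{k-1}\vartheta$), not per cube; and the passage from $F$ to the one-dimensional $f_{i,j}$ goes through an antiderivative (so that the $\int f'_{i,j}f'_{i,j'}$ in the sieve asymptotics reproduce the $I$ and $J$ functionals of $F$), which your phrase ``suitably rescaled copy'' elides.
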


We prove this theorem in Section \ref{trick-sec}.  We remark that due to the continuity of $M_{k,\eps}$ in $\eps$, the strict inequalities in (i), (ii) of this theorem may be replaced by non-strict inequalities. Parts (i), (xiii) of Theorem \ref{main-dhl}, and a weaker version $\DHL[4,2]$ of part (xii), then follow from Theorem \ref{bv-thm} and the following computations, proven in Sections \ref{mkeps-sec}, \ref{4d}:

\begin{theorem}[Lower bounds on $M_{k,\eps}$]\label{mke-lower}\
\begin{itemize}
\item[(i)] $M_{50,1/25} > 4.0043$.
\item[(xii$'$)] $M_{4,0.168} > 2.00558$.
\item[(xiii)] $M_{51,1/50} > 4.00156$.
\end{itemize}
\end{theorem}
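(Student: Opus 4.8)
\textbf{Proof proposal for Theorem~\ref{mke-lower}.}

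Each part of the theorem is a \emph{lower} bound on the variational constant $M_{k,\eps}$, so it suffices in every case to exhibit a single admissible trial function $F$, supported on the enlarged simplex $(1+\eps)\cdot\mathcal{R}_k$ and not identically zero, for which the ratio $\sum_{i=1}^k J_{i,1-\eps}(F)/I(F)$ provably exceeds the stated threshold ($4.0043$, $2.00558$, $4.00156$). The plan is to reduce the construction of such an $F$ to a finite-dimensional linear algebra problem and then to certify the resulting inequality by an exact computation. First I would renormalize: writing $F(t_1,\dots,t_k)=G\bigl(t_1/(1+\eps),\dots,t_k/(1+\eps)\bigr)$ with $G$ supported on the standard simplex $\mathcal{R}_k$, the functionals $I$ and $J_{i,1-\eps}$ rescale by explicit powers of $(1+\eps)$, and the outer region of integration in $J_{i,1-\eps}$ becomes $\tfrac{1-\eps}{1+\eps}\cdot\mathcal{R}_{k-1}$. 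Thus $M_{k,\eps}$ is, up to a fixed constant, a ``truncated simplex'' ratio of exactly the flavour appearing in Theorem~\ref{maynard-trunc}, with the single parameter $\theta:=\tfrac{1-\eps}{1+\eps}$ governing the truncation.

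Next I would restrict to \emph{symmetric} functions $G$, so that $\sum_{i=1}^k J_{i,1-\eps}(F)=k\,J_{1,1-\eps}(F)$, and expand $G$ in a finite basis of symmetric polynomials --- for the large-$k$ cases (i) and (xiii) a convenient choice is monomials in the power sums $P_1=t_1+\dots+t_k$ and $P_2=t_1^2+\dots+t_k^2$ (and $P_3$, etc., if needed), possibly multiplied by powers of $1-P_1$ to capture the behaviour near the outer boundary. With $G=\sum_a c_a B_a$ in such a basis, $I(G)$ and $J_{1,1-\eps}(G)$ become explicit quadratic forms $c^{T}M_2 c$ and $c^{T}M_1 c$ in the coefficient vector $c$, whence
$$M_{k,\eps}\ \ge\ k\cdot\sup_{c\neq 0}\frac{c^{T}M_2 c}{c^{T}M_1 c},$$
and the supremum is the largest generalized eigenvalue of the pencil $(M_2,M_1)$. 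The entries of $M_1$ and $M_2$ are integrals over simplices of products of power-sum symmetric functions; these reduce to one- or two-dimensional integrals by the classical device that, when $(t_1,\dots,t_k)$ ranges over $\mathcal{R}_k$ with the uniform (Lebesgue) weight, the sum $s=P_1$ has a Beta-type density $s^{k-1}/(k-1)!$ on $[0,1]$ and, conditionally on $s$, the point $(t_1/s,\dots,t_k/s)$ is uniform on the unit simplex, so that all needed moments of $P_2,\,P_1 t_1,\,\dots$ are rational functions of $k$. Consequently the dimension $k$ enters the computation only through these rational formulas, and for $k=50$ or $51$ the actual linear-algebra problem stays of fixed, modest size.

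I would then assemble $M_1,M_2$ in exact rational arithmetic, locate a near-optimal $c$ numerically (this is only a guide), round it to an explicit rational vector, and verify $k\,c^{T}M_2 c/c^{T}M_1 c$ exceeds the target by a direct exact evaluation --- or, equivalently, by a rigorous interval-arithmetic bound. Part (xii$'$), with $k=4$, is handled separately in Section~\ref{4d}: here the margin over $2m/\vartheta=2$ is razor-thin ($M_{4,0.168}>2.00558$, barely above $2$), and since the ambient dimension is tiny one can afford a richer or piecewise-polynomial Ansatz and a higher-precision optimization, at the cost of a more delicate explicit computation of the (now genuinely $4$-dimensional) integrals defining $I$ and $J_{1,1-\eps}$.

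The main obstacle is the $J_{i,1-\eps}$ functional. Unlike the plain Maynard problem of Theorem~\ref{maynard-thm}, the inner integral $\int_0^\infty F\,dt_1$ runs over the whole half-line while $(t_2,\dots,t_k)$ is confined to the \emph{shrunken} simplex $\tfrac{1-\eps}{1+\eps}\mathcal{R}_{k-1}$, and the support constraint $t_1+\dots+t_k\le 1+\eps$ cuts the $t_1$-range off at $1+\eps-(t_2+\dots+t_k)$, a value that may exceed $1$. Keeping careful track of this ``overflow'' region --- precisely the region where enlarging the simplex pays off --- is what makes the moment bookkeeping genuinely two-parameter (in $\eps$ and in the basis degree) and is the source of whatever loss prevents the ratio from climbing much above $4$ (resp. $2$). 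Once the relevant integrals are expressed as the advertised rational functions of $k$ and $\eps$, the remaining work is the finite, certifiable optimization described above.
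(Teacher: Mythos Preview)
Your plan for parts (i) and (xiii) is correct and essentially matches the paper's approach: expand $F$ in a finite basis of symmetric polynomials, compute the two Gram matrices exactly via the Beta integral (Lemma~\ref{bfi}), solve the generalized eigenvalue problem to locate a good coefficient vector, round it, and certify the inequality in exact arithmetic. The paper's implementation differs only in details: it does not renormalize but works directly on $(1+\eps)\cdot\mathcal{R}_k$ with the basis $(1+\eps-P_{(1)})^a P_\alpha$ (with $\alpha$ restricted to even-entry signatures, which turns out to cost little accuracy but greatly shrinks the basis), and handles the mismatched outer domain in $J_{i,1-\eps}$ by binomially re-expanding $(1+\eps-P_{(1)})^a$ in powers of $(1-\eps-P_{(1)})$ so that Lemma~\ref{bfi} applies again on $(1-\eps)\cdot\mathcal{R}_{k-1}$.

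Where you diverge is part (xii$'$). You anticipate that the thin margin over $2$ forces ``a richer or piecewise-polynomial Ansatz and a higher-precision optimization, at the cost of a more delicate explicit computation''. In fact the paper does the opposite: a single \emph{linear} cutoff
\[
F(t_1,t_2,t_3,t_4)=\bigl(1-0.784\,(t_1+t_2+t_3+t_4)\bigr)\,\onef_{t_1+t_2+t_3+t_4\le 1.168}
\]
already yields $4J_{1,1-\eps}(F)/I(F)>2.00558$, and both integrals collapse to one-variable polynomial integrals checkable by hand (Section~\ref{4d}). So (xii$'$) is the \emph{easiest} of the three parts, not the hardest; the enlargement of the simplex alone buys enough, without any piecewise structure. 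Your approach would certainly work, but it is considerable overkill.
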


We remark that computations in the proof of Theorem \ref{mke-lower}(xii$'$) are simple enough that the bound may be checked by hand, without use of a computer.  The computations used to establish the full strength of Theorem \ref{main-dhl}(xii) are however significantly more complicated.  

In fact, we may enlarge the support of $F$ further.  We give a version corresponding to part (ii) of Theorem \ref{epsilon-trick}; there is also a version corresponding to part (i), but we will not give it here as we will not have any use for it.

\begin{theorem}[Going beyond the epsilon enlargement]\label{epsilon-beyond}  Let $k \geq 2$ and $m \geq 1$ be fixed integers, let $0 <\vartheta < 1$ be a fixed quantity such that $\GEH[\vartheta]$ holds, and let $0 < \eps < \frac{1}{k-1}$ be fixed also. Suppose that there is a fixed non-zero square-integrable function $F: [0,+\infty)^k \to \R$ supported in $\frac{k}{k-1} \cdot {\mathcal R}_k$, such that for $i=1,\dots,k$ one has the vanishing marginal condition
\begin{equation}\label{vanishing-marginal}
\int_0^\infty F(t_1,\dots,t_k)\ dt_i = 0
\end{equation}
 whenever $t_1,\dots,t_{i-1},t_{i+1},\dots,t_k \geq 0$ are such that
$$ t_1+\dots+t_{i-1}+t_{i+1}+\dots+t_k > 1+\eps.$$
Suppose that we also have the inequality
$$ \frac{\sum_{i=1}^k J_{i,1-\eps}(F)}{I(F)} > \frac{2m}{\vartheta}.$$
Then $\DHL[k,m+1]$ holds.
\end{theorem}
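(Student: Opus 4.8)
The plan is to reduce Theorem~\ref{epsilon-beyond} to an application of Lemma~\ref{crit}, exactly as in the proofs of Theorems~\ref{maynard-thm}, \ref{maynard-trunc}, \ref{epsilon-trick}, the key new point being that the vanishing marginal condition \eqref{vanishing-marginal} allows us to work with an $F$ whose support $\frac{k}{k-1}\cdot{\mathcal R}_k$ is too large for Theorem~\ref{nonprime-asym}(ii) to apply directly to every term, yet the marginal cancellation kills precisely the terms that would otherwise be unsupported. First I would fix an admissible $k$-tuple $(h_1,\dots,h_k)$ and a residue class $b\ (W)$ as in Lemma~\ref{crit}, and build the sieve weight $\nu$ of the form \eqref{nuform} by approximating $F$ in a suitable Sobolev-type norm by a finite linear combination $\sum_j c_j \prod_i f_{j,i}(t_i)$ of tensor products of smooth one-dimensional functions $f_{j,i}$, each supported on $[0,\frac{k}{k-1}]$, and setting $F_{j,i}(t) := f_{j,i}(t)$ so that $\nu(n) = (\sum_{x^{t} } \dots)^2$ is the square of the associated divisor sum $\sum_j c_j \prod_i \lambda_{F_{j,i}}(n+h_i)$. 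The normalization is chosen so that, after rescaling coordinates by the factor governing $B$, the functionals $I(F)$ and $J_{i,1-\eps}(F)$ are what appear in the asymptotics for \eqref{s1} and \eqref{s2}.

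Next I would expand the sum in \eqref{s1} into the bilinear pieces \eqref{sfg-1} and apply Theorem~\ref{nonprime-asym} to each. Here is where the hypotheses are used: for a cross-term $\prod_i \lambda_{F_i}(n+h_i)\lambda_{G_i}(n+h_i)$ with $F_i = f_{j,i}$, $G_i = f_{j',i}$, the total support exponent $\sum_i (S(F_i)+S(G_i))$ can be as large as $2\cdot\frac{k}{k-1} > 1$, so the trivial case (i) of Theorem~\ref{nonprime-asym} need not apply; instead, for each such term one singles out an index $i_0$ realizing the largest individual support $S(F_{i_0})$ (or $S(G_{i_0})$), and checks that $\sum_{i\neq i_0}(S(F_i)+S(G_i)) < \vartheta$, so that hypothesis (ii) (Generalized Elliott--Halberstam) applies. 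This reduction works because $\eps < \frac{1}{k-1}$ forces the marginal-supported part of $F$ — the part with $\sum_{i\neq i_0} t_i \leq 1+\eps$ — to have total complementary support below the threshold; the terms of $\nu$ coming from the region where some marginal sum exceeds $1+\eps$ are exactly those annihilated (after integrating out $t_{i_0}$) by \eqref{vanishing-marginal}, so they contribute $o(\cdot)$ to the asymptotic. Carrying this bookkeeping through gives $\alpha = I(F) + o(1)$ in the normalization of \eqref{s1}, up to the arbitrarily small error incurred by the tensor-product approximation.

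For \eqref{s2}, I would use \eqref{lambdan-prime}: since each $F_{j,i}$ is supported in $[0,\frac{k}{k-1}]$ rather than $[0,1]$, the identity $\lambda_{F}(n) = F(0)$ for prime $n \geq x$ no longer holds verbatim, so one must instead replace $\lambda_{F_{j,i_0}}(n+h_{i_0})$ by its value on primes, namely $\int_0^\infty F'_{j,i_0}$ after an integration by parts, or more carefully decompose using that $\theta(n+h_{i_0})$ is supported on primes and apply Theorem~\ref{prime-asym}(i) to the remaining $k-1$ factors; the support condition \eqref{fg-upper} there becomes $\sum_{i\neq i_0}(S(F_i)+S(G_i)) < \vartheta$, which again holds on the marginal-supported part by the choice $\eps < \frac{1}{k-1}$ together with \eqref{vanishing-marginal}. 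This yields $\beta_{i_0} = J_{i_0,1-\eps}(F) + o(1)$ for each $i_0$ (the truncation of the $t_{i_0}$-integral to $(1-\eps)\cdot{\mathcal R}_{k-1}$ arising precisely because the prime at $n+h_{i_0}$ forces $\log_x(n+h_{i_0}) \approx 1$, costing one unit of the available length $\vartheta$ and hence shrinking the admissible range of the other variables from $1+\eps$ down to $1-\eps$ after the GEH budget is spent). With $\alpha = I(F)+o(1)$ and $\sum_i \beta_i = \sum_i J_{i,1-\eps}(F) + o(1)$, the key inequality \eqref{key} of Lemma~\ref{crit} becomes exactly the assumed $\frac{\sum_i J_{i,1-\eps}(F)}{I(F)} > \frac{2m}{\vartheta}$ — one gains the factor $2$ over the naive $\frac{m}{\vartheta}$ because of the standard symmetrization/doubling in the Selberg sieve analysis, matching the $\frac{2m}{\vartheta}$ normalization in Theorems~\ref{maynard-thm} and~\ref{epsilon-trick} — so Lemma~\ref{crit} gives $\DHL[k,m+1]$.

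The main obstacle is the verification, for \emph{every} cross-term arising in the expansion of \eqref{s1} and \eqref{s2}, that the vanishing marginal hypothesis \eqref{vanishing-marginal} genuinely removes all contributions lying outside the region where Theorem~\ref{nonprime-asym}(ii) or Theorem~\ref{prime-asym}(i) applies — in other words, showing that the quadratic form defined by $\nu$ ``sees'' $F$ only through the part of its support that is legitimately controllable. This requires writing $F$ (or its smooth approximants) as a sum of a piece $F_{\mathrm{good}}$ supported where all marginal sums $\sum_{i\neq i_0}t_i \leq 1+\eps$ and a piece $F_{\mathrm{bad}}$ supported in the complement, observing that \eqref{vanishing-marginal} forces $\int F_{\mathrm{bad}}\,dt_{i_0} = -\int F_{\mathrm{good}}\,dt_{i_0} $ to have a definite sign structure, and then checking that in the expansion of $\nu = (\sum_j c_j \prod_i \lambda_{F_{j,i}})^2$ the troublesome products are precisely those involving an odd number of $F_{\mathrm{bad}}$-type factors in the critical variable, which after applying the relevant asymptotic and integrating out that variable vanish. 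Handling the smoothing and the $\ell^2$-approximation of a general square-integrable $F$ by smooth tensor products while preserving \eqref{vanishing-marginal} approximately — or, alternatively, arguing by a limiting/density argument that reduces to smooth $F$ satisfying \eqref{vanishing-marginal} exactly — is the other delicate point, but is routine given the analogous steps carried out in the proofs of Theorems~\ref{maynard-thm} and~\ref{epsilon-trick}.
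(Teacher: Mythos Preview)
Your overall framework is right, but there are two concrete gaps.

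First, you miss the rescaling. Before any tensor approximation, one rescales $F$ so that its support lies in $(\frac{k}{k-1}\frac{\vartheta}{2}-\delta_1)\cdot\mathcal{R}_k$; since $\frac{k}{k-1}\cdot\frac{\vartheta}{2}<1$ for all $k\geq 2$ and $\vartheta<1$, every one-dimensional component $f_{i,j}$ is then supported in $[0,1)$ and \eqref{lambdan-prime} applies directly. (This rescaling is also where the factor $2/\vartheta$ in the hypothesis comes from --- it is not a ``symmetrization/doubling'' artefact.) A consequence you overlook: for the $I(F)$ sum, the vanishing marginal condition is \emph{not used at all}. After rescaling, each tensor component has $\sum_i S(f_{i,j}) < \frac{k}{k-1}\frac{\vartheta}{2}$, so for any pair $(j,j')$ the pigeonhole principle gives an $i_0$ with $\sum_{i\neq i_0}(S(f_{i,j})+S(f_{i,j'}))<\vartheta$, and Theorem~\ref{nonprime-asym}(ii) applies to every term.

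Second, and this is the real gap, your mechanism for exploiting \eqref{vanishing-marginal} in the $J$-sums is not the one that works. In the prime sum at $h_{i_0}$, the $i_0$-factor of each cross term collapses via \eqref{lambdan-prime} to the scalar $f_{i_0,j}(0)f_{i_0,j'}(0)$, not to an integral that the vanishing-marginal condition could annihilate; your ``odd number of bad factors'' cancellation has no hook here. What one actually needs is that those tensor components $j$ for which $\sum_{i\neq i_0}S(f_{i,j})$ is too large for Theorem~\ref{prime-asym}(i) satisfy $f_{i_0,j}(0)=0$, so that they contribute nothing to the prime sum. The paper achieves this by passing to the antiderivative $f_3(t)=\int_{s\geq t}F_3(s)\,ds$: the marginal condition \eqref{vanishing-marginal} on $F_3$ becomes \emph{actual vanishing} of $f_3$ on the region $\{t_{i_0}\leq\delta_2/2,\ \sum_{i\neq i_0}t_i>(1+\eps)\tfrac{\vartheta}{2}-\tfrac{\delta_2}{2}\}$, a property that survives Stone--Weierstrass approximation and yields \eqref{sfk-sum-2}: whenever $f_{i_0,j}(0)\neq 0$, the remaining supports sum to $<(1+\eps)\tfrac{\vartheta}{2}$. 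Only these components survive in the prime sum, and for them the $\mathcal{J}_1/\mathcal{J}_2$ lower-bound argument of Section~\ref{trick-sec} applies verbatim. Your good/bad splitting of $F$ itself does not produce this structural dichotomy at the level of the individual tensor components $f_{i,j}$, which is where it is needed; the passage to the antiderivative is precisely the device that converts an integral constraint on $F$ into a pointwise support constraint on $f$, and this step is not ``routine''.
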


This theorem is proven in Section \ref{beyond-sec}.  Theorem \ref{main-dhl}(xii) is then an immediate consequence of Theorem \ref{epsilon-beyond} and the following numerical fact, established in Section \ref{3d}.

\begin{theorem}[A piecewise polynomial cutoff]\label{piece}  Set $\eps := \frac{1}{4}$.  Then there exists a piecewise polynomial function $F: [0,+\infty)^3 \to \R$ supported on the simplex
$$ \frac{3}{2} \cdot {\mathcal R}_3 = \left\{ (t_1,t_2,t_3) \in [0,+\infty)^3: t_1+t_2+t_3 \leq \frac{3}{2}\right\}$$
and symmetric in the $t_1,t_2,t_3$ variables, such that $F$ is not identically zero and obeys the vanishing marginal condition
$$
\int_0^\infty F(t_1,t_2,t_3)\ dt_3 = 0$$
whenever $t_1,t_2 \geq 0$ with $t_1+t_2 > 1+\eps$, and such that
$$ \frac{3 \int_{t_1+t_2 \leq 1-\eps} (\int_0^\infty F(t_1,t_2,t_3)\ dt_3)^2\ dt_1 dt_2}{\int_{[0,\infty)^3} F(t_1,t_2,t_3)^2\ dt_1 dt_2 dt_3} > 2.$$
\end{theorem}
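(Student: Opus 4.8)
The plan is to construct $F$ explicitly as a symmetrized piecewise polynomial and verify the inequality by direct computation. The natural ansatz is to look for $F$ of relatively low degree, supported on $\frac32 \cdot \mathcal{R}_3$, and to use the structure of the vanishing marginal condition to cut down the search space. Write $g(t_1,t_2) := \int_0^\infty F(t_1,t_2,t_3)\,dt_3$; the vanishing condition says $g$ is supported on the triangle $\{t_1,t_2\geq 0,\ t_1+t_2\leq 1+\eps\} = \{t_1+t_2\leq \frac54\}$, and the quantity to be made $>2$ is
$$
\frac{3 \int_{t_1+t_2\leq 1-\eps} g(t_1,t_2)^2\,dt_1 dt_2}{\int_{[0,\infty)^3} F^2}.
$$
So one wants $g$ concentrated on $t_1+t_2\leq \frac34$ while keeping $\int F^2$ small relative to $\int_{t_1+t_2\le 3/4} g^2$. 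The first step I would take is to reduce to a finite-dimensional problem: fix a convenient polyhedral subdivision of $\frac32\cdot\mathcal{R}_3$ (e.g. cutting along the hyperplanes $t_i+t_j = \frac34$, $t_i+t_j=\frac54$, $t_i = $ various constants, and the symmetry walls $t_i = t_j$), restrict $F$ to be a polynomial of bounded degree on each cell subject to symmetry and the marginal constraint, and treat the polynomial coefficients as the unknowns.

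The second step is to set up the variational problem in these coordinates. Both $I(F) = \int F^2$ and $\sum_i J_{i,1-\eps}(F) = 3\int_{t_1+t_2\le 3/4} g^2$ (using symmetry) are quadratic forms in the coefficient vector, so maximizing the ratio is a generalized eigenvalue problem: we want the largest $\lambda$ with $\det(A - \lambda B) = 0$, where $A$ is the Gram matrix of the numerator form and $B$ that of $\int F^2$, both computable exactly by integrating polynomials over polytopes (rational arithmetic throughout). The marginal condition $\int_0^\infty F\,dt_3 = 0$ for $t_1+t_2>\frac54$ becomes a system of linear equations on the coefficients (the integral of a piecewise polynomial in $t_3$ over the relevant range is again a polynomial in $t_1,t_2$, which must vanish identically on the region $\frac54 < t_1+t_2\le \frac32$); I would solve this linear system first to parametrize the admissible subspace, then restrict $A,B$ to it. Having found an approximate optimizer numerically, I would then exhibit one explicit rational $F$ — it suffices to produce a single admissible $F$ achieving ratio $> 2$, so no optimality claim is needed — and verify by exact rational computation that the marginal conditions hold identically and that $3\int_{t_1+t_2\le 3/4} g^2 > 2\int F^2$.

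The main obstacle is bookkeeping and the sheer combinatorial complexity of integrating piecewise polynomials over the many cells of the subdivision: one must get the polytope decomposition right (the regions $t_1+t_2\le\frac34$, $\frac34<t_1+t_2\le\frac54$, $\frac54<t_1+t_2$ interact with $t_3\ge 0$, $t_1+t_2+t_3\le\frac32$, and the symmetry walls in a way that produces a fair number of simplices), and the marginal-vanishing constraint couples the behavior of $F$ on cells with large $t_3$ to cells with small $t_3$, so the linear algebra is genuinely global rather than cell-local. A secondary difficulty is choosing the polynomial degree and cell structure rich enough that the optimal ratio genuinely exceeds $2$ — the bound $M_3 = \sup \frac{\sum J_i}{I}$ on the plain simplex $\mathcal{R}_3$ is known to be less than $2$, so it is the enlargement to $\frac32\cdot\mathcal{R}_3$ combined with the marginal condition that buys the extra room, and one must verify computationally that a modest-degree piecewise-polynomial family already captures enough of this gain. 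Once the decomposition and degree are fixed, the remaining verification is a finite exact computation, which I would carry out (and record) in Section~\ref{3d}.
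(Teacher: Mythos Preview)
Your proposal is correct and follows essentially the same approach as the paper: fix a polyhedral subdivision of $\frac{3}{2}\cdot\mathcal{R}_3$ compatible with the symmetry walls and the thresholds $t_i+t_j=1\pm\eps$, take $F$ piecewise polynomial of low degree on each cell, impose the marginal constraints as linear equations on the coefficients, solve the resulting generalized eigenvalue problem numerically, and then exhibit and verify one explicit rational $F$ with ratio exceeding $2$. The paper's actual subdivision (into $60$ polytopes, with degrees ranging from $0$ to $3$ on different cells) and the final explicit $F$ are recorded in Section~\ref{3d}; your identification of the bookkeeping as the main obstacle is accurate.
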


There are several other ways to combine Theorems \ref{prime-asym}, \ref{nonprime-asym} with equidistribution theorems on the primes to obtain results of the form $\DHL[k,m+1]$, but all of our attempts to do so either did not improve the numerology, or else were numerically infeasible to implement. 


\section{Multidimensional Selberg sieves}\label{sieving-sec}

In this section we prove Theorems \ref{prime-asym} and \ref{nonprime-asym}.  A key asymptotic used in both theorems is the following:

\begin{lemma}[Asymptotic]\label{mul-asym}  Let $k \geq 1$ be a fixed integer,
 and let $N$ be a natural number coprime to $W$ with $\log{N}=O(\log^{O(1)}{x})$. Let $F_1,\dots,F_k,G_1,\dots,G_k: [0,+\infty)\to \R$ be fixed smooth compactly supported functions.  Then
\begin{equation}\label{multisum}
 \sum_{\substack{d_1,\dots,d_k,d'_1,\dots,d'_k \\ [d_1,d'_1],\dots,[d_k,d'_k], W, N \text{ coprime}}} \prod_{j=1}^k \frac{\mu(d_j) \mu(d'_j) F_j( \log_x d_j ) G_j( \log_x d'_j)}{[d_j,d'_j]} = (c+o(1)) B^{-k}\frac{N^k}{\phi(N)^k}
\end{equation}
where $B$ was defined in \eqref{bnorm}, and
$$ c := \prod_{j=1}^k \int_0^\infty F'_j(t_j) G'_j(t_j)\ dt_j.$$

The same claim holds if the denominators $[d_j,d'_j]$ are replaced by $\phi([d_j,d'_j])$.
\end{lemma}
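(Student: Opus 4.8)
The plan is to decouple the sum over the $2k$ divisor variables $d_1,\dots,d_k,d'_1,\dots,d'_k$ by inserting the multiplicative weight that enforces coprimality, so that the whole sum factorises (up to a small error) into a single ``local'' Euler product at each prime times $k$ copies of a one-variable archimedean integral. Concretely, I would first use the substitutions $F_j = F_j(\log_x d_j)$ etc.\ to express each $F_j, G_j$ via its Fourier or Mellin representation, $F_j(\log_x d) = \int_{\R} \hat F_j(\xi) d^{-\xi/\log x}\,d\xi$ (or an analogous contour integral), turning each $F_j(\log_x d_j)$ into a power $d_j^{-s_j}$ with $s_j$ small. This reduces the claim to an asymptotic for the Dirichlet-type sum
\[
\sum_{\substack{d_1,\dots,d_k,d'_1,\dots,d'_k\\ [d_j,d'_j],\,W,\,N \text{ coprime}}} \prod_{j=1}^k \frac{\mu(d_j)\mu(d'_j)}{[d_j,d'_j]\, d_j^{s_j}\, (d'_j)^{s'_j}},
\]
which is a manageable Euler product over primes $p \nmid WN$, each local factor being $1 - \frac{1}{p}\big(p^{-s_j} + p^{-s'_j} - p^{-s_j-s'_j}\big) + \dots$ for the $j$-th block (the blocks interact only through the shared prime $p$ ranging over $p\nmid WN$, which is why the factorisation into $k$ independent pieces is not literally exact and must be handled with a comparison to $\zeta$-factors).

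The key steps, in order: (1) Factor the arithmetic sum as an Euler product; compare it to $\prod_{j=1}^k \zeta(1+s_j+s'_j)^{-1}\cdot(\text{something holomorphic and bounded near }s=0)$, extracting the main singularity. The ratio of the true Euler product to this zeta-product is given by a product over all primes that converges absolutely in a neighbourhood of $s_j = s'_j = 0$, and at $s=0$ it contributes the factor $\prod_{p \nmid WN}(1 - 1/p)^{\text{(something)}}$, which combines with the removed primes $p \mid WN$ to produce the ratio $\frac{\phi(W)}{W}\cdot\frac{N^k}{\phi(N)^k}\cdot(\text{after multiplying by }(\log x)^k)$ — this is where $B^{-k}$ and $N^k/\phi(N)^k$ come from. (2) Plug this back into the contour integrals over the $\hat F_j, \hat G_j$; the pole of $\zeta(1+s_j+s'_j)^{-1}$ being a zero means we instead get, after shifting contours, an integral whose leading term is $\prod_j \int_0^\infty F'_j(t)G'_j(t)\,dt$ — the derivative appearing because $1/\zeta(1+s)\approx s$ near $s=0$ and multiplication by $s_j$ in Mellin space corresponds to differentiation. (3) Bound all error terms: truncate the divisors at $d_j \le x^{O(1)}$ using compact support of $F_j,G_j$, use the divisor bound \eqref{divisor-bound} and \eqref{divisor-2} to control tails, and use $\log N = O(\log^{O(1)} x)$ together with $W \ll \log\log^{O(1)} x$ to ensure the $N^k/\phi(N)^k$ and $B^{-k}$ factors dominate the error. (4) Repeat verbatim with $[d_j,d'_j]$ replaced by $\phi([d_j,d'_j])$: the only change is in the local factors at each prime, $\frac{1}{\phi(p^a)}$ versus $\frac{1}{p^a}$, which differ by a factor $\frac{p}{p-1}=1+O(1/p)$, hence the Euler product comparison goes through identically and produces the same main term.

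The main obstacle I expect is step (1)–(2): cleanly isolating the main term from the Euler product and justifying the contour shifts uniformly in the parameters $s_j,s'_j$. One has to verify that the ratio of the true Euler product to the model zeta-product extends holomorphically to a region $\Re(s_j+s'_j) > -c$ and is bounded there (polynomially in the imaginary parts, so that the $\hat F_j,\hat G_j$ — which decay rapidly by smoothness — make the integrals converge), and then carefully track how the factor $\prod_j s_j$ (from $1/\zeta(1+s_j+s'_j)\sim s_j+s'_j$, noting $s_j+s'_j$ rather than $s_j$ alone actually requires a symmetrisation/integration-by-parts argument to land on $\int F'_jG'_j$) interacts with the archimedean integrals. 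This is essentially the computation carried out in \cite{gpy} and refined in \cite{maynard-new}; the novelty here is purely in bookkeeping the extra modulus $N$, which is harmless since $N$ is coprime to $W$ and of polynomial-logarithmic size, so it only modifies finitely many (well, $O(\log\log x)$ many) local factors in a controlled multiplicative way. An alternative, more elementary route avoiding contour integration — expanding $F_j(\log_x d_j)$ as a rapidly converging sum of monomials in $\log_x d_j$ and reducing to estimates on $\sum_{d}\mu(d)(\log_x d)^a/d$ type sums, as in \cite[Lemma 4.?]{polymath8a} — is also viable and may be what the authors use; in that case the main obstacle becomes the combinatorial bookkeeping of the coprimality conditions among the $[d_j,d'_j]$.
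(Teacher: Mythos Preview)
Your proposal is correct and follows essentially the same route as the paper: express $F_j(\log_x d)$ via a Fourier representation (the paper writes $e^t F_j(t) = \int_\R e^{-it\xi} f_j(\xi)\,d\xi$, giving $F_j(\log_x d) = \int_\R f_j(\xi)\,d^{-(1+i\xi)/\log x}\,d\xi$), factor the resulting Dirichlet sum as an Euler product over $p\nmid WN$, compare each local factor to the model $\frac{(1-p^{-1-s_j})(1-p^{-1-s'_j})}{1-p^{-1-s_j-s'_j}}$ up to a $1+O(1/p^2)$ error, and use $\zeta(1+s)\sim 1/s$ to extract the main term; the $\phi$-variant is handled exactly as you say, by noting the change $1/p\to 1/(p-1)$ is absorbed into the $1+O(1/p^2)$ factor. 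One small correction and simplification relative to your sketch: the comparison is to the ratio $\zeta(1+s_j+s'_j)\big/\bigl(\zeta(1+s_j)\zeta(1+s'_j)\bigr)$ rather than to $\zeta(1+s_j+s'_j)^{-1}$ alone, and because the Fourier integrals already place the parameters on the vertical line $\Re s = 1/\log x$, no contour shift is needed---the paper simply truncates to $|\xi_j|\le\sqrt{\log x}$ using rapid decay of $f_j,g_j$, replaces $\zeta$ by its polar approximation there, and then recognises $\int_\R\int_\R \frac{(1+i\xi)(1+i\xi')}{2+i\xi+i\xi'} f_j(\xi)g_j(\xi')\,d\xi\,d\xi' = \int_0^\infty F'_j(t)G'_j(t)\,dt$ by differentiating the Fourier inversion formula.
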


Such asymptotics are standard in the literature; see e.g. \cite{gy} for some similar computations.  In older literature, it is common to establish these asymptotics via contour integration (e.g. via Perron's formula), but we will use the Fourier-analytic approach here.  Of course, both approaches ultimately use the same input, namely the simple pole of the Riemann zeta function at $s=1$. 


\begin{proof}  We begin with the first claim.
For $j=1,\dots,k$, the functions $t \mapsto e^t F_j(t)$, $t \mapsto e^t G_j(t)$ may be extended to smooth compactly supported functions on all of $\R$, and so we have Fourier expansions
\begin{equation}\label{etf}
 e^t F_j(t) = \int_\R e^{-it\xi} f_j(\xi)\ d\xi
\end{equation}
and
$$ e^t G_j(t) = \int_\R e^{-it\xi} g_j(\xi)\ d\xi$$
for some fixed functions $f_j, g_j: \R \to \C$ that are smooth and rapidly decreasing in the sense that $f_j(\xi), g_j(\xi) = O( (1+|\xi|)^{-A} )$ for any fixed $A>0$ and all $\xi \in \R$ 
(here the implied constant is independent of $\xi$ and depends only on $A$).  

We may thus write
$$ F_j( \log_x d_j ) = \int_\R \frac{f_j(\xi_j)}{d_j^{\frac{1+i\xi_j}{\log x}}}\ d\xi_j$$
and
$$ G_j( \log_x d'_j ) = \int_\R \frac{g_j(\xi'_j)}{(d'_j)^{\frac{1+i\xi'_j}{\log x}}}\ d\xi'_j$$
for all $d_j,d'_j \geq 1$.
We note that
\begin{align*}
 \sum_{d_j,d_j'}\frac{|\mu(d_j)\mu(d_j')|}{[d_j,d_j']d_j^{1/\log{x}}(d_j')^{1/\log{x}}} &= \prod_{p}\Bigl(1+\frac{2}{p^{1+1/\log{x}}}+\frac{1}{p^{1+2/\log{x}}}\Bigr) \\
&\leq \zeta\left(1+\frac{1}{\log x}\right)^3 \\
&\ll \log^3 x.
\end{align*}
Therefore, if we substitute the Fourier expansions into the left-hand side of \eqref{multisum}, the resulting expression is absolutely convergent. Thus we can apply Fubini's theorem, and the left-hand side of \eqref{multisum} can thus be rewritten as
\begin{equation}\label{fg-int}
 \int_\R \dots \int_\R K(\xi_1,\dots,\xi_k,\xi'_1,\dots,\xi'_k)\ \prod_{j=1}^k f_j(\xi_j) g_j(\xi'_j) d\xi_j d\xi'_j,
\end{equation}
where
$$ K(\xi_1,\dots,\xi_k,\xi'_1,\dots,\xi'_k) := 
\sum_{\substack{d_1, \dots, d_k, d'_1, \dots, d'_k \\ [d_1,d'_1], \dots, [d_k,d'_k], W, N \text{ coprime}}} 
\prod_{j=1}^k \frac{\mu(d_j) \mu(d'_j)}{[d_j,d'_j] d_j^{\frac{1+i\xi_j}{\log x}} (d'_j)^{\frac{1+i\xi'_j}{\log x}}}.$$
This latter expression factorizes as an Euler product 
$$ K = \prod_{p\nmid WN} K_p, $$
where the local factors $K_p$ are given by
\begin{equation}\label{kp}
 K_p(\xi_1,\dots,\xi_k,\xi'_1,\dots,\xi'_k)
 := 1 + \frac{1}{p}
\sum_{\substack{d_1,\dots,d_k,d'_1,\dots,d'_k \\ [d_1, \dots, d_k, d'_1, \dots, d'_k]=p \\ [d_1,d'_1],\dots,[d_k,d'_k] \text{ coprime}}} 
\prod_{j=1}^k \frac{\mu(d_j) \mu(d'_j)}{d_j^{\frac{1+i\xi_j}{\log x}} (d'_j)^{\frac{1+i\xi'_j}{\log x}}}.
\end{equation}
We can estimate each Euler factor as
\begin{equation}\label{kp-est}
 K_p(\xi_1,\dots,\xi_k,\xi'_1,\dots,\xi'_k) = \Bigl(1+O(\frac{1}{p^2})\Bigr) \prod_{j=1}^k \frac{\left(1 - p^{-1-\frac{1+i\xi_j}{\log x}}\right)\left(1 - p^{-1-\frac{1+i\xi'_j}{\log x}}\right)}{1 - p^{-1-\frac{2+i\xi_j+i\xi'_j}{\log x}}}.
\end{equation}
Since 
$$ 
\prod_{p: p>w} \Bigl(1 + O(\frac{1}{p^2})\Bigr) = 1 + o(1),$$
we have
$$
 K(\xi_1,\dots,\xi_k,\xi'_1,\dots,\xi'_k) = (1+o(1)) \prod_{j=1}^k \frac{ \zeta_{W N}( 1 + \frac{2+i\xi_j+i\xi'_j}{\log x}) }{ \zeta_{W N}(1 + \frac{1+i\xi_j}{\log x}) \zeta_{W N}(1 + \frac{1+i\xi'_j}{\log x}) } $$
where the modified zeta function $\zeta_{WN}$ is defined by the formula
$$ \zeta_{W N}(s) := \prod_{p\nmid W N} \left(1-\frac{1}{p^s}\right)^{-1}$$
for $\Re(s) > 1$.  

For $\Re(s) \geq 1 + \frac{1}{\log x}$ we have the crude bounds
\begin{align*}
 |\zeta_{W N}(s)|, |\zeta_{W N}(s)|^{-1} &\leq \zeta( 1 + \frac{1}{\log x}) \\
&\ll \log x
\end{align*}
where the first inequality comes from comparing the factors in the Euler product.
Thus
$$
 K(\xi_1,\dots,\xi_k,\xi'_1,\dots,\xi'_k) = O( \log^{3k} x ).$$
Combining this with the rapid decrease of $f_j, g_j$, we see that the contribution to \eqref{fg-int} outside of the cube $\{\max(|\xi_1|,\dots,|\xi_k|,|\xi'_1|,\dots,|\xi'_k|) \leq \sqrt{\log x}\}$ (say) is negligible.  Thus it will suffice to show that
$$
 \int_{-\sqrt{\log x}}^{\sqrt{\log x}} \dots \int_{-\sqrt{\log x}}^{\sqrt{\log x}} K(\xi_1,\dots,\xi_k,\xi'_1,\dots,\xi'_k)\ \prod_{j=1}^k f_j(\xi_j) g_j(\xi'_j) d\xi_j d\xi'_j = (c+o(1)) B^{-k}\frac{N^k}{\phi(N)^k}.
$$
When $|\xi_j| \leq \sqrt{\log x}$, we see from the simple pole of the Riemann zeta function $\zeta(s) = \prod_p (1-\frac{1}{p^s})^{-1}$ at $s=1$ that
$$ \zeta\left(1 + \frac{1+i\xi_j}{\log x}\right) = (1+o(1)) \frac{\log x}{1+i\xi_j}.$$
For $-\sqrt{\log{x}}\le \xi_j\le \sqrt{\log{x}}$, we see that
$$ 1-\frac{1}{p^{1+\frac{1+i\xi_j}{\log{x}}}}=1-\frac{1}{p}+O\Bigl(\frac{\log{p}}{p\sqrt{\log{x}}}\Bigr).$$
Since $\log(WN)\ll \log^{O(1)}{x}$, this gives
\begin{align*}
 \prod_{p|WN} \Bigl(1 - \frac{1}{p^{1+\frac{1+i\xi_j}{\log x}}}\Bigr) &= \frac{\phi(WN)}{WN}\exp\Bigl(O\Bigl(\sum_{p|WN}\frac{\log{p}}{p\sqrt{\log{x}}}\Bigr)\Bigr) = (1+o(1))\frac{\phi(WN)}{WN},
\end{align*}
since the sum is maximized when $WN$ is composed only of primes $p\ll \log^{O(1)}{x}$. Thus
$$ \zeta_{WN}\Bigl(1 + \frac{1+i\xi_j}{\log x}\Bigr) = \frac{(1+o(1)) B \phi(N)}{(1 + i\xi_j)N}.$$
Similarly with $1+i\xi_j$ replaced by $1+i\xi'_j$ or $2+i\xi_j+i\xi'_j$.  We conclude that
\begin{equation}\label{kt}
 K(\xi_1,\dots,\xi_k,\xi'_1,\dots,\xi'_k) = (1+o(1)) B^{-k}\frac{N^k}{\phi(N)^k} \prod_{j=1}^k \frac{(1+i\xi_j) (1+i\xi'_j)}{2+i\xi_j+i\xi'_j}.
\end{equation}
Therefore it will suffice to show that
$$
 \int_\R \dots \int_\R \prod_{j=1}^k \frac{(1+i\xi_j) (1+i\xi'_j)}{2+i\xi_j+i\xi'_j}  f_j(\xi_j) g_j(\xi'_j) d\xi_j d\xi'_j
= c,$$
since the errors caused by the $1+o(1)$ multiplicative factor in \eqref{kt} or the truncation $|\xi_j|, |\xi'_j| \leq \sqrt{\log x}$ can be seen to be negligible using the rapid decay of $f_j,g_j$.  By Fubini's theorem, it suffices to show that
$$ 
\int_\R \int_\R \frac{(1+i\xi) (1+i\xi')}{2+i\xi+i\xi'} f_j(\xi) g_j(\xi')\ d\xi d\xi' = \int_0^{+\infty} F_j'(t) G_j'(t)\ dt$$
for each $j=1,\dots,k$.
But from dividing \eqref{etf} by $e^t$ and differentiating under the integral sign, we have
$$F'_j(t) = - \int_\R (1+i\xi) e^{-t(1+i\xi)} f_j(\xi)\ d\xi,$$
and the claim then follows from Fubini's theorem.

Finally, suppose that we replace the denominators $[d_j,d'_j]$ with $\phi([d_j,d'_j])$.  An inspection of the above argument shows that the only change that occurs is that the $\frac{1}{p}$ term in \eqref{kp} is replaced by $\frac{1}{p-1}$; but this modification may be absorbed into the $1+O(\frac{1}{p^2})$ factor in \eqref{kp-est}, and the rest of the argument continues as before.
\end{proof}

\subsection{The trivial case}\label{triv-sec}

We can now prove the easiest case of the two theorems, namely case (i) of Theorem \ref{nonprime-asym}; a closely related estimate also appears in \cite[Lemma 6.2]{maynard-new}.  We may assume that $x$ is sufficiently large depending on all fixed quantities. By \eqref{lambdaf-def}, the left-hand side of \eqref{lflg} may be expanded as
\begin{equation}\label{lflg-expand}
 \sum_{d_1,\dots,d_k,d'_1,\dots,d'_k} \left(\prod_{i=1}^k \mu(d_i) \mu(d'_i) F_i(\log_x d_i) G_i(\log_x d'_i)\right) S(d_1,\dots,d_k,d'_1,\dots,d'_k)
\end{equation}
where
$$ S(d_1,\dots,d_k,d'_1,\dots,d'_k) := 
 \sum_{\substack{x \leq n \leq 2x\\ n = b\ (W) \\ n + h_i = 0\ ([d_i,d'_i])\ \forall i\\}} 1.
$$
By hypothesis, $b+h_i$ is coprime to $W$ for all $i=1,\dots,k$, and $|h_i-h_j| < w$ for all distinct $i,j$.  Thus, $S(d_1,\dots,d_k,d'_1,\dots,d'_k)$ vanishes unless the $[d_i,d'_i]$ are coprime to each other and to $W$.  In this case, $S(d_1,\dots,d_k,d'_1,\dots,d'_k)$ is summing the constant function $1$ over an arithmetic progression in $[x,2x]$ of spacing $W [d_1,d'_1] \dots [d_k,d'_k]$, and so  
$$ S(d_1,\dots,d_k,d'_1,\dots,d'_k) = \frac{x}{W [d_1,d'_1] \dots [d_k,d'_k]} + O(1).$$
By Lemma \ref{mul-asym}, the contribution of the main term
$\frac{x}{W [d_1,d'_1] \dots [d_k,d'_k]}$ to \eqref{lflg} is $(c+o(1)) B^{-k} \frac{x}{W}$; note that the restriction of the integrals in \eqref{c-def} to $[0,1]$ instead of $[0,+\infty)$ is harmless since $S(F_i), S(G_i) < 1$ for all $i$.  Meanwhile, the contribution of the $O(1)$ error is then bounded by
$$
O\Bigl( \sum_{d_1,\dots,d_k,d'_1,\dots,d'_k} (\prod_{i=1}^k |F_i(\log_x d_i)| |G_i(\log_x d'_i)|)\Bigr).$$
By the hypothesis in Theorem \ref{nonprime-asym}(i), we see that for $d_1,\dots,d_k,d'_1,\dots,d'_k$ contributing a non-zero term here, one has
$$
[d_1,d'_1] \dots [d_k,d'_k] \lessapprox x^{1-\eps}$$
for some fixed $\eps>0$. From the divisor bound \eqref{divisor-bound} we see that each choice of $[d_1,d'_1] \dots [d_k,d'_k]$ arises from $\lessapprox 1$ choices of $d_1,\dots,d_k,d'_1,\dots,d'_k$.  We conclude that the net contribution of the $O(1)$ error to \eqref{lflg} is $\lessapprox x^{1-\eps}$, and the claim follows.

\subsection{The Elliott-Halberstam case}\label{eh-case}

Now we show case (i) of Theorem \ref{prime-asym}.  For sake of notation we take $i_0=k$, as the other cases are similar.  We use \eqref{lambdaf-def} to rewrite the left-hand side of \eqref{theta-oo} as
\begin{equation}\label{theta-oo2}
 \sum_{d_1,\dots,d_{k-1},d'_1,\dots,d'_{k-1}} \Bigl(\prod_{i=1}^{k-1} \mu(d_i) \mu(d'_i) F_i(\log_x d_i) G_i(\log_x d'_i)\Bigr) \tilde S(d_1,\dots,d_{k-1},d'_1,\dots,d'_{k-1})
\end{equation}
where
$$ \tilde S(d_1,\dots,d_{k-1},d'_1,\dots,d'_{k-1}) := 
 \sum_{\substack{x \leq n \leq 2x\\ n = b\ (W) \\ n + h_i = 0\ ([d_i,d'_i])\ \forall i=1,\dots,k-1}} \theta(n+h_k).
$$
As in the previous case, $\tilde S(d_1,\dots,d_{k-1},d'_1,\dots,d'_{k-1})$ vanishes unless the $[d_i,d'_i]$ are coprime to each other and to $W$, and so the summand in \eqref{theta-oo2} vanishes unless the modulus $q_{W,d_1,\dots,d'_{k-1}}$ defined by
\begin{equation}\label{q-def}
q_{W,d_1,\dots,d'_{k-1}} := W [d_1,d'_1] \dots [d_{k-1},d'_{k-1}]
\end{equation}
is squarefree.  In that case, we may use the Chinese remainder theorem to concatenate the congruence conditions  on $n$ into a single primitive congruence condition  
$$n+h_k = a_{W,d_1,\dots,d'_{k-1}} \ (q_{W,d_1,\dots,d'_{k-1}})$$
for some $a_{W,d_1,\dots,d'_{k-1}}$ depending on $W, d_1,\dots,d_{k-1},d'_1,\dots,d'_{k-1}$, and conclude using \eqref{disc-def} that
\begin{equation}\label{ts}
\begin{split}
 \tilde S(d_1,\dots,d_{k-1},d'_1,\dots,d'_{k-1}) &= \frac{1}{\phi(q_{W,d_1,\dots,d'_{k-1}})} \sum_{x+h_k \leq n \leq 2x+h_k} \theta(n)\\
&\quad + \Delta( \onef_{[x+h_k,2x+h_k]} \theta;  a_{W,d_1,\dots,d'_{k-1}}\ (q_{W,d_1,\dots,d'_{k-1}})).
\end{split}
\end{equation}
From the prime number theorem we have
$$ \sum_{x+h_k \leq n \leq 2x+h_k} \theta(n) = (1+o(1)) x$$
and this expression is clearly independent of $d_1,\dots,d'_{k-1}$.  Thus by Lemma \ref{mul-asym}, the contribution of the main term in \eqref{ts} to \eqref{theta-oo2} is $(c+o(1)) B^{1-k} \frac{x}{\phi(W)}$.  By \eqref{W-bound} and \eqref{bnorm}, it thus suffices to show that for any fixed $A$ we have
\begin{equation}\label{sosmall}
 \sum_{d_1,\dots,d_{k-1},d'_1,\dots,d'_{k-1}} \Bigl(\prod_{i=1}^{k-1} |F_i(\log_x d_i)| |G_i(\log_x d'_i)|\Bigr) 
|\Delta( \onef_{[x+h_k,2x+h_k]} \theta;  a\ (q))| \ll x \log^{-A} x,
\end{equation}
where $a=a_{W,d_1,\dots,d'_{k-1}}$ and $q=q_{W,d_1,\dots,d'_{k-1}}$.  For future reference we note that we may restrict the summation here to those $d_1,\dots,d'_{k-1}$ for which $q_{W,d_1,\dots,d'_{k-1}}$ is square-free.

From the hypotheses of Theorem \ref{prime-asym}(i), we have
$$ q_{W,d_1,\dots,d'_{k-1}} \lessapprox x^\vartheta$$
whenever the summand in \eqref{theta-oo2} is non-zero, and each choice $q$ of $q_{W,d_1,\dots,d'_{k-1}}$ is associated to $O( \tau(q)^{O(1)} )$ choices of $d_1,\dots,d_{k-1},d'_1,\dots,d'_{k-1}$.  Thus this contribution is
$$
\ll \sum_{q \lessapprox x^\vartheta} \tau(q)^{O(1)} \sup_{a \in (\Z/q\Z)^\times} |\Delta( \onef_{[x+h_k,2x+h_k]} \theta; a\ (q) )|.$$
Using the crude bound
$$ |\Delta( \onef_{[x+h_k,2x+h_k]} \theta; a\ (q) )| \ll \frac{x}{q} \log^{O(1)} x$$
and \eqref{divisor-2}, we have
$$ \sum_{q \lessapprox x^\vartheta} \tau(q)^C \sup_{a \in (\Z/q\Z)^\times} |\Delta( \onef_{[x+h_k,2x+h_k]} \theta; a\ (q) )| \ll x \log^{O(1)} x$$
for any fixed $C>0$. By the Cauchy-Schwarz inequality it suffices to show that
$$ \sum_{q \lessapprox x^\vartheta} \sup_{a \in (\Z/q\Z)^\times} |\Delta( \onef_{[x+h_k,2x+h_k]} \theta; a\ (q) )| \ll x \log^{-A} x$$
for any fixed $A>0$.  However, since $\theta$ only differs from $\Lambda$ on powers $p^j$ of primes with $j>1$, it is not difficult to show that
$$  |\Delta( \onef_{[x+h_k,2x+h_k]} \theta; a\ (q) ) -  \Delta( \onef_{[x+h_k,2x+h_k]} \Lambda; a\ (q) )| \lessapprox \sqrt{\frac{x}{q}}, $$
so the net error in replacing $\theta$ here by $\Lambda$ is $\lessapprox x^{1 - (1-\vartheta)/2}$, which is certainly acceptable.  The claim now follows from the hypothesis $\EH[\vartheta]$, thanks to Claim \ref{eh-def}.

\subsection{The Motohashi-Pintz-Zhang case}

Now we show case (ii) of Theorem \ref{prime-asym}.  We repeat the arguments from Section \ref{eh-case}, with the only difference being in the derivation of \eqref{sosmall}.  As observed previously, we may restrict $q_{W,d_1,\dots,d'_{k-1}}$ to be squarefree.  From the hypotheses in Theorem \ref{prime-asym}(ii), we also see that
$$ q_{W,d_1,\dots,d'_{k-1}} \lessapprox x^{1/2+2\varpi}$$
and that all the prime factors of $q_{W,d_1,\dots,d'_{k-1}}$ are at most $x^\delta$.  Thus, if we set $I := [1,x^\delta]$, we see (using the notation from Claim \ref{mpz-claim}) that $q_{W,d_1,\dots,d'_{k-1}}$ lies in $\Scal_I$, and is thus a factor of $P_I$.  If we then let ${\mathcal A} \subset \Z/P_I\Z$ denote all the primitive residue classes $a\ (P_I)$ with the property that $a = b\ (W)$, and such that for each prime $w < p \leq x^\delta$, one has $a + h_i = 0\ (p)$ for some $i=1,\dots,k$, then we see that $a_{W,d_1,\dots,d'_{k-1}}$ lies in the projection of ${\mathcal A}$ to $\Z/q_{W,d_1,\dots,d'_{k-1}}\Z$.  Each $q \in \Scal_I$ is equal to $q_{W,d_1,\dots,d'_{k-1}}$ for $O(\tau(q)^{O(1)})$ choices of $d_1,\dots,d'_{k-1}$.  Thus the left-hand side of \eqref{sosmall} is
$$ \ll \sum_{q \in \Scal_I: q \lessapprox x^{1/2+2\varpi}} \tau(q)^{O(1)} \sup_{a \in {\mathcal A}}
|\Delta( \onef_{[x+h_k,2x+h_k]} \theta;  a\ (q))|.$$
Note from the Chinese remainder theorem that for any given $q$, if one lets $a$ range uniformly in ${\mathcal A}$, then $a\ (q)$ is uniformly distributed among $O( \tau(q)^{O(1)})$ different moduli.  Thus we have
$$
\sup_{a \in {\mathcal A}}
|\Delta( \onef_{[x+h_k,2x+h_k]} \theta;  a\ (q))| \ll \frac{\tau(q)^{O(1)}}{|{\mathcal A}|} \sum_{a \in {\mathcal A}} |\Delta( \onef_{[x+h_k,2x+h_k]} \theta;  a\ (q))|, $$
and so it suffices to show that
$$
\sum_{q \in \Scal_I: q \lessapprox x^{1/2+2\varpi}} \frac{\tau(q)^{O(1)}}{|{\mathcal A}|} \sum_{a \in {\mathcal A}}
|\Delta( \onef_{[x+h_k,2x+h_k]} \theta;  a\ (q))|\ll x \log^{-A} x$$
for any fixed $A>0$.  We see it suffices to show that
$$
\sum_{q \in \Scal_I: q \lessapprox x^{1/2+2\varpi}} \tau(q)^{O(1)}
|\Delta( \onef_{[x+h_k,2x+h_k]} \theta;  a\ (q))|\ll x \log^{-A} x$$
for any given $a \in {\mathcal A}$.  But this follows from the hypothesis $MPZ[\varpi,\delta]$ by repeating the arguments of Section \ref{eh-case}.

\subsection{Crude estimates on divisor sums}

To proceed further, we will need some additional information on the divisor sums $\lambda_F$ (defined in \eqref{lambdaf-def}), namely that these sums are concentrated on ``almost primes''; results of this type have also appeared in \cite{pintz-szemeredi}.

\begin{proposition}[Almost primality]\label{almostprime} Let $k \geq 1$ be fixed, let $(h_1,\dots,h_k)$ be a fixed admissible $k$-tuple, and let $b\ (W)$ be such that $b+h_i$ is coprime to $W$ for each $i=1,\dots,k$.  Let $F_1,\dots,F_k: [0,+\infty) \to \R$ be fixed smooth compactly supported functions, and let $m_1,\dots,m_k \geq 0$ and $a_1,\dots,a_k \geq 1$ be fixed natural numbers.  Then
\begin{equation}\label{lambdatau}
\sum_{x \leq n \leq 2x: n = b\ (W)} \prod_{j=1}^k \Bigl(|\lambda_{F_j}(n+h_j)|^{a_j} \tau(n+h_j)^{m_j}\Bigr) \ll B^{-k} \frac{x}{W}.
\end{equation}
Furthermore, if $1 \leq j_0 \leq k$ is fixed and $p_0$ is a prime with $p_0 \leq x^{\frac{1}{10k}}$, then we have the variant
\begin{equation}\label{lambdatau-fix}
\sum_{x \leq n \leq 2x: n = b\ (W)} \prod_{j=1}^k \Bigl(|\lambda_{F_j}(n+h_j)|^{a_j} \tau(n+h_j)^{m_j}\Bigr) \onef_{p_0|n+h_{j_0}} \ll \frac{\log_x p_0}{p_0} B^{-k} \frac{x}{W}.
\end{equation}
As a consequence, we have
\begin{equation}\label{lambdatau-fix2}
\sum_{x \leq n \leq 2x: n = b\ (W)} \prod_{j=1}^k \Bigl(|\lambda_{F_j}(n+h_j)|^{a_j} \tau(n+h_j)^{m_j}\Bigr) \onef_{p(n+h_{j_0}) \leq x^\eps} \ll \eps B^{-k} \frac{x}{W},
\end{equation}
for any $\eps > 0$, where $p(n)$ denotes the least prime factor of $n$.
\end{proposition}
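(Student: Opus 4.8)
All three estimates will come out of a single computation: a pointwise ``almost-primality'' bound for the divisor sums $\lambda_{F_j}$, fed into a standard sieve-theoretic (fundamental-lemma) bookkeeping, with \eqref{lambdatau-fix} and \eqref{lambdatau-fix2} obtained by inserting the additional congruence $p_0\mid n+h_{j_0}$ and tracking the extra factors it produces. Related estimates appear in \cite{pintz-szemeredi} and \cite[Lemma 6.2]{maynard-new}.

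\textbf{The pointwise bound.} Fix $F$ smooth and compactly supported and write $S=S(F)$, $y=x^S$. For $m\ge1$ let $p_1<\dots<p_r$ be the distinct primes dividing $m$ with $p_i\le y$, and put $u_i=\log_x p_i$. Since $F(\log_x d)=0$ for $d>y$ and $\mu(d)$ forces $d$ squarefree, only divisors $d=\prod_{i\in T}p_i$ contribute, so
$$\lambda_F(m)=\sum_{T\subseteq\{1,\dots,r\}}(-1)^{|T|}F\Bigl(\sum_{i\in T}u_i\Bigr)=\bigl[(I-\Sigma_{u_1})\cdots(I-\Sigma_{u_r})F\bigr](0),$$
where $\Sigma_u$ denotes translation by $u$. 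Applying $(I-\Sigma_u)g(t)=-\int_0^{u}g'(t+s)\,ds$ repeatedly gives
$$|\lambda_F(m)|\ \le\ \|F^{(r)}\|_\infty\prod_{i=1}^{r}u_i\ =\ \|F^{(r)}\|_\infty\prod_{\substack{p\mid m\\ p\le y}}\log_x p,$$
together with the cruder bound $|\lambda_F(m)|\le 2^{r}\|F\|_\infty$ (and $|\lambda_F(m)|=|F(0)|$ when $r=0$). For $m\ll x$ one has $\sum_{p\mid m,\,p\le y}\log_x p\le\log_x m\le1+o(1)$, so only finitely many derivatives of $F$ are ever sampled; combining the two displayed bounds and splitting the outer sum according to the number of prime factors of each $n+h_j$ below $x^{S(F_j)}$ will let us keep all implied constants under control. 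This is the delicate point, since the naive estimate $|\lambda_F(m)|\ll\tau(m)$ is lossy by powers of $\log x$.

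\textbf{Summation and the variants.} Insert the pointwise bound into the left side of \eqref{lambdatau}. Because $b+h_i$ is coprime to $W$, every $n+h_i$ is coprime to $W$, so its least prime factor exceeds $w$; as in Section \ref{eh-case}, for a nonzero contribution the sets of prime factors of the various $n+h_j$ lying in $(w,x^{S(F_j)}]$ are pairwise disjoint and coprime to $W$. Splitting the sum over $n$ according to these prime sets and applying the fundamental lemma of sieve theory, the ``rough'' contribution (where no $n+h_j$ has a prime factor below $x^{S(F_j)}$, so $|\lambda_{F_j}(n+h_j)|=|F_j(0)|\ll1$ and $\tau(n+h_j)\ll1$) is $\ll\frac xW\prod_j\prod_{w<p\le x^{S(F_j)}}(1-\tfrac1p)\ll B^{-k}\frac xW$ by Mertens' theorem and $\tfrac W{\phi(W)}\asymp\log w$. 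For the remaining terms, the contribution of the small prime factors of $n+h_j$ is governed by the Euler product $\exp\!\bigl(O\bigl(\sum_{w<p\le x^{S(F_j)}}\tfrac{\log_x p}{p}\bigr)\bigr)=O(1)$, using $\sum_{w<p\le x^{S(F_j)}}\tfrac{\log_x p}{p}\le S(F_j)+o(1)$; the same $\exp(O(1))$ also absorbs the factors $\tau(n+h_j)^{m_j}$, since each small prime $p$ alters the relevant local factor only by $1+O_{m_j}(1/p)$ and an $x^{S(F_j)}$-rough integer of size $\ll x$ has $O(1)$ prime factors. What survives is exactly the product over $j$ of the rough-sieve factor $\asymp B^{-1}$, giving \eqref{lambdatau}. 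For \eqref{lambdatau-fix} one repeats this with $p_0\mid n+h_{j_0}$ enforced throughout: this inserts a density factor $\tfrac1{p_0}$, and when $p_0\le x^{S(F_{j_0})}$ it forces $p_0$ into the small-prime set of $n+h_{j_0}$, producing the cancellation factor $\log_x p_0$ from the pointwise bound for $\lambda_{F_{j_0}}(n+h_{j_0})$; when $x^{S(F_{j_0})}<p_0\le x^{1/(10k)}$ only $\tfrac1{p_0}$ appears, but $\log_x p_0>S(F_{j_0})$ is bounded below, so in both ranges the bound $\tfrac{\log_x p_0}{p_0}B^{-k}\frac xW$ holds. Finally \eqref{lambdatau-fix2} follows from $\onef_{p(n+h_{j_0})\le x^\eps}\le\sum_{w<p_0\le x^\eps}\onef_{p_0\mid n+h_{j_0}}$: for $\eps\le\tfrac1{10k}$ apply \eqref{lambdatau-fix} termwise and use $\sum_{w<p_0\le x^\eps}\tfrac{\log_x p_0}{p_0}\le\eps+o(1)$, while for $\eps>\tfrac1{10k}$ the bound is immediate from \eqref{lambdatau}. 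The main obstacle is the pointwise bound of the previous paragraph: it must be strong enough to beat the $\log$-power loss of the estimate $|\lambda_F|\ll\tau$, yet uniform enough in the number of small prime factors of $m$ for the Euler products above to stay bounded.
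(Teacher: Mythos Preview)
Your outline of the summation step and of the two variants is in the same spirit as the paper, but your pointwise bound has a genuine gap that makes the whole argument fail.

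You write that for $m\ll x$ one has $\sum_{p\mid m,\,p\le y}\log_x p\le 1+o(1)$, ``so only finitely many derivatives of $F$ are ever sampled''. This is a non sequitur: the bound $\sum_i u_i\le 1+o(1)$ does \emph{not} bound the number $r$ of terms. If the small primes of $m$ are all only a little larger than $w$, each $u_i$ is about $(\log w)/\log x$, so $r$ can be of order $\log x/\log\log\log x$, and hence unbounded as $x\to\infty$. Your bound $|\lambda_F(m)|\le \|F^{(r)}\|_\infty\prod_i u_i$ therefore needs derivatives of arbitrarily high order. One cannot rescue this by an interpolation $|\lambda_F(m)|\le C\prod_i\min(Cu_i,2)$ with a single constant $C$: that would require $\|F^{(s)}\|_\infty\ll C^{s}$ uniformly in $s$, i.e.\ $F$ real-analytic, which is impossible for a nonzero compactly supported smooth function. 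So the ``delicate point'' you correctly identify is not actually handled.

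The paper fixes exactly this by replacing your finite-difference bound with a Fourier (Mellin) argument. Writing $F_j(\log_x d)=\int_\R f_j(\xi)\,d^{-(1+i\xi)/\log x}\,d\xi$ with $f_j$ rapidly decreasing, one gets
\[
|\lambda_{F_j}(n)|\ \ll\ \int_\R \prod_{p\mid n}\,\min\bigl((1+|\xi|)\log_x p,\,1\bigr)\,\frac{d\xi}{(1+|\xi|)^A},
\]
which after raising to the $a_j$th power and a change of variables becomes
\[
|\lambda_{F_j}(n)|^{a_j}\ \ll\ \tau(n)^{O(1)}\int_1^\infty \prod_{p\mid n}\min(\sigma\log_x p,1)\,\frac{d\sigma}{\sigma^A}.
\]
Here the implied constant depends only on $F_j,A,a_j$ and is completely uniform in the number of prime factors of $n$; the role of the hopeless $\|F^{(r)}\|_\infty$ is taken over by the decay of $f_j$. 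With this bound in hand, the paper then carries out precisely the programme you sketch: factor $n+h_j=d_jm_j$ with $d_j\le x^{1/10k}$ containing the small primes and $m_j$ rough, bound $\tau(n+h_j)\ll 2^{O(\Omega(d_j))}$, control the inner sum over $n$ by a Selberg upper bound sieve (via Lemma~\ref{mul-asym}, which is what you call ``the fundamental lemma''), and close with the Euler product $\prod_{w<p}\bigl(1+O(\min(\sigma\log_x p,1)/p)\bigr)\ll\sigma^{O(1)}$. Your treatments of \eqref{lambdatau-fix} and \eqref{lambdatau-fix2} are then essentially the paper's: the factor $\min(\sigma\log_x p_0,1)/p_0\le \sigma\log_x p_0/p_0$ appears when $p_0$ is forced into $d_{j_0}$, and the last part follows by summing over $p_0\le x^\eps$ using Mertens.
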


The exponent $\frac{1}{10k}$ can certainly be improved here, but for our purposes any fixed positive exponent depending only on $k$ will suffice.

\begin{proof}  The strategy is to estimate the alternating divisor sums $\lambda_{F_j}(n+h_j)$ by non-negative expressions involving prime factors of $n+h_j$, which can then be bounded combinatorially using standard tools.

We first prove \eqref{lambdatau}.
As in the proof of Proposition \ref{mul-asym}, we can use Fourier expansion to write
$$ F_j( \log_x d ) = \int_\R \frac{f_j(\xi)}{d^{\frac{1+i\xi}{\log x}}}\ d\xi$$
for some rapidly decreasing $f_j: \R \to \C$ and all natural numbers $d$.  Thus
$$ \lambda_{F_j}(n) = \int_\R \Bigl(\sum_{d|n} \frac{\mu(d)}{d^{\frac{1+i\xi}{\log x}}}\Bigr) f_j(\xi)\ d\xi, $$
which factorizes using Euler products as
$$ \lambda_{F_j}(n) = \int_\R \prod_{p|n} \Bigl(1 - \frac{1}{p^{\frac{1+i\xi}{\log x}}}\Bigr) f_j(\xi)\ d\xi.$$
The function $s \mapsto p^{\frac{-s}{\log x}}$ has a magnitude of $O(1)$ and a derivative of $O( \log_x p )$ when $\Re(s) > 1$, and thus
$$ 1 - \frac{1}{p^{\frac{1+i\xi}{\log x}}} = O\Bigl( \min( (1+|\xi|) \log_x p, 1) \Bigr).$$
From the rapid decrease of $f_j$ and the triangle inequality, we conclude that
$$ |\lambda_{F_j}(n)| \ll \int_\R \Bigl(\prod_{p|n} O\Bigl( \min( (1+|\xi|) \log_x p, 1)\Bigr)\Bigr) \frac{d\xi}{(1+|\xi|)^A}$$
for any fixed $A > 0$. Thus, noting that $\prod_{p|n}O(1)\ll \tau(n)^{O(1)}$, we have
$$ |\lambda_{F_j}(n)|^{a_j} \ll \tau(n)^{O(1)}\int_\R \dots \int_\R \Bigl(\prod_{p|n}  \prod_{l=1}^{a_j} \min( (1+|\xi_l|) \log_x p, 1)\Bigr) \frac{d\xi_1 \dots d\xi_{a_j}}{(1+|\xi_1|)^A \dots (1+|\xi_{a_j}|)^A}
$$
for any fixed $a_j,A$. However, we have
$$ \prod_{i=1}^{a_j}\min( (1+|\xi_i|) \log_x p, 1)) \leq \min( (1+|\xi_1|+\dots+|\xi_{a_j}|) \log_x p, 1 ), $$
and so
$$ |\lambda_{F_j}(n)|^{a_j} \ll \tau(n)^{O(1)}\int_\R \dots \int_\R \frac{(\prod_{p|n} \min( (1+|\xi_1|+\dots+|\xi_{a_j}|) \log_x p, 1)) d\xi_1 \dots d \xi_{a_j}}{(1+|\xi_1|+\dots+|\xi_{a_j}|)^A}.$$
Making the change of variables $\sigma := 1+|\xi_1|+\dots+|\xi_{a_j}|$, we obtain
$$ |\lambda_{F_j}(n)|^{a_j} \ll \tau(n)^{O(1)} \int_1^\infty \Bigl(\prod_{p|n} \min(\sigma \log_x p, 1)\Bigr) \frac{d\sigma}{\sigma^A}$$
for any fixed $A>0$.  In view of this bound and the Fubini-Tonelli theorem, it suffices to show that
$$
\sum_{x \leq n \leq 2x: n = b\ (W)} \prod_{j=1}^k \Bigl(\tau(n+h_j)^{O(1)} \prod_{p|n+h_j} \min(\sigma_j \log_x p,1)\Bigr) \ll B^{-k} \frac{x}{W} (\sigma_1+\dots+\sigma_k)^{O(1)}$$
for all $\sigma_1,\dots,\sigma_k \geq 1$. By setting $\sigma := \sigma_1+\dots+\sigma_k$, it suffices to show that
\begin{equation}\label{xnx}
\sum_{x \leq n \leq 2x: n = b\ (W)} \prod_{j=1}^k \Bigl(\tau(n+h_j)^{O(1)} \prod_{p|n+h_j} \min(\sigma \log_x p,1)\Bigr) \ll B^{-k} \frac{x}{W} \sigma^{O(1)}
\end{equation}
for any $\sigma \geq 1$.

To proceed further, we factorize $n+h_j$ as a product
$$ n+h_j = p_1 \dots p_r$$
of primes $p_1 \leq \dots \leq p_r$ in increasing order, and then write 
$$ n+h_j = d_j m_j$$
where $d_j := p_1 \dots p_{i_j}$ and $i_j$ is the largest index for which $p_1 \dots p_{i_j} < x^{\frac{1}{10k}}$, and $m_j := p_{i_j+1} \dots p_r$.  By construction, we see that $0 \leq i_j < r$, $d_j \leq x^{\frac{1}{10k}}$.  Also, we have
$$ p_{i_j+1} \geq (p_1 \dots p_{i_j+1})^{\frac{1}{i_j+1}} \geq x^{\frac{1}{10k(i_j+1)}}.$$
Since $n \leq 2x$, this implies that
$$ r = O( i_j + 1 )$$
and so
$$ \tau(n+h_j) \leq 2^{O(1+\Omega(d_j))},$$
where we recall that $\Omega(d_j) = i_j$ denotes the number of prime factors of $d_j$, counting multiplicity. We also see that
$$ p(m_j) \geq x^{\frac{1}{10k(1+\Omega(d_j))}}\geq x^{\frac{1}{10k(1+\Omega(d_1\dots d_k))}}=:R,$$
where $p(n)$ denotes the least prime factor of $n$. Finally, we have that
$$ \prod_{p|n+h_j} \min(\sigma \log_x p,1) \leq \prod_{p|d_j} \min( \sigma \log_x p, 1 ),$$
and we see the $d_1,\dots,d_k,W$ are coprime.  We may thus estimate the left-hand side of \eqref{xnx} by
$$ 
\ll \sum_* \Bigl(\prod_{j=1}^k 2^{O(1+\Omega(d_j))} \prod_{p|d_j} \min( \sigma \log_x p, 1 )\Bigr) \sum_{**} 1$$
where the outer sum $\sum_*$ is over $d_1,\dots,d_k \leq x^{\frac{1}{10k}}$ with $d_1,\dots,d_k,W$ coprime, and the inner sum $\sum_{**}$ is over $x \leq n \leq 2x$ with $n = b\ (W)$ and $n + h_j = 0\ (d_j)$ for each $j$, with $p( \frac{n+h_j}{d_j}) \geq R$ for each $j$.

We bound the inner sum $\sum_{**} 1$  using a Selberg sieve upper bound. Let $G$ be a smooth  function supported on $[0,1]$ with $G(0)=1$, and let $d=d_1\dots d_k$. We see that
$$\sum_{**} 1\le \sum_{\substack{x\le n\le 2x\\ n+h_i = 0\ (d_i) \\ n\equiv b\ (W)}}\prod_{i=1}^k\Bigl(\sum_{\substack{e|n+h_i\\ (e,dW)=1}}\mu(e)G(\log_R{e})\Bigr)^2,$$
since the product is $G(0)^{2k}=1$ if $p( \frac{n+h_j}{d_j}) \geq R$, and non-negative otherwise. The right hand side may be expanded as
$$ \sum_{\substack{e_1,\dots,e_k,e_1',\dots,e_k'\\ (e_ie_i',dW)=1\forall i}}\Bigl(\prod_{i=1}^k\mu(e_i)\mu(e_i')G(\log_R{e_i})G(\log_R{e_i'})\Bigr)\sum_{\substack{x\le n\le 2x \\ n+h_i = 0\ (d_i[e_i,e_i']) \\ n= b\ (W)}}1.$$
As in Section \ref{triv-sec}, the inner sum vanishes unless the $e_ie_i'$ are coprime to each other and $dW$, in which case it is
$$\frac{x}{dW[e_1,e_1']\dots [e_k,e_k']}+O(1).$$
The $O(1)$ term contributes $\lessapprox R^k\lessapprox x^{1/10}$, which is negligible. By Lemma \ref{mul-asym}, if $\Omega(d)\ll \log^{1/2}{x}$ then the main term contributes
$$\ll \Bigl(\frac{d}{\phi(d)}\Bigr)^k\frac{x}{d W}(\log{R})^{-k}\ll 2^{\Omega(d)}B^{-k}\frac{x}{d W}.$$
We see that this final bound applies trivially if $\Omega(d)\gg \log^{1/2}{x}$. The bound \eqref{xnx} thus reduces to
\begin{equation}\label{xnx-2}
\sum_* \Bigl(\prod_{j=1}^k \frac{2^{O(1+\Omega(d_j))}}{d_j} \prod_{p|d_j} \min( \sigma \log_x p, 1 )\Bigr) \ll \sigma^{O(1)}.
\end{equation}
Ignoring the coprimality conditions on the $d_j$ for an upper bound, we see this is bounded by
$$ \prod_{w<p\le x^{\frac{1}{10k}}}\Bigl(1+\frac{O(\min(\sigma\log_x(p),1))}{p}\sum_{j\ge 0}\frac{O(1)^j}{p^j}\Bigr)^k \ll \exp\Bigl(O\Bigl(\sum_{p\le x}\frac{(\min(\sigma\log_x(p),1))}{p}\Bigr)\Bigr).$$
But from Mertens' theorem we have
$$ \sum_{p \leq x} \frac{\min(\sigma \log_x p, 1)}{p} = O\Bigl( \log \frac{1}{\sigma} \Bigr),$$
and the claim \eqref{lambdatau} follows.

The proof of \eqref{lambdatau-fix} is a minor modification of the argument above used to prove \eqref{lambdatau}.  
Namely, the variable $d_{j_0}$ is now replaced by $[d_0,p_0]<x^{1/5k}$, which upon factoring out $p_0$ has the effect of multiplying the upper bound for \eqref{xnx-2} by $O( \frac{\sigma \log_x p_0}{p_0})$ (at the negligible cost of deleting the prime $p_0$ from the sum $\sum_{p \leq x}$), giving the claim; we omit the details.

Finally, \eqref{lambdatau-fix2} follows immediately from \eqref{lambdatau} when $\eps > \frac{1}{10k}$, and from \eqref{lambdatau-fix} and Mertens' theorem when $\eps \leq \frac{1}{10k}$.
\end{proof}

\begin{remark} As in \cite{pintz-szemeredi}, one can use Proposition \ref{almostprime}, together with the observation that the quantity $\lambda_F(n)$ is bounded whenever $n = O(x)$ and $p(n) \geq x^\eps$, to conclude that whenever the hypotheses of Lemma \ref{crit} are obeyed for some $\nu$ of the form \eqref{nuform}, then there exists a fixed $\eps>0$ such that for all sufficiently large $x$, there are $\gg \frac{x}{\log^k x}$ elements $n$ of $[x,2x]$ such that $n+h_1,\dots,n+h_k$ have no prime factor less than $x^\eps$, and that at least $m$ of the $n+h_1,\dots,n+h_k$ are prime.
\end{remark}

\subsection{The generalized Elliott-Halberstam case}\label{geh-case}

Now we show case (ii) of Theorem \ref{nonprime-asym}.  For sake of notation we shall take $i_0=k$, as the other cases are similar; thus we have
\begin{equation}\label{ik1}
\sum_{i=1}^{k-1} (S(F_{i}) + S(G_{i})) < \vartheta.
\end{equation}

The basic idea is to view the sum \eqref{lflg} as a variant of \eqref{theta-oo}, with the role of the function $\theta$ now being played by the product divisor sum $\lambda_{F_k} \lambda_{G_k}$, and to repeat the arguments in Section \ref{eh-case}.  To do this we rely on Proposition \ref{almostprime} to restrict $n+h_i$ to the almost primes.

We turn to the details.  Let $\eps > 0$ be an arbitrary fixed quantity.  From \eqref{lambdatau-fix2} and Cauchy-Schwarz one has
$$
 \sum_{\substack{x \leq n \leq 2x\\ n = b\ (W)}} \Bigl(\prod_{i=1}^k \lambda_{F_{i}}(n+h_{i}) \lambda_{G_{i}}(n+h_{i})\Bigr) \onef_{p(n+h_k) \leq x^\eps} = O\left( \eps B^{-k} \frac{x}{W} \right)
$$
with the implied constant uniform in $\eps$,
so by the triangle inequality and a limiting argument as $\eps \to 0$ it suffices to show that
\begin{equation}\label{lltrunc}
 \sum_{\substack{x \leq n \leq 2x\\ n = b\ (W)}} \Bigl( \prod_{i=1}^k \lambda_{F_{i}}(n+h_{i}) \lambda_{G_{i}}(n+h_{i}) \Bigr)\onef_{p(n+h_k) > x^\eps} = (c_\eps + o(1)) B^{-k} \frac{x}{W} 
\end{equation}
where $c_\eps$ is a quantity depending on $\eps$ but not on $x$, such that
$$ \lim_{\eps \to 0} c_\eps = \prod_{i=1}^k \int_0^1 F'_i(t) G'_i(t)\ dt.$$

We use \eqref{lambdaf-def} to expand out $\lambda_{F_{i}}, \lambda_{G_{i}}$ for $i=1,\dots,k-1$, but \emph{not} for $i=k$, so that the left-hand side of \eqref{lflg} becomes
\begin{equation}\label{ddd}
 \sum_{d_1,\dots,d_{k-1},d'_1,\dots,d'_{k-1}} \Bigl(\prod_{i=1}^k \mu(d_{i}) \mu(d'_{i}) F_{i}(\log_x d_{i}) G_{i}(\log_x d'_{i}) \Bigr) S'(d_1,\dots,d_{k-1},d'_1,\dots,d'_{k-1})
\end{equation}
where
$$ S'(d_1,\dots,d_{k-1},d'_1,\dots,d'_{k-1}) := 
 \sum_{\substack{x \leq n \leq 2x\\ n = b\ (W) \\ n + h_{i} = 0\ ([d_{i},d'_{i}])\ \forall i=1,\dots,k-1}}
\hspace{0pt minus 1fil}
 \lambda_{F_k}(n+h_k) \lambda_{G_k}(n+h_k) \onef_{p(n+h_k) 
> x^\eps}.
$$
As before, the summand in \eqref{ddd} vanishes unless the modulus\footnote{In the $k=1$ case, we of course just have $q_{W,d_1,\dots,d'_{k-1}} = W$.}
$q_{W,d_1,\dots,d'_{k-1}}$ defined in \eqref{q-def} is squarefree, in which case we have the analogue
\begin{align}\label{tsp}
 S'(d_1,\dots,d_{k-1},d'_1,\dots,d'_{k-1}) &= \frac{1}{\phi(q)} \sum_{\substack{x+h_k \leq n \leq 2x+h_k\\ (n,q)=1}} \lambda_{F_k}(n) \lambda_{G_k}(n) \onef_{p(n)>x^\eps}\nonumber\\
& + \Delta( \onef_{[x+h_k,2x+h_k]} \lambda_{F_k} \lambda_{G_k} \onef_{p(\cdot) > x^\eps};  a\ (q))
\end{align}
of \eqref{ts}. Here we have put $q=q_{W,d_1,\dots,d'_{k-1}}$ and $a=a_{W,d_1,\dots,d'_{k-1}}$ for convenience. We thus split
$$ S' = S'_1 - S'_2 + S'_3,$$
where,
\begin{align}
 S'_1(d_1,\dots,d_{k-1},d'_1,\dots,d'_{k-1}) &= \frac{1}{\phi(q)} \sum_{x+h_k \leq n \leq 2x+h_k} \lambda_{F_k}(n) \lambda_{G_k}(n) \onef_{p(n)>x^\eps} 
\label{sp1-def},\\
 S'_2(d_1,\dots,d_{k-1},d'_1,\dots,d'_{k-1}) &= \frac{1}{\phi(q)} \sum_{x+h_k \leq n \leq 2x+h_k; (n,q) > 1} \lambda_{F_k}(n) \lambda_{G_k}(n) \onef_{p(n)>x^\eps} 
\label{sp2-def},\\
 S'_3(d_1,\dots,d_{k-1},d'_1,\dots,d'_{k-1}) &= \Delta( \onef_{[x+h_k,2x+h_k]} \lambda_{F_k} \lambda_{G_k} \onef_{p(\cdot) > x^\eps};  a\ (q)),
\label{sp3-def}
\end{align}
when $q=q_{W,d_1,\dots,d'_{k-1}}$ is squarefree, with $S'_1=S'_2=S'_3=0$ otherwise.

For $j\in\{1,2,3\}$, let
\begin{equation}
 \Sigma_j= \sum_{d_1,\dots,d_{k-1},d'_1,\dots,d'_{k-1}}\Bigl(\prod_{i=1}^k \mu(d_{i}) \mu(d'_{i}) F_{i}(\log_x d_{i}) G_{i}(\log_x d'_{i})\Bigr) S'_j(d_1,\dots,d_{k-1},d'_1,\dots,d'_{k-1}).
\label{eq:SigmaDef}
\end{equation}
To show \eqref{lltrunc}, it thus suffices to show the main term estimate
\begin{equation}\label{ll-main}
 \Sigma_1= (c_\eps + o(1)) B^{-k} \frac{x}{W},
\end{equation}
the first error term estimate
\begin{equation}\label{ll-error1}
\Sigma_2 \lessapprox x^{1-\eps},
\end{equation}
and the second error term estimate
\begin{equation}\label{ll-error2}
\Sigma_3 \ll x \log^{-A} x
\end{equation}
for any fixed $A>0$.

We begin with \eqref{ll-error1}.  Observe that if $p(n) > x^\eps$, then the only way that $(n,q_{W,d_1,\dots,d'_{k-1}})$ can exceed $1$ is if there is a prime $x^\eps < p \ll x$ which divides both $n$ and one of $d_1,\dots,d'_{k-1}$; in particular, this case can only occur when $k > 1$.  For sake of notation we will just consider the contribution when there is a prime that divides $n$ and $d_1$, as the other $2k-3$ cases are similar.  By \eqref{sp2-def}, this contribution to $\Sigma_2$ can then be crudely bounded (using \eqref{divisor-bound}) by
\begin{align*}
 \Sigma_2&\lessapprox \sum_{x^\eps < p \ll x} \sum_{d_1,\dots,d_{k-1},d'_1,\dots,d'_{k-1} \leq x; p|d_1} \frac{1}{[d_1,d'_1] \dots [d_{k-1},d'_{k-1}]} \sum_{n \ll x: p|n} 1 \\
&\lessapprox \sum_{x^\eps < p \ll x} \frac{x}{p} \Bigl(\sum_{e_1 \leq x^2; p|e_1} \frac{\tau(e_1)}{e_1}\Bigr) \prod_{i=2}^{k-1} \Bigl(\sum_{e_i \leq x^2} \frac{\tau(e_i)}{e_i}\Bigr) \\
&\lessapprox \sum_{x^\eps < p \ll x} \frac{x}{p^2} \\
&\lessapprox x^{1-\eps}
\end{align*}
as required, where we have made the change of variables $e_i := [d_i,d'_i]$, using the divisor bound to control the multiplicity.  

Now we show \eqref{ll-error2}.  From the hypothesis \eqref{eh-upper} we have $q_{W,d_1,\dots,d'_{k-1}} \lessapprox x^\vartheta$ whenever the summand in \eqref{ll-error2} is non-zero.  From the divisor bound, for each $q \lessapprox x^\vartheta$ there are $O( \tau(q)^{O(1)})$ choices of $d_1,\dots,d'_{k-1}$ with $q_{W,d_1,\dots,d'_{k-1}} = q$. We see the product in \eqref{eq:SigmaDef} is $O(1)$. Thus by \eqref{sp3-def}, we may bound $\Sigma_3$ by
$$
\Sigma_3 \ll \sum_{q \lessapprox x^\vartheta} \tau(q)^{O(1)} \sup_{a \in (\Z/q\Z)^\times} |\Delta( \onef_{[x+h_k,2x+h_k]} \lambda_{F_k} \lambda_{G_k} \onef_{p(\cdot) > x^\eps};  a\ (q))|.
$$
From \eqref{divisor-2} we easily obtain the bound
$$
\Sigma_3\ll \sum_{q \lessapprox x^\vartheta} \tau(q)^{O(1)} \sup_{a \in (\Z/q\Z)^\times} |\Delta( \onef_{[x+h_k,2x+h_k]} \lambda_{F_k} \lambda_{G_k} \onef_{p(\cdot) > x^\eps};  a\ (q))| \ll x \log^{O(1)} x,
$$
so by Cauchy-Schwarz it suffices to show that
\begin{equation}\label{local}
\sum_{q \lessapprox x^\vartheta} \sup_{a \in (\Z/q\Z)^\times} |\Delta( \onef_{[x+h_k,2x+h_k]} \lambda_{F_k} \lambda_{G_k} \onef_{p(\cdot) > x^\eps};  a\ (q))| \ll x \log^{-A} x
\end{equation}
for any fixed $A>0$.

If we had the additional hypothesis $S(F_k) + S(G_k) < 1$, then this would follow easily from the hypothesis $\GEH[\vartheta]$ thanks to Claim \ref{geh-def}, since one can write $\lambda_{F_k} \lambda_{G_k} \onef_{p(\cdot) > x^\eps} = \alpha \star \beta$ with
$$ \alpha(n) := \onef_{p(n) > x^\eps} \sum_{d,d': [d,d'] = n} \mu(d) F_k(\log_x d) \mu(d') G_k(\log_x d')$$
and
$$ \beta(n) := \onef_{p(n) > x^\eps}.$$
But even in the absence of the hypothesis $S(F_k) + S(G_k) < 1$, we can still invoke $\GEH[\vartheta]$ after appealing to the fundamental theorem of arithmetic.  Indeed, if $n \in [x+h_k,2x+h_k]$ with $p(\cdot) > \eps$, then we have
$$ n = p_1 \dots p_r$$
for some primes $x^\eps < p_1 \leq \dots \leq p_r \leq 2x+h_k$, which forces $r \leq \frac{1}{\eps}+1$.  If we then partition $[x^\eps,2x+h_k]$ by $O( \log^{A+1} x )$ intervals $I_1,\dots,I_m$, with each $I_j$ contained in an interval of the form $[N, (1+\log^{-A} x) N]$, then we have $p_i \in I_{j_i}$ for some $1 \leq j_1 \leq \dots \leq j_r \leq m$, with the product interval $I_{j_1} \cdot \dots \cdot I_{j_r}$ intersecting $[x+h_k, 2x+h_k]$.  For fixed $r$, there are $O( \log^{Ar+r} x)$ such tuples $(j_1,\dots,j_r)$, and a simple application of the prime number theorem with classical error term (and crude estimates on the discrepancy $\Delta$) shows that each tuple contributes $O( x \log^{-Ar+O(1)} x)$ to \eqref{local} (here, and for the rest of this section, implied constants will be independent of $A$ unless stated otherwise). In particular, the $O(\log^{A(r-1)}{x})$ tuples $(j_1,\dots,j_r)$ with one repeated $j_i$, or for which the interval $I_{j_1} \cdot \dots \cdot I_{j_r}$ meets the boundary of $[x+h_k, 2x+h_k]$, contribute a total of $O(\log^{-A+O(1)}{x})$. This is an acceptable error to \eqref{local}, and so these tuples may be removed.  Thus it suffices to show that 
$$
\sum_{q \lessapprox x^\vartheta} \sup_{a \in (\Z/q\Z)^\times} |\Delta( \lambda_{F_k} \lambda_{G_k} \onef_{A_{j_1,\dots,j_r}};  a\ (q))| \ll x \log^{-A(r+1)+O(1)} x
$$
for any $1 \leq r \leq \frac{1}{\eps}+1$ and $1 \leq j_1 < \dots < j_r \leq m$ with $I_{j_1} \cdot \dots \cdot I_{j_r}$ contained in $[x+h_k,x+2h_k]$, where $A_{j_1,\dots,j_r}$ is the set of all products $p_1 \dots p_r$ with $p_i \in I_{j_i}$ for $i=1,\dots,r$, and where we allow implied constants in the $\ll$ notation to depend on $\eps$.  But for $n$ in $A_{j_1,\dots,j_r}$, the $2^r$ factors of $n$ are just the products of subsets of $\{p_1,\dots,p_r\}$, and from the smoothness of $F_k,G_k$ we see that $\lambda_{F_k}(n)$ is equal to some bounded constant (depending on $j_1,\dots,j_r$, but independent of $p_1,\dots,p_r$), plus an error of $O(\log^{-A} x)$.  As before, the contribution of this error is $O( \log^{-A(r+1)+O(1)} x)$, so it suffices to show that
$$
\sum_{q \lessapprox x^\vartheta} \sup_{a \in (\Z/q\Z)^\times} |\Delta( \onef_{A_{j_1,\dots,j_r}};  a\ (q))| \ll x \log^{-A(r+1)+O(1)} x.
$$
But one can write $\onef_{A_{j_1,\dots,j_r}}$ as a convolution $\onef_{A_{j_1}} \star \dots \star \onef_{A_{j_r}}$, where $A_{j_i}$ denotes the primes in $I_{j_i}$; assigning $A_{j_r}$ (for instance) to be $\beta$ and the remaining portion of the convolution to be $\alpha$, the claim now follows from the hypothesis $\GEH[\vartheta]$, thanks to the Siegel-Walfisz theorem (see e.g. \cite[Satz 4]{siebert} or~\cite[Th. 5.29]{ik}).

Finally, we show \eqref{ll-main}.  By Lemma \ref{mul-asym} we have
$$
 \sum_{\substack{ d_1,\dots,d_{k-1},d'_1,\dots,d'_{k-1}\\ d_1d'_1,\dots,d_{k-1}d'_{k-1}, W \text{ coprime}}} \hspace{0pt minus 1fil}\frac{\prod_{i=1}^{k-1} \mu(d_{i}) \mu(d'_{i}) F_{i}(\log_x d_{i}) G_{i}(\log_x d'_{i}) }{\phi(q_{W,d_1,\dots,d'_{k-1}})} = \frac{1}{\phi(W)} (c'+o(1)) B^{-k+1},$$
where
$$ c' := \prod_{i=1}^{k-1} \int_0^1 F'_i(t) G'_i(t)\ dt$$
(note that $F_i, G_i$ are supported on $[0,1]$ by hypothesis), so by \eqref{sp1-def} it suffices to show that
\begin{equation}\label{cpeps}
\sum_{x+h_k \leq n \leq 2x+h_k} \lambda_{F_k}(n) \lambda_{G_k}(n) \onef_{p(n)>x^\eps} = (c''_\eps + o(1)) \frac{x}{\log x},
\end{equation}
where $c''_{\eps}$ is a quantity depending on $\eps$ but not on $x$ such that
$$ \lim_{\eps \to 0} c''_{\eps} = \int_0^1 F'_k(t) G'_k(t)\ dt.$$
In the case $S(F_k)+S(G_k) < 1$, this would follow easily from (the $k=1$ case of) Theorem \ref{nonprime-asym}(i) and Proposition \ref{almostprime}.  In the general case, we may appeal once more to the fundamental theorem of arithmetic.  As before, we may factor $n = p_1 \dots p_r$ for some $x^\eps \leq p_1 \leq \dots \leq p_r \leq 2x+h_k$ and $r \leq \frac{1}{\eps}+1$.  The contribution of those $n$ with a repeated prime factor $p_i = p_{i+1}$ can easily be shown to be $\lessapprox x^{1-\eps}$ in the same manner we dealt with $\Sigma_2$, so we may restrict attention to the square-free $n$, for which the $p_i$ are strictly increasing.  In that case, one can write
$$ \lambda_{F_k}(n) = (-1)^r \partial_{(\log_x p_1)} \dots \partial_{(\log_x p_r)} F_k(0)$$
and
$$ \lambda_{G_k}(n) = (-1)^r \partial_{(\log_x p_1)} \dots \partial_{(\log_x p_r)} G_k(0)$$
where $\partial_{(h)} F(x) := F(x+h)-F(x)$.  On the other hand, a standard application of Mertens' theorem and the prime number theorem (and an induction on $r$) shows that for any fixed $r \geq 1$ and any fixed continuous function $f: \R^r \to \R$, we have
$$ \sum_{x^\eps \leq p_1 < \dots < p_r: x+h_k \leq p_1 \dots p_r \leq 2x+h_k} f(\log_x p_1,\dots, \log_x p_r) = (c_f + o(1)) \frac{x}{\log x} $$
where $c_f$ is the quantity
$$ c_f := \int_{\eps \leq t_1 < \dots < t_r: t_1 + \dots + t_r = 1} f( t_1,\dots,t_r)\ \frac{dt_1 \dots dt_{r-1}}{t_1 \dots t_r}$$
where we lift Lebesgue measure $dt_1 \dots dt_{r-1}$ up to the hyperplane $t_1+\dots+t_r=1$, thus
$$ \int_{t_1+\dots+t_r=1} F(t_1,\dots,t_r)\ dt_1 \dots dt_{r-1} := \int_{\R^{r-1}} F(t_1,\dots,t_{r-1},1-t_1-\dots-t_{r-1}) dt_1 \dots dt_{r-1}.$$
Putting all this together, we see that we obtain an asymptotic \eqref{cpeps} with
$$ c''_\eps := \sum_{1 \leq r \leq \frac{1}{\eps}+1} 
\int_{\eps \leq t_1 < \dots < t_r: t_1 + \dots + t_r = 1} \partial_{(t_1)} \dots \partial_{(t_r)} F_k(0)
\partial_{(t_1)} \dots \partial_{(t_r)} G_k(0)\ \frac{dt_1 \dots dt_{r-1}}{t_1 \dots t_r}.$$
Comparing \eqref{cpeps} with the first part of Proposition \ref{almostprime} we see that $c''_\eps = O(1)$ uniformly in $\eps$; subtracting two instances of \eqref{cpeps} and comparing with the last part of Proposition \ref{almostprime} we see that $|c''_{\eps_1} - c''_{\eps_2}| \ll \eps_1 + \eps_2$ for any $\eps_1,\eps_2 > 0$.  We conclude that $c''_\eps$ converges to a limit as $\eps\rightarrow 0$ for any $F,G$.  This implies the absolute convergence
\begin{equation}\label{absconv}
 \sum_{r>0} \int_{0 < t_1 < \dots < t_r: t_1 + \dots + t_r = 1} |\partial_{(t_1)} \dots \partial_{(t_r)} F_k(0)|
|\partial_{(t_1)} \dots \partial_{(t_r)} G_k(0)|\ \frac{dt_1 \dots dt_{r-1}}{t_1 \dots t_r} < \infty;
\end{equation}
indeed, by the Cauchy-Schwarz inequality it suffices to establish this for $F=G$, at which point we may remove the absolute value signs and use the boundedness of $c''_\eps$.  By the dominated convergence theorem, it therefore suffices to establish the identity
\begin{equation}\label{condconv}
 \sum_{r>0} \int_{0 < t_1 < \dots < t_r: t_1 + \dots + t_r = 1} \partial_{(t_1)} \dots \partial_{(t_r)} F_k(0)
\partial_{(t_1)} \dots \partial_{(t_r)} G_k(0)\ \frac{dt_1 \dots dt_{r-1}}{t_1 \dots t_r} = \int_0^1 F'_k(t) G'_k(t)\ dt.
\end{equation}
It will suffice to show the identity
\begin{equation}\label{depol}
 \sum_{r>0} \int_{0 < t_1 < \dots < t_r: t_1 + \dots + t_r = 1} |\partial_{(t_1)} \dots \partial_{(t_r)} F(0)|^2\ \frac{dt_1 \dots dt_{r-1}}{t_1 \dots t_r} = \int_0^1 |F'(t)|^2\ dt
\end{equation}
for any smooth $F: [0,+\infty) \to \R$, since \eqref{condconv} follows by replacing $F$ with $F_k+G_k$ and $F_k-G_k$ and then subtracting.

At this point we use the following identity:

\begin{lemma}  For any positive reals $t_1,\dots,t_r$ with $r \geq 1$, we have
\begin{equation}\label{star}
 \frac{1}{t_1 \dots t_r} = \sum_{\sigma \in S_r} \frac{1}{\prod_{i=1}^r (\sum_{j=i}^r t_{\sigma(j)})}.
\end{equation}
\end{lemma}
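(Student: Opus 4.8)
The plan is to prove \eqref{star} by induction on $r$, the base case $r=1$ being the trivial identity $1/t_1 = 1/t_1$. For the inductive step I would group the terms on the right-hand side according to the value $\ell = \sigma(1)$. The key structural observation is that the factor of $\prod_{i=1}^r (\sum_{j=i}^r t_{\sigma(j)})$ corresponding to $i=1$ equals $t_1 + \dots + t_r =: T$ regardless of $\sigma$, so it factors out of the entire sum; and for $i \geq 2$ the factor $\sum_{j=i}^r t_{\sigma(j)}$ involves only the values $t_{\sigma(2)},\dots,t_{\sigma(r)}$, i.e. only the $(r-1)$-tuple obtained from $(t_1,\dots,t_r)$ by deleting $t_\ell$. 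Hence, once $\sigma(1)=\ell$ is fixed, the remaining sum over the $(r-1)!$ choices of $(\sigma(2),\dots,\sigma(r))$ is, after relabelling the positions $2,\dots,r$ as $1,\dots,r-1$, precisely the right-hand side of \eqref{star} for the tuple $(t_m)_{m \neq \ell}$; by the inductive hypothesis this equals $\prod_{m \neq \ell} t_m^{-1} = t_\ell/(t_1 \cdots t_r)$.

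Summing over $\ell$, the right-hand side of \eqref{star} becomes $\frac{1}{T}\sum_{\ell=1}^r \frac{t_\ell}{t_1 \cdots t_r} = \frac{1}{T}\cdot\frac{T}{t_1\cdots t_r} = \frac{1}{t_1\cdots t_r}$, which is the claim.

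An alternative route, which avoids induction entirely, is to begin from the representation $\frac{1}{t_1\cdots t_r} = \int_{[0,\infty)^r} e^{-(t_1 u_1 + \dots + t_r u_r)}\,du_1\cdots du_r$, partition $[0,\infty)^r$ into the $r!$ chambers $\{u_{\sigma(1)} \geq \dots \geq u_{\sigma(r)} \geq 0\}$ indexed by $\sigma \in S_r$, and on each chamber substitute $v_i := u_{\sigma(i)} - u_{\sigma(i+1)}$ for $i<r$ and $v_r := u_{\sigma(r)}$, so that $u_{\sigma(i)} = \sum_{j\geq i} v_j$. A short computation rewrites the exponent as $\sum_j v_j \sum_{i \leq j} t_{\sigma(i)}$, and integrating out the $v_j \geq 0$ produces $\prod_{j=1}^r (\sum_{i=1}^j t_{\sigma(i)})^{-1}$; reindexing $\sigma$ by composition with the reversal permutation turns the left-hand partial sums into the right-hand partial sums $\sum_{j=i}^r t_{\sigma(j)}$, matching the summand in \eqref{star}.

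I do not anticipate a genuine obstacle: the only point that needs care is bookkeeping — verifying that after fixing $\sigma(1)$ the residual product involves exactly the deleted sub-tuple, so that the inductive hypothesis applies verbatim (respectively, in the integral proof, that the change of variables is measure-preserving on each chamber and that the reversal reindexing identifies the two product forms).
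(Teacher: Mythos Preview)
Your inductive argument is correct and is essentially the same as the paper's: the paper also verifies that the right-hand side $f_r$ satisfies the recursion $f_r(t_1,\dots,t_r) = \frac{1}{t_1+\dots+t_r}\sum_{i} f_{r-1}(\dots,\widehat{t_i},\dots)$, observes that $1/(t_1\cdots t_r)$ satisfies the same recursion, and concludes by induction. Your grouping by $\ell=\sigma(1)$ is exactly this recursion spelled out.

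Your second proof, via the integral $\int_{[0,\infty)^r} e^{-\sum t_i u_i}\,du$ and the decomposition into Weyl chambers, is a genuinely different route that the paper does not take. It is correct (the change of variables has Jacobian $1$, the chambers cover $[0,\infty)^r$ up to measure zero, and the reversal $\sigma\mapsto\sigma\circ(i\mapsto r+1-i)$ converts the partial sums $\sum_{i\le j}t_{\sigma(i)}$ into the tail sums $\sum_{j\ge i}t_{\sigma(j)}$). This approach has the mild advantage of being a one-shot computation rather than an induction, and it makes the identity visibly a special case of the standard ``shuffle'' decomposition of a product of exponentials; the paper's inductive proof is shorter to write down and requires no analysis.
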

Thus, for instance, when $r=2$ we have
$$ \frac{1}{t_1 t_2} = \frac{1}{(t_1+t_2) t_1} + \frac{1}{(t_1+t_2)t_2}.$$

\begin{proof}  If the right-hand side of \eqref{star} is denoted $f_r( t_1,\dots,t_r )$, then one easily verifies the identity
$$ f_r(t_1,\dots,t_r) = \frac{1}{t_1+\dots+t_r} \sum_{i=1}^r f_{r-1}(t_1,\dots,t_{i-1},t_{i+1},\dots,t_r)$$
for any $r > 1$; but the left-hand side of \eqref{star} also obeys this identity, and the claim then follows from induction.
\end{proof}

From this lemma and symmetrisation, we may rewrite the left-hand side of \eqref{depol} as
$$
 \sum_{r>0} \int_{\substack{t_1,\dots,t_r \geq 0\\ t_1+\dots+t_r =1}} |\partial_{(t_1)} \dots \partial_{(t_r)} F(0)|^2\ \frac{dt_1 \dots dt_{r-1}}{\prod_{i=1}^r(\sum_{j=i}^rt_i)}.$$
Let
$$ I_a(F) := \int_0^a F'(t)^2\ dt,$$
and
$$ J_a(F) := (\partial_{(a)} F(0))^2.$$
One can then rewrite \eqref{depol} as the identity
\begin{equation}\label{depol-2}
 I_1(F) = \sum_{r=1}^\infty K_{1,r}(F),
\end{equation}
where
$$ K_{a,r}(F) := \int_{\substack{t_1,\dots,t_r \geq 0\\ t_1+\dots+t_r =a}} J_{t_r}( \partial_{(t_1)} \dots \partial_{(t_{r-1})} F) \frac{dt_1 \dots dt_{r-1}}{a(a-t_1) \dots (a-t_1-\dots-t_{r-1})}.$$
To prove this, we first observe the identity
$$ I_a(F) = \frac{1}{a} J_a(F) + \int_{0 \leq t \leq a} I_{a-t}( \partial_{(t)} F ) \frac{dt}{a}$$
for any $a>0$; indeed, we have
\begin{align*}
\int_{0 \leq t \leq a} I_{a-t}( \partial_{(t)} F ) \frac{dt}{a} &=
\int_{0 \leq t \leq a; 0 \leq u \leq a-t} |F'(t+u) - F'(t)|^2\ \frac{du dt}{a} \\
&= \int_{0 \leq t \leq s \leq a} |F'(s) - F'(t)|^2\ \frac{ds dt}{a} \\
&= \frac{1}{2} \int_0^a \int_0^a |F'(s) - F'(t)|^2\ \frac{ds dt}{a} \\
&= \int_0^a |F'(s)|^2\ ds - \frac{1}{a} \left(\int_0^a F'(s)\ ds\right) \left(\int_0^a F'(t)\ dt\right) \\
&= I_a(F) - \frac{1}{a} J_a(F),
\end{align*}
and the claim follows.  Iterating this identity $k$ times, we see that
\begin{equation}\label{iak}
I_a(F) = \sum_{r=1}^k K_{a,r}(F) + L_{a,k}(F)
\end{equation}
for any $k \geq 1$, where
$$ L_{a,k}(F) := \int_{\substack{t_1,\dots,t_k \geq 0\\ t_1+\dots+t_k \leq a}} I_{1-t_1-\dots-t_k}( \partial_{(t_1)} \dots \partial_{(t_k)} F) \frac{dt_1 \dots dt_k}{a(a-t_1) \dots (a-t_1-\dots-t_{k-1})}.$$
In particular, dropping the $L_{a,k}(F)$ term and sending $k \to \infty$ yields the lower bound
\begin{equation}\label{krsum}
\sum_{r=1}^\infty K_{a,r}(F) \leq I_a(F).
\end{equation}
On the other hand, we can expand $L_{a,k}(F)$ as
$$ \int_{\substack{t_1,\dots,t_k,t \geq 0\\t_1+\dots+t_k+t \leq a}} |\partial_{(t_1)} \dots \partial_{(t_k)} F'(t)|^2 \frac{dt_1 \dots dt_k dt}{a(a-t_1) \dots (a-t_1-\dots-t_{k-1})}.$$
Writing $s := t_1 + \dots + t_k$, we obtain the upper bound
$$ L_{a,k}(F) \leq \int_{s,t \geq 0: s+t \leq a} K_{s,k}( F'_t )\ dt,$$
where $F_t(x) := F(x+t)$.  Summing this and using \eqref{krsum} and the monotone convergence theorem, we conclude that
$$ \sum_{k=1}^\infty L_{a,k}(F) \leq \int_{s,t \geq 0: s+t \leq a} I_{s}( F_t )\ dt < \infty,$$
and in particular $L_{a,k}(F) \to 0$ as $k \to \infty$.  Sending $k \to \infty$ in \eqref{iak}, we obtain \eqref{depol-2} as desired.

\section{Reduction to a variational problem}\label{variational-sec}

Now that we have proven Theorems \ref{prime-asym} and \ref{nonprime-asym}, we can establish Theorems \ref{maynard-thm}, \ref{maynard-trunc}, \ref{epsilon-trick}, \ref{epsilon-beyond}.  The main technical difficulty is to take the multidimensional measurable functions $F$ appearing in these functions and approximate them by tensor products of smooth functions, for which Theorems \ref{prime-asym} and \ref{nonprime-asym} may be applied.

\subsection{Proof of Theorem \ref{maynard-thm}}\label{may-sec}

We now prove Theorem \ref{maynard-thm}.  Let $k, m, \vartheta$ obey the hypotheses of that theorem, thus we may find a fixed square-integrable function $F: [0,+\infty)^k \to \R$ supported on the simplex
$$ {\mathcal R}_k := \{ (t_1,\dots,t_k) \in [0,+\infty)^k: t_1+\dots+t_k \leq 1 \}$$
and not identically zero and with
\begin{equation}\label{jbig}
 \frac{\sum_{i=1}^k J_i(F)}{I(F)} > \frac{2m}{\vartheta}.
\end{equation}
We now perform a number of technical steps to further improve the structure of $F$.  Our arguments here will be somewhat convoluted, and are not the most efficient way to prove Theorem \ref{maynard-thm} (which in any event was already established in \cite{maynard-new}), but they will motivate the similar arguments given below to prove the more difficult results in Theorems \ref{maynard-trunc}, \ref{epsilon-trick}, \ref{epsilon-beyond}.  In particular, we will use regularisation techniques which are compatible with the vanishing marginal condition \eqref{vanishing-marginal} that is a key hypothesis in Theorem \ref{epsilon-beyond}.

We first need to rescale and retreat a little bit from the slanted boundary of the simplex ${\mathcal R}_k$.
Let $\delta_1 > 0$ be a sufficiently small fixed quantity, and write $F_1: [0,+\infty)^k \to \R$ to be the rescaled function
$$ F_1(t_1,\dots,t_k) := F( \frac{t_1}{\vartheta/2-\delta_1}, \dots, \frac{t_k}{\vartheta/2-\delta_1} ).$$
Thus $F_1$ is a fixed square-integrable measurable function supported on the rescaled simplex
$$ (\vartheta/2-\delta_1) \cdot {\mathcal R}_k = \{ (t_1,\dots,t_k) \in [0,+\infty)^k: t_1+\dots+t_k \leq \vartheta/2-\delta_1 \}.$$
From \eqref{jbig}, we see that if $\delta_1$ is small enough, then $F_1$ is not identically zero and
\begin{equation}\label{jbig-2}
 \frac{\sum_{i=1}^k J_i(F_1)}{I(F_1)} > m.
\end{equation}

Let $\delta_1$ and $F_1$ be as above. Next, let $\delta_2 > 0$ be a sufficiently small fixed quantity (smaller than $\delta_1$), and write $F_2: [0,+\infty)^k \to \R$ to be the shifted function, defined by setting
$$ F_2(t_1,\dots,t_k) := F_1( t_1-\delta_2, \dots, t_k-\delta_2 )$$
when $t_1,\dots,t_k \geq \delta_2$, and $F_2(t_1,\dots,t_k)=0$ otherwise.
As $F_1$ was square-integrable, compactly supported, and not identically zero, and because spatial translation is continuous in the strong operator topology on $L^2$, it is easy to see that we will have $F_2$ not identically zero and that
\begin{equation}\label{jbig-3}
 \frac{\sum_{i=1}^k J_i(F_2)}{I(F_2)} > m
\end{equation}
for $\delta_2$ small enough (after restricting $F_2$ back to $[0,+\infty)^k$, of course).  For $\delta_2$ small enough, this function will be supported on the region
$$
\{ (t_1,\dots,t_k) \in \R^k: t_1 \dots + t_k \leq \vartheta/2-\delta_2; t_1,\dots,t_k \geq \delta_2 \},$$
thus the support of $F_2$ stays away from all the boundary faces of ${\mathcal R}_k$.

By convolving $F_2$ with a smooth approximation to the identity that is supported sufficiently close to the origin, one may then find a \emph{smooth} function $F_3: [0,+\infty)^k \to \R$, supported on
$$
\{ (t_1,\dots,t_k) \in \R^k: t_1 \dots + t_k \leq \vartheta/2-\delta_2/2; t_1,\dots,t_k \geq \delta_2/2 \},$$
which is not identically zero, and such that
\begin{equation}\label{jbig-4}
 \frac{\sum_{i=1}^k J_i(F_3)}{I(F_3)} > m.
\end{equation}

We extend $F_3$ by zero to all of $\R^k$, and then define the function $f_3: \R^k \to \R$ by
$$ f_3(t_1,\dots,t_k) := \int_{s_1 \geq t_1, \dots, s_k \geq t_k} F_3(s_1,\dots,s_k)\ ds_1 \dots ds_k,$$
thus $f_3$ is smooth, not identically zero and supported on the region
\begin{equation}\label{ftj-set}
 \{ (t_1,\dots,t_k) \in \R^k: \sum_{i=1}^k \max(t_i, \delta_2/2) \leq \vartheta/2 - \delta_2/2 \}.
\end{equation}
From the fundamental theorem of calculus we have
\begin{equation}\label{ftj}
 F_3(t_1,\dots,t_k) := (-1)^k \frac{\partial^k}{\partial t_1 \dots \partial t_k} f_3(t_1,\dots,t_k),
\end{equation}
and so $I(F_3) = \tilde I(f_3)$ and $J_i(F_3) = \tilde J_i(f_3)$ for $i=1,\dots,k$, where
\begin{equation}\label{tidef}
 \tilde I(f_3) := \int_{[0,+\infty)^k} \left|\frac{\partial^k}{\partial t_1 \dots \partial t_k} f_3(t_1,\dots,t_k)\right|^2\ dt_1 \dots dt_k
\end{equation}
and
\begin{equation}\label{tjdef}
 \tilde J_i(f_3) := \int_{[0,+\infty)^{k-1}} \left|\frac{\partial^{k-1}}{\partial t_1 \dots \partial t_{i-1} \partial t_{i+1} \dots \partial t_k} f_3(t_1,\dots,t_{i-1}, 0, t_{i+1}, \dots, t_k)\right|^2\ dt_1 \dots dt_{i-1} dt_{i+1} \dots dt_k.
\end{equation}
In particular, 
\begin{equation}\label{jbig-5}
 \frac{\sum_{i=1}^k \tilde J_i(f_3)}{\tilde I(f_3)} > m.
\end{equation}

Now we approximate $f_3$ by linear combinations of tensor products.  By the Stone-Weierstrass theorem, we may express $f_3$ (on $[0,+\infty)^k$) as the uniform limit of functions
of the form
\begin{equation}\label{cff}
 (t_1,\dots,t_k) \mapsto \sum_{j=1}^J c_j f_{1,j}(t_1) \dots f_{k,j}(t_k)
\end{equation}
where $c_1,\dots,c_J$ are real scalars, and $f_{i,j}: \R \to \R$ are smooth compactly supported functions.  Since $f_3$ is supported in \eqref{ftj-set}, we can ensure that all the components $f_{1,j}(t_1) \dots f_{k,j}(t_k)$ are supported in the slightly larger region
$$ \{ (t_1,\dots,t_k) \in \R^k: \sum_{i=1}^k \max(t_i, \delta_2/4) \leq \vartheta/2 - \delta_2/4 \}.$$
Observe that if one convolves a function of the form \eqref{cff} with a smooth approximation to the identity which is of tensor product form $(t_1,\dots,t_k) \mapsto \varphi_1(t_1) \dots \varphi_1(t_k)$, one obtains another function of this form.  Such a convolution converts a uniformly convergent sequence of functions to a \emph{uniformly smoothly} convergent sequence of functions (that is to say, all derivatives of the functions converge uniformly).  From this, we conclude that $f_3$ can be expressed (on $[0,+\infty)^k$) as the \emph{smooth} limit of functions of the form \eqref{cff}, with each component $f_{1,j}(t_1) \dots f_{k,j}(t_k)$ supported in the region
$$ \{ (t_1,\dots,t_k) \in \R^k: \sum_{i=1}^k \max(t_i, \delta_2/8) \leq \vartheta/2 - \delta_2/8 \}.$$
Thus, we may find such a linear combination
\begin{equation}\label{f4}
 f_4(t_1,\dots,t_k) = \sum_{j=1}^J c_j f_{1,j}(t_1) \dots f_{k,j}(t_k)
\end{equation}
with $J$, $c_j$, $f_{i,j}$ fixed and $f_4$ not identically zero, with
\begin{equation}\label{jbig-6}
 \frac{\sum_{i=1}^k \tilde J_i(f_4)}{\tilde I(f_4)} > m.
\end{equation}
Furthermore, by construction we have
\begin{equation}\label{sfg}
S(f_{1,j}) + \dots + S(f_{k,j}) < \frac{\vartheta}{2} \leq \frac{1}{2}
\end{equation}
for all $j=1,\dots,J$, where $S()$ was defined in \eqref{S-def}.

Now we construct the sieve weight $\nu: \N \to \R$ by the formula
\begin{equation}\label{nu-def}
 \nu(n) := \left( \sum_{j=1}^J c_j \lambda_{f_{1,j}}(n+h_1) \dots \lambda_{f_{k,j}}(n+h_k) \right)^2,
\end{equation}
where the divisor sums $\lambda_f$ were defined in \eqref{lambdaf-def}.

Clearly $\nu$ is non-negative.  Expanding out the square and using Theorem \ref{nonprime-asym}(i) and \eqref{sfg}, we see that
$$ \sum_{\substack{x \leq n \leq 2x\\ n = b\ (W)}} \nu(n) = (\alpha + o(1)) B^{-k} \frac{x}{\log x}$$
where
$$ \alpha := \sum_{j=1}^J \sum_{j'=1}^J c_j c_{j'} \prod_{i=1}^k \int_0^\infty f'_{i,j}(t_i) f'_{i,j'}(t_i)\ dt_i$$
which factorizes using \eqref{f4}, \eqref{tidef} as
\begin{align*}
\alpha &= \int_{[0,+\infty)^k} \left|\frac{\partial^{k-1}}{\partial t_1 \dots \partial t_k} f_4(t_1,\dots,t_k)\right|^2\ dt_1 \dots dt_k \\
&= \tilde I(f_4).
\end{align*}
Now consider the sum
$$ \sum_{\substack{x \leq n \leq 2x\\ n = b\ (W)}} \nu(n) \theta(n+h_k).$$
By \eqref{lambdan-prime}, one has
$$ \lambda_{f_{k,j}}(n+h_k) = f_{k,j}(0)$$
whenever $n$ gives a non-zero contribution to the above sum.  Expanding out the square in \eqref{nu-def} again and using Theorem \ref{prime-asym}(i) and \eqref{sfg} (and the hypothesis $\EH[\vartheta]$), we thus see that
$$ \sum_{\substack{x \leq n \leq 2x\\ n = b\ (W)}} \nu(n) \theta(n+h_k) = (\beta_k + o(1)) B^{1-k} \frac{x}{\phi(W)}$$
where
$$ \beta_k := \sum_{j=1}^J \sum_{j'=1}^J c_j c_{j'} f_{i,j}(0) f_{i,j'}(0) \prod_{i=1}^{k-1} \int_0^\infty f'_{i,j}(t_i) f'_{i,j'}(t_i)\ dt_i$$
which factorizes using \eqref{f4}, \eqref{tjdef} as
\begin{align*}
\beta_k &= \int_{[0,+\infty)^k} \left|\frac{\partial^k}{\partial t_1 \dots \partial t_{k-1}} f_4(t_1,\dots,t_{k-1},0)\right|^2\ dt_1 \dots dt_{k-1} \\
&= \tilde J_k(f_4).
\end{align*}
More generally, we see that
$$ \sum_{\substack{x \leq n \leq 2x\\ n = b\ (W)}} \nu(n) \theta(n+h_i) = (\beta_i + o(1)) B^{1-k} \frac{x}{\phi(W)}$$
for $i=1,\dots,k$, with $\beta_i := \tilde J_i(f_4)$.  Applying Lemma \ref{crit} and \eqref{jbig-4}, we obtain $\DHL[k,m+1]$ as required.

\subsection{Proof of Theorem \ref{maynard-trunc}}\label{trunc-sec}

Now we prove Theorem \ref{maynard-trunc}, which uses a very similar argument to that of the previous section.  Let $k, m, \varpi, \delta, F$ be as in Theorem \ref{maynard-trunc}.  By performing the same rescaling as in the previous section (but with $1/2 + 2\varpi$ playing the role of $\vartheta$), we see that we can find
a fixed square-integrable measurable function $F_1$ supported on the rescaled truncated simplex
$$ \{ (t_1,\dots,t_k) \in [0,+\infty)^k: t_1+\dots+t_k \leq \frac{1}{4} + \varpi - \delta_1; t_1,\dots,t_k < \delta - \delta_1 \}$$
for some sufficiently small fixed $\delta_1>0$, such that \eqref{jbig-2} holds.  By repeating the arguments of the previous section we may eventually arrive at a smooth function $f_4: \R^k \to \R$ of the form \eqref{f4}, which is not identically zero and obeys \eqref{jbig-6}, and such that each component $f_{1,j}(t_1) \dots f_{k,j}(t_k)$ is supported in the region
$$ \{ (t_1,\dots,t_k) \in \R^k: \sum_{i=1}^k \max(t_i, \delta_2/8) \leq \frac{1}{4}+\varpi - \delta_2/8; t_1,\dots,t_k < \delta - \delta_2/8 \}$$
for some sufficiently small $\delta_2>0$.  In particular, one has
$$
S(f_{1,j}) + \dots + S(f_{k,j}) < \frac{1}{4}+\varpi \leq \frac{1}{2}$$
and
$$ S(f_{1,j}),\dots,S(f_{k,j}) < \delta$$
for all $j=1,\dots,J$.   If we then define $\nu$ by \eqref{nu-def} as before, and repeat all of the above arguments (but use Theorem \ref{prime-asym}(ii) and $\MPZ[\varpi,\delta]$ in place of Theorem \ref{prime-asym}(i) and $\EH[\vartheta]$), we obtain the claim; we leave the details to the interested reader.

\subsection{Proof of Theorem \ref{epsilon-trick}}\label{trick-sec}

Now we prove Theorem \ref{epsilon-trick}. Let $k, m, \eps, \vartheta$ be as in that theorem.  Then one may find a square-integrable function $F: [0,+\infty)^k \to \R$ supported on $(1+\eps) \cdot {\mathcal R}_k$ which is not identically zero, and with
$$\frac{\sum_{i=1}^k J_{i,1-\eps}(F)}{I(F)} > \frac{2m}{\vartheta}.$$
By truncating and rescaling as in Section \ref{may-sec}, we may find a fixed bounded measurable function $F_1: [0,+\infty)^k \to \R$ on the simplex $(1+\eps) (\frac{\vartheta}{2}-\delta_1) \cdot {\mathcal R}_k$ such that 
$$\frac{\sum_{i=1}^k J_{i,(1-\eps) \frac{\vartheta}{2}}(F_1)}{I(F_1)} > m.$$

By repeating the arguments in Section \ref{may-sec}, we may eventually arrive at a smooth function $f_4: \R^k \to \R$ of the form \eqref{f4}, which is not identically zero and obeys 
\begin{equation}\label{f4-ratio}
\frac{\sum_{i=1}^k \tilde J_{i,(1-\eps) \frac{\vartheta}{2}}(f_4)}{\tilde I(f_4)} > m
\end{equation}
with
\begin{align*}
 \tilde J_{i, (1-\eps) \frac{\vartheta}{2}}(f_4) &:= \int_{(1-\eps) \frac{\vartheta}{2} \cdot {\mathcal R}_{k-1}} \left|\frac{\partial^{k-1}}{\partial t_1 \dots \partial t_{i-1} \partial t_{i+1} \dots \partial t_k} f_4(t_1,\dots,t_{i-1}, 0, t_{i+1}, \dots, t_k)\right|^2\\
&\quad \ dt_1 \dots dt_{i-1} dt_{i+1} \dots dt_k,
\end{align*}
and such that each component $f_{1,j}(t_1) \dots f_{k,j}(t_k)$ is supported in the region
$$ \left\{ (t_1,\dots,t_k) \in \R^k: \sum_{i=1}^k \max(t_i, \delta_2/8) \leq (1+\eps) \frac{\vartheta}{2} - \frac{\delta_2}{8}\right\}$$
for some sufficiently small $\delta_2>0$.  In particular, we have
\begin{equation}\label{sff}
S(f_{1,j}) + \dots + S(f_{k,j}) \leq (1+\eps) \frac{\vartheta}{2} - \frac{\delta_2}{8}
\end{equation}
for all $1 \leq j \leq J$.  

Let $\delta_3 > 0$ be a sufficiently small fixed quantity (smaller than $\delta_1$ or $\delta_2$).  By a smooth partitioning, we may assume that all of the $f_{i,j}$ are supported in intervals of length at most $\delta_3$, while keeping the sum
\begin{equation}\label{abs-sum}
\sum_{j=1}^J |c_j| |f_{1,j}(t_1)| \dots |f_{k,j}(t_k)|
\end{equation}
bounded uniformly in $t_1,\dots,t_k$ and in $\delta_3$.

Now let $\nu$ be as in \eqref{nu-def}, and consider the expression
$$ \sum_{\substack{x \leq n \leq 2x\\ n = b\ (W)}} \nu(n).$$
This expression expands as a linear combination of the expressions
$$ \sum_{\substack{x \leq n \leq 2x\\ n = b\ (W)}} \prod_{i=1}^k \lambda_{f_{i,j}}(n+h_i) \lambda_{f_{i,j'}}(n+h_i)$$
for various $1 \leq j,j' \leq J$.  We claim that this sum is equal to
$$ \left(\prod_{i=1}^k \int_0^1 f'_{i,j}(t_i) f'_{i,j'}(t_i)\ dt_i + o(1)\right) B^{-k} \frac{x}{W}.$$
To see this, we divide into two cases.  First suppose that hypothesis (i) from Theorem \ref{epsilon-trick} holds.  Then from \eqref{sff} we have
$$ \sum_{i=1}^k (S(f_{i,j}) + S(f_{i,j'})) < (1+\eps) \vartheta < 1$$
and the claim follows from Theorem \ref{nonprime-asym}(i).  Now suppose instead that hypothesis (ii) from Theorem \ref{epsilon-trick} holds, then from \eqref{sff} one has
$$ \sum_{i=1}^k (S(f_{i,j}) + S(f_{i,j'})) < (1+\eps) \vartheta < \frac{k}{k-1} \vartheta,$$
and so from the pigeonhole principle we have
$$ \sum_{1 \leq i \leq k: i \neq i_0} (S(f_{i,j}) + S(f_{i,j'})) < \vartheta$$
for some $1 \leq i_0 \leq k$.  The claim now follows from Theorem \ref{nonprime-asym}(ii).

Putting this together as in Section \ref{may-sec}, we conclude that
$$ \sum_{\substack{x \leq n \leq 2x\\ n = b\ (W)}} \nu(n) = (\alpha + o(1)) B^{-k} \frac{x}{W}$$
where 
$$ \alpha := \tilde I(f_4).$$

Now we consider the sum
\begin{equation}\label{theta-sum}
 \sum_{\substack{x \leq n \leq 2x\\ n = b\ (W)}} \nu(n) \theta(n+h_k).
\end{equation}
From Proposition \ref{geh-eh} we see that we have $\EH[\vartheta]$ as a consequence of the hypotheses of Theorem \ref{epsilon-trick}.  However, this combined with Theorem \ref{prime-asym} is not strong enough to obtain an asymptotic for the sum \eqref{theta-sum}, as there is an epsilon loss in \eqref{sff}.  But observe that Lemma \ref{crit} only requires a \emph{lower} bound on the sum \eqref{theta-sum}, rather than an asymptotic. 

To obtain this lower bound, we partition $\{1,\dots,J\}$ into ${\mathcal J}_1 \cup {\mathcal J}_2$, where ${\mathcal J}_1$ consists of those indices $j \in \{1,\dots,J\}$ with
\begin{equation}\label{sff-minus}
S(f_{1,j}) + \dots + S(f_{k-1,j}) < (1-\eps) \frac{\vartheta}{2}
\end{equation}
and ${\mathcal J}_2$ is the complement.  From the elementary inequality
$$ (x_1 + x_2)^2 = x_1^2 + 2x_1 x_2 + x_2^2 \geq (x_1+2x_2) x_1 $$
we obtain the pointwise lower bound
$$ \nu(n) \geq 
\left( (\sum_{j \in {\mathcal J}_1} + 2 \sum_{j \in {\mathcal J}_2}) c_j \lambda_{f_{1,j}}(n+h_1) \dots \lambda_{f_{k,j}}(n+h_k) \right)
\left( \sum_{j' \in {\mathcal J}_1} c_{j'} \lambda_{f_{1,j'}}(n+h_1) \dots \lambda_{f_{k,j'}}(n+h_k) \right).$$
The point of performing this lower bound is that if $j \in {\mathcal J}_1 \cup {\mathcal J}_2$ and $j' \in {\mathcal J}_1$, then from \eqref{sff}, \eqref{sff-minus} one has
$$ \sum_{i=1}^{k-1} (S(f_{i,j}) + S(f_{i,j'})) < \vartheta$$
which makes Theorem \ref{prime-asym}(i) available for use.  Indeed, for any $j \in \{1,\dots,J\}$ and $i=1,\dots,k$, we have from \eqref{sff} that
$$ S(f_{i,j}) \leq (1+\eps) \frac{\vartheta}{2} < \vartheta < 1$$
and so by \eqref{lambdan-prime} we have
\begin{equation}\label{nutheta}
\begin{split}
 \nu(n) \theta(n+h_k) &\geq 
\left( (\sum_{j \in {\mathcal J}_1} + 2 \sum_{j \in {\mathcal J}_2}) c_j \lambda_{f_{1,j}}(n+h_1) \dots \lambda_{f_{k-1,j}}(n+h_{k-1}) f_{k,j}(0) \right)\\
&\quad \times \left( \sum_{j' \in {\mathcal J}_1} c_{j'} \lambda_{f_{1,j'}}(n+h_1) \dots \lambda_{f_{k-1,j'}}(n+h_{k-1}) f_{k,j'}(0) \right) \theta(n+h_k)
\end{split}
\end{equation}
for $x \leq n \leq 2x$.  If we then apply Theorem \ref{prime-asym}(i) and the hypothesis $\EH[\vartheta]$, we obtain the lower bound
$$ 
 \sum_{\substack{x \leq n \leq 2x\\ n = b\ (W)}} \nu(n) \theta(n+h_k) \geq (\beta_k - o(1)) B^{1-k} \frac{x}{\phi(W)}$$
with
$$ \beta_k :=
(\sum_{j \in {\mathcal J}_1} + 2 \sum_{j \in {\mathcal J}_2}) \sum_{j' \in {\mathcal J}_1} c_j c_{j'} f_{k,j}(0) f_{k,j'}(0)
\prod_{i=1}^{k-1} \int_0^\infty f'_{i,j}(t_i) f'_{i,j'}(t_i)\ dt_i $$
which we can rearrange as
\begin{align*}
\beta_k &= \int_{[0,+\infty)^{k-1}} \left(\frac{\partial^{k-1}}{\partial t_1 \dots \partial t_{k-1}} f_{4,1}(t_1,\dots,t_{k-1},0) + 2\frac{\partial^{k-1}}{\partial t_1 \dots \partial t_{k-1}} f_{4,2}(t_1,\dots,t_{k-1},0)\right)\\
&\quad\quad  \frac{\partial^{k-1}}{\partial t_1 \dots \partial t_{k-1}} f_{4,1}(t_1,\dots,t_{k-1},0)\ dt_1 \dots dt_{k-1} 
\end{align*}
where
$$ f_{4,l}(t_1,\dots,t_k) :=\sum_{j \in {\mathcal J}_l} c_j f_{1,j}(t_1) \dots f_{k,j}(t_k)$$
for $l=1,2$.  Note that $f_{4,1}, f_{4,2}$ are both bounded pointwise by \eqref{abs-sum}, and their supports only overlap on a set of measure $O( \delta_3 )$.  We conclude that
$$ \beta_k = \tilde J_k( f_{4,1} ) + O(\delta_3)$$
with the implied constant independent of $\delta_3$, and thus
$$ \beta_k = \tilde J_{k, (1-\eps) \frac{\vartheta}{2}}( f_4 ) + O(\delta_3).$$
A similar argument gives
$$ 
 \sum_{\substack{x \leq n \leq 2x\\ n = b\ (W)}} \nu(n) \theta(n+h_i) \geq (\beta_i - o(1)) B^{1-k} \frac{x}{\phi(W)}$$
for $i=1,\dots,k$ with
$$ \beta_i = \tilde J_{i, (1-\eps) \frac{\vartheta}{2}}( f_4 ) + O(\delta_3).$$
If we choose $\delta_3$ small enough, then the claim $\DHL[k,m+1]$ now follows from Lemma \ref{crit} and \eqref{f4-ratio}.

\subsection{Proof of Theorem \ref{epsilon-beyond}}\label{beyond-sec}

Finally, we prove Theorem \ref{epsilon-beyond}. Let $k, m, \eps, F$ be as in that theorem.  By rescaling as in previous sections, we may find
a square-integrable function $F_1: [0,+\infty)^k \to \R$ supported on 
$(\frac{k}{k-1} \frac{\vartheta}{2}-\delta_1) \cdot {\mathcal R}_k$ for some sufficiently small fixed $\delta_1 > 0$, which is not identically zero, which obeys the bound
$$
\frac{\sum_{i=1}^k J_{i,(1-\eps) \frac{\vartheta}{2}}(F_1)}{I(F_1)} > m
$$
and also obeys the vanishing marginal condition \eqref{vanishing-marginal} whenever $t_1,\dots,t_{i-1},t_{i+1},\dots,t_k \geq 0$ are such that
$$ t_1+\dots+t_{i-1}+t_{i+1}+\dots+t_k > (1+\eps) \frac{\vartheta}{2} - \delta_1.$$

As before, we pass from $F_1$ to $F_2$ by a spatial translation, and from $F_2$ to $F_3$ by a regularisation; crucially, we note that both of these operations interact well with the vanishing marginal condition \eqref{vanishing-marginal}, with the end product being that we obtain a smooth function $F_3: [0,+\infty)^k \to \R$, supported on the region
$$
\{ (t_1,\dots,t_k) \in \R^k: t_1 \dots + t_k \leq \frac{k}{k-1} \frac{\vartheta}{2}-\frac{\delta_2}{2}; t_1,\dots,t_k \geq \frac{\delta_2}{2} \}$$
for some sufficiently small $\delta_2>0$, which is not identically zero, obeying the bound
$$
\frac{\sum_{i=1}^k J_{i,(1-\eps) \frac{\vartheta}{2}}(F_3)}{I(F_3)} > m
$$
and also obeying the vanishing marginal condition \eqref{vanishing-marginal} whenever
$t_1,\dots,t_{i-1},t_{i+1},\dots,t_k \geq 0$ are such that
$$ t_1+\dots+t_{i-1}+t_{i+1}+\dots+t_k > (1+\eps) \frac{\vartheta}{2} - \frac{\delta_2}{2}.$$

As before, we now define the function $f_3: \R^k \to \R$ by
$$ f_3(t_1,\dots,t_k) := \int_{s_1 \geq t_1, \dots, s_k \geq t_k} F_3(s_1,\dots,s_k)\ ds_1 \dots ds_k,$$
thus $f_3$ is smooth, not identically zero and supported on the region
$$ \left\{ (t_1,\dots,t_k) \in \R^k: \sum_{i=1}^k \max(t_i, \delta_2/2) \leq \frac{k}{k-1} \frac{\vartheta}{2} - \frac{\delta_2}{2} \right\}.$$
Furthermore, from the vanishing marginal condition we see that we also have
$$ f_3(t_1,\dots,t_k) =0 $$
whenever we have some $1 \leq i \leq k$ for which $t_i \leq \delta_2/2$ and
$$ t_1 + \dots + t_{i-1} + t_{i+1} + \dots + t_k \geq (1+\eps) \frac{\vartheta}{2} - \frac{\delta_2}{2}.$$

From the fundamental theorem of calculus as before, we have
$$
\frac{\sum_{i=1}^k \tilde J_{i,(1-\eps) \frac{\vartheta}{2}}(f_3)}{\tilde I(f_3)} > m.
$$
Using the Stone-Weierstrass theorem as before, we can then find a function $f_4$ of the form
\begin{equation}\label{cff-again}
 (t_1,\dots,t_k) \mapsto \sum_{j=1}^J c_j f_{1,j}(t_1) \dots f_{k,j}(t_k)
\end{equation}
where $c_1,\dots,c_J$ are real scalars, and $f_{i,j}: \R \to \R$ are smooth functions supported on intervals of length at most $\delta_3>0$ for some sufficiently small $\delta_3>0$, with each component
$f_{1,j}(t_1) \dots f_{k,j}(t_k)$ supported in the region
$$ \left\{ (t_1,\dots,t_k) \in \R^k: \sum_{i=1}^k \max(t_i, \delta_2/8) \leq \frac{k}{k-1} \frac{\vartheta}{2} - \delta_2/8 \right\}$$
and avoiding the regions
$$ \left\{ (t_1,\dots,t_k) \in \R^k: t_i \leq \delta_2/8; \quad  t_1 + \dots + t_{i-1} + t_{i+1} + \dots + t_k \geq (1+\eps) \frac{\vartheta}{2} - \delta_2/8 \right\}$$
for each $i=1,\dots,k$, and such that
$$
\frac{\sum_{i=1}^k \tilde J_{i,(1-\eps) \frac{\vartheta}{2}}(f_4)}{\tilde I(f_4)} > m.
$$
In particular, for any $j=1,\dots,J$ we have
\begin{equation}\label{sfk-sum}
S(f_{1,j}) + \dots + S(f_{k,j}) < \frac{k}{k-1} \frac{\vartheta}{2} < \frac{1}{2} \frac{k}{k-1} \leq 1
\end{equation}
and for any $i=1,\dots,k$ with $f_{k,i}$ not vanishing at zero, we have
\begin{equation}\label{sfk-sum-2} 
S(f_{1,j}) + \dots + S(f_{k,i-1}) + S(f_{k,i+1}) + \dots + S(f_{k,j}) < (1+\eps) \frac{\vartheta}{2}.
\end{equation}

Let $\nu$ be defined by \eqref{nu-def}.
From \eqref{sfk-sum}, the hypothesis $\GEH[\vartheta]$, and the argument from the previous section used to prove Theorem \ref{epsilon-trick}(ii), we have
$$ \sum_{\substack{x \leq n \leq 2x\\ n = b\ (W)}} \nu(n) = (\alpha + o(1)) B^{-k} \frac{x}{W}$$
where 
$$ \alpha := \tilde I(f_4).$$

Similarly, from \eqref{sfk-sum-2} (and the upper bound $S(f_{i,j}) < 1$ from \eqref{sfk-sum}), the hypothesis $\EH[\vartheta]$ (which is available by Proposition \ref{geh-eh}), and the argument from the previous section we have
$$ 
 \sum_{\substack{x \leq n \leq 2x\\ n = b\ (W)}} \nu(n) \theta(n+h_i) \geq (\beta_i - o(1)) B^{1-k} \frac{x}{\phi(W)}$$
for $i=1,\dots,k$ with
$$ \beta_i = \tilde J_{i, (1-\eps) \frac{\vartheta}{2}}( f_4 ) + O(\delta_3).$$

Setting $\delta_3$ small enough, the claim $\DHL[k,m+1]$ now follows from Lemma \ref{crit}.

\section{Asymptotic analysis}\label{asymptotics-sec}

We now establish upper and lower bounds on the quantity $M_k$ defined in \eqref{mk4}, as well as for the related quantities appearing in Theorem \ref{maynard-trunc}.

To obtain an upper bound on $M_k$, we use the following consequence of the Cauchy-Schwarz inequality.

\begin{lemma}[Cauchy-Schwarz]\label{cs}  Let $k \geq 2$, and suppose that there exist positive measurable functions $G_i: {\mathcal R}_k \to (0,+\infty)$ for $i=1,\dots,k$ such that
\begin{equation}\label{gi}
 \int_0^\infty G_i(t_1,\dots,t_k)\ dt_i \leq 1
\end{equation}
for all $t_1,\dots,t_{i-1},t_{i+1},\dots,t_k \geq 0$, where we extend $G_i$ by zero to all of $[0,+\infty)^k$.  Then we have
\begin{equation}\label{mk-bound}
M_k \leq \operatorname{ess} \sup_{(t_1,\dots,t_k) \in {\mathcal R}_k} \sum_{i=1}^k \frac{1}{G_i(t_1,\dots,t_k)}.
\end{equation}
Here $\operatorname{ess} \sup$ refers to essential supremum (thus, we may ignore a subset of ${\mathcal R}_k$ of measure zero in the supremum).
\end{lemma}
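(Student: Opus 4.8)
The plan is a slice-by-slice application of the Cauchy--Schwarz inequality, dualizing the ratio defining $M_k$. Fix an arbitrary square-integrable $F$ supported on $\mathcal{R}_k$ and not identically zero, and set $C := \operatorname{ess\,sup}_{(t_1,\dots,t_k) \in \mathcal{R}_k} \sum_{i=1}^k \tfrac{1}{G_i(t_1,\dots,t_k)}$. If $C = +\infty$ there is nothing to prove, so we may assume $C < \infty$. The goal is to show $\sum_{i=1}^k J_i(F) \leq C\, I(F)$, which upon dividing by $I(F) > 0$ and taking the supremum over all admissible $F$ gives $M_k \leq C$, that is, \eqref{mk-bound}.

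For each $i$ and each fixed choice of $t_1,\dots,t_{i-1},t_{i+1},\dots,t_k \geq 0$, I would write $F = (F/\sqrt{G_i}) \cdot \sqrt{G_i}$ on the set where $G_i > 0$ — a set containing the support of the slice $t_i \mapsto F(t_1,\dots,t_k)$, since $F$ vanishes off $\mathcal{R}_k$ while $G_i$ is strictly positive on $\mathcal{R}_k$ — and apply Cauchy--Schwarz in the variable $t_i$:
$$ \left( \int_0^\infty F(t_1,\dots,t_k)\, dt_i \right)^2 \leq \left( \int_0^\infty \frac{F(t_1,\dots,t_k)^2}{G_i(t_1,\dots,t_k)}\, dt_i \right) \left( \int_0^\infty G_i(t_1,\dots,t_k)\, dt_i \right) \leq \int_0^\infty \frac{F(t_1,\dots,t_k)^2}{G_i(t_1,\dots,t_k)}\, dt_i, $$
where the last inequality is the hypothesis \eqref{gi}. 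Integrating over the remaining $k-1$ variables then yields $J_i(F) \leq \int_{[0,+\infty)^k} F^2/G_i$, with the integrand understood to vanish wherever $F$ does. Summing over $i=1,\dots,k$ and using that $F$ is supported on $\mathcal{R}_k$,
$$ \sum_{i=1}^k J_i(F) \leq \int_{\mathcal{R}_k} F(t_1,\dots,t_k)^2 \left( \sum_{i=1}^k \frac{1}{G_i(t_1,\dots,t_k)} \right) dt_1 \cdots dt_k \leq C \int_{\mathcal{R}_k} F^2 = C\, I(F), $$
bounding the bracketed sum by its essential supremum $C$ on $\mathcal{R}_k$.

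There is no genuine obstacle here; the argument is routine once set up correctly. The only points deserving care are that the quotient $F^2/G_i$ is well-defined and measurable (this is exactly where the strict positivity of $G_i$ on $\mathcal{R}_k$ together with the support condition on $F$ enter), and that the intermediate integrals are finite — which follows a posteriori, since $C < \infty$ and $F \in L^2$ force $\int_{[0,+\infty)^k} F^2/G_i \leq \int_{\mathcal{R}_k} F^2 \sum_j 1/G_j \leq C\, I(F) < \infty$. If one instead has $C = +\infty$, the asserted bound is vacuous.
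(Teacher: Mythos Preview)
Your proof is correct and is essentially identical to the paper's: apply Cauchy--Schwarz in the $t_i$ variable with weight $G_i$ to bound $J_i(F) \le \int_{\mathcal R_k} F^2/G_i$, then sum over $i$ and bound $\sum_i 1/G_i$ by its essential supremum. You are slightly more explicit about the case $C=+\infty$ and about why $F^2/G_i$ is well-defined, but the argument is the same.
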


\begin{proof}  Let $F: [0,+\infty)^k \to \R$ be a square-integrable function supported on ${\mathcal R}_k$.  From the Cauchy-Schwarz inequality and \eqref{gi}, we have
$$ \left(\int_0^\infty F(t_1,\dots,t_k)\ dt_i\right)^2 \leq \int_0^\infty \frac{F(t_1,\dots,t_k)^2}{G_i(t_1,\dots,t_k)} \ dt_i$$
for any $t_1,\dots,t_{i-1},t_{i+1},\dots,t_k \geq 0$, with $F^2/G$ extended by zero outside of ${\mathcal R}_k$. Inserting this into \eqref{ji-def} and integrating, we conclude that
$$ J_i(F) \leq \int_{{\mathcal R}_k} \frac{F(t_1,\dots,t_k)^2 }{G_i(t_1,\dots,t_k)}\ dt_1\dots dt_k.$$
Summing in $i$ and using \eqref{i-def}, \eqref{mk4}, \eqref{mk-bound} we obtain the claim.
\end{proof}

As a corollary, we can compute $M_k$ exactly if we can locate a positive eigenfunction:

\begin{corollary}\label{ef}  Let $k \geq 2$, and suppose that there exists a positive function $F: {\mathcal R}_k \to (0,+\infty)$ obeying the eigenfunction equation
\begin{equation}\label{lf}
 \lambda F(t_1,\dots,t_k) = \sum_{i=1}^k \int_0^\infty F(t_1,\dots,t_{i-1},t'_i, t_{i+1},\dots,t_k)\ dt'_i
\end{equation}
for some $\lambda > 0$ and all $(t_1,\dots,t_k) \in {\mathcal R}_k$, where we extend $F$ by zero to all of $[0,+\infty)^k$.  Then $\lambda = M_k$.
\end{corollary}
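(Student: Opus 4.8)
The plan is to establish the two inequalities $M_k\le\lambda$ and $M_k\ge\lambda$ separately, in both cases by using the eigenfunction $F$ itself (this is the usual Collatz--Wielandt/Perron--Frobenius type argument adapted to the integral operator appearing in \eqref{lf}).

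For the upper bound, I would apply Lemma \ref{cs} with the explicit choice
$$ G_i(t_1,\dots,t_k) := \frac{F(t_1,\dots,t_k)}{\int_0^\infty F(t_1,\dots,t_{i-1},t'_i,t_{i+1},\dots,t_k)\ dt'_i}. $$
Since the denominator is independent of $t_i$, one has $\int_0^\infty G_i(t_1,\dots,t_k)\,dt_i=1$, so hypothesis \eqref{gi} is satisfied; the denominator is finite a.e. because $\lambda F(t)=\sum_i\int_0^\infty F(\dots,t'_i,\dots)\,dt'_i$ with $F(t)$ finite, and it is positive for a.e.\ $(t_1,\dots,t_k)\in\mathcal{R}_k$ because whenever $\sum_{j\neq i}t_j<1$ the integrand is positive on a $t'_i$-interval of positive length (one redefines $G_i$ arbitrarily on the remaining measure-zero set). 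With this choice the eigenfunction equation \eqref{lf} gives, for a.e.\ $(t_1,\dots,t_k)\in\mathcal{R}_k$,
$$ \sum_{i=1}^k \frac{1}{G_i(t_1,\dots,t_k)} = \frac{1}{F(t_1,\dots,t_k)}\sum_{i=1}^k\int_0^\infty F(t_1,\dots,t_{i-1},t'_i,t_{i+1},\dots,t_k)\,dt'_i = \lambda, $$
so Lemma \ref{cs} yields $M_k\le\lambda$.

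For the lower bound I would test the quotient in \eqref{mk4} against $F$ itself. Multiplying \eqref{lf} by $F(t_1,\dots,t_k)$ and integrating over $[0,+\infty)^k$ (which is legitimate term by term by Tonelli, since $F\ge 0$), and then, for each $i$, carrying out the $t_i$-integration first — noting that the inner marginal $\int_0^\infty F(\dots,t'_i,\dots)\,dt'_i$ does not depend on $t_i$ — one obtains from the definitions \eqref{i-def}, \eqref{ji-def} the identity $\lambda I(F)=\sum_{i=1}^k J_i(F)$. As $F$ is positive and supported on $\mathcal{R}_k$ it is not identically zero, and it is square-integrable (in all applications of this corollary $F$ is a polynomial on the bounded set $\mathcal{R}_k$, hence bounded), so $F$ is an admissible competitor in \eqref{mk4} and $M_k\ge\sum_i J_i(F)/I(F)=\lambda$. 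Combining the two bounds gives $\lambda=M_k$.

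The only genuinely delicate point is the integrability bookkeeping: one must know that the marginals of $F$ are finite a.e.\ (needed for $G_i$ to be well defined) and that $F\in L^2$ (needed for $F$ to be a valid test function in \eqref{mk4}). The former is forced by the eigenfunction equation itself; the latter holds automatically in the cases we use, and in general can be arranged by a routine truncation of $F$ near the boundary of $\mathcal{R}_k$ followed by a limiting argument. Everything else reduces to direct substitution into Lemma \ref{cs} and into the definitions of $I$ and $J_i$.
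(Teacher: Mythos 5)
Your argument matches the paper's proof exactly: the lower bound $M_k\ge\lambda$ comes from integrating \eqref{lf} against $F$ to get $\lambda I(F)=\sum_i J_i(F)$, and the upper bound $M_k\le\lambda$ comes from applying Lemma \ref{cs} with $G_i=F/\int_0^\infty F\,dt_i'$. Your extra remarks on the integrability bookkeeping are sensible but the core of the argument is the same.
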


\begin{proof}  On the one hand, if we integrate \eqref{lf} against $F$ and use \eqref{i-def}, \eqref{ji-def} we see that
$$ \lambda I(F) = \sum_{i=1}^k J_i(F)$$
and thus by \eqref{mk4} we see that $M_k \geq \lambda$.  On the other hand, if we apply Lemma \ref{cs} with
$$ G_i(t_1,\dots,t_k) \coloneqq \frac{F(t_1,\dots,t_k)}{\int_0^\infty F(t_1,\dots,t_{i-1},t'_i, t_{i+1},\dots,t_k)\ dt'_i}$$
we see that $M_k \leq \lambda$, and the claim follows.
\end{proof}

This allows for an exact calculation of $M_2$:

\begin{corollary}[Computation of $M_2$]\label{m2-comp}
We have 
$$M_2 = \frac{1}{1 - W(1/e)} = 1.38593\dots$$
where the Lambert $W$-function $W(x)$ is defined for positive $x$ as the unique positive solution to $x = W(x) e^{W(x)}$.
\end{corollary}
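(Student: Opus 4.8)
The plan is to invoke Corollary~\ref{ef}: it suffices to exhibit a strictly positive function $F\colon{\mathcal R}_2\to(0,+\infty)$ satisfying the eigenfunction equation \eqref{lf} with eigenvalue $\lambda:=\frac{1}{1-W(1/e)}$, and then the Corollary yields $M_2=\lambda$ immediately. Note first that $W$ is increasing on $[0,+\infty)$ with $W(e)=1$ and $1/e<e$, so $0<W(1/e)<1$ and hence $\lambda>1$; the strict positivity of $\lambda-1$ will be used below.

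Next I would reduce \eqref{lf} to a one-dimensional equation. Since the operator $F\mapsto\sum_i\int_0^\infty F\,dt_i$ sends a symmetric function to one of the form $(t_1,t_2)\mapsto g(t_1)+g(t_2)$, I look for $F$ of that shape, $F(t_1,t_2)=g(t_1)+g(t_2)$ with $g\colon[0,1]\to\R$ extended by zero, and set $G(u):=\int_0^u g(t)\,dt$. Using that $F$ is supported on ${\mathcal R}_2$ one has $\int_0^\infty F(t_1',t_2)\,dt_1'=G(1-t_2)+(1-t_2)g(t_2)$, and symmetrically for the $t_2$-marginal; separating the $t_1$- and $t_2$-dependence (e.g.\ by first setting $t_2=0$) shows that an $F$ of this shape satisfies \eqref{lf} if and only if
\begin{equation*}
(\lambda-1+u)\,g(u)=G(1-u)\qquad\text{for all }u\in[0,1],
\end{equation*}
with $G(0)=0$ automatic.

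Then I would solve this functional equation. Differentiating it once and eliminating $G'(1-u)$ by applying the same equation at the point $1-u$ yields the second-order linear ODE
\begin{equation*}
(\lambda-1+u)\,G''(u)+G'(u)+\frac{G(u)}{\lambda-u}=0,\qquad u\in[0,1],
\end{equation*}
whose only singular points, $u=1-\lambda$ and $u=\lambda$, lie outside $[0,1]$. One checks directly that $u\mapsto\lambda-u$ solves this ODE; reduction of order then presents the general solution as a combination of $\lambda-u$ and
\begin{equation*}
G_2(u):=(\lambda-u)\log\frac{\lambda-1+u}{\lambda-u}+(2\lambda-1).
\end{equation*}
Crucially, the functional equation is strictly stronger than the ODE: substituting the general solution back in, the branch $\lambda-u$ is seen not to satisfy it while $G_2$ does (for every $\lambda$), so $G=G_2$ up to a scalar. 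The remaining condition $G(0)=0$ becomes $\lambda\log\frac{\lambda-1}{\lambda}+2\lambda-1=0$, i.e.\ $\log\!\big(1-\tfrac1\lambda\big)=\tfrac1\lambda-2$; writing $t_0:=1-\tfrac1\lambda$ this is exactly $t_0e^{t_0}=1/e$, i.e.\ $t_0=W(1/e)$, i.e.\ $\lambda=\frac{1}{1-W(1/e)}$, matching our choice.

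Finally, with this $\lambda$ set $g:=G'$, so that $g(u)=-\log\frac{\lambda-1+u}{\lambda-u}+\frac{2\lambda-1}{\lambda-1+u}$. Differentiating gives $g'(u)=-\big(\frac1{\lambda-1+u}+\frac1{\lambda-u}+\frac{2\lambda-1}{(\lambda-1+u)^2}\big)<0$ on $[0,1]$ (here $\lambda>1$ guarantees $\lambda-1+u>0$ and $\lambda-u>0$), while the functional equation at $u=1$ gives $g(1)=0$; hence $g>0$ on $[0,1)$ and $g(1)=0$. As no point of ${\mathcal R}_2$ has both coordinates equal to $1$, the function $F(t_1,t_2)=g(t_1)+g(t_2)$ is strictly positive on ${\mathcal R}_2$ and bounded (hence square-integrable), and it satisfies \eqref{lf} with eigenvalue $\lambda$. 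Corollary~\ref{ef} then gives $M_2=\lambda=\frac{1}{1-W(1/e)}=1.38593\dots$. The one genuinely non-routine step is solving the functional equation: one must pass to the associated ODE, spot the elementary solution $\lambda-u$, run reduction of order, and notice that the functional constraint eliminates one of the two branches so that $G(0)=0$ alone pins down $\lambda$; rewriting the resulting transcendental equation in Lambert-$W$ form is then immediate.
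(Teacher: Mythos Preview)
Your proof is correct and follows the same overall strategy as the paper: apply Corollary~\ref{ef} with an ansatz $F(t_1,t_2)=g(t_1)+g(t_2)$, reduce to the one-variable functional equation $(\lambda-1+u)g(u)=\int_0^{1-u}g$, and check that the resulting $F$ is strictly positive on ${\mathcal R}_2$. Your $g$ is exactly $(2\lambda-1)$ times the paper's $f$, so the eigenfunctions coincide up to scaling.

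The only real difference is presentational. The paper simply writes down $f$ and verifies the functional equation and the identity $2\lambda-1=\lambda\log\lambda-\lambda\log(\lambda-1)$ by direct computation. You instead \emph{derive} $g$: you pass to the second-order ODE $(\lambda-1+u)G''+G'+G/(\lambda-u)=0$, spot the elementary solution $\lambda-u$, use reduction of order to find $G_2$, and then observe that the original (undifferentiated) functional equation kills the $\lambda-u$ branch so that $G(0)=0$ alone forces the Lambert-$W$ value of $\lambda$. This is more work but explains where the eigenfunction comes from; you also make the positivity check (via $g'<0$, $g(1)=0$) explicit, which the paper leaves to the reader.
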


\begin{proof}  If we set $\lambda \coloneqq \frac{1}{1-W(1/e)} = 1.38593\dots$, then a brief calculation shows that
\begin{equation}\label{ll1}
 2\lambda - 1 = \lambda \log \lambda - \lambda \log(\lambda-1).
\end{equation}
Now if we define the function $f: [0,1] \to [0,+\infty)$ by the formula
$$ f(x) \coloneqq \frac{1}{\lambda-1+x} + \frac{1}{2\lambda-1} \log \frac{\lambda-x}{\lambda-1+x}$$
then a further brief calculation shows that
$$ \int_0^{1-x} f(y)\ dy = \frac{\lambda-1+x}{2\lambda-1} \log \frac{\lambda-x}{\lambda-1+x} + \frac{\lambda \log \lambda - \lambda \log(\lambda-1)}{2\lambda-1}$$
for any $0 \leq x \leq 1$, and hence by \eqref{ll1} that
$$ \int_0^{1-x} f(y)\ dy = (\lambda-1+x) f(x).$$
If we then define the function $F: {\mathcal R}_2 \to (0,+\infty)$ by $F(x,y) \coloneqq f(x) + f(y)$, we conclude that
$$ \int_0^{1-x} F(x',y)\ dx' + \int_0^{1-y} F(x,y')\ dy' = \lambda F(x,y)$$
for all $(x,y) \in {\mathcal R}_2$, and the claim now follows from Corollary \ref{ef}.
\end{proof}

We conjecture that a positive eigenfunction for $M_k$ exists for all $k \geq 2$, not just for $k=2$; however, we were unable to produce any such eigenfunctions for $k>2$.  Nevertheless, Lemma \ref{cs} still gives us a general upper bound:

\begin{corollary}\label{mk-upper}  We have $M_k \leq \frac{k}{k-1} \log k$ for any $k \geq 2$.
\end{corollary}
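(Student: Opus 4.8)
The plan is to apply Lemma \ref{cs} with an explicit, symmetric choice of weight functions $G_i$, namely ones of the form $G_i(t_1,\dots,t_k) = g(t_i)$ depending only on the $i$-th coordinate, where $g\colon[0,1]\to(0,+\infty)$ is a fixed probability density. The constraint \eqref{gi} in Lemma \ref{cs} then becomes $\int_0^{1} g(t_i)\,dt_i \le 1$ — in fact we will take equality — which is automatic once $g$ integrates to $1$ on $[0,1]$ (note that on ${\mathcal R}_k$ each coordinate $t_i$ lies in $[0,1]$, so the inner integral over $t_i \ge 0$ with $F$ supported on the simplex only sees $[0,1]$; one must check the inner integral in \eqref{gi} is over the relevant range, but since we extend $G_i$ by zero this is fine). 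With this choice, \eqref{mk-bound} gives
$$
M_k \le \operatorname{ess}\sup_{(t_1,\dots,t_k)\in{\mathcal R}_k} \sum_{i=1}^k \frac{1}{g(t_i)}.
$$

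The second step is to choose $g$ so that the right-hand side is as small as possible. The natural candidate, suggested by the structure of the simplex constraint $t_1+\dots+t_k\le 1$, is $g(t) = \frac{1}{c}\cdot\frac{1}{k^{-1}+t}$ for an appropriate normalizing constant $c$; equivalently $g(t)$ proportional to $1/(\frac1k + t)$ or to $1/(1 - (k-1)t)$-type expressions. Let me instead take the cleaner choice
$$
g(t) := \frac{k-1}{\log k}\cdot\frac{1}{1+(k-1)t},\qquad t\in[0,1],
$$
which satisfies $\int_0^1 g(t)\,dt = \frac{k-1}{\log k}\cdot\frac{1}{k-1}\log\bigl(1+(k-1)t\bigr)\Big|_0^1 = \frac{1}{\log k}\log k = 1$, so \eqref{gi} holds with equality. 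Then
$$
\sum_{i=1}^k \frac{1}{g(t_i)} = \frac{\log k}{k-1}\sum_{i=1}^k \bigl(1+(k-1)t_i\bigr) = \frac{\log k}{k-1}\Bigl(k + (k-1)\sum_{i=1}^k t_i\Bigr).
$$
On ${\mathcal R}_k$ we have $\sum_i t_i \le 1$, so this is at most $\frac{\log k}{k-1}\bigl(k + (k-1)\bigr) = \frac{\log k}{k-1}(2k-1)$. Hmm — that gives $M_k \le \frac{(2k-1)\log k}{k-1}$, which is weaker than claimed. The resolution is that one should not bound $\sum_i t_i$ by $1$ uniformly; rather, the bound $\frac{k}{k-1}\log k$ corresponds to the essential supremum being attained where $F$ concentrates. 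Actually the correct reading: we want $\sum_{i=1}^k 1/g(t_i)$ bounded by $\frac{k}{k-1}\log k$ \emph{on the support of the optimal $F$}, but Lemma \ref{cs} as stated needs the bound on all of ${\mathcal R}_k$. So instead I should choose $g$ so that $\sum_i 1/g(t_i)$ is \emph{constant} on the hyperplane $\sum t_i = 1$ and the sup over ${\mathcal R}_k$ is attained there. Take $g$ with $1/g(t) = A + Bt$ affine: then $\sum_i 1/g(t_i) = Ak + B\sum t_i \le Ak + B$ on ${\mathcal R}_k$, with equality on the far face. Normalization $\int_0^1 \frac{dt}{A+Bt} = 1$ gives $\frac1B\log\frac{A+B}{A} = 1$. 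We then minimize $Ak+B$ subject to $\log\frac{A+B}{A}=B$. The hard part — and the only real content here — is carrying out this one-variable optimization and checking the minimizer yields exactly $\frac{k}{k-1}\log k$.

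Concretely: set $B = \log\frac{A+B}{A}$, i.e. $A+B = Ae^{B}$, i.e. $A = \frac{B}{e^B-1}$. Then $Ak+B = \frac{Bk}{e^B-1}+B = B\cdot\frac{k + e^B - 1}{e^B-1}$; minimizing over $B>0$, a direct differentiation shows the optimum occurs at $e^B = k$, i.e. $B = \log k$, giving $A = \frac{\log k}{k-1}$ and $Ak+B = \frac{k\log k}{k-1} + \log k \cdot \frac{?}{}$ — wait, $\frac{k\log k}{k-1}$ already, let me recompute: $Ak + B = \frac{\log k}{k-1}\cdot k + \log k = \frac{k\log k}{k-1} + \log k$? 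That is still too big by $\log k$. The correct choice must instead use $1/g(t) = A + Bt$ with the sup $Ak + B\sum t_i$ where we exploit $\sum t_i \le 1$ differently — or, more likely, the intended $G_i$ depends on \emph{all} coordinates via $\sum_j t_j$, not just $t_i$. The cleanest path: take $G_i(t_1,\dots,t_k) = \frac{1}{\log k}\cdot\frac{k-1}{1-t_1-\dots-\widehat{t_i}-\dots-t_k + (k-1)^{-1}\cdot\text{(something)}}$ — I would follow the pattern of Corollary \ref{m2-comp}, choosing $G_i$ so that $\int_0^\infty G_i\,dt_i = 1$ and $\sum_i 1/G_i$ is genuinely constant $=\frac{k}{k-1}\log k$ on ${\mathcal R}_k$; the verification is the routine-but-delicate computation I would relegate to a short lemma. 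I expect \textbf{the main obstacle} to be precisely pinning down the right $G_i$ (almost certainly $G_i(t) = \frac{(k-1)}{\log k}\,\frac{1}{1+(k-1)t_i}$ with the observation that on the optimizing configurations $\sum t_i$ is forced to be small, or a variant where $G_i$ sees the complementary sum) so that the essential supremum in \eqref{mk-bound} equals exactly $\frac{k}{k-1}\log k$ and not something larger; everything else is a one-line invocation of Lemma \ref{cs}.
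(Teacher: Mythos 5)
You correctly identify Lemma \ref{cs} as the tool and correctly diagnose why your first attempt fails: with $G_i(t_1,\dots,t_k) = g(t_i)$ depending only on $t_i$, the sum $\sum_i 1/g(t_i)$ is genuinely non-constant on ${\mathcal R}_k$, and the essential supremum is taken on the far face $\sum_i t_i = 1$, giving only $\frac{(2k-1)\log k}{k-1}$. Your subsequent insight is also right in spirit: one should choose $G_i$ depending on \emph{all} coordinates so that $\sum_i 1/G_i$ is exactly constant on ${\mathcal R}_k$. But you never actually produce such a $G_i$, and your two concrete guesses point in the wrong direction — the proposal as written does not establish the bound.

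The fix is close to, but not the same as, what you tried. Keep the affine-reciprocal form $1/G_i = A_i + B t_i$, but make the ``constant'' part $A_i$ depend on the other coordinates so that the normalization $\int_0^\infty G_i\,dt_i = 1$ holds for \emph{every} slice, not just the slice through the origin. Writing $s := 1-\sum_{j\neq i}t_j$ (the upper limit of $t_i$ on ${\mathcal R}_k$), take
$$
G_i(t_1,\dots,t_k) \;=\; \frac{k-1}{\log k}\cdot\frac{1}{s + (k-1)t_i} \;=\; \frac{k-1}{\log k}\cdot\frac{1}{\,1 - t_1-\dots-t_k + k\,t_i\,}.
$$
Then for any $s\ge 0$,
$$
\int_0^{s} G_i\,dt_i = \frac{1}{\log k}\Bigl[\log\bigl(s+(k-1)t_i\bigr)\Bigr]_{0}^{s} = \frac{1}{\log k}\log\frac{ks}{s} = 1,
$$
so \eqref{gi} holds with equality on every slice (and trivially when $s\le 0$). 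Moreover
$$
\sum_{i=1}^k \frac{1}{G_i(t_1,\dots,t_k)} = \frac{\log k}{k-1}\sum_{i=1}^k\Bigl(1 - \sum_{j=1}^k t_j + k t_i\Bigr) = \frac{\log k}{k-1}\cdot k,
$$
which is \emph{identically} $\frac{k}{k-1}\log k$ on ${\mathcal R}_k$, since the $\sum_j t_j$ terms cancel against $k\sum_i t_i$. Lemma \ref{cs} then gives the claim at once. The crucial point your proposal misses: replacing the fixed constant ``$1$'' in your denominator $1+(k-1)t_i$ by the variable quantity $1-\sum_{j\neq i}t_j$ simultaneously achieves exact normalization on every slice \emph{and} collapses the reciprocal sum to a constant, whereas any $G_i$ depending only on $t_i$ is forced to lose on one of these two counts, and your guess involving a factor of $(k-1)^{-1}$ inside the denominator has the wrong homogeneity.
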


Thus for instance one has $M_2 \leq 2 \log 2 = 1.38629\dots$, which compares well with Corollary \ref{m2-comp}.  On the other hand, Corollary \ref{mk-upper} also gives
$$ M_4 \leq \frac{4}{3} \log 4 = 1.8454\dots, $$
so that one cannot hope to establish $\DHL[4,2]$ (or $\DHL[3,2]$) solely through Theorem \ref{maynard-thm} even when assuming GEH, and must rely instead on more sophisticated criteria for $\DHL[k,m]$ such as Theorem \ref{epsilon-trick} or Theorem \ref{epsilon-beyond}. 

\begin{proof}  If we set $G_i: {\mathcal R}_k \to (0,+\infty)$ for $i=1,\dots,k$ to be the functions
$$ G_i(t_1,\dots,t_k) \coloneqq \frac{k-1}{\log k} \frac{1}{1-t_1-\dots-t_k + kt_i} $$
then direct calculation shows that
$$ \int_0^\infty G_i(t_1,\dots,t_k)\ dt_i \leq 1$$
for all $t_1,\dots,t_{i-1},t_{i+1},\dots,t_k \geq 0$, where we extend $G_i$ by zero to all of $[0,+\infty)^k$.  On the other hand, we have
$$ \sum_{i=1}^k \frac{1}{G_i(t_1,\dots,t_k)} = \frac{k}{k-1} \log k$$
for all $(t_1,\dots,t_k) \in {\mathcal R}_k$.  The claim now follows from Lemma \ref{cs}.
\end{proof}

The upper bound arguments for $M_k$ can be extended to other quantities such as $M_{k,\eps}$, although the bounds do not appear to be as sharp in that case.  For instance, we have the following variant of Lemma \ref{mk-upper}, which shows that the improvement in constants when moving from $M_k$ to $M_{k,\eps}$ is asymptotically modest:

\begin{proposition}\label{mkeps}  For any $k \geq 2$ and $0 \leq \eps < 1$ we have
$$ M_{k,\eps} \leq \frac{k}{k-1} \log(2k-1).$$
\end{proposition}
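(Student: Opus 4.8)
The plan is to prove Proposition \ref{mkeps} by the same Cauchy--Schwarz strategy that underlies Lemma \ref{cs} and Corollary \ref{mk-upper}, adapted to the enlarged support $(1+\eps)\cdot{\mathcal R}_k$ and the truncated functionals $J_{i,1-\eps}$. The first step is to record the appropriate analogue of Lemma \ref{cs}: given positive measurable functions $G_i \colon (1+\eps)\cdot{\mathcal R}_k \to (0,+\infty)$, extended by zero to $[0,+\infty)^k$, with the property that
\[ \int_0^\infty G_i(t_1,\dots,t_k)\, dt_i \le 1 \]
for all $(t_j)_{j\neq i}$ with $\sum_{j\neq i} t_j \le 1-\eps$, one has
\[ M_{k,\eps} \le \operatorname{ess} \sup_{(t_1,\dots,t_k)\in (1+\eps)\cdot{\mathcal R}_k} \ \sum_{i\,:\,\sum_{j\neq i} t_j \le 1-\eps} \frac{1}{G_i(t_1,\dots,t_k)}. \]
This follows exactly as in Lemma \ref{cs}: for a square-integrable $F$ supported on $(1+\eps)\cdot{\mathcal R}_k$ and fixed values of the other coordinates with $\sum_{j\neq i} t_j \le 1-\eps$, Cauchy--Schwarz applied on the fibre $t_i \in [0,\, 1+\eps - \sum_{j\neq i} t_j]$ (the range on which $F$ is supported) gives $\bigl(\int_0^\infty F\, dt_i\bigr)^2 \le \int_0^\infty F^2/G_i\, dt_i$; integrating over $(1-\eps)\cdot{\mathcal R}_{k-1}$ and summing over $i$ produces the restricted sum above, since $G_i$ is only used on the slab $\{\sum_{j\neq i} t_j \le 1-\eps\}$.

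The heart of the matter is then the choice of the $G_i$. One starts from the weight used in Corollary \ref{mk-upper}, namely a constant multiple of $\bigl(1 - \sum_j t_j + kt_i\bigr)^{-1} = \bigl((1-s_i) + (k-1)t_i\bigr)^{-1}$ with $s_i := \sum_{j\neq i} t_j$. On the fibre over a point with $s_i = s$, the denominator ranges over the interval with endpoints $1-s$ and $(1-s) + (k-1)(1+\eps-s)$, and the ratio of these endpoints is increasing in $s$, attaining its maximum value $\tfrac{\eps + 2(k-1)\eps}{\eps} = 2k-1$ at $s = 1-\eps$; hence the normalisation $\int_0^\infty G_i\, dt_i \le 1$ holds precisely when the constant is $\tfrac{k-1}{\log(2k-1)}$, and this is the source of the $\log(2k-1)$ in the statement. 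The difficulty is that, unlike on ${\mathcal R}_k$, the identity $\sum_i \bigl(1 - \sum_j t_j + kt_i\bigr) = k$ does \emph{not} bound the restricted sum $\sum_{i\,:\,s_i \le 1-\eps} \bigl(1 - \sum_j t_j + kt_i\bigr)$ over the enlarged simplex: near a corner such as $(1+\eps,0,\dots,0)$ only one index is active, and the surviving term equals $k + (k-1)\eps > k$. To repair this one truncates the weight, replacing the linear denominator by $\min\bigl((1-s_i) + (k-1)t_i,\ \tau\bigr)$ for a suitable threshold $\tau$ (which may depend on $k$ and $\eps$), so that the active term at such corners is capped; alternatively, for small $\eps$ one may simply use the soft bound $M_{k,\eps}\le (1+\eps)M_k \le \tfrac{(1+\eps)k}{k-1}\log k$, obtained by rescaling $F$ from $(1+\eps)\cdot{\mathcal R}_k$ to ${\mathcal R}_k$ together with Corollary \ref{mk-upper}, and reserve the truncated-weight argument for the remaining range of $\eps$.

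With such a choice in hand, the verification splits into two checks. First, that the truncated weight still satisfies $\int_0^\infty G_i\, dt_i \le 1$: the binding fibre remains $s_i = 1-\eps$, where the truncation does not bite, so the constant $\tfrac{k-1}{\log(2k-1)}$ is unaffected. Second, that $\sum_{i\,:\,s_i \le 1-\eps} 1/G_i \le \tfrac{k}{k-1}\log(2k-1)$ throughout $(1+\eps)\cdot{\mathcal R}_k$; after clearing the constant this reduces to the elementary inequality $\sum_{i\,:\,s_i \le 1-\eps} \psi_i(t) \le k$, where $\psi_i$ is the truncated denominator. I expect this last inequality — choosing $\tau$ so that all the extremal configurations (those with a few vanishing coordinates and the remaining ones equal) are controlled uniformly for $\eps \in [0,1)$ — to be the main obstacle; everything else is bookkeeping modelled on the proof of Corollary \ref{mk-upper}.
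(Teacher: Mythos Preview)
Your approach is exactly the paper's: Cauchy--Schwarz on each fibre with weight $(1-\sum_j t_j+kt_i)^{-1}$, the constant $\tfrac{k-1}{\log(2k-1)}$ arising from the extremal fibre at $s_i=1-\eps$. But the paper's proof is far shorter than what you sketch. After deriving the fibrewise inequality
\[
\Bigl(\int_0^\infty F\,dt_i\Bigr)^2 \le \frac{\log(2k-1)}{k-1}\int_0^\infty \bigl(1-\textstyle\sum_j t_j + kt_i\bigr)\,F^2\,dt_i
\]
for $(t_j)_{j\ne i}\in(1-\eps)\cdot\mathcal{R}_{k-1}$, the paper simply integrates over the remaining variables and sums over $i$, using the identity $\sum_{i=1}^k(1-\sum_j t_j+kt_i)=k$, with no truncation of the weight and no case analysis in $\eps$.

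Your concern about the restricted sum $\sum_{i:s_i\le 1-\eps}(1-\sigma+kt_i)$ exceeding $k$ near corners is a legitimate one --- at $(1+\eps,0,\dots,0)$ this sum equals $k+(k-1)\eps$ --- and the paper's terse ``integrating and summing'' does not visibly address it. But your own proposal does not close this gap either. The rescaling bound $M_{k,\eps}\le(1+\eps)M_k\le(1+\eps)\tfrac{k}{k-1}\log k$ only implies the stated inequality when $k^{1+\eps}\le 2k-1$, i.e.\ roughly for $\eps\le(\log 2)/\log k$, far short of the full range $0\le\eps<1$. And the truncated-weight scheme is not carried out: you explicitly flag the inequality $\sum_{i:s_i\le 1-\eps}\psi_i(t)\le k$ as ``the main obstacle'' and leave it open. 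As written, then, your proposal is an outline rather than a proof, with a genuine gap at precisely the step you yourself identify as the difficulty.
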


\begin{proof}  
Let $F: [0,+\infty)^k \to \R$ be a square-integrable function supported on $(1+\eps) \cdot {\mathcal R}_k$.  If $i=1,\dots,k$ and $(t_1,\dots,t_{i-1},t_{i+1},\dots,t_k) \in (1-\eps) \cdot {\mathcal R}_k$, then if we write $s := 1-t_1-\dots-t_{i-1}-t_{i+1}-\dots-t_k$, we have $s \geq \eps$ and hence
\begin{align*}
\int_0^{1-t_1-\dots-t_{i-1}-t_{i+1}-\dots-t_k+\eps} \frac{1}{1-t_1-\dots-t_k + kt_i}\ dt_i 
&= \int_0^{s+\eps} \frac{1}{s+(k-1)t_i}\ dt_i \\
&= \frac{1}{k-1} \log \frac{ks + (k-1)\eps}{s} \\
&\leq \frac{1}{k-1} \log(2k-1).
\end{align*}
By Cauchy-Schwarz, we conclude that
$$ \left(\int_0^\infty F(t_1,\dots,t_k)\ dt_i\right)^2 \leq \frac{1}{k-1} \log(2k-1) \int_0^\infty (1-t_1-\dots-t_k + k t_i) F(t_1,\dots,t_k)^2\ dt_i.$$
Integrating in $t_1,\dots,t_{i-1},t_{i+1},\dots,t_k$ and summing in $i$, we obtain the claim.
\end{proof}

\begin{remark} The same argument, using the weight $1 + a(-t_1-\dots-t_k + kt_i)$, gives the more general inequality
$$ M_{k,\eps} \leq \frac{k}{a(k-1)} \log\left(k + \frac{(a(1+\eps)-1)(k-1)}{1-a(1-\eps)} \right)$$
whenever $\frac{1}{1+\eps} < a < \frac{1}{1-\eps}$; the case $a=1$ is Proposition \ref{mkeps}, and the limiting case $a=\frac{1}{1+\eps}$ recovers Lemma \ref{mk-upper} when one sends $\eps$ to zero.
\end{remark}

One can also adapt the computations in Corollary \ref{m2-comp} to obtain exact expressions for $M_{2,\eps}$, although the calculations are rather lengthy and will only be summarized here.  For fixed $0 < \eps < 1$, the eigenfunctions $F$ one seeks should take the form
$$ F(x,y) \coloneqq f(x) + f(y)$$
for $x,y \geq 0$ and $x+y \leq 1+\eps$, where
$$ f(x) := \onef_{x \leq 1-\eps} \int_0^{1+\eps-x} F(x,t)\ dt.$$
In the regime $0 < \eps < 1/3$, one can calculate that $f$ will (up to scalar multiples) take the form
\begin{align*}
 f(x) &= \onef_{x \leq 2\eps} \frac{C_1}{\lambda-1-\eps+x} \\
&\quad + \onef_{2\eps \leq x \leq 1-\eps} \left(\frac{\log(\lambda-x)-\log(\lambda-1-\eps+x)}{2\lambda-1-\eps} + \frac{1}{\lambda-1-\eps+x} \right)
\end{align*}
where
$$ C_1 := \frac{\log(\lambda-2\eps) - \log(\lambda-1+\eps)}{1 - \log(\lambda-1+\eps) + \log(\lambda-1-\eps)} $$
and $\lambda$ is the largest root of the equation
\begin{align*}
 1 &= C_1 ( \log(\lambda-1+\eps) - \log(\lambda-1-\eps)) - \log(\lambda-1+\eps) \\
&\quad + \frac{ (\lambda-1+\eps) \log(\lambda-1+\eps) - (\lambda-2\eps) \log(\lambda-2\eps) }{2\lambda-1-\eps}.
\end{align*}
In the regime $1/3 \leq \eps < 1$, the situation is significantly simpler, and one has the exact expressions
$$ f(x) = \frac{\onef_{x \leq 1-\eps}}{\lambda - 1 - \eps + x}$$
and 
$$ \lambda = \frac{e(1+\eps)-2\eps}{e-1}.$$
In both cases, a variant of Corollary \ref{ef} can be used to show that $M_{2,\eps}$ will be equal to $\lambda$; thus for instance
$$ M_{2,\eps} = \frac{e(1+\eps)-2\eps}{e-1}$$
for $1/3 \leq \eps < 1$.  In particular, $M_{2,\eps}$ increases to $2$ in the limit $\eps \to 1$; the lower bound $\liminf_{\eps \to 1} M_{2,\eps} \geq 2$ can also be established by testing with the function $F(x,y) := \onef_{x \leq \delta, y \leq 1+\eps-\delta} + \onef_{y \leq \delta, x \leq 1+\eps-\delta}$ for some sufficiently small $\delta>0$.

Now we turn to lower bounds on $M_k$, which are of more relevance for the purpose of establishing results such as Theorem \ref{mlower}.  If one restricts attention to those functions $F: {\mathcal R}_k \to \R$ of the special form $F(t_1,\dots,t_k) = f(t_1+\dots+t_k)$ for some function $f: [0,1] \to \R$ then the resulting variational problem has been optimized in previous works \cite{revesz}, \cite{polymath8a-unabridged} (and originally in unpublished work of Conrey), giving rise to the lower bound
$$ M_k \geq \frac{4k(k-1)}{j_{k-2}^2}$$
where $j_{k-2}$ is the first positive zero of the Bessel function $J_{k-2}$.  This lower bound is reasonably strong for small $k$; for instance, when $k=2$ it shows that
$$ M_2 \geq 1.383\dots$$
which compares well with Corollary \ref{m2-comp}, and also shows that $M_6 > 2$, recovering the result of Goldston, Pintz, and Y{\i}ld{\i}r{\i}m that $\DHL[6,2]$ (and hence $H_1 \leq 16$) was true on the Elliott-Halberstam conjecture.  However, one can show that $\frac{4k(k-1)}{j_{k-2}^2} < 4$ for all $k$ (see \cite{sound}), so this lower bound cannot be used to force $M_k$ to be larger than $4$.

In \cite{maynard-new} the lower bound
\begin{equation}\label{klog}
 M_k \geq \log k - 2\log\log k - 2
\end{equation}
was established for all sufficiently large $k$.  In fact, the arguments in \cite{maynard-new} can be used to show this bound for all $k \geq 200$ (for $k<200$, the right-hand side of \eqref{klog} is either negative or undefined).  Indeed, if we use the bound \cite[(7.19)]{maynard-new} with $A$ chosen so that $A^2 e^A = k$, then $3 < A < \log k$ when $k \geq 200$, hence $e^A = k/A^2 > k / \log^2 k$ and so $A \geq \log k - 2 \log\log k$.  By using the bounds $\frac{A}{e^A-1} < \frac{1}{6}$ (since $A >3$) and $e^A/k = 1/A^2 < 1/9$, we see that the right-hand side of \cite[(8.17)]{maynard-new} exceeds $A - \frac{1}{(1-1/6 - 1/9)^2} \geq A-2$, which gives \eqref{klog}.

We will remove the $\log\log k$ term in \eqref{klog} via the following explicit estimate.

\begin{theorem}\label{explicit}  Let $k \geq 2$, and let $c,T,\tau > 0$ be parameters.  Define the function $g: [0,T] \to \R$ by
\begin{equation}\label{g-def}
 g(t) \coloneqq \frac{1}{c + (k-1) t}
\end{equation}
and the quantities
\begin{align}
m_2 &\coloneqq \int_0^T g(t)^2\ dt \label{m2-def}\\
\mu &\coloneqq \frac{1}{m_2} \int_0^T t g(t)^2\ dt \label{mu-def}\\
\sigma^2 &\coloneqq \frac{1}{m_2} \int_0^T t^2 g(t)^2\ dt - \mu^2.\label{sigma-def}
\end{align}
Assume the inequalities
\begin{align}
k\mu &\leq 1-\tau \label{tau-bound}\\
k\mu &< 1-T \label{T-bound}\\
k\sigma^2 &< (1+\tau-k\mu)^2. \label{ksb}
\end{align}
Then one has
\begin{equation}\label{bigbound}
 \frac{k}{k-1} \log k - M_k^{[T]} \leq \frac{k}{k-1} \frac{Z + Z_3 + WX + VU}{(1+\tau/2) (1 - \frac{k\sigma^2}{(1+\tau-k\mu)^2})}
\end{equation}
where $Z, Z_3, W, X, V, U$ are the explicitly computable quantities
\begin{align}
Z &\coloneqq \frac{1}{\tau} \int_1^{1+\tau}\left( r\left(\log\frac{r-k\mu}{T} + \frac{k\sigma^2}{4(r-k\mu)^2 \log \frac{r-k\mu}{T}} \right) + \frac{r^2}{4kT}\right)\ dr\label{z-def}\\
Z_3 &\coloneqq \frac{1}{m_2} \int_0^T kt \log(1+\frac{t}{T}) g(t)^2\ dt \label{z3-def}\\
W &\coloneqq \frac{1}{m_2} \int_0^T \log(1+\frac{\tau}{kt}) g(t)^2\ dt\label{W-def}\\
X &\coloneqq \frac{\log k}{\tau} c^2 \label{X-def}\\
V &\coloneqq \frac{c}{m_2} \int_0^T \frac{1}{2c + (k-1)t} g(t)^2\ dt \label{V-def} \\
U &\coloneqq \frac{\log k}{c} \int_0^1 \left((1 + u\tau- (k-1)\mu- c)^2 + (k-1) \sigma^2\right)\ du.\label{U-def}
\end{align}
Of course, since $M_k^{[T]} \leq M_k$, the bound \eqref{bigbound} also holds with $M_k^{[T]}$ replaced by $M_k$.
\end{theorem}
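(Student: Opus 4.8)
The plan is to establish the lower bound on $M_k^{[T]}$ implicit in \eqref{bigbound} by exhibiting an explicit near-optimal competitor for the variational problem \eqref{mk4} (with support restricted to the truncated simplex $[0,T]^k \cap \mathcal{R}_k$ that defines $M_k^{[T]}$ as in Theorem \ref{maynard-trunc}), and then carefully estimating its Rayleigh quotient. Concretely, take
$$ F(t_1,\dots,t_k) := \Bigl(\prod_{i=1}^k g(t_i)\Bigr)\, \onef_{t_1+\dots+t_k \leq 1}, $$
extended by zero, with $g$ as in \eqref{g-def}. Since $g$ is supported on $[0,T]$, $F$ is supported on $[0,T]^k\cap\mathcal{R}_k$; the hypotheses \eqref{tau-bound}--\eqref{ksb} guarantee that this support has positive measure, so $F$ is a legitimate competitor and $M_k^{[T]} \geq \frac{\sum_{i=1}^k J_i(F)}{I(F)} = \frac{k J_k(F)}{I(F)}$ by symmetry. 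For the denominator one uses only the crude bound $I(F)\leq \int_{[0,T]^k}\prod_i g(t_i)^2\,dt = m_2^k$ obtained by dropping the constraint; all error terms will arise from the numerator.

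The heart of the argument is the lower bound on $J_k(F)$. Writing $s := t_1+\dots+t_{k-1}$ and integrating out $t_k$,
$$ J_k(F) = \int_{\substack{t_1,\dots,t_{k-1}\in[0,T]\\ s\le 1}} \Bigl(\int_0^{\min(T,\,1-s)} g(t_k)\,dt_k\Bigr)^2 \prod_{i=1}^{k-1} g(t_i)^2\, dt_1\cdots dt_{k-1}. $$
The inner integral is evaluated exactly from the antiderivative $\int_0^v g = \tfrac{1}{k-1}\log\tfrac{c+(k-1)v}{c}$; it is maximal (and constant in $s$) on $s\leq 1-T$, and degrades only mildly for $1-T<s\leq 1$. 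One views $m_2^{-(k-1)}\prod_{i<k}g(t_i)^2\,dt_i$ as a probability measure, under which $s$ has mean $(k-1)\mu$ and variance $(k-1)\sigma^2$ by \eqref{mu-def}, \eqref{sigma-def}; the hypotheses \eqref{tau-bound}--\eqref{ksb} say precisely that this mean lies safely below $1-T$ and the standard deviation is small compared with the distance to the boundary, so a Chebyshev-type estimate puts almost all of the mass in the region $s<1-T$ where the inner integral is maximal. To capture the boundary region with the correct constants one introduces an auxiliary averaging over a parameter $r\in[1,1+\tau]$, effectively smoothing the hard cutoff into a Lipschitz ramp; the normalization $\tfrac1\tau\int_1^{1+\tau} r\,dr = 1+\tau/2$ yields the corresponding factor in \eqref{bigbound}, while the Chebyshev input yields the factor $1-\tfrac{k\sigma^2}{(1+\tau-k\mu)^2}$. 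The residual loss relative to the bulk value is then bounded by the explicit quantities $Z$, $Z_3$, $W$ of \eqref{z-def}, \eqref{z3-def}, \eqref{W-def} --- the $\log\tfrac{r-k\mu}{T}$ and $\tfrac{k\sigma^2}{(r-k\mu)^2}$ pieces of $Z$ being exactly what one gets by combining "$g$ integrated over a shortened interval" with Chebyshev's inequality.

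It remains to normalize against the target $\tfrac{k}{k-1}\log k$: the bulk value $\tfrac{k}{m_2}\bigl(\tfrac{1}{k-1}\log\tfrac{c+(k-1)T}{c}\bigr)^2$ of $\tfrac{kJ_k(F)}{I(F)}$ differs from $\tfrac{k}{k-1}\log k$ by an amount which, upon expanding in the small parameter $c$ and using \eqref{m2-def}, is absorbed into the quantities $X$, $V$, $U$ of \eqref{X-def}, \eqref{V-def}, \eqref{U-def} (each proportional to $\log k$ and to a power of $c$). Combining all of this produces \eqref{bigbound}; the final assertion $M_k^{[T]}\le M_k$ is immediate since $M_k$ is a supremum over a strictly larger class of functions $F$.

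The main obstacle is the step bounding $J_k(F)$: one must control simultaneously, and with sharp enough constants, both the mild degradation of the inner $t_k$-integral for $s$ near $1-T$ and the probability that $s$ exceeds the various thresholds, in such a way that the resulting constants coalesce into exactly $Z$, $Z_3$, $W$ --- all the while keeping every inequality oriented correctly (lower bounds on $J_k$ throughout), even though the $r$-averaging and the Chebyshev estimates most naturally produce upper bounds on error terms. It is this accounting of competing estimates, rather than any single difficult inequality, where the work lies; the pay-off is that, after optimizing the free parameters $c,T,\tau$, this removes the $\log\log k$ term from \eqref{klog}.
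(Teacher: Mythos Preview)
Your overall framework---a product test function, the probabilistic reinterpretation with i.i.d.\ variables of density $g^2/m_2$, Chebyshev's inequality, and averaging over an auxiliary parameter $r\in[1,1+\tau]$---matches the paper's. But the mechanism you describe for extracting $\tfrac{k}{k-1}\log k$ as the main term is not the one that actually produces the specific error quantities in \eqref{bigbound}, and as written the argument has a gap.

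Two concrete issues. First, one should take $F_r(t) := \onef_{t_1+\dots+t_k\le r}\prod_i g(t_i)$ (cutoff at $r$, not $1$) and use the \emph{exact} value $I(F_r)=m_2^k\,\P(S_k\le r)$, not the crude bound $\le m_2^k$: the factor $(1+\tau/2)\bigl(1-\tfrac{k\sigma^2}{(1+\tau-k\mu)^2}\bigr)$ in the denominator of \eqref{bigbound} is precisely a lower bound on $\tfrac1\tau\int_1^{1+\tau} r\,\P(S_k\le r)\,dr$, and discarding $\P(S_k\le r)$ loses it. Second, and more fundamentally, the paper does not compare $\bigl(\int_0^{\min(T,r-S_{k-1})} g\bigr)^2$ to its ``bulk value'' $\bigl(\int_0^T g\bigr)^2$. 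Instead it introduces the random comparison function $h(t):=\tfrac{1}{r-S_{k-1}+(k-1)t}$, chosen so that $\int_0^{r-S_{k-1}} h = \tfrac{\log k}{k-1}$ identically, and applies the Legendre identity
\[
\Bigl(\int_0^{r-S_{k-1}} g\Bigr)^2 = \frac{\log k}{k-1}\int_0^{r-S_{k-1}}\frac{g^2}{h} - \frac12\int_0^{r-S_{k-1}}\!\!\int_0^{r-S_{k-1}}\frac{(g(s)h(t)-g(t)h(s))^2}{h(s)h(t)}\,ds\,dt.
\]
A symmetrization over all $k$ coordinates gives $\E\int_0^{r-S_{k-1}}g^2/h = m_2\, r\,\P(S_k\le r)$ exactly, so the main term cancels perfectly against $I(F_r)$ and one obtains $\Delta\cdot r\,\P(S_k\le r) \le Y_1(r)+Y_2(r)$ with $\Delta:=\tfrac{k}{k-1}\log k - M_k^{[T]}$. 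The quantities $Z,Z_3$ then bound $Y_1$ (the piece of the double integral with $\max(s,t)>T$), while $W,X,V,U$ bound the $r$-average of $Y_2$ via the identity $g-h=(r-S_{k-1}-c)\,gh$ together with a case split at $r-S_{k-1}=2c$; this is the origin of the $\tfrac{1}{2c+(k-1)t}$ in $V$ and the $(1+u\tau-(k-1)\mu-c)^2$ in $U$. Your ``bulk value'' $\tfrac{k}{m_2}\bigl(\tfrac{1}{k-1}\log\tfrac{c+(k-1)T}{c}\bigr)^2$ differs from $\tfrac{k}{k-1}\log k$ by an $O(1)$ quantity that is not what $X,V,U$ are measuring, so the assertion that this discrepancy is ``absorbed'' by them is unjustified.
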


\begin{proof}  From \eqref{mk4} we have
$$ \sum_{i=1}^k J_i(F) \leq M_k^{[T]} I(F)$$
whenever $F: [0,+\infty)^k \to \R$ is square-integrable and supported on $[0,T]^k \cap {\mathcal R}_k$.  By rescaling, we conclude that
$$ \sum_{i=1}^k J_i(F) \leq r M_k^{[T]} I(F)$$
whenever $r>0$ and $F: [0,+\infty)^k \to \R$ is square-integrable and supported on $[0,rT]^k \cap r \cdot {\mathcal R}_k$.  We apply this inequality with the function
$$ F(t_1,\dots,t_k) \coloneqq \onef_{t_1+\dots+t_k \leq r} g(t_1) \dots g(t_k)$$
where $r>1$ is a parameter which we will eventually average over, and $g$ is extended by zero to $[0,+\infty)$.  We thus have
$$ I(F) = m_2^k \int_0^\infty \dots \int_0^\infty \onef_{t_1+\dots+t_k \leq r} \prod_{i=1}^k \frac{g(t_i)^2\ dt_i}{m_2}.$$
We can interpret this probabilistically as
$$ I(F) = m_2^k \P( X_1 + \dots + X_k \leq r )$$
where $X_1,\dots,X_k$ are independent random variables taking values in $[0,T]$ with probability distribution $\frac{1}{m_2} g(t)^2\ dt$.  In a similar fashion, we have
$$ J_k(F) = m_2^{k-1} \int_0^\infty \dots \int_0^\infty\left (\int_{[0,r-t_1-\dots-t_{k-1}]} g(t)\ dt\right)^2 \prod_{i=1}^{k-1} \frac{g(t_i)^2\ dt_i}{m_2},$$
where we adopt the convention that $\int_{[a,b]}$ vanishes when $b<a$.  In probabilistic language, we thus have
$$ J_k(F) = m_2^{k-1} \E \left(\int_{[0,r-X_1-\dots-X_{k-1}]} g(t)\ dt\right)^2$$
where we adopt the convention that the expectation operator $\E$ applies to the entire expression to the right of that operator unless explicitly restricted by parentheses.
Also by symmetry we see that $J_i(F)=J_k(F)$ for all $i=1,\dots,k$.  Putting all this together, we conclude that
$$\E \left(\int_0^{r-X_1-\dots-X_{k-1}} g(t)\ dt\right)^2 \leq \frac{m_2 M_k^{[T]} r}{k} \P( X_1 + \dots + X_k \geq r )$$
for all $r>1$.  Writing $S_i \coloneqq X_1 + \dots + X_i$, we abbreviate this as
\begin{equation}\label{rg1}
\E \left(\int_{[0,r-S_{k-1}]} g(t)\ dt\right)^2 \leq \frac{m_2 M_k^{[T]} r}{k} \P( S_k \geq r ).
\end{equation}
Now we run a variant of the Cauchy-Schwarz argument used to prove Corollary \ref{mk-upper}.  If, for fixed $r>0$, we introduce the random function $h: (0,+\infty) \to \R$ by the formula
\begin{equation}\label{hdef}
 h(t) \coloneqq \frac{1}{r-S_{k-1}+(k-1)t} \onef_{S_{k-1} < r}
\end{equation}
and observe that whenever $S_{k-1} < r$, we have
\begin{equation}\label{h-int}
 \int_{[0,r-S_{k-1}]} h(t)\ dt = \frac{\log k}{k-1}
\end{equation}
and thus by the Legendre identity we have
$$ \left(\int_{[0,r-S_{k-1}]} g(t)\ dt\right)^2 = \frac{\log k}{k-1} \int_{[0,r-S_{k-1}]} \frac{g(t)^2}{h(t)}\ dt - \frac{1}{2} \int_{[0,r-S_{k-1}]} \int_{[0,r-S_{k-1}]} \frac{(g(s)h(t)-g(t)h(s))^2}{h(s)h(t)}\ ds dt$$
for $S_{k-1} < r$; but the claim also holds when $r \leq S_{k-1}$ since all integrals vanish in that case.  On the other hand, we have
\begin{align*}
\E \int_{[0,r-S_{k-1}]} \frac{g(t)^2}{h(t)}\ dt &= m_2 \E (r - S_{k-1} + (k-1) X_k) \onef_{X_k \leq r - S_{k-1} }\\
&= m_2 \E (r - S_k + k X_k) \onef_{S_k \leq r} \\
&= m_2 \E r \onef_{S_k \leq r} \\
&= m_2 r \P( S_k \leq r)
\end{align*}
where we have used symmetry to get the third equality.  We conclude that
$$ \E (\int_{[0,r-S_{k-1}]} g(t)\ dt)^2 = \frac{\log k}{k-1} m_2 r \P(S_k \leq r) - \frac{1}{2} \E \int_{[0,r-S_{k-1}]} \int_{[0,r-S_{k-1}]} \frac{(g(s)h(t)-g(t)h(s))^2}{h(s)h(t)}\ ds dt.$$
Combining this with \eqref{rg1}, we conclude that
$$
\Delta r \P( S_k \leq r) \leq \frac{k}{2m_2}
\E \int_{[0,r-S_{k-1}]} \int_{[0,r-S_{k-1}]} \frac{(g(s)h(t)-g(t)h(s))^2}{h(s)h(t)}\ ds dt
$$
where
$$ \Delta \coloneqq \frac{k}{k-1} \log k - M_k^{[T]}.$$
Splitting into regions where $s,t$ are less than $T$ or greater than $T$, and noting that $g(s)$ vanishes for $s > T$, we conclude that
$$
\Delta r \P( S_k \leq r) \leq Y_1(r) + Y_2(r)$$
where 
$$ Y_1(r) \coloneqq \frac{k}{m_2} \E \int_{[0,T]} \int_{[T,r-S_{k-1}]} \frac{g(t)^2}{h(t)} h(s)\ ds dt $$
and
$$ Y_2(r) \coloneqq \frac{k}{2m_2} \E \int_{[0,\min(T,r-S_{k-1})]} \int_{[0,\min(T,r-S_{k-1})]} \frac{(g(s)h(t)-g(t)h(s))^2}{h(s)h(t)}\ ds dt.$$
We average this from $r=1$ to $r=1+\tau$, to conclude that
$$
\Delta (\frac{1}{\tau} \int_1^{1+\tau} r \P( S_k \leq r)\ dr) \leq \frac{1}{\tau} \int_1^{1+\tau} Y_1(r)\ dr + 
\frac{1}{\tau} \int_1^{1+\tau} Y_2(r)\ dr.$$
Thus to prove \eqref{bigbound}, it suffices (by \eqref{ksb}) to establish the bounds
\begin{equation}\label{denom-bound}
\frac{1}{\tau} \int_1^{1+\tau} r \P( S_k \leq r)\ dr \geq 
(1+\tau/2) \left(1 - \frac{k\sigma^2}{(1+\tau-k\mu)^2}\right),
\end{equation}
\begin{equation}\label{y1-bound}
\frac{k}{k-1} Y_1(r) \leq Z + Z_3
\end{equation}
for all $1 < r \leq 1+\tau$, and
\begin{equation}\label{y2-bound}
\frac{1}{\tau} \int_1^{1+\tau} Y_2(r)\ dr \leq \frac{k}{k-1}( WX+VU ).
\end{equation}

We begin with \eqref{denom-bound}.  Since
$$ \frac{1}{\tau} \int_1^{1+\tau} r\ dr = 1+\frac{\tau}{2}$$
it suffices to show that
$$
\frac{1}{\tau} \int_1^{1+\tau} r \P( S_k > r) \leq (1+\frac{\tau}{2}) \frac{k\sigma^2}{(1+\tau-k\mu)^2}.
$$
But, from \eqref{mu-def}, \eqref{sigma-def}, we see that each $X_i$ has mean $\mu$ and variance $\sigma^2$, so $S_k$ has mean $k\mu$ and variance $k\sigma^2$.  It thus suffices to show the pointwise bound
$$
\frac{1}{\tau} \int_1^{1+\tau} r 1_{x>r} \leq (1+\frac{\tau}{2}) \frac{(x-k\mu)^2}{(1+\tau-k\mu)^2}
$$
for any $x$. It suffices to verify this in the range $1 \leq x \leq 1+\tau$.  But in this range, the left-hand side is convex, equals $0$ at $1$ and $1+\tau/2$ at $1+\tau$, while the right-hand side is convex, and equals $1+\tau/2$ at $1+\tau$ with slope at least $(1+\tau/2)/\tau$ there thanks to \eqref{tau-bound}.  The claim follows.

Now we show \eqref{y1-bound}.  The quantity $Y_1(r)$ is vanishing unless $r-S_{k-1} \geq T$.  Using the crude bound $h(s) \leq \frac{1}{(k-1)s}$ from \eqref{hdef}, we see that
$$ \int_{[T,r-S_{k-1}]} h(s)\ ds \leq \frac{1}{k-1} \log_+ \frac{r-S_{k-1}}{T}$$
where $\log_+(x) \coloneqq \max(\log x, 0)$.  We conclude that
$$ Y_1(r) \leq \frac{k}{k-1} \frac{1}{m_2} \E \int_{[0,T]} \frac{g(t)^2}{h(t)}\ dt \log_+ \frac{r-S_{k-1}}{T}.$$
We can rewrite this as
$$ Y_1(r) \leq \frac{k}{k-1} \E \frac{\onef_{S_k \leq r}}{h(X_k)} \log_+ \frac{r-S_{k-1}}{T}.$$
By \eqref{hdef}, we have
$$ \frac{\onef_{S_k \leq r}}{h(X_k)} = (r-S_k+kX_k) \onef_{S_k \leq r}.$$
Also, from the elementary bound $\log_+(x+y) \leq \log_+ x + \log(1+y)$ for any $x,y \geq 0$, we see that
$$ \log_+ \frac{r-S_{k-1}}{T} \leq \log_+ \frac{r-S_{k}}{T} + \log\left(1+\frac{X_k}{T}\right).$$
We conclude that
\begin{align*}
Y_1(r) &\leq \frac{k}{k-1} \E (r-S_k+kX_k) \left( \log_+ \frac{r-S_{k}}{T} + \log\left(1+\frac{X_k}{T}\right) \right) \onef_{S_k \leq r}\\
&\leq \frac{k}{k-1} \left( \E (r-S_k+kX_k) \log_+ \frac{r-S_{k}}{T} + \max(r-S_k,0) \frac{X_k}{T} + k X_k \log\left(1+\frac{X_k}{T}\right) \right)
\end{align*}
using the elementary bound $\log(1+y) \leq y$.  Symmetrizing in the $X_1,\dots,X_k$, we conclude that
\begin{equation}\label{y1r}
 Y_1(r) \leq \frac{k}{k-1} (Z_1(r) + Z_2(r) + Z_3)
\end{equation}
where
\begin{align*}
Z_1(r) &\coloneqq \E r \log_+ \frac{r-S_k}{T} \\
Z_2(r) &\coloneqq \E (r-S_k) \onef_{S_k \leq r} \frac{S_k}{kT} 
\end{align*}
and $Z_3$ was defined in \eqref{z3-def}.

For the minor error term $Z_2$, we use the crude bound $(r-S_k) \onef_{S_k \leq r} S_k \leq \frac{r^2}{4}$, so
\begin{equation}\label{z2r}
Z_2(r) \leq \frac{r^2}{4kT}.
\end{equation}
For $Z_1$, we upper bound $\log_+ x$ by a quadratic expression in $x$.  More precisely, we observe the inequality
$$ \log_+ x \leq \frac{(x-2a\log a-a)^2}{4a^2 \log a}$$
for any $a > 1$ and $x \in \R$, since the left-hand side is concave in $x$ for $x \geq 1$, while the right-hand side is convex in $x$, non-negative, and tangent to the left-hand side at $x=a$.  We conclude that
$$ \log_+ \frac{r-S_k}{T} \leq \frac{(r-S_k-2aT\log a-aT)^2}{4a^2 T^2 \log a}.$$
On the other hand, from \eqref{mu-def}, \eqref{sigma-def}, we see that each $X_i$ has mean $\mu$ and variance $\sigma^2$, so $S_k$ has mean $k\mu$ and variance $k\sigma^2$.  We conclude that
$$ Z_1(r) \leq r \frac{ (r-k\mu-2aT\log a-aT)^2 + k \sigma^2}{4a^2 T^2 \log a}$$
for any $a > 1$.

From \eqref{T-bound} and the assumption $r > 1$, we may choose $a \coloneqq \frac{r-k\mu}{T}$ here, leading to the simplified formula
\begin{equation}\label{z1r}
Z_1(r) \leq r \left( \log \frac{r-k\mu}{T} + \frac{k\sigma^2}{4(r-k\mu)^2 \log \frac{r-k\mu}{T}}\right).
\end{equation}
From \eqref{y1r}, \eqref{z2r}, \eqref{z1r}, \eqref{z-def} we conclude \eqref{y1-bound}.

Finally, we prove \eqref{y2-bound}.  Here, we finally use the specific form \eqref{g-def} of the function $g$.  Indeed, from \eqref{g-def}, \eqref{hdef} we observe the identity
$$ g(t) - h(t) = (r - S_{k-1} - c) g(t) h(t)$$
for $t \in [0, \min(r-S_{k-1},T)]$.  Thus
\begin{align*}
Y_2(r) &= \frac{k}{2 m_2} \E \int_{[0,\min(r-S_{k-1},T)]} \int_{[0,\min(r-S_{k-1},T)]} \frac{((g-h)(s) h(t)-(g-h)(t) h(s))^2}{h(s) h(t)}\ ds dt \\
&= \frac{k}{2 m_2} \E (r - S_{k-1} - c)^2 \int_{[0,\min(r-S_{k-1},T)]} \int_{[0,\min(r-S_{k-1},T)]} (g(s)-g(t))^2 h(s) h(t)\ ds dt.
\end{align*}
Using the crude bound $(g(s)-g(t))^2 \leq g(s)^2+g(t)^2$ and using symmetry, we conclude
$$Y_2(r) \leq \frac{k}{m_2} \E (r - S_{k-1} - c)^2 \int_{[0,\min(r-S_{k-1},T)]} \int_{[0,\min(r-S_{k-1},T)]} g(s)^2 h(s) h(t)\ ds dt.$$
From \eqref{h-int}, \eqref{hdef} we conclude that
$$ Y_2(r) \leq \frac{k}{k-1} Z_4(r)$$
where
$$ Z_4(r) \coloneqq \frac{\log k}{m_2} \E\left( (r - S_{k-1} - c)^2 \int_{[0,\min(r-S_{k-1},T)]} \frac{g(s)^2}{r-S_{k-1}+(k-1)s}\ ds \right) .$$
To prove \eqref{y2-bound}, it thus suffices (after making the change of variables $r = 1 + u \tau$) to show that
\begin{equation}\label{z4-bound}
 \int_0^1 Z_4(1+u\tau)\ du \leq WX+VU.
\end{equation}
We will exploit the averaging in $u$ to deal with the singular nature of the factor $\frac{1}{r-S_{k-1}+(k-1)s}$.  By Fubini's theorem, the left-hand side of \eqref{z4-bound} may be written as
$$ \frac{\log k}{m_2} \E \int_0^1 Q(u)\ du $$
where $Q(u)$ is the random variable
$$ Q(u) \coloneqq (1+u\tau - S_{k-1} - c)^2 \int_{[0,\min(1+u\tau-S_{k-1},T)]} \frac{g(s)^2}{1+u\tau-S_{k-1}+(k-1)s}\ ds.$$
Note that $Q(u)$ vanishes unless $1+u\tau-S_{k-1} > 0$.  Consider first the contribution of those $Q(u)$ for which
$$ 0 < 1+u\tau-S_{k-1} \leq 2c.$$
In this regime we may bound
$$ (1+u\tau-S_{k-1}-c)^2 \leq c^2,$$
so this contribution to \eqref{z4-bound} may be bounded by
$$ \frac{\log k}{m_2} c^2 \E \int_{[0,T]} g(s)^2 \left(\int_0^1 \frac{\onef_{1+u\tau-S_{k-1} \geq s}}{1+u\tau-S_{k-1}+(k-1)s}\ du\right)\ ds.$$
Observe on making the change of variables $v \coloneqq 1 + u\tau - S_{k-1} + (k-1)s$ that 
\begin{align*}
\int_0^1 \frac{\onef_{1+u\tau-S_{k-1} \geq s}}{1+u\tau-S_{k-1}+(k-1)s}\ du &= \frac{1}{\tau} \int_{[\max(ks, 1-S_{k-1}+(k-1)s), 1-S_{k-1}+\tau+(k-1)s]} \frac{dv}{v} \\
&\leq \frac{1}{\tau} \log \frac{ks+\tau}{ks}
\end{align*}
and so this contribution to \eqref{z4-bound} is bounded by $WX$, where $W,X$ are defined in \eqref{W-def}, \eqref{X-def}.

Now we consider the contribution to \eqref{z4-bound} when\footnote{One could obtain a small improvement to the bounds here by replacing the threshold $2c$ with a parameter to be optimized over.}
$$ 1+u\tau-S_{k-1} > 2c.$$
In this regime we bound
$$ \frac{1}{1+u\tau-S_{k-1}+(k-1)s} \leq \frac{1}{2c+(k-1)t},$$ 
and so this portion of $\int_0^1 Z_4[1+u\tau]\ du$ may be bounded by 
$$\int_0^1 \frac{\log k}{c} \E (1 + u\tau - S_{k-1} - c)^2 V\ du = VU$$
where $V, U$ are defined in \eqref{V-def}, \eqref{U-def}.  The proof of the theorem is now complete.
\end{proof}

We can now perform an asymptotic analysis in the limit $k \to \infty$ to establish Theorem \ref{mlower}(xi) and Theorem \ref{mlower-var}(vi).  For $k$ sufficiently large, we select the parameters
\begin{align*}
c &\coloneqq \frac{1}{\log k} + \frac{\alpha}{\log^2 k} \\
T &\coloneqq \frac{\beta}{\log k} \\
\tau &\coloneqq \frac{\gamma}{\log k}
\end{align*}
for some real parameters $\alpha \in \R$ and $\beta,\gamma > 0$ independent of $k$ to be optimized in later.  From \eqref{g-def}, \eqref{m2-def} we have
\begin{align*}
m_2 &= \frac{1}{k-1} \left( \frac{1}{c} - \frac{1}{c+(k-1)T} \right) \\
&= \frac{\log k}{k} \left(1 - \frac{\alpha}{\log k} + o( \frac{1}{\log k} ) \right)
\end{align*}
where we use $o(f(k))$ to denote a function $g(k)$ of $k$ with $g(k)/f(k) \to 0$ as $k \to \infty$.  On the other hand, we have from \eqref{g-def}, \eqref{mu-def} that
\begin{align*}
m_2 (c + (k-1) \mu) &= \int_0^T (c+(k-1)t) g(t)^2\ dt \\
&= \frac{1}{k-1} \log \frac{c+(k-1)T}{c} \\
&= \frac{\log k}{k} \left(1 + \frac{\log \beta}{\log k} + o( \frac{1}{\log k} ) \right)
\end{align*}
and thus
\begin{align*}
 k\mu &= \frac{k}{k-1} \left(1 + \frac{\log \beta + \alpha}{\log k} + o(\frac{1}{\log k} ) \right) - \frac{kc}{k-1} \\
&= 1 + \frac{\log \beta + \alpha}{\log k} + o\left( \frac{1}{\log k} \right) - \left( \frac{1}{\log k} + o\left( \frac{1}{\log k} \right) \right)\\
&= 1 + \frac{\log \beta + \alpha - 1}{\log k} + o\left( \frac{1}{\log k} \right).
\end{align*}
Similarly, from \eqref{g-def}, \eqref{mu-def}, \eqref{sigma-def} we have
\begin{align*}
m_2 (c^2 + 2c(k-1)\mu + (k-1)^2 (\mu^2+\sigma^2)) &= \int_0^T (c+(k-1)t)^2 g(t)^2\ dt \\
&= T
\end{align*}
and thus
\begin{align*}
k \sigma^2 &= \frac{k}{(k-1)^2} \left(\frac{T}{m_2} - c^2 - 2c(k-1)\mu\right) - k \mu^2 \\
&= \frac{\beta}{\log^2 k} + o( \frac{1}{\log^2 k} ).
\end{align*}

We conclude that the hypotheses \eqref{tau-bound}, \eqref{T-bound}, \eqref{ksb} will be obeyed for sufficiently large $k$ if we have
\begin{align*}
\log \beta + \alpha + \gamma &< 1 \\
\log \beta + \alpha + \beta &< 1 \\
\beta &< (1 + \gamma - \alpha - \log \beta)^2.
\end{align*}
These conditions can be simultaneously obeyed, for instance by setting $\beta=\gamma=1$ and $\alpha = -1$.

Now we crudely estimate the quantities $Z,Z_3,W,X,V,U$ in \eqref{z-def}-\eqref{U-def}.  For $1 \leq r \leq 1+\tau$, we have $r-k\mu \sim 1/\log k$, and so
$$ \frac{r-k\mu}{T} \sim 1; \quad \frac{k \sigma^2}{(r-k\mu)^2} \sim 1; \quad \frac{r^2}{4kT} = o(1)$$
and so by \eqref{z-def} $Z = O(1)$.  Using the crude bound $\log(1+\frac{t}{T}) = O(1)$ for $0 \leq t \leq T$, we see from \eqref{z3-def}, \eqref{mu-def} that $Z_3 = O( k \mu ) = O(1)$.  It is clear that $X = O(1)$, and using the crude bound $\frac{1}{2c+(k-1)t} \leq \frac{1}{c}$ we see from \eqref{V-def}, \eqref{m2-def} that $V = O(1)$.  For $0 \leq u \leq 1$ we have $1 + u\tau - (k-1)\mu - c = O(1/\log k)$, so from \eqref{U-def} we have $U=O(1)$.  Finally, from \eqref{W-def} and the change of variables $t = \frac{s}{k \log k}$ we have
\begin{align*}
W &= \frac{\log k}{k m_2} \int_0^{kT\log k} \log\left(1 + \frac{\gamma}{s}\right) \frac{ds}{(1 + \frac{\alpha}{\log k} + \frac{k-1}{k} s)^2} \\
&= O\left( \int_0^\infty \log\left(1+\frac{\gamma}{s}\right) \frac{ds}{(1+o(1))(1+s)^2} \right) \\
&= O(1).
\end{align*}
Finally we have
$$ 1 - \frac{k\sigma^2}{(1+\tau-k\mu)^2} \sim 1.$$
Putting all this together, we see from \eqref{bigbound} that
$$  M_k \geq M_k^{[T]} \geq \frac{k}{k-1} \log k -  O(1)$$
giving Theorem \ref{mlower}(xi).  Furthermore, if we set
$$ \varpi \coloneqq \frac{7}{600} - \frac{C}{\log k}$$
and
$$ \delta \coloneqq \left(\frac{1}{4} + \frac{7}{600}\right) \frac{\beta}{\log k}$$
then we will have $600 \varpi + 180 \delta < 7$ for $C$ large enough, and Theorem \ref{mlower-var}(vi) also follows (as one can verify from inspection that all implied constants here are effective).


Finally, Theorem \ref{mlower}(viii), (ix), (x) follow by setting
\begin{align*}
c &:= \frac{\theta}{\log k} \\
T &:= \frac{\beta}{\log k} \\
\tau &= 1 - k\mu
\end{align*}
with $\theta,\beta$ given by Table \ref{narrow-table}, with \eqref{bigbound} then giving the bound $M_k^{[T]} > M$ with $M$ as given by the table, after verifying of course that the conditions \eqref{tau-bound}, \eqref{T-bound}, \eqref{ksb} are obeyed.  Similarly, Theorem \ref{mlower-var} (ii), (iii), (iv), (v) follows with $\theta,\beta$ given by the same table, with $\varpi$ chosen so that
$$ M = \frac{m}{\frac{1}{4}+\varpi} $$
with $m=2,3,4,5$ for (ii), (iii), (iv), (v) respectively, and $\delta$ chosen by the formula
$$ \delta := T (\frac{1}{4} + \varpi).$$

\begin{table}
\centering
 \caption{Parameter choices for Theorems \ref{mlower}, \ref{mlower-var}.}
 \label{narrow-table}
  \begin{tabular}{llll}
   \toprule
     $k$    & $\theta$   & $\beta$          &M \\
   \midrule
           5511 & 0.965              & 0.973               & 6.000048609 \\ 
          35410 & 0.99479            & 0.85213             & 7.829849259 \\ [0.5ex]
          41588 & 0.97878  	         & 0.94319             & 8.000001401 \\
         309661 & 0.98627            & 0.92091             & 10.00000032 \\ [0.5ex]
				1649821 & 1.00422            & 0.80148             & 11.65752556 \\  
       75845707 & 1.00712            & 0.77003             & 15.48125090 \\  [0.5ex]
     3473955908 & 1.0079318          & 0.7490925           & 19.30374872 \\  
   \bottomrule
  \end{tabular}
\end{table}

\section{The case of small and medium dimension}\label{h1-sec}

In this section we establish lower bounds for $M_k$ (and related quantities, such as $M_{k,\eps}$) both for small values of $k$ (in particular, $k=3$ and $k=4$) and medium values of $k$ (in particular, $k=50$ and $k=54$).  Specifically, we will establish Theorem \ref{mlower}(vii), Theorem \ref{mke-lower}, and Theorem \ref{piece}.

\subsection{Bounding \texorpdfstring{$M_k$}{Mk} for medium \texorpdfstring{$k$}{k}}

We begin with the problem of lower bounding $M_k$.  We first formalize an observation\footnote{The arguments in \cite{maynard-new} are rigorous under the assumption of a positive eigenfunction as in Corollary \ref{ef}, but the existence of such an eigenfunction remains open for $k \geq 3$.} of Maynard \cite{maynard-new} that one may restrict without loss of generality to symmetric functions:

\begin{lemma}  For any $k \geq 2$, one has
$$ M_k \coloneqq \sup \frac{k J_1(F)}{I(F)}$$
where $F$ ranges over \emph{symmetric} square-integrable functions on ${\mathcal R}_k$ that are not identically zero.
\end{lemma}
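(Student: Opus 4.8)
The plan is to phrase the quantity $\sum_{i=1}^k J_i(F)$ as a quadratic form attached to a positivity‑preserving operator, and then run a Perron--Frobenius‑type argument on the symmetric and non‑symmetric sectors. Write $H := L^2(\mathcal{R}_k)$ and, for $i=1,\dots,k$, let $L_i : H \to L^2(\mathcal{R}_{k-1})$ be the marginal map $L_iF(t_1,\dots,\hat t_i,\dots,t_k) := \int_0^\infty F(t_1,\dots,t_k)\,dt_i$ (extending $F$ by zero), so that $J_i(F) = \|L_iF\|^2$; by Cauchy--Schwarz on a fibre of length at most $1$ one has $J_i(F)\le I(F)$, hence $L_i$ is bounded. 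Put $T := \sum_{i=1}^k L_i^* L_i$, a bounded self‑adjoint positive semi‑definite operator on $H$ with $\sum_i J_i(F) = \langle TF,F\rangle$ and $M_k = \|T\|$. The two features we exploit are that $T$ commutes with the $S_k$‑action on $H$ by coordinate permutations (these preserve both $\mathcal{R}_k$ and Lebesgue measure), and that $T$ is \emph{positivity preserving}: concretely $(L_i^* L_iF)(t)=\onef_{\mathcal{R}_k}(t)\int F(t_1,\dots,s,\dots,t_k)\,ds$ over the fibre through $t$, so each $L_i^* L_i$, and hence $T$ and each power $T^n$, maps non‑negative functions to non‑negative ones; in particular $|T^nG|\le T^n|G|$ pointwise for all $G$ and all $n$.

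Split $H = H^{\mathrm{sym}}\oplus (H^{\mathrm{sym}})^\perp$ into symmetric functions and their orthogonal complement; both are $T$‑invariant, so $M_k = \|T\| = \max\bigl(\|T|_{H^{\mathrm{sym}}}\|,\ \|T|_{(H^{\mathrm{sym}})^\perp}\|\bigr)$, and since $J_1(F)=\dots=J_k(F)$ for symmetric $F$ we have $\|T|_{H^{\mathrm{sym}}}\| = \sup\{kJ_1(F)/I(F): F\in H^{\mathrm{sym}},\ F\not\equiv 0\}$. Thus the lemma reduces to the inequality $\|T|_{(H^{\mathrm{sym}})^\perp}\| \le \|T|_{H^{\mathrm{sym}}}\|$. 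To prove it, let $G\in (H^{\mathrm{sym}})^\perp$ with $\|G\|=1$, let $P$ be the orthogonal projection onto $H^{\mathrm{sym}}$, and write $|G| = g+h$ with $g := P|G|$ and $h := |G|-g \in (H^{\mathrm{sym}})^\perp$. Using $|T^nG|\le T^n|G|$ and the $T$‑invariance of the two sectors,
\begin{align*}
 \|T^nG\|^2 = \langle T^{2n}G,G\rangle &\le \langle T^{2n}|G|,|G|\rangle = \langle T^{2n}g,g\rangle + \langle T^{2n}h,h\rangle \\
 &\le \|T|_{H^{\mathrm{sym}}}\|^{2n}\|g\|^2 + \|T|_{(H^{\mathrm{sym}})^\perp}\|^{2n}\|h\|^2 .
\end{align*}
The key elementary point is that $\|g\|^2 = \langle P|G|,|G|\rangle = \frac{1}{k!}\sum_{\sigma\in S_k}\langle |G|\circ\sigma,\, |G|\rangle \ge \frac{1}{k!}$, because each term is a non‑negative integral and the identity term equals $\|G\|^2 = 1$; hence $\|h\|^2 \le 1-\frac{1}{k!}$. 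Taking the supremum over all such $G$ yields $\|T|_{(H^{\mathrm{sym}})^\perp}\|^{2n} \le \|T|_{H^{\mathrm{sym}}}\|^{2n} + (1-\frac{1}{k!})\|T|_{(H^{\mathrm{sym}})^\perp}\|^{2n}$, i.e.\ $\bigl(\|T|_{(H^{\mathrm{sym}})^\perp}\| / \|T|_{H^{\mathrm{sym}}}\|\bigr)^{2n} \le k!$ for every $n\ge 1$ (here $\|T|_{H^{\mathrm{sym}}}\|>0$, e.g.\ by testing with $F = \onef_{\mathcal{R}_k}$); letting $n\to\infty$ gives $\|T|_{(H^{\mathrm{sym}})^\perp}\| \le \|T|_{H^{\mathrm{sym}}}\|$, as required.

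The main obstacle — and the reason one cannot simply symmetrize a near‑optimal $F$ directly — is that $L_i$ is \emph{not} compact, so the supremum defining $M_k$ need not be attained and a genuine (positive) eigenfunction need not exist; this is exactly the gap flagged in the footnote to the lemma. Passing to the powers $T^{2n}$ and combining the positivity bound $|T^nG|\le T^n|G|$ with the uniform lower bound $\|P|G|\|^2 \ge 1/k!$ on the symmetric mass of any non‑negative function is what lets us bypass the eigenfunction. The remaining verifications are routine: the bound $J_i(F)\le I(F)$ (boundedness of $T$), the positivity‑preservation and $S_k$‑equivariance of $T$, and the spectral identity $\|T^{2n}|_{V}\| = \|T|_V\|^{2n}$ for the self‑adjoint operator $T$ on each invariant subspace $V\in\{H^{\mathrm{sym}},(H^{\mathrm{sym}})^\perp\}$.
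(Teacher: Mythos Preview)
Your proof is correct and takes a genuinely different route from the paper's. The paper argues directly with the quadratic form: replacing $F$ by $|F|$ does not decrease the ratio, and for a near-optimal non-negative $F_n$ the symmetrization $\overline{F_n}$ has $I(\overline{F_n}) \ge I(\tfrac{1}{k!}F_n) = \tfrac{1}{(k!)^2}$, while the positive semi-definite form $Q(F) := M_k I(F) - \sum_i J_i(F)$ satisfies $Q(\overline{F_n}) \le Q(F_n) \to 0$ by the triangle inequality for semi-definite forms, so $\overline{F_n}$ is also a maximizing sequence. Your argument instead recasts everything spectrally, identifies $M_k = \|T\|$ for the positivity-preserving $S_k$-equivariant operator $T = \sum_i L_i^*L_i$, and runs a Perron--Frobenius iteration: the pointwise bound $|T^nG|\le T^n|G|$ together with the uniform lower bound $\|P|G|\|^2 \ge 1/k!$ forces $\|T|_{(H^{\mathrm{sym}})^\perp}\| \le \|T|_{H^{\mathrm{sym}}}\|$. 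The two proofs share the same essential ingredient (the symmetric part of a unit-norm non-negative function has mass at least $1/k!$), but the paper's version is shorter and more elementary, while yours is more conceptual and makes explicit why no positive eigenfunction is needed --- a point the paper itself flags in its footnote.
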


\begin{proof}  Firstly, observe that if one replaces a square-integrable function $F: [0,+\infty)^k \to \R$ with its absolute value $|F|$, then $I(|F|) = I(F)$ and $J_i(|F|) \geq J_i(F)$.  Thus one may restrict the supremum in \eqref{mk4} to non-negative functions without loss of generality.  We may thus find a sequence $F_n$ of square-integrable non-negative functions on ${\mathcal R}_k$, normalized so that $I(F_n)=1$, and such that $\sum_{i=1}^k J_i(F_n) \to M_k$ as $n \to \infty$.

Now let
$$\overline{F_n}(t_1,\dots,t_k) \coloneqq \frac{1}{k!} \sum_{\sigma \in S_k} F_n( t_{\sigma(1)},\dots,t_{\sigma(k)} )$$
be the symmetrization of $F_n$.  Since the $F_n$ are non-negative with $I(F_n)=1$, we see that
$$ I( \overline{F_n} ) \geq I( \frac{1}{k!} F_n ) = \frac{1}{(k!)^2}$$
and so $I(\overline{F_n})$ is bounded away from zero.  Also, from \eqref{mk4}, we know that the quadratic form
$$ Q(F) \coloneqq M_k I(F) - \sum_{i=1}^k J_i(F)$$
is positive semi-definite and is also invariant with respect to symmetries, and so from the triangle inequality for inner product spaces we conclude that
$$ Q( \overline{F_n} ) \leq Q( F_n ).$$
By construction, $Q(F_n)$ goes to zero as $n \to \infty$, and thus $Q(\overline{F_n})$ also goes to zero.  We conclude that
$$ \frac{k J_1(\overline{F_n})}{I(\overline{F_n})} = \frac{\sum_{i=1}^k J_i(\overline{F_n})}{I(\overline{F_n})} \to M_k$$
as $n \to \infty$, and so
$$ M_k \geq \sup \frac{k J_1(F)}{I(F)}.$$
The reverse inequality is immediate from \eqref{mk4}, and the claim follows.
\end{proof}

To establish a lower bound of the form $M_k > C$ for some $C > 0$, one thus seeks to locate a symmetric function $F: [0,+\infty)^k \to \R$ supported on ${\mathcal R}_k$ such that
\begin{equation}\label{kjaf}
 k J_1(F) > C I(F).
\end{equation}
To do this numerically, we follow \cite{maynard-new} (see also \cite{gpy} for some related ideas) and can restrict attention to functions $F$ that are linear combinations
$$ F = \sum_{i=1}^n a_i b_i$$
of some explicit finite set of symmetric square-integrable functions $b_1,\dots,b_n: [0,+\infty)^k \to \R$ supported on ${\mathcal R}_k$, and some real scalars $a_1,\dots,a_n$ that we may optimize in.  The condition \eqref{kjaf} then may be rewritten as
\begin{equation}\label{ama}
 \mathbf{a}^T \mathbf{M}_2 \mathbf{a} - C \mathbf{a}^T \mathbf{M}_1 \mathbf{a} > 0
\end{equation}
where $\mathbf{a}$ is the vector
$$ \mathbf{a} \coloneqq \begin{pmatrix} a_1 \\ \vdots \\ a_n \end{pmatrix}
$$
and $\mathbf{M}_1,\mathbf{M}_2$ are the real symmetric and positive semi-definite $n \times n$ matrices
\begin{align}\label{Eq:M1}
\mathbf{M}_1 &= \left( \int_{\R^k} b_i(t_1,\dots,t_k) b_j(t_1,\dots,t_k)\ dt_1 \dots dt_k \right)_{1 \leq i,j \leq n} \\\label{Eq:M2}
\mathbf{M}_2 &= \left( k \int_{\R^{k+1}} b_i(t_1,\dots,t_k) b_j(t_1,\dots,t_{k-1},t'_k)\ dt_1 \dots dt_k dt'_k\right)_{1 \leq i,j \leq n}.
\end{align}
If the $b_1,\dots,b_n$ are linearly independent in $L^2({\mathcal R}_k)$, then $\mathbf{M}_1$ is strictly positive definite, and (as observed in \cite[Lemma 8.3]{maynard-new}), one can find $\mathbf{a}$ obeying \eqref{ama} if and only if the largest eigenvalue of $\mathbf{M}_2 \mathbf{M}_1^{-1}$ exceeds $C$.  This is a criterion that can be numerically verified for medium-sized values of $n$, if the $b_1,\dots,b_n$ are chosen so that the matrix coefficients of $\mathbf{M}_1,\mathbf{M}_2$ are explicitly computable.

In order to facilitate computations, it is natural to work with bases $b_1,\dots,b_n$ of symmetric polynomials.  We have the following basic integration identity:

\begin{lemma}[Beta function identity]\label{bfi}  For any non-negative $a,a_1,\dots,a_k$, we have
$$ \int_{{\mathcal R}_k} (1-t_1-\dots-t_k)^a t_1^{a_1} \dots t_k^{a_k}\ dt_1 \dots dt_k = \frac{\Gamma(a+1) \Gamma(a_1+1) \dots \Gamma(a_k+1)}{\Gamma(a_1+\dots+a_k+k+a+1)}$$
where $\Gamma(s) := \int_0^\infty t^{s-1} e^{-t}\ dt$ is the Gamma function.  In particular, if $a_1,\dots,a_k$ are natural numbers, then
$$ \int_{{\mathcal R}_k} (1-t_1-\dots-t_k)^a t_1^{a_1} \dots t_k^{a_k}\ dt_1 \dots dt_k = \frac{a! a_1! \dots a_k!}{(a_1+\dots+a_k+k+a)!}.$$
\end{lemma}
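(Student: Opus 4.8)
The plan is to prove this by the classical Dirichlet-integral argument, by induction on $k$, using the one-dimensional Euler Beta integral
$$ \int_0^1 t^{x}(1-t)^{y}\ dt = \frac{\Gamma(x+1)\Gamma(y+1)}{\Gamma(x+y+2)} \qquad (x,y > -1) $$
as the base case $k=1$ (there the left-hand side of the lemma is exactly this integral with $x=a_1$, $y=a$, and the right-hand side is $\Gamma(a+1)\Gamma(a_1+1)/\Gamma(a_1+a+2)$, so the two agree).

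For the inductive step I would fix the last variable $t_k \in [0,1]$ and rescale the others by the affine substitution $t_i = (1-t_k)s_i$ for $i=1,\dots,k-1$. This carries the slice $\{t_1+\dots+t_{k-1}\le 1-t_k\}$ onto ${\mathcal R}_{k-1}$ in the $s$-variables, produces a Jacobian factor $(1-t_k)^{k-1}$, and — via the algebraic identity $1-t_1-\dots-t_k = (1-t_k)(1-s_1-\dots-s_{k-1})$ together with $t_i^{a_i} = (1-t_k)^{a_i} s_i^{a_i}$ — lets one pull a further power $(1-t_k)^{a+a_1+\dots+a_{k-1}}$ out of the inner integral. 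All integrands here are non-negative, so Tonelli's theorem justifies writing the $k$-fold integral as the $t_k$-integral of the inner $(k-1)$-fold integral and performing the substitution slice by slice. Applying the inductive hypothesis to the $s$-integral and then evaluating the remaining $t_k$-integral as one more Beta integral $\int_0^1 t_k^{a_k}(1-t_k)^{a_1+\dots+a_{k-1}+k-1+a}\,dt_k$, the Gamma factors telescope: the factor $\Gamma(a_1+\dots+a_{k-1}+k+a)$ appearing in the denominator of the inductive formula cancels against the numerator produced by this last Beta integral, leaving exactly $\Gamma(a+1)\Gamma(a_1+1)\cdots\Gamma(a_k+1)\big/\Gamma(a_1+\dots+a_k+k+a+1)$. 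The ``in particular'' clause is then immediate from $\Gamma(n+1)=n!$ for $n\in\{0,1,2,\dots\}$, applied to $a,a_1,\dots,a_k$ and to the denominator argument.

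I do not expect any genuine obstacle: this is a standard identity, and the only points needing (routine) care are the change of variables on each slice and the bookkeeping of the shifted arguments of $\Gamma$. As an alternative that avoids induction, one could multiply the asserted left-hand side by $\Gamma(a+a_1+\dots+a_k+k+1)$ and recognize $\Gamma(a+1)\Gamma(a_1+1)\cdots\Gamma(a_k+1)$ as $\int_{[0,+\infty)^{k+1}} u_0^{a}u_1^{a_1}\cdots u_k^{a_k}e^{-(u_0+\dots+u_k)}\,du$, then pass to the coordinates $s:=u_0+\dots+u_k$, $t_i:=u_i/s$ (Jacobian $s^k$), which factors this integral as $\Gamma(a+a_1+\dots+a_k+k+1)$ times the simplex integral in question; but the inductive proof is the cleaner one to write out in full.
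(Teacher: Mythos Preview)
Your inductive argument is correct and complete: the affine rescaling $t_i=(1-t_k)s_i$ does exactly what you say, the Gamma factors telescope as claimed, and the base case is the ordinary Beta integral.

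It is worth noting that the paper takes a different route, and in fact it is essentially your \emph{alternative} proof rather than your main one. The paper first reduces to $a=0$ by writing $(1-t_1-\dots-t_k)^a = a\int_0^{1-t_1-\dots-t_k} t_{k+1}^{a-1}\,dt_{k+1}$, thereby passing to ${\mathcal R}_{k+1}$ with one extra variable, and then evaluates the $a=0$ simplex integral by the Laplace-transform trick: setting $X=\int_{t_1+\dots+t_k=1} t_1^{a_1}\cdots t_k^{a_k}\,dt_1\cdots dt_{k-1}$, integrating $r^{a_1+\dots+a_k+k-1}X$ against $e^{-r}\,dr$ over $[0,\infty)$, and recognizing the result as $\prod_i \Gamma(a_i+1)$. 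Your alternative (pass to $u_i$-coordinates on $[0,\infty)^{k+1}$ and change to $s=\sum u_i$, $t_i=u_i/s$) is the same idea, arguably packaged more cleanly since it handles general $a$ in one stroke without the preliminary reduction. Your main inductive proof, by contrast, stays entirely within finite intervals and only ever uses the one-dimensional Beta identity; it is more elementary in that it never invokes the full Gamma integral $\int_0^\infty t^{s-1}e^{-t}\,dt$ beyond its role in the Beta formula, though it does require keeping track of the exponent bookkeeping through the induction.
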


\begin{proof}
Since
$$ \int_{{\mathcal R}_k} (1-t_1-\dots-t_k)^a t_1^{a_1} \dots t_k^{a_k}\ dt_1 \dots dt_k = a \int_{{\mathcal R}_{k+1}} t_1^{a_1} \dots t_k^{a_k} t_{k+1}^{a-1}\ dt_1 \dots dt_{k+1}$$
we see that to establish the lemma it suffices to do so in the case $a=0$.

If we write
$$ X := \int_{t_1+\dots+t_k=1} t_1^{a_1} \dots t_k^{a_k}\ dt_1 \dots dt_{k-1}$$
then by homogeneity we have
$$ r^{a_1+\dots+a_k+k-1} X = \int_{t_1+\dots+t_k=r} t_1^{a_1} \dots t_k^{a_k}\ dt_1 \dots dt_{k-1}$$
for any $r > 0$, and hence on integrating $r$ from $0$ to $1$ we conclude that
$$ \frac{X}{a_1+\dots+a_k+k} = \int_{{\mathcal R}_k} t_1^{a_1} \dots t_k^{a_k}\ dt_1 \dots dt_k.$$
On the other hand, if we multiply by $e^{-r}$ and integrate $r$ from $0$ to $\infty$, we obtain instead
$$ \int_0^\infty r^{a_1+\dots+a_k+k-1} X e^{-r}\ dr = \int_{[0,+\infty)^k} t_1^{a_1} \dots t_k^{a_k} e^{-t_1-\dots-t_k}\ dt_1 \dots dt_k.$$
Using the definition of the Gamma function, this becomes
$$ \Gamma(a_1+\dots+a_k+k) X = \Gamma(a_1+1) \dots \Gamma(a_k+1)$$
and the claim follows.
\end{proof}

Define a \emph{signature} to be a non-increasing sequence $\alpha = (\alpha_1,\alpha_2,\dots,\alpha_k)$ of natural numbers; for brevity we omit zeroes, thus for instance if $k=6$, then $(2,2,1,1,0,0)$ will be abbreviated as $(2,2,1,1)$.  The number of non-zero elements of $\alpha$ will be called the \emph{length} of the signature $\alpha$, and as usual the \emph{degree} of $\alpha$ will be $\alpha_1+\dots+\alpha_k$.  For each signature $\alpha$, we then define the symmetric polynomials $P_\alpha = P^{(k)}_\alpha$ by the formula
$$
P_\alpha(t_1,\dots,t_k) = \sum_{a: s(a)=\alpha} t_1^{a_1} \dots t_k^{a_k}$$
where the summation is over all tuples $a = (a_1,\dots,a_k)$ whose non-increasing rearrangement $s(a)$ is equal to $\alpha$.  Thus for instance
\begin{align*}
P_{(1)}(t_1,\dots,t_k) &= t_1 + \dots + t_k \\
P_{(2)}(t_1,\dots,t_k) &= t_1^2 + \dots + t_k^2 \\
P_{(1,1)}(t_1,\dots,t_k) &= \sum_{1 \leq i < j \leq k} t_i t_j \\
P_{(2,1)}(t_1,\dots,t_k) &= \sum_{1 \leq i < j \leq k} t_i^2 t_j + t_i t_j^2
\end{align*}
and so forth.  Clearly, the $P_\alpha$ form a linear basis for the symmetric polynomials of $t_1,\dots,t_k$.  Observe that if $\alpha = (\alpha',1)$ is a signature containing $1$, then one can express $P_\alpha$ as $P_{(1)} P_{\alpha'}$ minus a linear combination of polynomials $P_\beta$ with the length of $\beta$ less than that of $\alpha$.  This implies that the functions $P_{(1)}^a P_\alpha$, with $a \geq 0$ and $\alpha$ avoiding $1$, are also a basis for the symmetric polynomials.  Equivalently, the functions $(1-P_{(1)})^a P_\alpha$ with $a \geq 0$ and $\alpha$ avoiding $1$ form a basis.

After extensive experimentation, we have discovered that a good basis $b_1,\dots,b_n$ to use for the above problem comes by setting the $b_i$ to be all the symmetric polynomials of the form $(1-P_{(1)})^a P_\alpha$, where $a \geq 0$ and $\alpha$ consists entirely of even numbers, whose total degree $a + \alpha_1+\dots +\alpha_k$ is less than or equal to some chosen threshold $d$.  For such functions, the coefficients of $\mathbf{M}_1,\mathbf{M}_2$ can be computed exactly using Lemma \ref{bfi}.

More explicitly, first we quickly compute a look-up table for the structure constants $c_{\alpha,\beta,\gamma}\in \Z$ derived from simple products of the form
\[
P_{\alpha}P_{\beta}=\sum_{\gamma}c_{\alpha,\beta,\gamma}P_{\gamma}
\]
where $\deg(\alpha)+\deg(\beta)\leq d$.  Using this look-up table we rewrite the integrands of the entries of the matrices in (\ref{Eq:M1}) and (\ref{Eq:M2}) as integer linear combinations of nearly ``pure'' monomials of the form $(1-P_{(1)})^a t_1^{a_1}\dots t_k^{a_k}$.  We then calculate the entries of $\mathbf{M}_1$ and $\mathbf{M}_2$, as exact rational numbers, using Lemma \ref{bfi}.

We next run a generalized eigenvector routine on (real approximations to) $\mathbf{M}_1$ and $\mathbf{M}_2$ to find a vector $\mathbf{a}'$ which nearly maximizes the quantity $C$ in (\ref{ama}).  Taking a rational approximation $\mathbf{a}$ to $\mathbf{a}'$, we then do the quick (and exact) arithmetic to verify that (\ref{ama}) holds for some constant $C>4$.  This generalized eigenvector routine is time-intensive when the sizes of $\mathbf{M}_1$ and $\mathbf{M}_2$ are large (say, bigger than $1500\times 1500$), and in practice  is the most computationally intensive step of our calculation.  When one does not care about an exact arithmetic proof that $C>4$, instead one can run a test for positive-definiteness for the matrix $C\mathbf{M}_1-\mathbf{M}_2$, which is usually much faster and less RAM intensive.

Using this method, we were able to demonstrate $M_{54} > 4.00238$, thus establishing Theorem \ref{mlower}(vii).  We took $d=23$ and imposed the restriction on signatures $\alpha$ that they be composed only of even numbers. It is likely that $d=22$ would suffice in the absence of this restriction on signatures, but we found that the gain in $M_{54}$ from lifting this restriction is typically only in the region of $0.005$, whereas the execution time is increased by a large factor. We do not have a good understanding of why this particular restriction on signatures is so inexpensive in terms of the trade-off between the accuracy of $M$-values and computational complexity.  The total run-time for this computation was under one hour.

We now describe a second choice for the basis elements $b_1,\dots,b_n$, which uses the Krylov subspace method; it gives faster and more efficient numerical results than the previous basis, but does not seem to extend as well to more complicated variational problems such as $M_{k,\eps}$.  We introduce the linear operator ${\mathcal L}: L^2({\mathcal R}_k) \to L^2({\mathcal R}_k)$ defined by
$$ {\mathcal L} f( t_1,\dots,t_k) \coloneqq \sum_{i=1}^k \int_0^{1-t_1-\dots-t_{i-1}-t_{i+1}-\dots-t_k} f(t_1,\dots,t_{i-1},t'_i,t_{i+1},\dots,t_k)\ dt'_i.$$
This is a self-adjoint and positive semi-definite operator on $L^2({\mathcal R}_k)$.  For symmetric $b_1,\dots,b_n \in L^2({\mathcal R}_k)$, one can then write
\begin{align*}
\mathbf{M}_1 &= \left( \langle b_i, b_j \rangle \right)_{1 \leq i,j \leq n} \\
\mathbf{M}_2 &= \left( \langle {\mathcal L} b_i, b_j \rangle \right)_{1 \leq i,j \leq n}.
\end{align*}
If we then choose
$$ b_i \coloneqq {\mathcal L}^{i-1} 1$$
where $1$ is the unit constant function on ${\mathcal R}_k$, then the matrices $\mathbf{M}_1,\mathbf{M}_2$ take the Hankel form
\begin{align*}
\mathbf{M}_1 &= \left( \langle {\mathcal L}^{i+j-2} 1, 1 \rangle \right)_{1 \leq i,j \leq n} \\
\mathbf{M}_2 &= \left( \langle {\mathcal L}^{i+j-1} 1, 1 \rangle \right)_{1 \leq i,j \leq n},
\end{align*}
and so can be computed entirely in terms of the $2n$ numbers $\langle {\mathcal L}^i 1, 1 \rangle$ for $i=0,\dots,2n-1$.

The operator ${\mathcal L}$ maps symmetric polynomials to symmetric polynomials; for instance, one has
\begin{align*}
{\mathcal L} 1 &= k - (k-1) 	P_{(1)} \\
{\mathcal L} P_{(1)} &= \frac{k}{2} - \frac{k-1}{2} P_{(2)} - (k-2) P_{(1,1)}
\end{align*}
and so forth.  From this and Lemma \ref{bfi}, the quantities $\langle {\mathcal L}^i 1, 1 \rangle$ are explicitly computable rational numbers; for instance, one can calculate
\begin{align*}
\langle 1, 1 \rangle &= \frac{1}{k!} \\
\langle {\mathcal L} 1, 1\rangle &= \frac{2k}{(k+1)!} \\
\langle {\mathcal L}^2 1, 1\rangle &= \frac{k (5k+1)}{(k+2)!} \\
\langle {\mathcal L}^3 1, 1\rangle &= \frac{2k^2 (7k+5)}{(k+3)!}
\end{align*}
and so forth.

With Maple, we were able to compute $\langle {\mathcal L}^i 1, 1 \rangle$ for $i \leq 50$ and $k \leq 100$, leading to lower bounds on $M_k$ for these values of $k$, a selection of which are given in Table \ref{tab}.

\begin{table}
\centering
 \caption{Selected lower bounds on $M_k$ obtained from the Krylov subspace method, with the $\frac{k}{k-1} \log k$ upper bound displayed for comparison.}
 \label{tab}
  \begin{tabular}{lll}
   \toprule
     $k$    & Lower bound on $M_k$   & $\frac{k}{k-1} \log k$   \\
   \midrule
           2 & 1.38593            & 1.38630                \\
           3 & 1.64644            & 1.64792              \\ [0.5ex]
           4 & 1.84540  	        & 1.84840             \\
           5 & 2.00714  	        & 2.01180             \\ [0.5ex]
          10 & 2.54547            & 2.55843              \\
				  20 & 3.12756            & 3.15341              \\  [0.5ex]
          30 & 3.48313            & 3.51849              \\
          40 & 3.73919            & 3.78347            \\  [0.5ex]
          50 & 3.93586            & 3.99187              \\
          53 & 3.98621            & 4.04665            \\  [0.5ex]
          54 & 4.00223            & 4.06425            \\
          60 & 4.09101            & 4.16375            \\  [0.5ex]
				 100 & 4.46424            & 4.65169           \\
   \bottomrule
  \end{tabular}
\end{table}

\subsection{Bounding \texorpdfstring{$M_{k,\eps}$}{Mk,epsilon} for medium \texorpdfstring{$k$}{k}}\label{mkeps-sec}

When bounding $M_{k,\eps}$, we have not been able to implement the Krylov method, because the analogue of ${\mathcal L}^i 1$ in this context is piecewise polynomial instead of polynomial, and we were only able to compute it explicitly for very small values of $i$, such as $i=1,2,3$, which are insufficient for good numerics.  Thus, we rely on the previously discussed approach, in which symmetric polynomials are used for the basis functions.  Instead of computing integrals over the region ${\mathcal R}_k$ we pass to the regions $(1\pm \varepsilon)\mathcal{R}_k$.  In order to apply Lemma \ref{bfi} over these regions, this necessitates working with a slightly different basis of polynomials.  We chose to work with those polynomials of the form $(1+\varepsilon-P_{(1)})^a P_{\alpha}$, where $\alpha$ is a signature with no 1's.  Over the region $(1+\varepsilon)\mathcal{R}_k$, a single change of variables converts the needed integrals into those of the form in Lemma \ref{bfi}, and we can then compute the entries of $\mathbf{M}_1$.

On the other hand, over the region $(1-\varepsilon)\mathcal{R}_k$ we instead want to work with polynomials of the form $(1-\varepsilon-P_{(1)})^aP_{\alpha}$.  Since $(1+\varepsilon-P_{(1)})^a=(2\varepsilon +(1-\varepsilon-P_{(1)}))^a$, an expansion using the binomial theorem allows us to convert from our given basis to polynomials of the needed form.

With these modifications, and calculating as in the previous section, we find that $M_{50,1/25}>4.00124$ if $d=25$ and $M_{50,1/25}>4.0043$ if $d=27$, thus establishing Theorem \ref{mke-lower}(i).  As before, we found it optimal to restrict signatures to contain only even entries, which greatly reduced execution time while only reducing $M$ by a few thousandths.

One surprising additional computational difficulty introduced by allowing $\varepsilon>0$ is that the ``complexity'' of $\varepsilon$ as a rational number affects the run-time of the calculations.  We found that choosing $\varepsilon=1/m$ (where $m\in \Z$ has only small prime factors) reduces this effect.

A similar argument gives $M_{51,1/50} > 4.00156$, thus establishing Theorem \ref{mke-lower}(xiii). In this case our polynomials were of maximum degree $d= 22$.

Code and data for these calculations may be found at 

\centerline{\url{www.dropbox.com/sh/0xb4xrsx4qmua7u/WOhuo2Gx7f/Polymath8b}.}

\subsection{Bounding \texorpdfstring{$M_{4,\eps}$}{M4,0.18}}\label{4d}

We now prove Theorem \ref{mke-lower}(xii$'$), which can be established by a direct numerical calculation.  We introduce the explicit function $F: [0,+\infty)^4 \to \R$ defined by
$$ F(t_1,t_2,t_3,t_4) \coloneqq (1 - \alpha (t_1+t_2+t_3+t_4)) \onef_{t_1+t_2+t_3+t_4 \leq 1+\eps}$$
with $\eps \coloneqq 0.168$ and $\alpha \coloneqq 0.784$.  As $F$ is symmetric in $t_1,t_2,t_3,t_4$, we have $J_{i,1-\eps}(F) = J_{1,1-\eps}(F)$, so to show Theorem \ref{mke-lower}(xii$'$) it will suffice to show that
\begin{equation}\label{jf}
 \frac{4 J_{1,1-\eps}(F)}{I(F)} > 2.00558.
\end{equation}
By making the change of variables $s=t_1+t_2+t_3+t_4$ we see that
\begin{align*}
I(F) &= \int_{t_1+t_2+t_3+t_4 \leq 1+\eps} (1-\alpha(t_1+t_2+t_3+t_4))^2\ dt_1 dt_2 dt_3 dt_4 \\
&= \int_0^{1+\eps} (1-\alpha s)^2 \frac{s^3}{3!}\ ds\\
&= \alpha^2 \frac{(1+\eps)^6}{36} - \alpha \frac{(1+\eps)^5}{15} + \frac{(1+\eps)^4}{24} \\
&= 0.00728001347\dots
\end{align*}
and similarly by making the change of variables $u = t_1+t_2+t_3$
\begin{align*}
J_{1,1-\eps}(F) &= \int_{t_1+t_2+t_3 \leq 1-\eps} (\int_0^{1+\eps-t_1-t_2-t_3} (1-\alpha(t_1+t_2+t_3+t_4))\ dt_4)^2 dt_1 dt_2 dt_3 \\
&= \int_0^{1-\eps} (\int_0^{1+\eps-u} (1-\alpha(u+t_4))\ dt_4)^2 \frac{u^2}{2!} du \\
&= \int_0^{1-\eps} (1+\eps-u)^2 (1-\alpha \frac{1+\eps+u}{2})^2 \frac{u^2}{2} du  \\
&= 0.003650160667\dots
\end{align*}
and so \eqref{jf} follows.

\begin{remark}  If one uses the truncated function
$$ \tilde F(t_1,t_2,t_3,t_4) \coloneqq F(t_1,t_2,t_3,t_4) \onef_{t_1,t_2,t_3,t_4 \leq 1}$$
in place of $F$, and sets $\eps$ to $0.18$ instead of $0.168$, one can compute that
$$ \frac{4 J_{1,1-\eps}(\tilde F)}{I(\tilde F)} > 2.00235.$$
Thus it is possible to establish Theorem \ref{mke-lower}(xii$'$) using a cutoff function $F'$ that is also supported in the unit cube $[0,1]^4$.  This allows for a slight simplification to the proof of $\DHL[4,2]$ assuming GEH, as one can add the additional hypothesis $S(F_{i_0})+S(G_{i_0}) < 1$ to Theorem \ref{nonprime-asym}(ii) in that case.
\end{remark}

\begin{remark} By optimising in $\eps$ and taking $F$ to be a symmetric polynomial of degree higher than $1$, one can get slightly better lower bounds for $M_{4,\eps}$; for instance setting $\eps = 5/21$ and choosing $F$ to be a cubic polynomial, we were able to obtain the bound $M_{4,\eps} \geq 2.05411$.  On the other hand, the best lower bound for $M_{3,\eps}$ that we were able to obtain was $1.91726$ (taking $\eps = 56/113$ and optimizing over cubic polynomials).  Again, see \url{www.dropbox.com/sh/0xb4xrsx4qmua7u/WOhuo2Gx7f/Polymath8b} for the relevant code and data.
\end{remark}

\subsection{Three-dimensional cutoffs}\label{3d}

In this section we establish Theorem \ref{piece}.  We relabel the variables $(t_1,t_2,t_3)$ as $(x,y,z)$, thus our task is to locate a piecewise polynomial function
$F: [0,+\infty)^3 \to \R$ supported on the simplex
$$ R := \left\{ (x,y,z) \in [0,+\infty)^3: x+y+z \leq \frac{3}{2}\right\}$$
and symmetric in the $x,y,z$ variables, obeying the vanishing marginal condition
\begin{equation}\label{fmor}
\int_0^\infty F(x,y,z)\ dz = 0
\end{equation}
whenever $x,y \geq 0$ with $x+y > 1+\eps$, and such that
\begin{equation}\label{tt1}
J(F) > 2 I(F)
\end{equation}
where
\begin{equation}\label{jf-def}
J(F) := 3\int_{x+y \leq 1-\eps} \left(\int_0^\infty F(x,y,z)\ dz\right)^2\ dx dy
\end{equation}
and
\begin{equation}\label{if-def}
I(F) := \int_R  F(x,y,z)^2\ dx dy dz
\end{equation}
and
$$\eps := 1/4.$$

Our strategy will be as follows.  We will decompose the simplex $R$ (up to null sets) into a carefully selected set of disjoint open polyhedra $P_1,\dots,P_m$ (in fact $m$ will be $60$), and on each $P_i$ we will take $F(x,y,z)$ to be a low degree polynomial $F_i(x,y,z)$ (indeed, the degree will never exceed $3$).  The left and right-hand sides of \eqref{tt1} become quadratic functions in the coefficients of the $F_i$.  Meanwhile, the requirement of symmetry, as well as the marginal requirement \eqref{fmor}, imposes some linear constraints on these coefficients.  In principle, this creates a finite-dimensional quadratic program, which one can try to solve numerically.  However, to make this strategy practical, one needs to keep the number of linear constraints imposed on the coefficients to be fairly small, as compared with the total number of coefficients.  To achieve this, the following properties on the polyhedra $P_i$ are desirable:

\begin{itemize}
\item (Symmetry) If $P_i$ is a polytope in the partition, then every reflection of $P_i$ formed by permuting the $x,y,z$ coordinates should also lie in the partition.
\item (Graph structure) Each polytope $P_i$ should be of the form
\begin{equation}\label{qi-form}
\{ (x,y,z): (x,y) \in Q_i; a_i(x,y) < z < b_i(x,y)\},
\end{equation}
where $a_i(x,y), b_i(x,y)$ are linear forms and $Q_i$ is a polygon.
\item (Epsilon splitting)  Each $Q_i$ is contained in one of the regions $\{ (x,y): x+y < 1-\eps\}$, $\{ (x,y): 1-\eps < x+y < 1+\eps \}$, or $\{ (x,y): 1+\eps < x+y < 3/2 \}$.
\end{itemize}

Observe that the vanishing marginal condition \eqref{fmor} now takes the form
\begin{equation}\label{vanish}
 \sum_{i: (x,y) \in Q_i} \int_{a_i(x,y)}^{b_i(x,y)} F_i(x,y,z)\ dz = 0
\end{equation}
for every $x,y > 0$ with $x+y > 1+\eps$.  If the set $\{i: (x,y) \in Q_i\}$ is fixed, then the left-hand side of \eqref{vanish} is a polynomial in $x,y$ whose coefficients depend linearly on the coefficients of the $F_i$, and thus \eqref{vanish} imposes a set of linear conditions on these coefficients for each possible set $\{ i: (x,y) \in Q_i\}$ with $x+y > 1+\eps$.

Now we describe the partition we will use.  This partition can in fact be used for all $\eps$ in the interval $[1/4,1/3]$, but the endpoint $\eps = 1/4$ has some simplifications which allowed for reasonably good numerical results.  To obtain the symmetry property, it is natural to split $R$ (modulo null sets) into six polyhedra $R_{xyz}, R_{xzy}, R_{yxz}, R_{yzx}, R_{zxy}, R_{zyx}$, where
\begin{align*}
 R_{xyz} &:= \{(x,y,z)\in R\ :\ x+y < y+z < z+x\} \\
&= \{(x,y,z): 0 < y < x < z; x+y+z \leq 3/2 \}
\end{align*}
and the other polyhedra are obtained by permuting the indices $x,y,z$, thus for instance
\begin{align*}
R_{yxz} &:= \{(x,y,z)\in R\ :\ y+x < x+z < z+y\} \\
&= \{(x,y,z): 0 < x < y < z; y+x+z \leq 3/2 \}.
\end{align*}

To obtain the epsilon splitting property, we decompose $R_{xyz}$ (modulo null sets) into eight sub-polytopes
\begin{align*}
A_{xyz} & =\{(x,y,z)\in R\ :\ x+y< y+z< z+x< 1-\eps\},\\
B_{xyz} & =\{(x,y,z)\in R\ :\ x+y< y+z< 1-\eps< z+x< 1+\eps\},\\
C_{xyz} & =\{(x,y,z)\in R\ :\ x+y< 1-\eps < y+z < z+x< 1+\eps\},\\
D_{xyz} & =\{(x,y,z)\in R\ :\ 1-\eps < x+y<  y+z < z+x< 1+\eps\},\\
E_{xyz} & =\{(x,y,z)\in R\ :\ x+y< y+z< 1-\eps <  1+\eps < z+x\},\\
F_{xyz} & =\{(x,y,z)\in R\ :\ x+y< 1-\eps < y+z <  1+\eps < z+x\},\\
G_{xyz} & =\{(x,y,z)\in R\ :\ x+y< 1-\eps  <  1+\eps < y+z < z+x\},\\
H_{xyz} & =\{(x,y,z)\in R\ :\ 1-\eps < x+y<  y+z < 1+\eps < z+x\};
\end{align*}
the other five polytopes $R_{xzy}, R_{yxz}, R_{yzx}, R_{zxy}, R_{zyx}$ are decomposed similarly, leading to a partition of $R$ into $6 \times 8 = 48$ polytopes.
This is almost the partition we will use; however there is a technical difficulty arising from the fact that some of the permutations of $F_{xyz}$ do not obey the graph structure property.  So we will split $F_{xyz}$ further, into the three pieces
\begin{align*}
S_{xyz} & = \{(x,y,z)\in F_{xyz}\ :\ z< 1/2+\eps\},\\
T_{xyz} & = \{(x,y,z)\in F_{xyz}\ :\ z> 1/2+\eps;x> 1/2-\eps \}, \\
U_{xyz} & = \{(x,y,z)\in F_{xyz}\ :\ x< 1/2-\eps \}.
\end{align*}
Thus $R_{xyz}$ is now partitioned into ten polytopes $A_{xyz},$ $B_{xyz},$ $C_{xyz},$ $D_{xyz}$, $E_{xyz}$, $S_{xyz}$, $T_{xyz}$, $U_{xyz}$, $G_{xyz}$, $H_{xyz}$, and similarly for permutations of $R_{xyz}$, leading to a decomposition of $R$ into $6 \times 10 = 60$ polytopes.


A symmetric piecewise polynomial function $F$ supported on $R$ can now be described (almost everywhere) by specifying a polynomial function $F\downharpoonright_{P}: P \to \R$ for the ten polytopes $P = A_{xyz}, B_{xyz}, C_{xyz}, D_{xyz}, E_{xyz},  S_{xyz}, T_{xyz}, U_{xyz}, G_{xyz}, H_{xyz}$, and then extending by symmetry, thus for instance
$$ F\downharpoonright_{A_{yzx}}(x,y,z) = F\downharpoonright_{A_{xyz}}(z,x,y).$$

As discussed earlier, the expressions $I(F), J(F)$ can now be written as quadratic forms in the coefficients of the $F\downharpoonright_P$, and the vanishing marginal condition \eqref{fmor} imposes some linear constraints on these coefficients.

Observe that the polytope $D_{xyz}$ and all of its permutations make no contribution to either the functional $J(F)$ or to the marginal condition \eqref{fmor}, and give a non-negative contribution to $I(F)$.  Thus without loss of generality we may assume that
$$ F\downharpoonright_{D_{xyz}} = 0.$$
However, the other nine polytopes $A_{xyz}, B_{xyz}, C_{xyz}, E_{xyz}, S_{xyz}, T_{xyz}, U_{xyz}, G_{xyz}, H_{xyz}$ have at least one permutation which gives a non-trivial contribution to either $J(F)$ or to \eqref{fmor}, and cannot be easily eliminated.

Now we compute $I(F)$.  By symmetry we have
$$ I(F) = 3! I(F \downharpoonright_{R_{xyz}} ) = 6 \sum_P I( F \downharpoonright_P ) $$
where $P$ ranges over the nine polytopes $A_{xyz}, B_{xyz}, C_{xyz}, E_{xyz}, S_{xyz}, T_{xyz}, U_{xyz}, G_{xyz}, H_{xyz}$.  A tedious but straightforward computation shows that
\begin{align*}
I(F\downharpoonright_{A_{xyz}}) & =\int_{x=0}^{1/2-\eps/2}\int_{y=0}^{x}\int_{z=x}^{1-\eps-x}F\downharpoonright_{A_{xyz}}^2\ dz\ dy\ dx \\
I(F\downharpoonright_{B_{xyz}}) &= \left(\int_{z=1/2-\eps/2}^{1/2+\eps/2}\int_{x=1-\eps-z}^{z} +\int_{z=1/2+\eps/2}^{1-\eps}\int_{x=1-\eps-z}^{1+\eps-z}\right) \int_{y=0}^{1-\eps-z}F\downharpoonright_{B_{xyz}}^2\ dy\ dx\ dz \\
I(F\downharpoonright_{C_{xyz}}) & = \left(\int_{y=0}^{1/2-3\eps/2} \int_{x=y}^{y+2\eps} + \int_{y=1/2-3\eps/2}^{1/2-\eps}\int_{x=y}^{1-\eps-y} \right) \int_{z=1-\eps-y}^{1+\eps-x} \\
&\quad + \int_{y=1/2-\eps}^{1/2-\eps/2}\int_{x=y}^{1-\eps-y}\int_{z=1-\eps-y}^{3/2-x-y} F\downharpoonright_{C_{xyz}}^2\ dz\ dx\ dy \\
I(F\downharpoonright_{E_{xyz}}) & = \int_{z=1/2+\eps/2}^{1-\eps}\int_{x=1+\eps-z}^{z}\int_{y=0}^{1-\eps-z} F\downharpoonright_{E_{xyz}}^2\ dy\ dx\ dz \\
I(F\downharpoonright_{S_{xyz}}) & =  \left(\int_{y=0}^{1/2-3\eps/2}\int_{z=1-\eps-y}^{1/2+\eps} +\int_{y=1/2-3\eps/2}^{1/2-\eps}\int_{z=y+2\eps}^{1/2+\eps} \right)\int_{x=1+\eps-z}^{1-\eps-y} F\downharpoonright_{S_{xyz}}^2\ dx\ dz\ dy \\
I(F\downharpoonright_{T_{xyz}}) &  = \left(\int_{z=1/2+\eps}^{1/2+2\eps}\int_{x=1+\eps-z}^{3/2-z} + \int_{z=1/2+2\eps}^{1+\eps}\int_{x=1/2-\eps}^{3/2-z}\right)\int_{y=0}^{3/2-x-z} F\downharpoonright_{T_{xyz}}^2\ dy\ dz\ dx \\
I(F\downharpoonright_{U_{xyz}}) & = \int_{x=0}^{1/2-\eps}\int_{y=0}^{x}\int_{z=1+\eps-x}^{1+\eps-y} F\downharpoonright_{U_{xyz}}^2\ dz\ dy\ dx\\
I(F\downharpoonright_{G_{xyz}}) & = \int_{x=0}^{1/2-\eps}\int_{y=0}^{x}\int_{z=1+\eps-y}^{3/2-x-y} F\downharpoonright_{G_{xyz}}^2\ dx\ dz\ dy
\end{align*}
and
\begin{align*}
I(F\downharpoonright_{H_{xyz}}) & =\left(\int_{x=1/2+\eps/2}^{1-\eps}\int_{y=1-\eps-x}^{3/2-2x} +\int_{x=1-\eps}^{3/4}\int_{y=0}^{3/2-2x} \right) \int_{z=x}^{3/2-x-y}\\
&\quad + \int_{x=1/2}^{1/2+\eps/2}\int_{y=1-\eps-x}^{1/2-\eps}\int_{z=1+\eps-x}^{3/2-x-y} F\downharpoonright_{H_{xyz}}^2\ dz\ dy\ dx.
\end{align*}

Now we consider the quantity $J(F)$.  Here we only have the symmetry of swapping $x$ and $y$, so that
$$ J(F) = 6 \int_{0 < y < x; x+y < 1-\eps} \left(\int_0^{3/2-x-y} F(x,y,z)\ dz\right)^2 dx dy.$$
The region of integration meets the polytopes $A_{xyz}$, $A_{yzx}$, $A_{zyx}$, $B_{xyz}$, $B_{zyx}$, $C_{xyz}$, $E_{xyz}$, $E_{zyx}$, $S_{xyz}$, $T_{xyz}$, $U_{xyz}$, and $G_{xyz}$.

Projecting these polytopes to the $(x,y)$-plane, we have the diagram:
\begin{center}\includegraphics[scale=1.5]{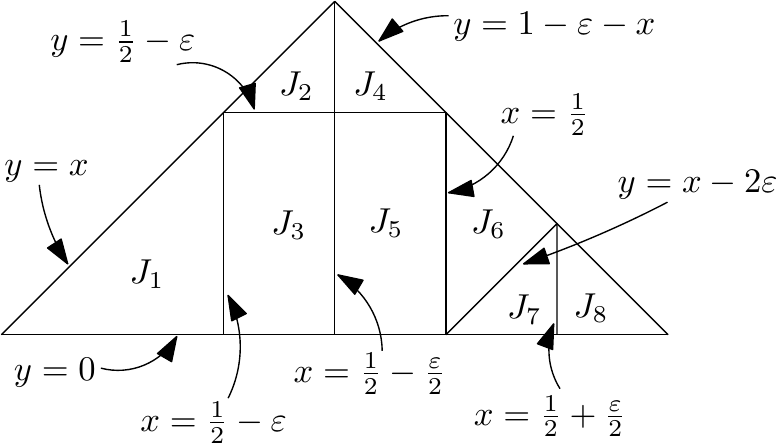}\end{center}

This diagram is drawn to scale in the case when $\varepsilon=1/4$, otherwise there is a separation between the $J_5$ and $J_7$ regions.  For these eight regions there are eight corresponding integrals $J_1,J_2,\ldots, J_8$, thus
$$ J(F) = 6( J_1 + \dots + J_8).$$

We have
\begin{align*}
J_1 & =\int_{x=0}^{1/2-\eps}\int_{y=0}^{x} \left(\int_{z=0}^{y}F\downharpoonright_{A_{yzx}}+\int_{z=y}^{x}F\downharpoonright_{A_{zyx}} +\int_{z=x}^{1-\eps-x}F\downharpoonright_{A_{xyz}} +\int_{z=1-\eps-x}^{1-\eps-y}F\downharpoonright_{B_{xyz}}  \right. \\
&\quad \left. +\int_{z=1-\eps-y}^{1+\eps-x}F\downharpoonright_{C_{xyz}} + \int_{z=1+\eps-x}^{1+\eps-y}F\downharpoonright_{U_{xyz}} + \int_{z=1+\eps-y}^{3/2-x-y}F\downharpoonright_{G_{xyz}}\ dz \right)^{2}\ dy\ dx.
\end{align*}
Next comes
\begin{align*}
J_2 & =\int_{x=1/2-\eps}^{1/2-\eps/2}\int_{y=1/2-\eps}^{x} \left(\int_{z=0}^{y}F\downharpoonright_{A_{yzx}}+\int_{z=y}^{x}F\downharpoonright_{A_{zyx}}+\int_{z=x}^{1-\eps-x}F\downharpoonright_{A_{xyz}} +\int_{z=1-\eps-x}^{1-\eps-y}F\downharpoonright_{B_{xyz}}\right. \\
&\quad \left. +\int_{z=1-\eps-y}^{3/2-x-y}F\downharpoonright_{C_{xyz}}\ dz \right)^{2}\ dy\ dx.
\end{align*}
Third is the piece
\begin{align*}
J_3 & =\int_{x=1/2-\eps}^{1/2-\eps/2}\int_{y=0}^{1/2-\eps} \left(\int_{z=0}^{y}F\downharpoonright_{A_{yzx}}+\int_{z=y}^{x}F\downharpoonright_{A_{zyx}} +\int_{z=x}^{1-\eps-x}F\downharpoonright_{A_{xyz}} +\int_{z=1-\eps-x}^{1-\eps-y}F\downharpoonright_{B_{xyz}} \right. \\
&\quad \left. + \int_{z=1-\eps-y}^{1+\eps-x}F\downharpoonright_{C_{xyz}} + \int_{z=1+\eps-x}^{3/2-x-y}F\downharpoonright_{T_{xyz}}\ dz \right)^{2}\ dy\ dx.
\end{align*}

We now have dealt with all integrals involving $A_{xyz}$, and all remaining integrals pass through $B_{zyx}$.  Continuing, we have
\begin{align*}
J_4 & =\int_{x=1/2-\eps/2}^{1/2}\int_{y=1/2-\eps}^{1-\eps-x} \left(\int_{z=0}^{y}F\downharpoonright_{A_{yzx}}+\int_{z=y}^{1-\eps-x}F\downharpoonright_{A_{zyx}} +\int_{z=1-\eps-x}^{x}F\downharpoonright_{B_{zyx}} +\int_{z=x}^{1-\eps-y}F\downharpoonright_{B_{xyz}}\right. \\
&\quad \left. +\int_{z=1-\eps-y}^{3/2-x-y}F\downharpoonright_{C_{xyz}}\ dz \right)^{2}\ dy\ dx.
\end{align*}
Another component is
\begin{align*}
J_5 & = \int_{x=1/2-\eps/2}^{1/2}\int_{y=0}^{1/2-\eps} \left(\int_{z=0}^{y}F\downharpoonright_{A_{yzx}}+\int_{z=y}^{1-\eps-x}F\downharpoonright_{A_{zyx}} \right. \\
&\quad \left. +\int_{z=1-\eps-x}^{x}F\downharpoonright_{B_{zyx}} +\int_{z=x}^{1-\eps-y}F\downharpoonright_{B_{xyz}} +\int_{z=1-\eps-y}^{1+\eps-x}F\downharpoonright_{C_{xyz}} + \int_{z=1+\eps-x}^{3/2-x-y}F\downharpoonright_{T_{xyz}}\ dz \right)^{2}\ dy\ dx.
\end{align*}

The most complicated piece is
\begin{align*}
J_6 & =\left(\int_{x=1/2}^{2\eps} \int_{y=0}^{1-\eps-x} + \int_{x=2\eps}^{1/2+\eps/2}\int_{y=x-2\eps}^{1-\eps-x} \right) \left(\int_{z=0}^{y}F\downharpoonright_{A_{yzx}}+\int_{z=y}^{1-\eps-x}F\downharpoonright_{A_{zyx}} +\int_{z=1-\eps-x}^{x}F\downharpoonright_{B_{zyx}}  \right. \\
&\quad \left. +\int_{z=x}^{1-\eps-y}F\downharpoonright_{B_{xyz}} +\int_{z=1-\eps-y}^{1+\eps-x}F\downharpoonright_{C_{xyz}} + \int_{z=1+\eps-x}^{1/2+\eps}F\downharpoonright_{S_{xyz}}+ \int_{z=1/2+\eps}^{3/2-x-y}F\downharpoonright_{T_{xyz}}\ dz \right)^{2}\ dy\ dx.
\end{align*}
Here we use $\left(\int_{x=1/2}^{2\eps} \int_{y=0}^{1-\eps-x} + \int_{x=2\eps}^{1/2+\eps/2}\int_{y=x-2\eps}^{1-\eps-x} \right) f(x,y)\ dy dx$ as an abbreviation for
$$ \int_{x=1/2}^{2\eps} \int_{y=0}^{1-\eps-x} f(x,y)\ dy dx + \int_{x=2\eps}^{1/2+\eps/2}\int_{y=x-2\eps}^{1-\eps-x} f(x,y)\ dy dx.$$

We have now exhausted $C_{xyz}$.  The seventh piece is
\begin{align*}
J_7 & = \int_{x=2\eps}^{1/2+\eps/2}\int_{y=0}^{x-2\eps} \left(\int_{z=0}^{y}F\downharpoonright_{A_{yzx}}+\int_{z=y}^{1-\eps-x}F\downharpoonright_{A_{zyx}} +\int_{z=1-\eps-x}^{x}F\downharpoonright_{B_{zyx}} \right. \\
&\quad \left. +\int_{z=x}^{1+\eps-x}F\downharpoonright_{B_{xyz}} + \int_{z=1+\eps-x}^{1-\eps-y}F\downharpoonright_{E_{xyz}} + \int_{1-\eps-y}^{1/2+\eps}F\downharpoonright_{S_{xyz}}+ \int_{1/2+\eps}^{3/2-x-y}F\downharpoonright_{T_{xyz}} \ dz \right)^{2}\ dy\ dx.
\end{align*}
Finally, we have
\begin{align*}
J_8 & = \int_{x=1/2+\eps/2}^{1-\eps}\int_{y=0}^{1-\eps-x} \left(\int_{z=0}^{y}F\downharpoonright_{A_{yzx}}+\int_{z=y}^{1-\eps-x}F\downharpoonright_{A_{zyx}} +\int_{z=1-\eps-x}^{1+\eps-x}F\downharpoonright_{B_{zyx}} \right. \\
&\quad \left. + \int_{z=1+\eps-x}^{x}F\downharpoonright_{E_{zyx}} + \int_{z=x}^{1-\eps-y}F\downharpoonright_{E_{xyz}} + \int_{1-\eps-y}^{1/2+\eps}F\downharpoonright_{S_{xyz}}+ \int_{1/2+\eps}^{3/2-x-y}F\downharpoonright_{T_{xyz}}\ dz \right)^{2}\ dy\ dx.
\end{align*}

In the case $\eps=1/4$, the marginal conditions \eqref{fmor} reduce to requiring
\begin{align}
\int_{z=0}^{3/2-x-y}F\downharpoonright_{G_{yzx}}\ dz &= 0\label{m1}\\
\int_{z=0}^{y}F\downharpoonright_{G_{yzx}} +\int_{z=y}^{3/2-x-y}F\downharpoonright_{G_{zyx}}\ dz &= 0\label{m2}\\
\int_{z=0}^{1+\varepsilon-x}F\downharpoonright_{U_{yzx}} + \int_{z=1+\varepsilon-x}^{y}F\downharpoonright_{G_{yzx}} +\int_{z=y}^{3/2-x-y}F\downharpoonright_{G_{zyx}}\ dz &= 0\label{m3}\\
\int_{z=0}^{1+\varepsilon-x}F\downharpoonright_{U_{yzx}} + \int_{z=1+\varepsilon-x}^{3/2-x-y}F\downharpoonright_{G_{yzx}}\ dz &= 0\label{m4}\\
\int_{z=0}^{3/2-x-y}F\downharpoonright_{T_{yzx}}\ dz &= 0\label{m5}\\
\int_{z=0}^{1-\varepsilon-x}F\downharpoonright_{E_{yzx}} + \int_{z=1-\varepsilon-x}^{1-\varepsilon-y}F\downharpoonright_{S_{yzx}} + \int_{z=1-\varepsilon-y}^{3/2-x-y}F\downharpoonright_{H_{yzx}}\ dz &= 0.\label{m7}
\end{align}
Each of these constraints is only required to hold for some portion of the parameter space $\{ (x,y): 1+\eps \leq x+y \leq 3/2 \}$, but as the left-hand sides are all polynomial functions in $x,y$ (using the signed definite integral $\int_b^a = -\int_a^b$), it is equivalent to require that all coefficients of these polynomial functions vanish.

Now we specify $F$.  After some numerical experimentation, we have found the simplest choice of $F$ that still achieves the desired goal comes by taking $F(x,y,z)$ to be a polynomial of degree $1$ on each of $E_{xyz}$, $S_{xyz}$, $H_{xyz}$, degree $2$ on $T_{xyz}$, vanishing on $D_{xyz}$, and degree $3$ on the remaining five relevant components of $R_{xyz}$.  After solving the quadratic program, rounding, and clearing denominators, we arrive at the choice
\begin{align*}
F\downharpoonright_{A_{xyz}} &:= -66+96 x-147 x^2+125 x^3+128 y-122 x y+104 x^2 y-275 y^2+394 y^3+99 z\\
&\quad -58 x z+63 x^2 z-98 y z+51 x y z+41 y^2 z-112 z^2+24 x z^2+72 y z^2+50 z^3 \\
F\downharpoonright_{B_{xyz}} &:= -41+52 x-73 x^2+25 x^3+108 y-66 x y+71 x^2 y-294 y^2+56 x y^2+363 y^3\\
&\quad +33 z+15 x z+22 x^2 z-40 y z-42 x y z+75 y^2 z-36 z^2-24 x z^2+26 y z^2+20 z^3 \\
F\downharpoonright_{C_{xyz}} &:= -22+45 x-35 x^2+63 y-99 x y+82 x^2 y-140 y^2+54 x y^2+179 y^3 \\
F\downharpoonright_{E_{xyz}} &:= -12+8 x+32 y \\
F\downharpoonright_{S_{xyz}} &:= -6+8 x+16 y \\
F\downharpoonright_{T_{xyz}} &:= 18-30 x+12 x^2+42 y-20 x y-66 y^2-45 z+34 x z+22 z^2 \\
F\downharpoonright_{U_{xyz}} &:= 94-1823 x+5760 x^2-5128 x^3+54 y-168 x^2 y+105 y^2+1422 x z-2340 x^2 z\\
&\quad -192 y^2 z-128 z^2-268 x z^2+64 z^3 \\
F\downharpoonright_{G_{xyz}} &:= 5274-19833 x+18570 x^2-5128 x^3-18024 y+44696 x y-20664 x^2 y+16158 y^2\\
&\quad -19056 x y^2-4592 y^3-10704 z+26860 x z-12588 x^2 z+24448 y z-30352 x y z\\
&\quad -10980 y^2 z+7240 z^2-9092 x z^2-8288 y z^2-1632 z^3 \\
F\downharpoonright_{H_{xyz}} &:= 8 z.
\end{align*}
One may compute that
$$ I(F) = \frac{62082439864241}{507343011840}$$
and
$$ J(F) = \frac{9933190664926733}{40587440947200}$$
with all the marginal conditions \eqref{m1}-\eqref{m7} obeyed, thus
$$ \frac{J(F)}{I(F)} = 2 + \frac{286648173}{4966595189139280}$$
and \eqref{tt1} follows.

\section{The parity problem}\label{parity-sec}

In this section we argue why the ``parity barrier'' of Selberg \cite{selberg} prohibits sieve-theoretic methods, such as the ones in this paper, from obtaining any bound on $H_1$ that is stronger than $H_1 \leq 6$, even on the assumption of strong distributional conjectures such as the generalized Elliott-Halberstam conjecture $\GEH[\vartheta]$, and even if one uses sieves other than the Selberg sieve.  Our discussion will be somewhat informal and heuristic in nature.

We begin by briefly recalling how the bound $H_1 \leq 6$ on GEH (i.e., Theorem \ref{main}(xii)) was proven.  This was deduced from the claim $\DHL[3,2]$, or more specifically from the claim that the set
\begin{equation}\label{A-def}
A := \{ n \in \N: \hbox{ at least two of } n, n+2, n+6 \hbox{ are prime} \}
\end{equation}
was infinite.

To do this, we (implicitly) established a lower bound
$$ \sum_n \nu(n) \onef_A(n) > 0$$
for some non-negative weight $\nu: \N \to \R^+$ supported on $[x,2x]$ for a sufficiently large $x$.  This bound was in turn established (after a lengthy sieve-theoretic analysis, and with a carefully chosen weight $\nu$) from upper bounds on various discrepancies.  More precisely, one required good upper bounds (on average) for the expressions
\begin{equation}\label{flip}
 \left|\sum_{x \leq n \leq 2x: n = a\ (q)} f(n+h) - \frac{1}{\phi(q)} \sum_{x \leq n \leq 2x: (n+h,q)=1} f(n+h)\right|
\end{equation}
for all $h \in \{0,2,6\}$ and various residue classes $a\ (q)$ with $q \leq x^{1-\eps}$ and arithmetic functions $f$, such as the constant function $f=1$, the von Mangoldt function $f = \Lambda$, or Dirichlet convolutions $f = \alpha \star \beta$ of the type considered in Claim \ref{geh-def}.  (In the presentation of this argument in previous sections, the shift by $h$ was eliminated using the change of variables $n' = n+h$, but for the current discussion it is important that we do not use this shift.)  One also required good asymptotic control on the main terms
\begin{equation}\label{flop}
\sum_{x \leq n \leq 2x: (n+h,q)=1} f(n+h).
\end{equation}

Once one eliminates the shift by $h$, an inspection of these arguments reveals that they would be equally valid if one inserted a further non-negative weight $\omega: \N \to \R^+$ in the summation over $n$.  More precisely, the above sieve-theoretic argument would also deduce the lower bound
$$ \sum_n \nu(n) \onef_A(n) \omega(n) > 0$$
if one had control on the weighted discrepancies
\begin{equation}\label{flip-weight}
 \left|\sum_{x \leq n \leq 2x: n= a\ (q)} f(n+h) \omega(n) - \frac{1}{\phi(q)} \sum_{x \leq n \leq 2x: (n+h,q)=1} f(n+h) \omega(n)\right|
\end{equation}
and on the weighted main terms
\begin{equation}\label{flop-weight}
\sum_{x \leq n \leq 2x: (n+h,q)=1} f(n+h) \omega(n)
\end{equation}
that were of the same form as in the unweighted case $\omega=1$.

Now suppose for instance that one was trying to prove the bound $H_1 \leq 4$.  A natural way to proceed here would be to replace the set $A$ in \eqref{A-def} with the smaller set
\begin{equation}\label{app}
 A' := \{ n \in \N: n, n+2 \hbox{ are both prime} \} \cup \{ n \in \N: n+2, n+6 \hbox{ are both prime} \}
\end{equation}
and hope to establish a bound of the form
$$ \sum_n \nu(n) \onef_{A'}(n) > 0$$
for a well-chosen function $\nu: \N \to \R^+$ supported on $[x,2x]$, by deriving this bound from suitable (averaged) upper bounds on the discrepancies \eqref{flip} and control on the main terms \eqref{flop}.  If the arguments were sieve-theoretic in nature, then (as in the $H_1 \leq 6$ case), one could then also deduce the lower bound
\begin{equation}\label{ap-lower}
 \sum_n \nu(n) \onef_{A'}(n) \omega(n) > 0
\end{equation}
for any non-negative weight $\omega: \N \to \R^+$, provided that one had the same control on the weighted discrepancies \eqref{flip-weight} and weighted main terms \eqref{flop-weight} that one did on \eqref{flip}, \eqref{flop}.

We apply this observation to the weight
\begin{align*}
\omega(n) &:= (1 - \lambda(n) \lambda(n+2)) (1 - \lambda(n+2) \lambda(n+6)) \\
&=  1 - \lambda(n)\lambda(n+2) - \lambda(n+2)\lambda(n+6) + \lambda(n) \lambda(n+6)
\end{align*}
where $\lambda(n) := (-1)^{\Omega(n)}$ is the Liouville function.  Observe that $\omega$ vanishes for any $n\in A'$, and hence
\begin{equation}\label{ap-none}
\sum_n \nu(n) \onef_{A'}(n) \omega(n) = 0
\end{equation}
for any $\nu$.  On the other hand, the ``M\"obius randomness law'' (see e.g. \cite{ik}) predicts a significant amount of cancellation for any non-trivial sum involving the M\"obius function $\mu$, or the closely related Liouville function $\lambda$.  For instance, the expression
$$ \sum_{x \leq n \leq 2x: n = a\ (q)} \lambda(n+h)$$
is expected to be very small (of size\footnote{Indeed, one might be even more ambitious and conjecture a square-root cancellation $\lessapprox \sqrt{x/q}$ for such sums (see \cite{mont} for some similar conjectures), although such stronger cancellations generally do not play an essential role in sieve-theoretic computations.} $O( \frac{x}{q} \log^{-A} x)$ for any fixed $A$) for any residue class $a\ (q)$ with $q \leq x^{1-\eps}$, and any $h \in \{0,2,6\}$; similarly for more complicated expressions such as
$$ \sum_{x \leq n \leq 2x: n = a\ (q)} \lambda(n+2) \lambda(n+6)$$
or
$$ \sum_{x \leq n \leq 2x: n = a\ (q)} \Lambda(n) \lambda(n+2) \lambda(n+6)$$
or more generally
$$ \sum_{x \leq n \leq 2x: n = a\ (q)} f(n) \lambda(n+2) \lambda(n+6)$$
where $f$ is a Dirichlet convolution $\alpha \star \beta$ of the form considered in Claim \ref{geh-def}.  Similarly for expressions such as
$$ \sum_{x \leq n \leq 2x: n = a\ (q)} f(n) \lambda(n) \lambda(n+2);$$
note from the complete multiplicativity of $\lambda$ that $(\alpha \star \beta) \lambda = (\alpha \lambda) \star (\beta \lambda)$, so if $f$ is of the form in Claim \ref{geh-def}, then $f\lambda$ is also.  In view of these observations (and similar observations arising from permutations of $\{0,2,6\}$), we conclude (heuristically, at least) that all the bounds that are believed to hold for \eqref{flip}, \eqref{flop} should also hold (up to minor changes in the implied constants) for \eqref{flip-weight}, \eqref{flop-weight}.  Thus, if the bound $H_1 \leq 4$ could be proven in a sieve-theoretic fashion, one should be able to conclude the bound \eqref{ap-lower}, which is in direct contradiction to \eqref{ap-none}.

\begin{remark}  Similar arguments work for any set of the form
$$ A_H := \{ n \in \N: \exists n \leq p_1 < p_2 \leq n+H; p_1,p_2 \hbox{ both prime}, p_2 - p_1 \leq 4 \}$$
and any fixed $H > 0$, to prohibit any non-trivial lower bound on $\sum_n \nu(n) \onef_{A_H}(n)$ from sieve-theoretic methods. Indeed, one uses the weight
$$ \omega(n) := \prod_{0 \leq i \leq i' \leq H; (n+i,3) = (n+i',3) = 1; i'-i \leq 4} (1 - \lambda(n+i) \lambda(n+i'));$$
we leave the details to the interested reader.  This seems to block any attempt to use any argument based only on the distribution of the prime numbers and related expressions in arithmetic progressions to prove $H_1 \leq 4$.
\end{remark}

The same arguments of course also prohibit a sieve-theoretic proof of the twin prime conjecture $H_1 = 2$.   In this case one can use the simpler weight $\omega(n) = 1 - \lambda(n) \lambda(n+2)$ to rule out such a proof, and the argument is essentially due to Selberg \cite{selberg}.

Of course, the parity barrier could be circumvented if one were able to introduce stronger sieve-theoretic axioms than the ``linear'' axioms currently available (which only control sums of the form \eqref{flip} or \eqref{flop}).  For instance, if one were able to obtain non-trivial bounds for ``bilinear'' expressions such as
$$ \sum_{x \leq n \leq 2x} f(n) \Lambda(n+2) = \sum_d \sum_m \alpha(d) \beta(m) \onef_{[x,2x]}(dm) \Lambda(dm+2)$$
for functions $f = \alpha \star \beta$ of the form in Claim \ref{geh-def}, then (by a modification of the proof of Proposition \ref{geh-eh}) one would very likely obtain non-trivial bounds on
$$ \sum_{x \leq n \leq 2x} \Lambda(n) \Lambda(n+2)$$
which would soon lead to a proof of the twin prime conjecture.  Unfortunately, we do not know of any plausible way to control such bilinear expressions.  (Note however that there are some other situations in which bilinear sieve axioms may be established, for instance in the argument of Friedlander and Iwaniec \cite{fi} establishing an infinitude of primes of the form $a^2+b^4$.)

\section{Additional remarks}\label{remarks-sec}

The proof of Theorem \ref{main-dhl}(xii) may be modified to establish the following variant:

\begin{proposition}  Assume the generalized Elliott-Halberstam conjecture $\GEH[\vartheta]$ for all $0 < \vartheta < 1$.  Let $0 < \eps < 1/2$ be fixed.  Then if $x$ is a sufficiently large multiple of $6$, there exists a natural number $n$ with $\eps x \leq n \leq (1-\eps) x$ such that at least two of $n, n-2, x-n$ are prime.  Similarly if $n-2$ is replaced by $n+2$.
\end{proposition}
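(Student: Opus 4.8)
The plan is to run exactly the argument behind Theorem \ref{main-dhl}(xii), but with one of the three linear forms $n+h_i$ replaced by the ``reflected'' form $x-n$. Recall that Theorem \ref{main-dhl}(xii) was obtained by applying Theorem \ref{epsilon-beyond} with $k=3$, $m=1$, $\eps=\tfrac14$ to a piecewise polynomial cutoff $F$ (Theorem \ref{piece}) adapted to the admissible triple $(0,2,6)$; the key analytic inputs were the asymptotics for prime sums (Theorem \ref{prime-asym}) and non-prime sums (Theorem \ref{nonprime-asym}) applied to the sieve weight $\nu(n)=(\sum_j c_j \lambda_{F_{1,j}}(n)\lambda_{F_{2,j}}(n+2)\lambda_{F_{3,j}}(n+6))^2$. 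Here we instead want, for $n$ ranging over a suitable residue class, at least two of $n$, $n-2$, $x-n$ to be prime. After a translation, it is convenient to work with $n$, $n+2$ and $x-n$ (shifting $n\mapsto n+2$ interchanges the two stated variants), so fix the admissible triple $\mathcal H=(0,2)$ together with the reflected form; the point is that $(0,2,6)$ being admissible and the reflection $n\mapsto x-n$ being a measure-preserving bijection means the combinatorial/sieve structure is unchanged.

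First I would set up the weighted sum analogous to the proof of Lemma \ref{crit}: with $w=\log\log\log x$, $W=\prod_{p\le w}p$, choose a residue class $b\ (W)$ so that $b$, $b+2$ are coprime to $W$ and $x-b$ is coprime to $W$ (this last condition is where we use that $x$ is a multiple of $6$, so that the reflected form behaves like a genuine third coordinate of an admissible triple; one checks the relevant residue class exists by the admissibility of the underlying configuration together with $6\mid x$). Then consider
$$ N:=\sum_{\substack{\eps x\le n\le (1-\eps)x\\ n=b\ (W)}} \nu(n)\Big(\theta(n)+\theta(n+2)+\theta(x-n) - \log 3x\Big), $$
where $\nu$ is the Maynard--Tao type weight built from the three-dimensional cutoff $F$ of Theorem \ref{piece}, now with the third divisor sum evaluated at $x-n$ rather than $n+6$. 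Restricting $n$ to $[\eps x,(1-\eps)x]$ rather than $[x,2x]$ only changes the main terms by harmless constant factors (all the integrals in Theorems \ref{prime-asym}, \ref{nonprime-asym} are over dyadic-type ranges and the implied asymptotics are uniform). If $N>0$ then some $n$ in the range has at least two of $n,n+2,x-n$ prime, which (after undoing the shift) gives the claim.

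The technical heart is re-deriving Theorems \ref{prime-asym} and \ref{nonprime-asym} for the sum over $n$ of products $\lambda_{F}(n)\lambda_{G}(n+2)\,(\text{divisor sum in }x-n)$ and of $\theta(x-n)$ times such products. For the non-prime sums this is immediate: the change of variables $n\mapsto x-n$ on the third factor turns $\lambda$-sums in $x-n$ into $\lambda$-sums in an ordinary shifted variable, and Lemma \ref{mul-asym} together with the $\GEH[\vartheta]$-based argument of Section \ref{geh-case} applies verbatim (the crucial feature of Theorem \ref{nonprime-asym}(ii), that no support bound is needed on $F_{i_0},G_{i_0}$, is exactly what lets us take $i_0$ to be the reflected coordinate, since its cutoff has support up to $\tfrac{k}{k-1}=\tfrac32$). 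For the prime sum $\sum \theta(x-n)\prod(\dots)$, after concatenating the congruences on $n$ via the Chinese remainder theorem we are led, as in Section \ref{eh-case}, to control $\sum_{q\lessapprox x^\vartheta}\sup_a|\Delta(\onef_{I}\theta; a\ (q))|$ for a suitable interval $I$ — this follows from $\EH[\vartheta]$, which $\GEH[\vartheta]$ implies by Proposition \ref{geh-eh}. The one genuine point to check is that the reflection $n\mapsto x-n$ does not disturb the reduction to a primitive residue class modulo $q_{W,d_1,\dots}$: since $x$ is fixed, $x-n\equiv a'\ (q)$ is still a single congruence in $n$, and coprimality of $a'$ with $q$ holds because $x$ is divisible by $6$ hence the relevant local conditions at primes dividing $W$ are satisfied. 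I expect this bookkeeping around the reflected coordinate — making sure $x-b$ lands in an admissible residue class and that all the discrepancy estimates transfer — to be the main obstacle, though it is essentially routine; once it is in place, the numerical inequality $J(F)/I(F)>2=2m$ from Theorem \ref{piece} (with $\vartheta\to 1$) feeds into the criterion exactly as in the proof of Theorem \ref{epsilon-beyond} and yields $N>0$.
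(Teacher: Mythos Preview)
Your approach is the same as the paper's, and the outline is correct. There is one point you gloss over that the paper flags explicitly: you claim that after the reflection, Lemma \ref{mul-asym} and the arguments of Section \ref{geh-case} ``apply verbatim''. They do not quite. In the original setting the moduli $[d_1,d'_1],[d_2,d'_2],[d_3,d'_3]$ are automatically pairwise coprime because any common prime factor $p>w$ would force two of the shifts $h_i$ to be congruent modulo $p$, hence equal. With the reflected coordinate the congruences become $n\equiv 0\ (q_1)$, $n\equiv 2\ (q_2)$, $n\equiv x\ (q_3)$, and now $q_1,q_3$ can share a prime $p>w$ whenever $p\mid x$, and $q_2,q_3$ can share one whenever $p\mid x-2$. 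So the coprimality input to Lemma \ref{mul-asym} fails at these primes, and the asymptotic acquires a singular series factor $\mathfrak S=\prod_{p\mid x(x-2),\,p>w}\tfrac{p}{p-1}$.

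This is not fatal: the same factor $\mathfrak S$ appears uniformly in the upper bound for $\sum\nu(n)$ and in each of the three lower bounds for $\sum\nu(n)\theta(\cdot)$, so it cancels in the key inequality $\beta_1+\beta_2+\beta_3>2\alpha$ and the numerics from Theorem \ref{piece} go through unchanged. The paper notes this modification (and calls it standard), but your sketch should acknowledge that Lemma \ref{mul-asym} needs this adjustment rather than asserting it transfers without change.
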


Note that if at least two of $n,n-2,x-n$ are prime, then either $n,n+2$ are twin primes, or else at least one of $x,x-2$ is expressible as the sum of two primes, and Theorem \ref{disj} easily follows.

\begin{proof} (Sketch) We just discuss the case of $n-2$, as the $n+2$ case is similar. Observe from the Chinese remainder theorem (and the hypothesis that $x$ is divisible by $6$) that one can find a residue class $b\ (W)$ such that $b, b-2, x-b$ are all coprime to $W$ (in particular, one has $b=1\ (6)$).   By a routine modification of the proof of Lemma \ref{crit}, it suffices to  find a non-negative weight
  function $\nu \colon \N \to \R^+$ and fixed quantities $\alpha > 0$ and $\beta_1,\beta_2,\beta_3 \geq
  0$, such that one has the
  asymptotic upper bound
$$
 \sum_{\substack{\eps x \leq n \leq (1-\eps) x\\ n = b\ (W)}} \nu(n) \leq {\mathfrak S} (\alpha+o(1)) B^{-k} \frac{(1-2\eps) x}{W},
$$
the asymptotic lower bounds
\begin{align*}
  \sum_{\substack{\eps x \leq n \leq (1-\eps) x\\ n = b\ (W)}} \nu(n) \theta(n) &\geq {\mathfrak S} (\beta_1-o(1)) B^{1-k} \frac{(1-2\eps) x}{\phi(W)} \\
  \sum_{\substack{\eps x \leq n \leq (1-\eps) x\\ n = b\ (W)}} \nu(n) \theta(n+2) &\geq {\mathfrak S} (\beta_2-o(1)) B^{1-k} \frac{(1-2\eps) x}{\phi(W)} \\
  \sum_{\substack{\eps x \leq n \leq (1-\eps) x\\ n = b\ (W)}} \nu(n) \theta(x-n) &\geq {\mathfrak S} (\beta_3-o(1)) B^{1-k} \frac{(1-2\eps) x}{\phi(W)} 
\end{align*}
and the inequality
$$ \beta_1+\beta_2+\beta_3 > 2 \alpha,$$
where ${\mathfrak S}$ is the singular series
$$ {\mathfrak S} := \prod_{p|x(x-2); p > w} \frac{p}{p-1}.$$
We select $\nu$ to be of the form
$$ \nu(n) = \left( \sum_{j=1}^J c_j \lambda_{F_{j,1}}(n) \lambda_{F_{j,2}}(n+2) \lambda_{F_{j,3}}(x-n) \right)^2 $$
for various fixed coefficients $c_1,\dots,c_J \in \R$ and fixed smooth compactly supported functions $F_{j,i}: [0,+\infty) \to \R$ with $j=1,\dots,J$ and $i=1,\dots,3$.  It is then routine\footnote{One new technical difficulty here is that some of the various moduli $[d_j,d'_j]$ arising in these arguments are not required to be coprime at primes $p > w$ dividing $x$ or $x-2$; this requires some modification to Lemma \ref{mul-asym} that ultimately leads to the appearance of the singular series ${\mathfrak S}$.  However, these modifications are quite standard, and we do not give the details here.} to verify that analogues of Theorem \ref{prime-asym} and Theorem \ref{nonprime-asym} hold for the various components of $\nu$, with the role of $x$ in the right-hand side replaced by $(1-2\eps) x$, and the claim then follows by a suitable modification of Theorem \ref{epsilon-beyond}, taking advantage of the function $F$ constructed in Theorem \ref{piece}.
\end{proof}

It is likely that the bounds in Theorem \ref{main} can be improved further by refining the sieve-theoretic methods employed in this paper, with the exception of part (xii) for which the parity problem prevents further improvement, as discussed in Section \ref{parity-sec}.  We list some possible avenues to such improvements as follows:

\begin{enumerate}
\item In Theorem \ref{mke-lower}, the bound $M_{k,\eps} > 4$ was obtained for some $\eps>0$ and $k=50$.  It is possible that $k$ could be lowered slightly, for instance to $k=49$, by further numerical computations, but we were only barely able to establish the $k=50$ bound after two weeks of computation.  However, there may be a more efficient way to solve the required variational problem (e.g. by selecting a more efficient basis than the symmetric monomial basis) that would allow one to advance in this direction; this would improve the bound $H_1 \leq 246$ slightly.  Extrapolation of existing numerics also raises the possibility that $M_{53}$ exceeds $4$, in which case the bound of $270$ in Theorem \ref{main}(vii) could be lowered to $264$.
\item To reduce $k$ (and thus $H_1$) further, one could try to solve another variational problem, such as the one arising in Theorem \ref{maynard-trunc} or in Theorem \ref{epsilon-beyond}, rather than trying to lower bound $M_k$ or $M_{k,\eps}$.  It is also possible to use the more complicated versions of $\MPZ[\varpi,\delta]$ established in \cite{polymath8a} (in which the modulus $q$ is assumed to be densely divisible rather than smooth) to replace the truncated simplex appearing in Theorem \ref{maynard-trunc} with a more complicated region (such regions also appear implicitly in \cite[\S 4.5]{polymath8a}).  However, in the medium-dimensional setting $k \approx 50$, we were not able to accurately and rapidly evaluate the various integrals associated to these variational problems when applied to a suitable basis of functions.  One key difficulty here is that whereas polynomials appear to be an adequate choice of basis for the $M_k$, an analysis of the Euler-Lagrange equation reveals that one should use piecewise polynomial basis functions instead for more complicated variational problems such as the $M_{k,\eps}$ problem (as was done in the three-dimensional case in Section \ref{3d}), and these are difficult to work with in medium dimensions.  From our experience with the low $k$ problems, it looks like one should allow these piecewise polynomials to have relatively high degree on some polytopes, low degree on other polytopes, and vanish completely on yet further polytopes\footnote{In particular, the optimal choice $F$ for $M_{k,\eps}$ should vanish on the polytope $\{ (t_1,\dots,t_k) \in (1+\eps) \cdot {\mathcal R}_k: \sum_{i \neq i_0} t_i \ge 1-\eps \hbox{ for all } i_0=1,\dots,k\}$.}, but we do not have a systematic understanding of what the optimal placement of degrees should be.  
\item In Theorem \ref{epsilon-beyond}, the function $F$ was required to be supported in the simplex $\frac{k}{k-1} \cdot {\mathcal R}_k$.  However, one can consider functions $F$ supported in other regions $R$, subject to the constraint that all elements of the sumset $R+R$ lie in a region treatable by one of the cases of Theorem \ref{nonprime-asym}.  This could potentially lead to other optimization problems that lead to superior numerology, although again it appears difficult to perform efficient numerics for such problems in the medium $k$ regime $k \approx 50$.  One possibility would be to adopt a ``free boundary'' perspective, in which the support of $F$ is not fixed in advance, but is allowed to evolve by some iterative numerical scheme.
\item To improve the bounds on $H_m$ for $m=2,3,4,5$, one could seek a better lower bound on $M_k$ than the one provided by Theorem \ref{explicit}; one could also try to lower bound more complicated quantities such as $M_{k,\eps}$.
\item One could attempt to improve the range of $\varpi,\delta$ for which estimates of the form $\MPZ[\varpi,\delta]$ are known to hold, which would improve the results of Theorem \ref{main}(ii)-(vi).  For instance, we believe that the condition $600 \varpi + 180\delta < 7$ in Theorem \ref{mpz-poly} could be improved slightly to $1080 \varpi + 330 \delta < 13$ by refining the arguments in \cite{polymath8a}, but this requires a hypothesis of square root cancellation in a certain four-dimensional exponential sum over finite fields, which we have thus far been unable to establish rigorously.  Another direction to pursue would be to improve the $\delta$ parameter, or to otherwise relax the requirement of smoothness in the moduli, in order to reduce the need to pass to a truncation of the simplex ${\mathcal R}_k$, which is the primary reason why the $m=1$ results are currently unable to use the existing estimates of the form $\MPZ[\varpi,\delta]$.  Another speculative possibility is to seek $\MPZ[\varpi,\delta]$ type estimates which only control distribution for a positive proportion of smooth moduli, rather than for all moduli, and then to design a sieve $\nu$ adapted to just that proportion of moduli (cf. \cite{fouvry-invent}).  Finally, there may be a way to combine the arguments currently used to prove $\MPZ[\varpi,\delta]$ with the automorphic forms (or ``Kloostermania'') methods used to prove nontrivial equidistribution results with respect to a fixed modulus, although we do not have any ideas on how to actually achieve such a combination.
\item It is also possible that one could tighten the argument in Lemma \ref{crit}, for instance by establishing a non-trivial lower bound on the portion of the sum $\sum_n \nu(n)$ when $n+h_1,\dots,n+h_k$ are all composite, or a sufficiently strong upper bound on the pair correlations $\sum_n \theta(n+h_i) \theta(n+h_j)$ (see \cite[\S 6]{banks} for a recent implementation of this latter idea).  However, our preliminary attempts to exploit these adjustments suggested that the gain from the former idea would be exponentially small in $k$, whereas the gain from the latter would also be very slight (perhaps reducing $k$ by $O(1)$ in large $k$ regimes, e.g. $k \geq 5000$).
\item All of our sieves used are essentially of Selberg type, being the square of a divisor sum.  We have experimented with a number of non-Selberg type sieves (for instance trying to exploit the obvious positivity of $1 - \sum_{p \leq x: p|n} \frac{\log p}{\log x}$ when $n \leq x$), however none of these variants offered a numerical improvement over the Selberg sieve.  Indeed it appears that after optimizing the cutoff function $F$, the Selberg sieve is in some sense a ``local maximum'' in the space of non-negative sieve functions, and one would need a radically different sieve to obtain numerically superior results.
\item Our numerical bounds for the diameter $H(k)$ of the narrowest admissible $k$-tuple are known to be exact for $k \leq 342$, but there is scope for some slight improvement for larger values of $k$, which would lead to some improvements in the bounds on $H_m$ for $m=2,3,4,5$.  However, we believe that our bounds on $H_m$ are already fairly close (e.g. within $10\%$) of optimal, so there is only a limited amount of gain to be obtained solely from this component of the argument.
\end{enumerate}

\section{Narrow admissible tuples}\label{tuples-sec}

In this section we outline the methods used to obtain the numerical bounds on $H(k)$ given by Theorem~\ref{hk-bound}, which are reproduced below:
\smallskip

\begin{enumerate}
\item $H(3) = 6$,
\item $H(50) = 246$,
\item $H(51) = 252$,
\item $H(54) = 270$,
\item $H(\num{5511}) \leq \num{52116}$,
\item $H(\num{35410}) \leq \num{398130}$,
\item $H(\num{41588}) \leq \num{474266}$,
\item $H(\num{309661}) \leq \num{4137854}$,
\item $H(\num{1649821}) \leq \num{24797814}$,
\item $H(\num{75845707}) \leq \num{1431556072}$,
\item $H(\num{3473955908}) \leq \num{80550202480}$.
\end{enumerate}
\smallskip

\subsection{\texorpdfstring{$H(k)$}{H(k)} values for small \texorpdfstring{$k$}{k}}

The equalities in the first four bounds (1)-(4) were previously known.
The case $H(3)=6$ is obvious: the admissible 3-tuples $(0,2,6)$ and $(0,4,6)$ have diameter $6$ and no $3$-tuple of smaller diameter is admissible.
The cases $H(50)=246$, $H(51)=252$, and $H(54)=270$ follow from results of Clark and Jarvis~\cite{clark}.
They define $\varrho^*(x)$ to be the largest integer $k$ for which there exists an admissible $k$-tuple that lies in a half-open interval $(y,y+x]$ of length~$x$.
For each integer $k>1$, the largest $x$ for which $\varrho^*(x)=k$ is precisely $H(k+1)$.
Table~1 of \cite{clark} lists these largest $x$ values for $2\le k\le 170$, and we find that $H(50)=246$, $H(51)=252$, and $H(54)=270$.
Admissible tuples that realize these bounds are shown in Figures~\ref{k50tup}, ~\ref{k51tup} and~\ref{k54tup}.

\begin{figure}
\begin{align*}
&0,4,6,16,30,34,36,46,48,58,60,64,70,78,84,88,90,94,100,106,\\
&108,114,118,126,130,136,144,148,150,156,160,168,174,178,184,\\
&190,196,198,204,210,214,216,220,226,228,234,238,240,244,246.
\end{align*}
\caption{Admissible $50$-tuple realizing $H(50)=246$.}\label{k50tup}
\end{figure}

\begin{figure}
\begin{align*}
&0,6,10,12,22,36,40,42,52,54,64,66,70,76,84,90,94,96,100,106,\\
&112,114,120,124,132,136,142,150,154,156,162,166,174,180,184,\\
&190,196,202,204,210,216,220,222,226,232,234,240,244,246,250,252.
\end{align*}
\caption{Admissible $51$-tuple realizing $H(51)=252$.}\label{k51tup}
\end{figure}
\begin{figure}

\begin{align*}
&0,4,10,18,24,28,30,40,54,58,60,70,72,82,84,88,94,102,108,112,114,\\
&118,124,130,132,138,142,150,154,160,168,172,174,180,184,192,198,202,\\
&208,214,220,222,228,234,238,240,244,250,252,258,262,264,268,270.
\end{align*}
\caption{Admissible $54$-tuple realizing $H(54)=270$.}\label{k54tup}
\end{figure}

\subsection{\texorpdfstring{$H(k)$}{H(k)} bounds for mid-range \texorpdfstring{$k$}{k}}\label{secmidk}

As previously noted, exact values for $H(k)$ are known only for $k\le 342$.
The upper bounds on $H(k)$ for the five cases (5)-(9) were obtained by constructing admissible $k$-tuples using techniques developed during the first part of the Polymath8 project.
These are described in detail in Section~3 of \cite{polymath8a-unabridged}, but for the sake of completeness we summarize the most relevant methods here.

\subsubsection{Fast admissibility testing}
A key component of all our constructions is the ability to efficiently determine whether a given $k$-tuple $\mathcal{H}=(h_1,\ldots, h_k)$ is admissible.  We say that $\mathcal{H}$ is \emph{admissible modulo} $p$ if its elements do not form a complete set of residues modulo $p$.
Any $k$-tuple $\mathcal{H}$ is automatically admissible modulo all primes $p > k$, since a $k$-tuple cannot occupy more than $k$ residue classes; thus we only need to test admissibility modulo primes $p < k$.

A simple way to test admissibility modulo $p$ is to  enumerate the elements of $\mathcal{H}$ modulo $p$ and keep track of which residue classes have been encountered in a table with $p$ boolean-valued entries.
Assuming the elements of $\mathcal{H}$ have absolute value bounded by $O(k\log k)$ (true of all the tuples we consider), this approach yields a total bit-complexity of $O(k^2/\log k\ \textsf{M}(\log k))$, where $\textsf{M}(n)$ denotes the complexity of multiplying two $n$-bit integers, which, up to a constant factor, also bounds the complexity of division with remainder.
Applying the Sch\"onhage-Strassen bound $\textsf{M}(n)=O(n\log n\log\log n)$ from~\cite{schonhage}, this is $O(k^2\log\log k\log\log\log k)$, essentially quadratic in $k$.

This approach can be improved by observing that for most of the primes $p < k$ there are likely to be many unoccupied residue classes modulo $p$.  In order to verify admissibility at $p$ it is enough to find one of them, and we typically do not need to check them all in order to do so.  Using a heuristic model that assumes the elements of $\mathcal{H}$ are approximately equidistributed modulo $p$, one can determine a bound $m < p$ such that $k$ random elements of $\Z/p\Z$ are unlikely to occupy all of the residue classes in $[0,m]$.  By representing the $k$-tuple $\mathcal{H}$ as a boolean vector $\mathcal{B}=(b_0,\ldots,b_{h_k-h_1})$ in which $b_i=1$ if and only if $i=h_j-h_1$ for some $h_j\in \mathcal{H}$, we can efficiently test whether $\mathcal{H}$ occupies every residue class in $[0,m]$ by examining the entries
\[
b_0,\ldots,b_m,b_p,\ldots,b_{p+m},b_{2p},\ldots,b_{2p+m},\ldots
\]
of $\mathcal{B}$.
The key point is that when $p<k$ is large, say $p > (1+\epsilon)k/\log k$, we can choose~$m$ so that we only need to examine a small subset of the entries in $\mathcal{B}$.
Indeed, for primes $p > k/c$ (for any constant $c$), we can take $m=O(1)$ and only need to examine $O(\log k)$ elements of $\mathcal{B}$ (assuming its total size is $O(k\log k)$, which applies to all the tuples we consider here).

Of course it may happen that $\mathcal{H}$ occupies every residue class in $[0,m]$ modulo $p$.
In this case we revert to our original approach of enumerating the elements of $\mathcal{H}$ modulo $p$, but we expect this to happen for only a small proportion of the primes $p < k$.
Heuristically, this reduces the complexity of admissibility testing by a factor of $O(\log k)$, making it sub-quadratic.  In practice we find this approach to be much more efficient than the straight-forward method when $k$ is large.  See \cite[\S 3.1]{polymath8a} for further details.

\subsubsection{Sieving methods}

Our techniques for constructing admissible $k$-tuples all involve sieving an integer interval $[s,t]$ of residue classes modulo primes $p < k$ and then selecting an admissible $k$-tuple from the survivors.  There are various approaches one can take, depending on the choice of interval and the residue classes to sieve.
We list four of these below, starting with the classical sieve of Eratosthenes and proceeding to more modern variations.

\begin{itemize}
\item\textbf{Sieve of Eratosthenes}.
We sieve an interval $[2,x]$ to obtain admissible $k$-tuples
\[
p_{m+1},\ldots,p_{m+k}.
\]
with $m$ as small as possible.
If we sieve the residue class $0(p)$ for all primes $p\le k$ we have $m=\pi(k)$ and $p_{m+1}>k$. In this case no admissibility testing is required, since the residue class $0(p)$ is unoccupied for all $p \le k$.
Applying the Prime Number Theorem in the forms
\begin{align*}
p_k &=k \log k+ k \log \log k - k +O\Bigl(k \frac{ \log \log k}{\log k} \Bigr),\\
\pi(x)&=\frac{x}{\log x}+O\Bigl(\frac{x}{\log^2 x}\Bigr),
\end{align*}
this construction yields the upper bound
\begin{equation}\label{hk-eratosthenes}
H(k)\le k\log k + k\log\log k  - k + o(k).
\end{equation}
As an optimization, rather than sieving modulo every prime $p \le k$ we instead sieve modulo increasing primes $p$ and stop as soon as the first $k$ survivors form an admissible tuple.
This will typically happen for some $p_m < k$.
\smallbreak

\item\textbf{Hensley-Richards sieve}.
The bound in \eqref{hk-eratosthenes} was improved by Hensley and Richards~\cite{hensley,
  hensley-2, richards}, who observed that rather than sieving $[2,x]$ it is better to sieve the interval $[-x/2,x/2]$ to obtain admissible $k$-tuples of the form
\[
-p_{m+\lfloor k/2\rfloor-1},\ldots,p_{m+1},\ldots,-1,1,\ldots,p_{m+1},\ldots,p_{m+\lfloor(k+1)/2\rfloor-1},
\]
where we again wish to make $m$ as small as possible.
It follows from Lemma 5 of \cite{hensley-2} that one can take $m=o(k/\log k)$, leading to the improved upper bound
\begin{equation}\label{hk-hensely-richards}
H(k)\le k\log k + k\log\log k -(1+\log 2)k + o(k).
\end{equation}
\smallbreak


\item\textbf{Shifted Schinzel sieve}.
As noted by Schinzel in \cite{schinzel}, in the Hensley-Richards sieve it is slightly better to sieve $1(2)$ rather than $0(2)$; this leaves unsieved powers of~$2$ near the center of the interval $[-x/2,x/2]$ that would otherwise be removed (more generally, one can sieve $1(p)$ for many small primes $p$, but we did not).
Additionally, we find that shifting the interval $[-x/2,x/2]$ can yield significant improvements (one can also view this as changing the choices of residue classes).

This leads to the following approach: we sieve an interval $[s,s+x]$ of odd integers and multiples of odd primes $p\le p_m$, where $x$ is large enough to ensure at least $k$ survivors, and $m$ is large enough to ensure that the survivors form an admissible tuple, with $x$ and $m$ minimal subject to these constraints.  A tuple of exactly~$k$ survivors is then chosen to minimize the diameter.  By varying $s$ and comparing the results, we can choose a starting point $s\in [-x/2,x/2]$ that yields the smallest final diameter.
For large $k$ we typically find $s\approx k$ is optimal, as opposed to $s\approx -(k/2)\log k$ in the Hensley-Richards sieve.
\smallbreak

\item\textbf{Shifted greedy sieve}.
As a further optimization, we can allow greater freedom in the choice of residue class to sieve.
We begin as in the shifted Schinzel sieve, but for primes $p \le p_m$ that exceed $2\sqrt{k\log k}$, rather than sieving $0(p)$ we choose a minimally occupied residue class $a(p)$.
As above we sieve the interval $[s,s+x]$ for varying values of $s\in [-x/2,x/2]$ and select the best result, but unlike the shifted Schinzel sieve, for large $k$ we typically choose $s\approx -(k/\log k - k)/2$.

We remark that while one might suppose that it would be better to choose a minimally occupied residue class at all primes, not just the larger ones, we find that this is generally not the case.  Fixing a structured choice of residue classes for the small primes avoids the erratic behavior that can result from making greedy choices to soon (see \cite[Fig. 1]{gordon} for an illustration of this).

\end{itemize}
\smallbreak

Table~\ref{kmidtable} lists the bounds obtained by applying each of these techniques (in the online version of this paper, each table entry includes a link to the constructed tuple).
To the admissible tuples obtained using the shifted greedy sieve we additionally applied various local optimizations that are detailed in \cite[\S 3.6]{polymath8a}.
As can be seen in the table, the additional improvement due to these local optimizations is quite small compared to that gained by using better sieving algorithms, especially when $k$ is large.

Table~\ref{kmidtable} also lists the value $\lfloor k\log k+k\rfloor$ that we conjecture as an upper bound on $H(k)$ for all sufficiently large $k$.

\begin{table}
\centering
\setlength{\extrarowheight}{2pt}
\caption{Upper bounds on $H(k)$ for selected values of $k$.}\label{kmidtable}
\begin{tabular}{lrrrrr}
\toprule
$k$                 & \num{5511}         & \num{35410}          & \num{41588}           & \num{309661}           & \num{1649821}\\
\midrule
$k$ primes past $k$ & \tupref{5511}{56538} & \tupref{35410}{433992} & \tupref{41588}{516586} & \tupref{309661}{4505700} & \tupref{1649821}{26916060} \\
Eratosthenes        & \tupref{5511}{55160} & \tupref{35410}{424636} & \tupref{41588}{505734} & \tupref{309661}{4430212} & \tupref{1649821}{26540720} \\
Hensley-Richards           & \tupref{5511}{54480} & \tupref{35410}{415642} & \tupref{41588}{494866} & \tupref{309661}{4312612} & \tupref{1649821}{25841884} \\
Shifted Schinzel    & \tupref{5511}{53774} & \tupref{35410}{411060} & \tupref{41588}{489056} & \tupref{309661}{4261858} & \tupref{1649821}{25541910} \\
Shifted greedy      & \tupref{5511}{52296} & \tupref{35410}{399936} & \tupref{41588}{476028} & \tupref{309661}{4142780} & \tupref{1649821}{24798306} \\
Best known          & \tupref{5511}{52116} & \tupref{35410}{398130} & \tupref{41588}{474266} & \tupref{309661}{4137854} & \tupref{1649821}{24797814} \\
\midrule
$\lfloor k\log k + k \rfloor$       & \num{52985}         & \num{406320}          & \num{483899}          & \num{4224777}           & \num{25268951} \\
\bottomrule
\end{tabular}
\end{table}

\subsection{\texorpdfstring{$H(k)$}{H(k)} bounds for large \texorpdfstring{$k$}{k}}.
The upper bounds on $H(k)$ for the last two cases (10) and (11) were obtained using modified versions of the techniques described above that are better suited to handling very large values of $k$.
These entail three types of optimizations that are summarized in the subsections below.

\subsubsection{Improved time complexity}
As noted above, the complexity of admissibility testing is quasi-quadratic in $k$.  Each of the techniques listed in \S\ref{secmidk} involves optimizing over a parameter space whose size is at least quasi-linear in $k$, leading to an overall quasi-cubic time complexity for constructing a narrow admissible $k$-tuple; this makes it impractical to handle $k > 10^9$.
We can reduce this complexity in a number of ways.

First, we can combine parameter optimization and admissibility testing.
In both the sieve of Eratosthenes and Hensley-Richards sieves, taking $m=k$ guarantees an admissible $k$-tuple.
For $m<k$, if the corresponding $k$-tuple is inadmissible, it is typically because it is inadmissible modulo the smallest prime $p_{m+1}$ that appears in the tuple.
This suggests a heuristic approach in which we start with $m=k$, and then iteratively reduce~$m$, testing the admissibility of each $k$-tuple modulo $p_{m+1}$ as we go, until we can proceed no further.
We then verify that the last $k$-tuple that was admissible modulo $p_{m+1}$ is also admissible modulo all primes $p > p_{m+1}$ (we know it is admissible at all primes $p \le p_m$ because we have sieved a residue class for each of these primes).  We expect this to be the case, but if not we can increase $m$ as required.
Heuristically this yields a quasi-quadratic running time, and in practice it takes less time to find the minimal $m$ than it does to verify the admissibility of the resulting $k$-tuple.

Second, we can avoid a complete search of the parameter space.
In the case of the shifted Schinzel sieve, for example, we find empirically that taking $s=k$ typically yields an admissible $k$-tuple whose diameter is not much larger than that achieved by an optimal choice of $s$; we can then simply focus on optimizing~$m$ using the strategy described above.
Similar comments apply to the shifted greedy sieve.

\subsubsection{Improved space complexity}
We expect a narrow admissible $k$-tuple to have diameter $d=(1+o(1))k\log k$.
Whether we encode this tuple as a sequence of $k$ integers, or as a bitmap of $d+1$ bits, as in the fast admissibility testing algorithm, we will need approximately $k\log k$ bits.  For $k > 10^9$ this may be too large to conveniently fit in memory.  We can reduce the space to $O(k\log\log k)$ bits by encoding the $k$-tuple as a sequence of $k-1$ gaps; the average gap between consecutive entries has size $\log k$ and can be encoded in $O(\log\log k)$ bits.
In practical terms, for the sequences we constructed almost all gaps can be encoded using a single 8-bit byte for each gap.

One can further reduce space by partitioning the sieving interval into windows.
For the construction of our largest tuples, we used windows of size $O(\sqrt{d})$ and converted to a gap-sequence representation only after sieving at all primes up to an $O(\sqrt{d})$ bound.

\subsubsection{Parallelization}
With the exception of the greedy sieve, all the techniques described above are easily parallelized.
The greedy sieve is more difficult to parallelize because the choice of a minimally occupied residue class modulo $p$ depends on the set of survivors obtained after sieving modulo primes less than $p$.
To address this issue we modified the greedy approach to work with batches of consecutive primes of size $n$, where $n$ is a multiple of the number of parallel threads of execution.  After sieving fixed residue classes modulo all small primes $p < 2\sqrt{k\log k}$, we determine minimally occupied residue classes for the next $n$ primes in parallel, sieve these residue classes, and then proceed to the next batch of $n$ primes.

In addition to the techniques described above, we also considered a modified Schinzel sieve in which we check admissibility modulo each successive prime $p$ before sieving multiples of $p$, in order to verify that sieving modulo $p$ is actually necessary.  For values of $p$ close to but slightly less than $p_m$ it will often be the case that the set of survivors is already admissibile modulo $p$, even though it does contain multiples of $p$ (because some other residue class is unoccupied).
As with the greedy sieve, when using this approach we sieve residue classes in batches of size $n$ to facilitate parallelization.

\subsubsection{Results for large \texorpdfstring{$k$}{k}}

Table~\ref{klargetable} lists the bounds obtained for the two largest values of $k$.
For $k=\num{75845707}$ the best results were obtained with a shifted greedy sieve that was modified for parallel execution as described above, using the fixed shift parameter $s=-(k\log k-k)/2$.
A list of the sieved residue classes is available at
\begin{center}
 \url{math.mit.edu/~drew/greedy_75845707_1431556072.txt}.
\end{center}
This file contains values of $k$, $s$, $d$, and $m$, along with a list of prime indices $n_i> m$ and residue classes $r_i$ such that sieving the interval $[s,s+d]$ of odd integers, multiples of $p_n$ for $1<n \le m$, and at $r_i$ modulo $p_{n_i}$ yields an admissible $k$-tuple.

For $k=\num{3473955908}$ we did not attempt any form of greedy sieving due to practical limits on the time and computational resources available. The best results were obtained using a modified Schinzel sieve that avoids unnecessary sieving, as described above, using the fixed shift parameter $s=k0$.
A list of the sieved residue classes is available at
\begin{center}
 \url{math.mit.edu/~drew/schinzel_3473955908_80550202480.txt}.
\end{center}
This file contains values of $k$, $s$, $d$, and $m$, along with a list of prime indices $n_i> m$ such that sieving the interval $[s,s+d]$ of odd integers, multiples of $p_n$ for $1<n \le m$, and multiples of $p_{n_i}$ yields an admissible $k$-tuple.

Source code for our implementation is available at \url{math.mit.edu/~drew/ompadm_v0.5.tar}; this code can be used to verify the admissibility of both the tuples listed above.

\begin{table}
\centering
\setlength{\extrarowheight}{2pt}
\caption{Upper bounds on $H(k)$ for selected values of $k$.}\label{klargetable}
\begin{tabular}{lrr}
\toprule
$k$                 & \num{75845707}   & \num{3473955908}  \\
\midrule
$k$ primes past $k$ & \num{1541858666} & \num{84449123072} \\
Eratosthenes        & \num{1526698470} & \num{83833839848} \\
Hensley-Richards    & \num{1488227220} & \num{81912638914} \\
Shifted Schinzel    & \num{1467584468} & \num{80761835464} \\
Shifted Greedy      & \num{1431556072} & not available\\
Best known         & \num{1431556072} & \num{80550202480} \\
\midrule
$\lfloor k\log k + k \rfloor$       & \num{1452006268}         & \num{79791764059} \\
\bottomrule
\end{tabular}
\end{table}


\begin{backmatter}



\section*{Acknowledgements}

This paper is part of the \emph{Polymath project}, which was launched
by Timothy Gowers in February 2009 as an experiment to see if research
mathematics could be conducted by a massive online collaboration.
The current project (which was administered by Terence Tao) is the eighth
project in this series, and this is the second paper arising from that project, after \cite{polymath8a}.  Further information on the Polymath project can be
found on the web site \url{michaelnielsen.org/polymath1}.  Information
about this specific project may be found at
\begin{center}
\small{\url{michaelnielsen.org/polymath1/index.php?title=Bounded\_gaps\_between\_primes}}
\end{center}
and a full list of participants and their grant acknowledgments may be
found at
\begin{center}
\small{\url{michaelnielsen.org/polymath1/index.php?title=Polymath8\_grant\_acknowledgments}}
\end{center}
\par
We thank Thomas Engelsma for supplying us with his data on narrow admissible tuples, and Henryk Iwaniec for useful suggestions.  We also thank the anonymous referees for some suggestions in improving the content and exposition of the paper.

\end{backmatter}
\end{document}